\numberwithin{equation}{section}
\numberwithin{figure}{section}
\theoremstyle{plain}
\newtheorem{thm}{\protect\theoremname}[section]
  \theoremstyle{remark}
  \newtheorem{rem}[thm]{\protect\remarkname}
  \theoremstyle{plain}
  \newtheorem{prop}[thm]{\protect\propositionname}
  \theoremstyle{plain}
  \newtheorem{lem}[thm]{\protect\lemmaname}
  \theoremstyle{plain}
  \newtheorem{cor}[thm]{\protect\corollaryname}
\renewcommand{\hat}[1]{\widehat{#1}}
\renewcommand{\tilde}[1]{\widetilde{#1}}
  \providecommand{\corollaryname}{Corollary}
  \providecommand{\lemmaname}{Lemma}
  \providecommand{\propositionname}{Proposition}
  \providecommand{\remarkname}{Remark}
\providecommand{\theoremname}{Theorem}
\begin{document}

\title[Scattering for Defocusing gBO]{Scattering for Defocusing generalized Benjamin-Ono Equation in the
Energy Space $H^{\frac{1}{2}}(\mathbb{R})$}

\author{\noindent Kihyun Kim}

\email{khyun1215@kaist.ac.kr}

\address{Department of Mathematical Sciences, Korea Advanced Institute of
Science and Technology, 291 Daehak-ro, Yuseong-gu, Daejeon 34141,
Korea}

\author{\noindent Soonsik Kwon}

\email{soonsikk@kaist.edu}

\address{Department of Mathematical Sciences, Korea Advanced Institute of
Science and Technology, 291 Daehak-ro, Yuseong-gu, Daejeon 34141,
Korea}

\keywords{generalized Benjamin-Ono equation, scattering, monotonicity.}

\subjclass[2000]{35B40, 35Q53}
\begin{abstract}
We prove the scattering for the defocusing generalized Benjamin-Ono
equation in the energy space $H^{\frac{1}{2}}(\mathbb{R})$. We first
establish the monotonicity formula that describes the unidirectional
propagation. More precisely, it says that the center of energy moves
faster than the center of mass. This type of monotonicity was first
observed by Tao \cite{Tao2007DiscCont} in the defocusing gKdV equations. 

We use the monotonicity in the setting of compactness-contradiction
argument to prove the large data scattering in the energy space $H^{\frac{1}{2}}(\mathbb{R})$.
On the way, we extend critical local theory of Vento \cite{Vento2010}
to the subcritical regime. Indeed, we obtain subcritical local theory
and global well-posedness in the energy space.
\end{abstract}

\maketitle
\tableofcontents{}

\section{Introduction}

We consider the \emph{defocusing generalized Benjamin-Ono equation}
\eqref{eq:gBO}: 
\begin{align}
\begin{cases}
\partial_{t}u+\mathcal{H}\partial_{xx}u+\partial_{x}(u^{k+1})=0, & (t,x)\in\mathbb{R}\times\mathbb{R},\\
u(0,x)=u_{0}(x),
\end{cases} & \tag{gBO}\label{eq:gBO}
\end{align}
with $u:\mathbb{R}\times\mathbb{R}\to\mathbb{R}$ a real-valued function,
$k$ an even natural number $\geq4$, $u_{0}$ an initial data in
homogeneous or inhomogeneous Sobolev space. We denote by $\mathcal{H}$
the \emph{Hilbert transform} on $\mathbb{R}$, which acts on Schwartz
functions $f$ by 
\[
\mathcal{H}f=\Big(\frac{1}{\pi}\mathrm{p.v.}\frac{1}{x}\Big)\ast f.
\]
Equivalently, $\mathcal{H}$ is a Fourier multiplier operator with
multiplier $m(\xi)=-i\mathrm{sgn}(\xi)$:
\[
\widehat{\mathcal{H}f}(\xi)=-i\mathrm{sgn}(\xi)\hat{f}(\xi).
\]
We let the nonlinearity $F(u)$ by $-\partial_{x}(u^{k+1})$, and
rewrite \eqref{eq:gBO} in the form
\[
\partial_{t}u+\mathcal{H}\partial_{xx}u=F(u).
\]
The linear part of \eqref{eq:gBO} is called as the \emph{linear Benjamin-Ono
equation}. If the nonlinearity has the opposite sign, we call the
equation \emph{focusing}. The equation \eqref{eq:gBO} is in fact
a generalization of the Benjamin-Ono equation
\begin{equation}
\partial_{t}u+\mathcal{H}\partial_{xx}u+\partial_{x}(u^{2})=0\tag{BO}\label{eq:BO}
\end{equation}
in its power of nonlinearity. The \eqref{eq:gBO} with $k=1$ is the
above \eqref{eq:BO}, and the case when $k=2$ is called as defocusing
modified Benjamin-Ono equation (mBO). It is known that \eqref{eq:BO}
is completely integrable \cite{FokasAblowitz1983}.

The equation \eqref{eq:gBO} has scaling invariance. For any $\lambda>0$
and a solution $u(t,x)$ to \eqref{eq:gBO} with initial data $u_{0}(x)$,
$u_{\lambda}(t,x)\coloneqq\lambda^{\frac{1}{k}}u(\lambda^{2}t,\lambda x)$
is also a solution with initial data $\lambda^{\frac{1}{k}}u_{0}(\lambda x)$.
Thus, \eqref{eq:gBO} is $\dot{H}^{s_{k}}$-critical with 
\[
s_{k}=\frac{1}{2}-\frac{1}{k},
\]
in the sense that scalings preserve $\dot{H}^{s_{k}}$ norm of the
initial data. 

Moreover, we have mass and energy conservation laws.\footnote{In fact, there is another (formal) conservation law: 
\[
\int_{\mathbb{R}}u(t,x)dx=\int_{\mathbb{R}}u(0,x)dx
\]
for a solution $u$ to \eqref{eq:gBO}, but this will not be used
throughout the paper. Note that it is not positive-definite and may
not even defined for $H^{\frac{1}{2}}$ solutions.} The $L^{2}$\emph{-mass} (in short, \emph{mass}) and \emph{energy}
of a function $u(t,x)$ is defined by
\begin{align*}
M(u)(t) & \coloneqq\int_{\mathbb{R}}u^{2}(t,x)dx,\\
E(u)(t) & \coloneqq\int_{\mathbb{R}}[\frac{1}{2}u\mathcal{H}u_{x}+\frac{1}{k+2}u^{k+2}](t,x)dx.
\end{align*}
Whenever $u$ is a smooth solution, mass and energy are conserved:
\[
M(u)(t)=M(u)(0)\qquad\text{and}\qquad E(u)(t)=E(u)(0).
\]
Henceforth, we drop the time variable $t$ and write $M(u)$ and $E(u)$
to denote mass and energy of a solution $u$, respectively. If $u(t)\in H^{\frac{1}{2}}(\mathbb{R})$,
then $E(u)$ is well-defined and finite in view of Plancherel's theorem.
So we regard that $H^{\frac{1}{2}}(\mathbb{R})$ is the \emph{energy
space}.

\ 

The local theory of Benjamin-Ono type equations have been intensively
studied. For readers who are interested in local theory of \eqref{eq:BO}
and (mBO), we refer to a recent survey by Ponce \cite{Ponce2016survey}.
We now want to pick up some works for \eqref{eq:gBO} with $k\geq4$.
At first, by the energy method, I\'orio \cite{Iorio1986CPDE} showed
local well-posedness (LWP) in $H^{s}$ with $s>\frac{3}{2}$. Using
the method of gauge transform, Molinet and Ribaud \cite{MolinetRibaud2004}
showed LWP in $H^{\frac{1}{2}}$ for $k\geq5$ and in $H^{\frac{1}{2}+}$
for $k=4$. Moreover, they showed small data global well-posedness
in the critical space $\dot{H}^{s_{k}}$. For $k=4$, the first large
data critical LWP was obtained by Burq and Planchon \cite{BurqPlanchon2006}.
Finally, Vento \cite{Vento2010} obtained the critical LWP for all
$k\geq4$ by modifying the linear propagator of \eqref{eq:gBO}.

Throughout the paper, we rely on the critical LWP obtained by Vento,
so we want to state the result in more detail. Denote the linear propagator
of the linear Benjamin-Ono equation by $V(t)$. A function $u:I\times\mathbb{R}\to\mathbb{R}$
is called as a \emph{$\dot{H}^{s_{k}}$ solution} to \eqref{eq:gBO}
if $0\in I$ is a time interval, $u\in C_{J}\dot{H}^{s_{k}}\cap L_{x}^{k}L_{J}^{\infty}\cap\dot{X}_{J}^{s_{k}}$
for any compact subinterval $J\subseteq I$, (see Notations and Section
\ref{subsec:review of Vento's work} for definitions of these function
spaces) and $u$ satisfies the Duhamel formula
\[
u(t)=V(t)u_{0}+\int_{0}^{t}V(t-s)F(u(s))ds
\]
for all $t\in I$. Replacing $\dot{H}^{s_{k}}$ and $\dot{X}_{I}^{s_{k}}$
by $H^{s}$ and $X_{I}^{s}$ respectively, we similarly define $H^{s}$
\emph{solutions} to \eqref{eq:gBO}. A solution is called \emph{global}
if $I=\mathbb{R}$. We now state the critical LWP of \eqref{eq:gBO}.
\begin{thm}[Critical local theory, \cite{Vento2010}]
\label{thm:LWP}Let $k\geq4$.\\
1. For any $u_{\ast}$ in $\dot{H}^{s_{k}}$, there exists $T=T(u_{\ast})>0$
and $r=r(u_{\ast})>0$ such that for any initial data $u_{0}\in B(u_{\ast};r)$,
there exists a unique $\dot{H}^{s_{k}}$ solution $u$ in $L_{T}^{\infty}\dot{H}_{x}^{s_{k}}\cap L_{x}^{k}L_{T}^{\infty}\cap\dot{X}_{T}^{s_{k}}$.
Moreover, the solution $u$ indeed lies in $\dot{Z}_{T}^{s_{k}}$
and the solution map is locally Lipschitz.\\
2. For any $s\geq s_{k}$, the $H^{s}$-version of the result holds.
That is, we can replace $L_{T}^{\infty}\dot{H}_{x}^{s_{k}}\cap L_{x}^{k}L_{T}^{\infty}\cap\dot{X}_{T}^{s_{k}}$,
$\dot{H}^{s_{k}}$, and $\dot{Z}_{T}^{s_{k}}$ by $L_{T}^{\infty}H_{x}^{s}\cap L_{x}^{k}L_{T}^{\infty}\cap X_{T}^{s}$,
$H^{s}$, and $Z_{T}^{s}$.
\end{thm}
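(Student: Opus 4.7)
The plan is to prove both parts via a standard contraction mapping argument applied to the Duhamel map
\[
\Phi_{u_0}(u)(t) = V(t)u_0 + \int_0^t V(t-s) F(u(s))\,ds,
\]
carried out in a ball of the resolution space $L_T^\infty \dot H^{s_k} \cap L_x^k L_T^\infty \cap \dot X_T^{s_k}$ (respectively its inhomogeneous version). The scaling-critical choice of the exponent $s_k = \tfrac{1}{2} - \tfrac{1}{k}$ and the $L_x^k L_T^\infty$ norm are dictated by the Strichartz pair adapted to the linear Benjamin-Ono propagator $V(t)$, which is the norm that can absorb the $u^k$ factor in the nonlinearity $\partial_x(u^{k+1}) = (k+1) u^k \partial_x u$.

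First I would record the linear estimates in Vento's spaces: the homogeneous bound $\|V(t) u_0\|_{L_T^\infty \dot H_x^{s_k} \cap L_x^k L_T^\infty \cap \dot X_T^{s_k}} \lesssim \|u_0\|_{\dot H^{s_k}}$ and the inhomogeneous (dual) estimate $\|\int_0^t V(t-s) F(s)\,ds\|_{\dot X_T^{s_k}} \lesssim \|F\|_{\dot Y_T^{s_k}}$, along with the corresponding $\dot Z_T^{s_k}$ refinement. Then I would import Vento's key multilinear estimate of the form
\[
\|\partial_x(u_1 \cdots u_{k+1})\|_{\dot Y_T^{s_k}} \lesssim \prod_{j=1}^{k+1} \|u_j\|_{L_x^k L_T^\infty \cap \dot X_T^{s_k}},
\]
which is precisely the ingredient that required Vento to modify the linear propagator to cope with the high-low frequency interactions intrinsic to the Benjamin-Ono dispersion relation. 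Combining these, $\Phi_{u_0}$ maps a small ball around $V(t) u_\ast$ into itself and is a contraction, provided one first shrinks $T$ (via the dominated convergence) or $r$ so that $\|V(t)u_\ast\|_{L_x^k L_T^\infty}$ and $\|V(t) u_\ast\|_{\dot X_T^{s_k}}$ are small, and then restricts $r$; this is the usual Cazenave-Weissler-style reduction that yields LWP for arbitrary data in the critical space at the cost of a data-dependent time $T(u_\ast)$. Uniqueness and the locally Lipschitz solution map follow from the same multilinear estimate applied to $u - v$, and the $\dot Z_T^{s_k}$-regularity upgrade comes from revisiting the Duhamel formula with the linear and nonlinear estimates in the finer $\dot Z$-space.

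For part 2, my strategy is persistence of regularity. Given an $H^s$ initial datum with $s \geq s_k$, I would run the contraction argument on the same time interval $T = T(u_\ast)$ given by part 1, now in the ball of $L_T^\infty H_x^s \cap L_x^k L_T^\infty \cap X_T^s$ consisting of functions whose $\dot H^{s_k}$-resolution coincides with the $\dot H^{s_k}$ solution already constructed. The required ingredients are the inhomogeneous versions of the linear estimates and a nonlinear estimate of the form $\|\partial_x(u^{k+1})\|_{Y_T^s} \lesssim \|u\|_{X_T^s \cap L_x^k L_T^\infty} \|u\|_{\dot X_T^{s_k} \cap L_x^k L_T^\infty}^k$, so that the additional Sobolev derivatives fall on one factor while the remaining $k$ factors are controlled by the critical norm, which is already small on $[0,T]$.

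The genuinely delicate step is the critical multilinear estimate, which I would invoke as a black box from Vento; everything else is a routine but careful implementation of the contraction principle in the modified resolution spaces $\dot X_T^{s_k}$ and $\dot Z_T^{s_k}$. The subtleties are (i) keeping track of the two different function-space components ($L_x^k L_T^\infty$ versus $\dot X_T^{s_k}$) when verifying the contraction, and (ii) ensuring that the neighborhood $B(u_\ast; r)$ on which $T$ is uniform is chosen by first fixing $T$ so that $V(t) u_\ast$ has small scale-invariant norms and then shrinking $r$ accordingly; this is precisely the assertion that $T$ and $r$ depend on the profile of $u_\ast$, not merely on $\|u_\ast\|_{\dot H^{s_k}}$.
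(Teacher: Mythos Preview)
Your scheme has a genuine gap at the key step: you claim one can shrink $T$ so that $\|V(t)u_\ast\|_{L_x^k L_T^\infty}$ is small, but this is false. Because the time exponent is $\infty$, as $T\to 0$ one has $\|V(t)u_\ast\|_{L_x^k L_T^\infty}\to \|u_\ast\|_{L_x^k}$, which is \emph{not} small for large data. Hence the naive Cazenave--Weissler reduction does not close here, and a direct contraction for $\Phi_{u_0}$ built on $V(t)$ with the multilinear estimate $\|\partial_x(u^{k+1})\|_{\dot{\mathcal N}_T^{s_k}}\lesssim \|u\|_{L_x^kL_T^\infty}^{k}\|u\|_{\dot X_T^{s_k}}$ fails precisely because the $L_x^kL_T^\infty$ factor cannot be made small. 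The paper (following Vento) stresses exactly this obstruction; see the discussion around \eqref{eq:double local smoothing}.

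What Vento actually does---and what your proposal sketches around but does not implement---is to absorb the dangerous high--low paraproduct into the linear part. One writes $F(u)=-\pi(u,u)+g(u)$, rewrites the equation as the distorted equation $(\partial_t+\mathcal H\partial_{xx})u+\pi(u_0,u)=[\pi(u_0,u)-\pi(u,u)]+g(u)$, and iterates with the distorted propagator $U(t)$ (which still obeys the linear estimates for short $T$). The contraction is then run on $B_{\dot X_T^{s_k}}(0;r)\cap B_{L_x^kL_T^\infty}(u_0;r)$: the smallness that replaces your unattainable $\|V(t)u_\ast\|_{L_x^kL_T^\infty}\ll1$ is $\|u-u_0\|_{L_x^kL_T^\infty}\ll1$ and $\|u\|_{\dot X_T^{s_k}}\ll1$, both of which \emph{do} hold for $U(t)u_0$ when $T$ is small (Lemma~\ref{lem:subcritical U(t) comparison} in the subcritical setting, and profile-dependent smallness in the critical setting). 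The nonlinear estimates (Lemma~\ref{lem:nonlinearity estimate}) are tailored so that $\pi(u_0,u)-\pi(u,u)$ and $g(u)$ each carry at least one factor of $\|u-u_0\|_{L_x^kL_T^\infty}$ or $\|u\|_{\dot X_T^{s_k}}$. Your proposal needs to be rewritten around this distorted iteration; invoking the multilinear bound as a black box is fine, but the iteration space and the source of smallness must change.
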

In Theorem \ref{thm:LWP}, the lifespan of a solution depends on the
profile of its initial data even for $s>s_{k}$. However, a slight
modification of the proof of Theorem \ref{thm:LWP} yields that for
given $u_{0}\in H^{s}$ with $s>s_{k}$, we can construct a $\dot{H}^{s_{k}}$
solution $u\in C_{T}\dot{H}^{s_{k}}$ whose lifespan $T$ depends
only on $\|u_{0}\|_{H^{s}}$. Moreover, we use persistence of regularity
(Proposition \ref{prop:persistence of regularity}) to guarantee that
this $\dot{H}^{s_{k}}$ solution is indeed a $H^{s}$ solution. In
particular, we obtain subcritical local well-posedness. When $s\geq\frac{1}{2}$,
by mass/energy conservation, we obtain the global well-posedness of
\eqref{eq:gBO} in the energy space $H^{\frac{1}{2}}$. This is our
first result.
\begin{thm}[Subcritical local theory and global well-posedness]
\label{thm:subcritical LWP, GWP}\ Let $k\geq4$.\\
1. (Subcritical LWP) Assume that $s>s_{k}$. For any $R>0$, there
exists $T=T(R)>0$ such that if the initial data $u_{0}$ satisfies
$\|u_{0}\|_{H^{s}}<R$, then there exists a unique $H^{s}$ solution
$u$ in $L_{T}^{\infty}H_{x}^{s}\cap L_{x}^{k}L_{T}^{\infty}\cap X_{T}^{s}$.
Moreover, the solution $u$ indeed lies in $Z_{T}^{s}$ and the solution
map is continuous.\\
2. (GWP and conservation laws) Assume that $s\geq\frac{1}{2}$. For
any $T>0$ and initial data $u_{0}\in H^{s}$, there exists a unique
$H^{s}$ solution $u$ in $L_{T}^{\infty}H_{x}^{s}\cap L_{x}^{k}L_{T}^{\infty}\cap X_{T}^{s}$.
Moreover, the solution $u$ indeed lies in $Z_{T}^{s}$ and the solution
map is continuous. Furthermore, we have both mass and energy conservation.
\end{thm}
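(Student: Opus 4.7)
The plan is to reduce Part 1 to Theorem~\ref{thm:LWP} via a frequency localization argument that upgrades the profile-dependent lifespan of the critical theory to a norm-dependent lifespan. Given $u_0 \in H^s$ with $s > s_k$ and $\|u_0\|_{H^s} \leq R$, the key observation is the uniform high-frequency bound
\begin{equation*}
\|P_{>N} u_0\|_{\dot{H}^{s_k}} \lesssim N^{s_k - s}\|u_0\|_{\dot{H}^s} \lesssim N^{s_k - s} R,
\end{equation*}
which, since $s_k - s < 0$, replaces the profile-dependent smallness $\|V(t) P_{>N} u_\ast\|_{\dot{X}^{s_k}}\to 0$ that drives Vento's contraction by an estimate depending only on $R$. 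I would choose $N = N(R,\varepsilon)$ to force this quantity below the contraction threshold $\varepsilon$, and then $T = T(N) = T(R)$ small enough to absorb the finite-frequency part $V(t) P_{\leq N} u_0$ into the contraction ball. Re-running Vento's fixed point scheme then yields a $\dot{H}^{s_k}$ solution on $[-T(R), T(R)]$ for every such $u_0$. The persistence of regularity proposition (Proposition~\ref{prop:persistence of regularity}) upgrades this to an $H^s$ solution on the same interval, and transfers uniqueness and continuous dependence from the critical level to $H^s$.

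For Part 2, I would first establish the conservation laws at $H^{1/2}$ by approximation. Take smooth data $u_0^n \to u_0$ in $H^{1/2}$ and invoke I\'orio's classical theory to obtain smooth solutions $u_n$ satisfying $M(u_n) = M(u_0^n)$ and $E(u_n) = E(u_0^n)$. The $H^{1/2}$ continuous dependence from Part 1 ensures $u_n \to u$ in $C_T H^{1/2}$ on an interval of length depending only on $\|u_0\|_{H^{1/2}}$, and the Sobolev embedding $H^{1/2}(\mathbb{R}) \hookrightarrow L^{k+2}(\mathbb{R})$ shows that $M$ and $E$ are continuous functionals on $H^{1/2}$, so the limit inherits both conservation laws. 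The defocusing sign then yields the a priori bound
\begin{equation*}
\|u(t)\|_{H^{1/2}}^2 = M(u_0) + \|u(t)\|_{\dot{H}^{1/2}}^2 \leq M(u_0) + 2 E(u_0),
\end{equation*}
uniform in $t$. Iterating Part 1 on successive intervals of length $T = T(M(u_0) + 2E(u_0))$ then extends the solution globally in $H^{1/2}$. For general $s \geq 1/2$, the global $H^{1/2}$ bound combined with persistence of regularity applied on each iteration step propagates $H^s$ regularity to all of $\mathbb{R}$.

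The main technical obstacle I anticipate is the limiting argument used to propagate the conservation laws from smooth to $H^{1/2}$ data, since one must carefully match the topology in which the approximating sequence converges against the topology in which $E$ is continuous; this is precisely why it is important that Part 1 delivers continuous dependence in the full $H^{1/2}$ norm rather than only the critical norm $\dot{H}^{s_k}$. The modification of Vento's argument itself in Part 1 is essentially bookkeeping once the replacement of the profile-dependent high-frequency smallness by the uniform $N^{s_k - s} R$ estimate is identified, and the iteration in Part 2 is standard once the a priori bound is secured.
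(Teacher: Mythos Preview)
Your proposal is correct and matches the paper's approach: the frequency splitting $u_0 = P_{\leq N}u_0 + P_{>N}u_0$ with the uniform bound $\|P_{>N}u_0\|_{\dot H^{s_k}}\lesssim N^{s_k-s}R$ is exactly how the paper upgrades Vento's profile-dependent lifespan to a norm-dependent one (this is Lemma~\ref{lem:subcritical-V(t)-comparison} and Corollary~\ref{cor:subcritical lifespan of s_k solution}), followed by persistence of regularity to reach $H^s$; Part~2 is likewise identical. The only cosmetic difference is that the paper tracks smallness of $\|V(t)u_0-u_0\|_{L_x^kL_T^\infty}$ and $\|D_x^{0+}V(t)u_0\|_{L_x^{k+}L_T^{\infty-}}$ (needed for the distorted linear estimates) rather than $\|V(t)P_{>N}u_*\|_{\dot X^{s_k}}$, but the same frequency splitting handles these quantities as well.
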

Beyond the well-posedness theory, it is of great interest to study
long-time dynamics of the solutions. It is widely believed that for
the defocusing equations, the scattering holds.\footnote{For defocusing Benjamin-Ono type equations, by heuristic observations
of time decay, it is believed that the linear scattering holds for
$k>2$.} (however, see also \cite{Tao2016AnalPDE,Tao2018AnalPDE} for supercritical
equations.) The defocusing nature in general forces solutions to disperse
in the physical space. As the nonlinearity contains the power of $u$,
its effect becomes much weaker than the linear evolution. As a result,
the linear evolution dominates the dynamics of $u$ and $u$ resembles
some linear solution asymptotically. In mathematical terms, we say
that a solution $u$ \emph{scatters forward} (resp., \emph{backward})
\emph{in time} if there exists a \emph{scattering state} $u_{\pm}$
satisfying 
\[
\lim_{t\to\pm\infty}\|V(t)u_{\pm}-u(t)\|_{\dot{H}^{s_{k}}}=0.
\]
It is well-known that finiteness of the solution norm implies the
scattering. We now present our main result, that is, any $H^{\frac{1}{2}}$
solutions to \eqref{eq:gBO} scatter.
\begin{thm}[Scattering for defocusing gBO]
\label{thm:scattering}For $k>4$,\footnote{Recall that $k$ is an even number, so $k=6,8,\dots$.}
any  $H^{\frac{1}{2}}$-solution to \eqref{eq:gBO} scatters both
forward and backward in time. Moreover, there exists a function $L:[0,\infty)\to[0,\infty)$
such that 
\[
\|u\|_{\dot{X}_{t}^{s_{k}}}+\|u\|_{L_{x}^{k}L_{t}^{\infty}}\leq L(M(u)+E(u)).
\]
\end{thm}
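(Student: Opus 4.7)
The plan is to run a concentration-compactness and rigidity scheme in the spirit of Kenig--Merle, with the rigidity step supplied by the monotonicity formula advertised in the abstract. By the small-data critical theory (Theorem~\ref{thm:LWP}) together with the global well-posedness of Theorem~\ref{thm:subcritical LWP, GWP}, it suffices to establish the uniform a priori bound
\[
\|u\|_{\dot X^{s_k}_t}+\|u\|_{L^k_x L^\infty_t}\le L\bigl(M(u)+E(u)\bigr),
\]
since finiteness of this norm already implies scattering through a standard Duhamel continuation argument. Accordingly one sets
\[
L(E_0):=\sup\bigl\{\|u\|_{\dot X^{s_k}_t}+\|u\|_{L^k_x L^\infty_t}:u\text{ is an }H^{1/2}\text{ solution with }M(u)+E(u)\le E_0\bigr\},
\]
notes that $L(E_0)<\infty$ for small $E_0$, and defines the critical threshold $E_c:=\sup\{E_0:L(E_0)<\infty\}$. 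Assume $E_c<\infty$ for contradiction.

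\textbf{Extraction of a critical element.} A minimizing sequence argument yields $H^{1/2}$ solutions $u_n$ with $M(u_n)+E(u_n)\to E_c$ but $\|u_n\|_{\dot X^{s_k}_t\cap L^k_x L^\infty_t}\to\infty$. Applying a linear profile decomposition in $H^{1/2}(\mathbb{R})$ adapted to the Benjamin--Ono propagator $V(t)$, combined with a long-time perturbation lemma inside Vento's resolution spaces, produces in the usual way a single nonscattering profile and thus a critical solution $u_c$ with $M(u_c)+E(u_c)=E_c$ whose orbit is precompact in $H^{1/2}$ modulo spatial translations: there is a continuous path $x:\mathbb{R}\to\mathbb{R}$ for which $\{u_c(t,\cdot-x(t))\}_{t\in\mathbb{R}}$ is relatively compact in $H^{1/2}$. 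Because $s_k=\tfrac12-\tfrac1k<\tfrac12$ the energy space is strictly subcritical with respect to the natural scaling, so the only symmetries that need to be quotiented out are spatial and temporal translations.

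\textbf{Rigidity via monotonicity.} The monotonicity formula established earlier in the paper asserts, informally, that for real defocusing solutions the center of energy propagates strictly faster than the center of mass. Quantitatively one obtains monotonicity in $t$ of weighted integrals of the form $\int\chi(x-ct)\bigl[\tfrac{1}{2}u\mathcal{H}u_x+\tfrac{1}{k+2}u^{k+2}\bigr]dx$ minus a multiple of $\int\chi(x-ct)u^2\,dx$, where the defocusing sign and the assumption $k>4$ are exactly what makes the virial-type bracket coercive. For the critical element $u_c$, precompactness modulo translations gives tightness of both the mass and energy densities around $x(t)$, so that the barycenters of these two densities each remain within a bounded distance of $x(t)$. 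Their difference is therefore bounded on $\mathbb{R}$, contradicting the strict monotonicity unless $u_c\equiv 0$; this contradicts $E_c>0$ and closes the argument.

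\textbf{Main obstacle.} The principal technical burden lies in the construction of the critical element rather than in the rigidity step. One must prove a profile decomposition in $H^{1/2}$ for $V(t)$, a stability lemma in the nonstandard space $\dot X^{s_k}\cap L^k_x L^\infty_t$ of Vento, and a Palais--Smale-type compactness statement producing the translation parameter $x(t)$; each of these is more delicate than in the Schr\"odinger or KdV settings because the Benjamin--Ono dispersion relation is only quadratic and Vento's modified linear propagator precludes a purely Strichartz-based treatment. Once these ingredients are in place, the restriction $k>4$ enters only in the rigidity step, where it is needed to make the weighted monotonicity identity robust under the nonlinear perturbation.
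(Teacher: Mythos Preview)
Your overall scheme is the paper's scheme: Kenig--Merle concentration-compactness producing an almost periodic critical element, precluded by a monotonicity identity. However, two of your specific claims are mislocated and would not survive as written.

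First, the restriction $k>4$ does \emph{not} enter in the rigidity step; the monotonicity formula (Proposition~\ref{prop:monotonicity formula}) holds for all $k\ge4$. The condition $k>4$ is needed only in the linear profile decomposition, because the asymptotic vanishing of the remainder in $L_x^kL_t^\infty$ is obtained by interpolating $L_x^4L_t^\infty$ against $L_{t,x}^\infty$, and $L_x^4L_t^\infty$ is the genuine endpoint of the local smoothing scale (see the remark after Proposition~\ref{prop:linear profile decomposition}). So your final paragraph inverts the role of $k>4$.

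Second, the rigidity is not a moving-frame virial of the form $\int\chi(x-ct)[\cdots]\,dx$. The paper uses Tao's \emph{interaction} functional
\[
M(t)=\int_{\mathbb{R}\times\mathbb{R}}\Phi(y-x)\,\rho(t,x)\,e(t,y)\,dx\,dy,
\]
and shows $\partial_tM(t)\gtrsim_u 1$ for the critical element while $|M(t)|\lesssim R$; the contradiction is between a linear lower bound in $t$ and a fixed upper bound. A single-variable weighted integral of the type you wrote would face the difficulty that the energy density $e=\tfrac12 u\mathcal H u_x+\tfrac{1}{k+2}u^{k+2}$ is not pointwise nonnegative, so controlling the localized functional and its errors requires the bilinear structure together with Lemma~\ref{lem:H(1/2) bound for localization} and the positivity Lemma~\ref{lem:positivity of s}.

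Finally, you should not expect a standard long-time perturbation lemma in $\dot X^{s_k}\cap L_x^kL_t^\infty$: both norms contain an $L^\infty$ factor, so subdividing the time interval does not produce smallness. The paper bypasses this by proving the nonlinear profile decomposition directly (Theorem~\ref{thm:nonlinear profile decomposition}), combining short-time perturbation at finite times (Lemma~\ref{lem:short-time 1}, using the distorted propagator) with a separate perturbation lemma near $\pm\infty$ (Lemma~\ref{lem:short-time 2}) where smallness of $\|\tilde u\|_{L_x^kL_{T+}^\infty}$ is available for scattering profiles. Your sentence ``a long-time perturbation lemma inside Vento's resolution spaces'' hides exactly the step the paper singles out as nonstandard.
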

There have been a number of results addressing scattering for semilinear
dispersive equations. In this paper, we will use concentration compactness
argument. It originates from Lions \cite{Lions1984AIHP1,Lions1984AIHP2},
and was first used in the dispersive equation by Bahouri and G\'erard
\cite{BahouriGerard1999AJM}. For the scattering problem, Kenig and
Merle \cite{Kenig-Merle2006Invent,KenigMerle2008Acta} used it for
the focusing energy-critical nonlinear Schr\"odinger and wave equations.\footnote{As Kenig and Merle dealt with focusing equations, they showed scattering
when solutions have energy less than that of the ground state.}In fact, their argument have a great generality and applied to many
other equations. There are too extensive research on this subject
to list them here. We refer, for example, \cite{Dodson2012JAMS,Dodson2015AdvMath,Killip-Tao-Visan2009JEMS,Killip-Visan2013Clay,HolmerRoudenko2008}
for the nonlinear Schr\"odinger equations (NLS). For the mass-critical
defocusing generalized Korteweg-de Vries equation (gKdV), Dodson \cite{Dodson2017AnnPDE}
proved the scattering. Recently, his argument is extended to supercritical
defocusing (gKdV) \cite{FarahLinaresPastorVisciglia2017}.

The our case \eqref{eq:gBO} shares a similar nature with the defocusing
(gKdV) 
\[
\partial_{t}u+\partial_{xxx}u=\partial_{x}(|u|^{p-1}u)
\]
in their nonlinearities and unidirectional propagation.\footnote{Unidirectional propagation means that radiation waves propagate in
one direction. See Section \ref{sec:Monotonicity-Formula} for details.} It is expected that qualitative asymptotic behaviors of solutions
are similar. In particular, we show that Tao's \emph{monotonicity
formula} \cite{Tao2007DiscCont} still holds in \eqref{eq:gBO}:
\[
\partial_{t}(\langle x\rangle_{E}-\langle x\rangle_{M})>0.
\]
Here, $\langle x\rangle_{M}$ and $\langle x\rangle_{E}$ denote mass
and energy center, respectively. For details, see Section \ref{sec:Monotonicity-Formula}.
Outline of proof of Theorem \ref{thm:scattering} closely follows
that of (gKdV) \cite{Dodson2017AnnPDE}. However, we encounter several
difficulties in \eqref{eq:gBO}. They are due to weaker dispersion,
technical issues from the Hilbert transform, and their consequences,
for example, trickier local theory. We will explain these issues in
detail.

\subsection*{Outline of the Proof and Ideas}

In this subsection, we explain our scheme of the proof and what difficulties
arise in the setting of \eqref{eq:gBO}. As we already mentioned,
the subcritical well-posedness (Theorem \ref{thm:subcritical LWP, GWP})
follows by extending Vento's argument \cite{Vento2010}; see Section
\ref{sec:WP theory} for details. Henceforth, we focus on the scattering
(Theorem \ref{thm:scattering}).

We now explain how compactness-contradiction argument goes. Suppose
that Theorem \ref{thm:scattering} fails. In the first step, we show
the existence of the critical element, which does not scatter (both
forward and backward in time). In this step, we start with the linear
profile decomposition and then obtain nonlinear profiles. The first
goal is to show that sum of nonlinear profiles becomes an approximate
solution. This requires a long-time perturbation theory. As a result,
the extremizing sequence converges to a critical element. Moreover,
this critical element stays in a compact set modulo symmetries. In
the next step, we use (a truncated version of) monotonicity formula
to show that such a solution cannot exist. We now explain details
and difficulties step by step.

\ 

The linear profile decomposition is used to obtain compactness property.
To illustrate this in our case, we consider the local smoothing estimate
\[
\|V(t)u_{0}\|_{L_{x}^{k}L_{t}^{\infty}}\lesssim\|u_{0}\|_{H^{\frac{1}{2}}}.
\]
This estimate has two non-compact symmetries: spatial translation
and time translations. The linear profile decomposition says that
lack of compactness of the estimate essentially comes from these symmetries.

Its rigorous statement and proof are fairly standard, we include the
proof in Appendix \ref{sec:proof of linear profile decomposition}.
However, the usual method cannot take care of the case when $k=4$.
This is because $L_{x}^{4}L_{t}^{\infty}$ is indeed the endpoint
exponents in the local smoothing estimates, so it cannot be obtained
by interpolating other estimates. So we can only prove the case when
$k>4$, where $L_{x}^{k}L_{t}^{\infty}$ norm can be interpolated
with $L_{x}^{4}L_{t}^{\infty}$ and $L_{t,x}^{\infty}$. This is the
only point where we should assume $k>4$ in Theorem \ref{thm:scattering}.

\ 

Each profile from the linear profile decomposition gives rise global
nonlinear solutions, so called nonlinear profiles. To guarantee that
the sum of nonlinear profiles approximates a nonlinear solution to
\eqref{eq:gBO}, we need a long-time perturbation theory. In \eqref{eq:gBO},
we do not obtain a standard and strong long-time perturbation as in
other contexts. This is principally due to a \emph{delicate} local
theory \cite{Vento2010}. Its difficulty is well-explained in Vento
\cite{Vento2010}, but we take it for beginning. At first, one may
try a naive estimate
\[
\|D_{x}^{s_{k}-\frac{1}{2}}\partial_{x}(u^{k+1})\|_{L_{x}^{1}L_{T}^{2}}\lesssim\|D_{x}^{s_{k}+\frac{1}{2}}u\|_{L_{x}^{\infty}L_{T}^{2}}\|u\|_{L_{x}^{k}L_{T}^{\infty}}^{k}.
\]
Here, we do not know whether we can use Leibniz rule at endpoint.
Moreover, we cannot guarantee that smallness of $T$ implies smallness
of $L_{x}^{k}L_{T}^{\infty}$ norm because of $L^{\infty}$ factor.
Vento resolves this difficulty by extracting out a dangerous high-low
interaction in the nonlinearity, approximating it by suitable linear
term $\pi(u_{0},u)$, and rewrite \eqref{eq:gBO} in distorted form:
\[
(\partial_{t}+\mathcal{H}\partial_{xx})u+\pi(u_{0},u)=[\pi(u_{0},u)-\pi(u,u)]+g(u).
\]

In other contexts, the long-time perturbation can be obtained in a
general form by concatenating local theory on short time intervals.
As an explicit example, in the (gKdV) setting \cite{KillipVisanKwonShao2012},
one uses $L_{x}^{5}L_{t}^{10}$ norm as a solution norm. If an interval
$I$ (possibly unbounded) is a disjoint union of $I_{j}$'s, then
by Minkowski's inequality,
\[
\|u\|_{L_{x}^{5}L_{I}^{10}}=\|u\|_{L_{x}^{5}\ell_{j}^{10}L_{I_{j}}^{10}}\geq\|u\|_{\ell_{j}^{10}L_{x}^{5}L_{I_{j}}^{10}}=\Big(\sum_{j}\|u\|_{L_{x}^{5}L_{I_{j}}^{10}}^{10}\Big)^{\frac{1}{10}}.
\]
This says that one can subdivide $I$ into finitely many subintervals
$I_{j}$ such that each $L_{x}^{5}L_{I_{j}}^{10}$ norm of $u$ is
sufficiently small. In our case, however, we use solution norms $L_{x}^{k}L_{t}^{\infty}$
and $\dot{X}_{t}^{s_{k}}$, where both contains $L^{\infty}$ part.
Thus, due to nature of $L^{\infty}$, the usual subdivision argument
does not guarantee smallness of our solution norms.

Recalling the reason why we need long-time perturbation in the proof,
we do not have to obtain its full power of perturbation. It only suffices
to somehow obtain nonlinear version of the profile decomposition.
For this purpose, we prove perturbation for a restricted class of
approximate solutions, namely those explicitly constructed from the
profiles obtained in the linear profile decomposition.

To obtain nonlinear profile decomposition, we use three kinds of perturbation
theory, the small data global theory, local theory at time zero, and
local theory at time $\pm\infty$. Note that the small data global
theory and local theory at time $\pm\infty$ do not require modifying
the linear propagator. This helps us to obtain perturbation lemmas
at time $\pm\infty$ and safely ignore small data profiles constructed
in profile decomposition. Thus, it only suffices to obtain perturbation
on the remaining compact interval, on which we can concatenate local
theory.

\ 

Since the orbit $\{u(t)\}_{t\in\mathbb{R}}$ is precompact in $H^{\frac{1}{2}}$
modulo spatial translation, we have only soliton-like enemies. See
Remark \ref{rem:4.23}. The monotonicity formula is the main tool
to exclude soliton-like solutions. In (gKdV), Tao \cite{Tao2007DiscCont}
established the monotonicity formula:
\[
\partial_{t}(\langle x\rangle_{E}-\langle x\rangle_{M})>0,
\]
where $\langle x\rangle_{M}$ and $\langle x\rangle_{E}$ denote mass
and energy center of a solution, respectively. One can rewrite the
monotonicity as an interaction form:
\[
\partial_{t}\int_{\mathbb{R}\times\mathbb{R}}(y-x)\rho(t,x)e(t,y)dxdy>0,
\]
where $\rho$ and $e$ are mass density and energy density, respectively.
See Section \ref{sec:Monotonicity-Formula}. Tao's monotonicity formula
is based on the following basic observations. Note that the group
velocity $\frac{d}{d\xi}\omega(\xi)=-3\xi^{2}$ is sign definite and
the higher frequency pieces travel faster. Moreover, the energy density
is more weighted on higher frequency. Thus, the monotonicity formula
quantitatively gives a clue that solutions disperse in the physical
space. \eqref{eq:gBO} also has the same property as the group velocity
is $2|\xi|$.

Tao's proof is nontrivial but surprisingly elementary. His argument
seems to be applicable in \eqref{eq:gBO}. We can closely follow his
argument, but there are technical issues arising from the Hilbert
transform. More precisely, we have to show 
\[
s\coloneqq\int_{\mathbb{R}}u^{k+1}\mathcal{H}u_{x}\geq0.
\]
In (gKdV), the corresponding statement is just $-\int_{\mathbb{R}}u^{k+1}u_{xx}\geq0$,
which is obvious. To show positivity of $s$ in \eqref{eq:gBO}, we
use finite-dimensional approximation and the Lagrange multiplier method.
We reduce it to the case of the circle $\mathbb{T}$.\footnote{As $u$ and $\mathcal{H}u$ cannot be localized simultaneously in
the physical space, we work instead on the frequency space, where
both $u$ and $\mathcal{H}u$ can be localized.} By a density argument in the frequency space, we reduce it to the
finite-dimensional case $\mathbb{C}^{2N}$. Furthermore, by homogeneity
of the functional $s$, we reduce it to the sphere case $S^{2N-1}$,
which is compact. Then, in light of Lagrange multiplier, we can obtain
a useful relation what an extremizer should satisfy. We then use Pohozaev
type argument to show that $s$ is nonnegative.

Finally, to use monotonicity in practice, because we assume that $u$
is a $H^{\frac{1}{2}}$ solution, we consider the localized interaction
functional
\[
M(t)=\int_{\mathbb{R}\times\mathbb{R}}\Phi(y-x)\rho(t,x)e(t,y)dxdy,
\]
where $\Phi(x)=\Phi_{R}(x)$ approximates $x$ but remains bounded
by $R$ for large $R>0$. The truncation creates a number of errors.
In (gKdV), Dodson \cite{Dodson2017AnnPDE} utilized it in this concrete
form (see also \cite{OgawaTsutsumi1991JDE}). In our case, we want
to point out two technical difficulties. At first, as opposed to other
defocusing equations, the energy density 
\[
e=\frac{1}{2}u\mathcal{H}u_{x}+\frac{1}{k+2}u^{k+2}
\]
is not pointwisely nonnegative. Due to this, one should interpret
the integral involving $e$ in view of the Parseval identity to estimate
various terms by the $H^{\frac{1}{2}}$ norm. Moreover, in error estimates,
we run into terms containing derivatives of $\Phi$ and $\mathcal{H}$
that should be small as $C(\|u\|_{H^{\frac{1}{2}}})\cdot o_{R}(1)$.
Secondly, as the Hilbert transform does not satisfy a simple Leibniz
rule, there are many commutator terms in computation such as $[\mathcal{H},Q]$
for some smooth function $Q$. The calculations and error estimates
are involved.

\subsection*{\label{subsec:Notations}Notations}

We shall use the notation $A\lesssim B$ frequently. We say $A\lesssim B$
when there is some implicit constant $C$ that does not depend on
$A$ and $B$ satisfying $A\leq CB$. For some parameter $r$, we
write $A\lesssim_{r}B$ if the implicit constant $C$ depends on $r$.
In this paper, $k$-dependence will be ignored; we shall abbreviate
$A\lesssim_{k}B$ by $A\lesssim B$.

The \emph{Fourier transform} of a function $f(x)$ is defined by
\[
\hat{f}(\xi)\coloneqq\int_{\mathbb{R}}f(x)e^{-i\xi x}dx.
\]
We define the \emph{Sobolev norms} for $s\in\mathbb{R}$ as
\[
\|f\|_{H^{s}}\coloneqq\|\langle D_{x}\rangle^{s}f\|_{L^{2}}\quad\text{and}\quad\|f\|_{\dot{H}^{s}}\coloneqq\|D_{x}^{s}f\|_{L^{2}},
\]
where $D_{x}^{s}$ and $\langle D_{x}\rangle^{s}$ are the Fourier
multiplier operators with multiplier $|\xi|^{s}$ and $\langle\xi\rangle^{s}\coloneqq(1+|\xi|^{2})^{\frac{s}{2}}$,
respectively.

We let $Q_{j}$ be the \emph{Littlewood-Paley projection} to the frequency
$\sim2^{j}$ in $x$-variable. Given a function $u$, we write
\begin{align*}
u_{\ll j} & =Q_{\ll j}u=Q_{<j-J}u,\\
u_{\sim j} & =Q_{\sim j}u=\sum_{|r-j|\leq J}Q_{r}u
\end{align*}
for some $J$ large. We remark that one should choose $J$ depending
on $\|u_{0}\|_{\dot{H}^{s_{k}}}$ to obtain linear estimates for the
distorted equation. In later sections, we still use same $J$ for
global nonlinear solutions. Although $\dot{H}^{s_{k}}$ norm may change,
but it is bounded by $H^{\frac{1}{2}}$. Due to conservation laws,
our global solution has uniform $H^{\frac{1}{2}}$ bound. See \cite[Lemma 3.5 and Proposition 3.2]{Vento2010}
or Section \ref{subsec:review of Vento's work}.

We use various \emph{mixed Lebesgue norms}. For $1\leq q,r\leq\infty$,
an interval $I\subseteq\mathbb{R}$, and a Banach space $X$, we write
\begin{align*}
\|f(t)\|_{L_{I}^{q}X} & \coloneqq\Big\|\|f(t)\|_{X}\Big\|_{L_{t}^{q}(I)},\\
\|G(t,x)\|_{L_{x}^{q}L_{I}^{r}} & \coloneqq\Big\|\|G\|_{L_{t}^{r}(I)}\Big\|_{L_{x}^{q}(\mathbb{R})}.
\end{align*}
In the situation of $q=r$, by Fubini's theorem, we abbreviate $L_{x}^{q}L_{t}^{q}$
and $L_{t}^{q}L_{x}^{q}$ by $L_{t,x}^{q}$. If $t\in I\mapsto G(t)\in X$
is continuous and bounded, we write $G\in C_{I}X$. If $I=\mathbb{R}$,
$I=[-T,T]$, $I=[T,+\infty)$, or $I=(-\infty,-T]$, we replace subscript
$I$ by $t$, $T$, $T+$, or $T-$, respectively.

We now define \emph{Besov-type spaces} by following \cite{Vento2010}.
For $s\in\mathbb{R}$, $p,q,r\in[1,\infty]$, and an interval $I\subseteq\mathbb{R}$,
we define
\[
\|f\|_{\dot{\mathcal{B}}_{p}^{s,r}(L_{I}^{q})}\coloneqq\Big(\sum_{j\in\mathbb{Z}}[2^{js}\|Q_{j}f\|_{L_{x}^{p}L_{I}^{q}}]^{r}\Big)^{\frac{1}{r}},
\]
where $Q_{j}$ is the Littlewood-Paley projection to the frequency
$\sim2^{j}$ in $x$-variable. In other words, $\dot{\mathcal{B}}_{p}^{s,r}(L_{I}^{q})$
sums up each $\|Q_{j}f\|_{L_{x}^{p}L_{I}^{q}}$ in $\ell_{j}^{r}$-sense. 

\subsection*{Organization of the Paper}

In Section \ref{sec:Monotonicity-Formula}, we establish the monotonicity
formula for \eqref{eq:gBO}. In Section \ref{sec:WP theory}, we prove
subcritical well-posedness in $H^{s}$ with $s>s_{k}$ and global
well-posedness in $H^{s}$ with $s\geq\frac{1}{2}$. In Section \ref{sec:existence of critical elt},
we derive the scattering criterion, linear profile decomposition,
and nonlinear profile decomposition. We then prove existence of the
critical element if Theorem \ref{thm:scattering} fails. In Section
\ref{sec:precllusion of a.p. sol}, we use the monotonicity formula
to show that such critical element cannot exist, concluding Theorem
\ref{thm:scattering}.

\subsection*{Acknowledgements}

We appreciate Terry Tao for helpful discussion while his visiting
at KAIST. The authors are partially supported by Samsung Science \&
Technology Foundation BA1701-01.

\section{\label{sec:Monotonicity-Formula}Monotonicity Formula}

One of the crucial steps toward asymptotic control of the global solutions
is a decay estimate, or monotonicity formula. Though most canonical
nonlinear dispersive equations are Hamiltonian systems, it is well
known that certain monotonicity phenomena occur and are formulated
as for example, Morawetz inequality, Virial inequality, and so on.

In this section, we derive a monotonicity formula for the defocusing
\eqref{eq:gBO} as well as linear Benjamin-Ono flow. It is similar
to the monotonicity formula in the defocusing generalized Korteweg-de
Vries equation \eqref{eq:gKdV}. Tao constructed a monotonicity formula
\cite{Tao2007DiscCont} for \eqref{eq:gKdV}. Tao's argument is ingenious
but elementary, and we observe that his argument similarly works for
\eqref{eq:gBO}. However, we will meet a nontrivial technical difficulty,
which does not appear in \eqref{eq:gKdV}.

Firstly, we recall Tao's monotonicity formula in \eqref{eq:gKdV}.
Consider the \emph{defocusing generalized Korteweg-de Vries} \emph{equation}
\eqref{eq:gKdV}
\begin{equation}
\partial_{t}u+\partial_{xxx}u=\partial_{x}(|u|^{p-1}u)\tag{gKdV}\label{eq:gKdV}
\end{equation}
where $p$ is an integer $\geq2$ and $u:\mathbb{R}\times\mathbb{R}\to\mathbb{R}$.
If a wave packet with frequency $\xi$ solves the \emph{Airy equation},
i.e. the linear part of the \eqref{eq:gKdV}, then it propagates with
group velocity $-3\xi^{2}$. This says that any solutions to the Airy
equation essentially propagates to the left and the higher frequency
piece travels faster. The defocusing \eqref{eq:gKdV} preserves this
phenomenon, as we shall see in the next paragraph.

Not only qualitatively, we can quantitatively capture the unidirectional
propagation of \eqref{eq:gKdV}. Let us define mass density $\rho_{\mathrm{gKdV}}$
and energy density $e_{\mathrm{gKdV}}$ by 
\[
\rho_{\mathrm{gKdV}}\coloneqq u^{2}\quad\text{and}\quad e_{\mathrm{gKdV}}\coloneqq\frac{1}{2}u_{x}^{2}+\frac{1}{p+1}|u|^{p+1}.
\]
Define the mass current $j_{\mathrm{gKdV}}$ and energy current $k_{\mathrm{gKdV}}$
by
\[
j_{\mathrm{gKdV}}\coloneqq2u_{x}^{2}+\frac{2p}{p+1}|u|^{p+1}\quad\text{and}\quad k_{\mathrm{gKdV}}\coloneqq\frac{3}{2}u_{xx}^{2}+2p|u|^{p-1}u_{x}^{2}+\frac{1}{2}|u|^{2p}.
\]
They satisfy the local conservation laws
\begin{align}
\partial_{t}\rho_{\mathrm{gKdV}}+\partial_{xxx}\rho_{\mathrm{gKdV}} & =\partial_{x}j_{\mathrm{gKdV}},\label{eq:loc cons law 1}\\
\partial_{t}e_{\mathrm{gKdV}}+\partial_{xxx}e_{\mathrm{gKdV}} & =\partial_{x}k_{\mathrm{gKdV}},\label{eq:loc cons law 2}
\end{align}
and we have mass and energy conservation laws,
\[
M_{\mathrm{gKdV}}(u)\coloneqq\int\rho_{\mathrm{gKdV}}\quad\text{and}\quad E_{\mathrm{gKdV}}(u)\coloneqq\int e_{\mathrm{gKdV}}.
\]
We consider the center of mass and energy $\langle x\rangle_{M}$
and $\langle x\rangle_{E}$:
\[
\langle x\rangle_{M}\coloneqq\frac{1}{M_{\mathrm{gKdV}}(u)}\int x\rho_{\mathrm{gKdV}}\quad\text{and}\quad\langle x\rangle_{E}\coloneqq\frac{1}{E_{\mathrm{gKdV}}(u)}\int xe_{\mathrm{gKdV}},
\]
whenever the integrals are defined. We then have monotonicity formulae\footnote{In fact, the lower bound of \eqref{eq:gKdV monotonicity not interaction 2}
depends on $\|u\|_{H^{2}}$, but it is not conserved under the flow.}
\begin{align}
-\partial_{t}\langle x\rangle_{M} & =\frac{1}{M_{\mathrm{gKdV}}(u)}\int j_{\mathrm{gKdV}}\gtrsim_{E_{\mathrm{gKdV}}(u),M_{\mathrm{gKdV}}(u)}1,\label{eq:gKdV monotonicity not interaction}\\
-\partial_{t}\langle x\rangle_{E} & =\frac{1}{E_{\textrm{gKdV}}(u)}\int k_{\mathrm{gKdV}}>0,\label{eq:gKdV monotonicity not interaction 2}
\end{align}
The formulae \eqref{eq:gKdV monotonicity not interaction} and \eqref{eq:gKdV monotonicity not interaction 2}
tell that wave packets move to the left. From sign definiteness of
the group velocity $-3\xi^{2}$, we expect a bit more. Indeed, $\langle x\rangle_{E}$
moves faster than $\langle x\rangle_{M}$ because $\langle x\rangle_{E}$
is more weighted on higher frequencies.

Tao \cite{Tao2007DiscCont} obtained the following refined monotonicity
(where we drop the subscript $\mathrm{gKdV}$) 
\begin{equation}
\partial_{t}(\langle x\rangle_{M}-\langle x\rangle_{E})\gtrsim\frac{M(u)}{E(u)}\Big(\int|u|^{p+1}\Big)^{2}.\label{eq:gKdV monotonicity}
\end{equation}
Equivalently, this phenomenon of separation of mass center and energy
center can be rewritten in an interaction form:
\begin{align}
 & \partial_{t}\int_{\mathbb{R}\times\mathbb{R}}(x-y)\rho(t,x)e(t,y)dxdy\nonumber \\
 & =\int_{\mathbb{R}}e(t,y)dy\Big(\partial_{t}\int_{\mathbb{R}}x\rho(t,x)dx\Big)-\int_{\mathbb{R}}\rho(t,x)dx\Big(\partial_{t}\int_{\mathbb{R}}ye(t,y)dy\Big)\nonumber \\
 & =M(u)E(u)\partial_{t}(\langle x\rangle_{M}-\langle x\rangle_{E})\nonumber \\
 & \gtrsim M(u)^{2}\Big(\int|u|^{p+1}\Big)^{2}.\label{eq:interaction}
\end{align}
Here, we assume good spatial decay of $u$ to guarantee that integrals
are finite. In \cite{Dodson2017AnnPDE}, Dodson makes use of \eqref{eq:interaction}
to show the scattering of the defocusing mass-critical \eqref{eq:gKdV}
flow.\footnote{Dodson uses it to preclude soliton-like enemies.}
In practice, since one cannot assume that $u$ has a good spatial
decay, one uses a localized version of \eqref{eq:interaction} replacing
$x-y$ by $\Phi(x-y)$ where $\Phi$ is a truncated version of $x-y$.

We remark that the monotonicity formulae \eqref{eq:gKdV monotonicity not interaction}
and \eqref{eq:gKdV monotonicity not interaction 2} are not sufficient
to prove scattering. In order to show scattering, we should somehow
preclude soliton-like solutions, which preserve their profile but
move sufficiently fast to the left.

\ 

In the study of \eqref{eq:gBO}, we expect the similar unidirectional
propagation as like \eqref{eq:gKdV}. In fact, a wave packet with
frequency $\xi$ propagates with group velocity $2|\xi|$ under the
linear Benjamin-Ono flow. It is natural to expect that the center
of energy moves to the right faster than the center of mass. It turns
out that Tao's monotonicity holds in \eqref{eq:gBO}. Henceforth,
we focus on obtaining analogous monotonicity formula of interaction
form.

We define \emph{mass density} $\rho$ and \emph{energy density} $e$
of $u$ by 
\begin{align*}
\rho[u] & \coloneqq u^{2},\\
e[u] & \coloneqq\frac{1}{2}u\mathcal{H}u_{x}+\frac{1}{k+2}u^{k+2}.
\end{align*}
We have mass and energy conservation laws
\[
M(u)\coloneqq\int\rho\quad\text{and}\quad E(u)\coloneqq\int e.
\]
We define \emph{mass current} $j$ and \emph{energy current} $k$
of $u$ by 
\begin{align*}
j[u] & \coloneqq2u\mathcal{H}u_{x}+\frac{2(k+1)}{k+2}u^{k+2},\\
k[u] & \coloneqq u_{x}^{2}+\frac{3}{2}u^{k+1}\mathcal{H}u_{x}+\frac{1}{2}u^{2k+2}.
\end{align*}
They satisfy 
\begin{equation}
\partial_{t}\int x\rho=\int j\quad\text{and}\quad\partial_{t}\int xe=\int k.\label{eq:mass energy center}
\end{equation}
These namings come from analogy with the ones of \eqref{eq:gKdV}.
\begin{rem}
It is worth noticing that we do not have pointwise nonnegativity of
$e$, $j$, and $k$ now. This fact will cause problems when we estimate
errors arising from localizing monotonicity formula. For instance,
we \emph{cannot} say that $\|e[u]\|_{L^{1}}$ is equal to $E(u)$,
or even estimated in terms of $\|u\|_{H^{\frac{1}{2}}}$. This is
in contrast to the case of \eqref{eq:gKdV}, where we have $\|e_{\mathrm{gKdV}}[u]\|_{L^{1}}=E_{\mathrm{gKdV}}(u)$.
We will come back to this issue in Section \ref{sec:precllusion of a.p. sol}.
\end{rem}
\begin{rem}
We are not sure whether local conservations laws such as \eqref{eq:loc cons law 1}
and \eqref{eq:loc cons law 2} hold for \eqref{eq:gBO}. In the derivation
what follows, we only use \eqref{eq:mass energy center}. Note that
when we prove mass/energy conservation, it suffices to use integration
by parts and properties of the Hilbert transform.
\end{rem}
Let us now state and prove the monotonicity formula for \eqref{eq:gBO}.
\begin{prop}[Monotonicity formula]
\label{prop:monotonicity formula}Let $p\geq\sqrt{2}$ and $u\in H^{1}$.
Then, we have
\[
\int\rho\int k-\int j\int e\geq\frac{k^{2}}{2(k+2)^{2}}M(u)^{2}\Big(\int u^{k+2}\Big)^{2}.
\]
If $u(t,x)$ is a classical solution to \eqref{eq:gBO} satisfying,
for example, $\langle x\rangle u\in C_{t,loc}H^{1}$,\footnote{In Section \ref{sec:precllusion of a.p. sol}, we use a truncated
version of the monotonicity formula instead of \eqref{eq:interaction gBO}.
Thus we do not need additional assumption on spatial decay of $u$
for our later analysis.}then we have
\begin{equation}
\partial_{t}\int_{\mathbb{R}\times\mathbb{R}}(y-x)\rho(x)e(y)dxdy\geq\frac{k^{2}}{2(k+2)^{2}}M(u)^{2}\Big(\int u^{k+2}\Big)^{2}.\label{eq:interaction gBO}
\end{equation}
In particular,
\[
\partial_{t}(\langle x\rangle_{E}-\langle x\rangle_{M})>0.
\]
\end{prop}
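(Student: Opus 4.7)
\emph{Proof proposal.} The plan is to mirror Tao's gKdV argument from \cite{Tao2007DiscCont}, the one genuinely new ingredient being the positivity of the Hilbert-transform pairing $S := \int u^{k+1}\mathcal{H}u_x$. I would start by expanding the left-hand side in terms of the six elementary integrals $A = \int u^2$, $B = \int u\mathcal{H}u_x$, $C = \int u^{k+2}$, $D = \int u_x^2$, $S$, and $T = \int u^{2k+2}$. Substituting the definitions of $\rho, j, e, k$ and collecting like terms produces an identity of the form
\[
\int\rho\int k - \int j\int e = (AD - B^2) + \tfrac{3}{2}AS + \tfrac{1}{2}AT - \tfrac{k+3}{k+2}BC - \tfrac{2(k+1)}{(k+2)^2}C^2.
\]

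Next I would bound each block from below by elementary means. For the quadratic piece, Cauchy--Schwarz in Fourier (applied to $|\xi| = |\xi|^{1/2}\cdot|\xi|^{1/2}$ in the Plancherel representation of $B$) yields $B^2 \leq AD$. For the nonlinear block, writing $u^{k+2} = u\cdot u^{k+1}$ and applying Cauchy--Schwarz gives $C^2 \leq AT$; combined with the arithmetic identity $\tfrac{1}{2} - \tfrac{2(k+1)}{(k+2)^2} = \tfrac{k^2}{2(k+2)^2}$, this produces the very coefficient featured on the right-hand side of the proposition. The cross term $-\tfrac{k+3}{k+2}BC$ is then absorbed by a weighted Young/Cauchy--Schwarz estimate against the remaining positive quantities $AD - B^2$ and $AS$.

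The genuine obstacle is the positivity statement $S = \int u^{k+1}\mathcal{H}u_x \geq 0$. In gKdV the analogue collapses after an integration by parts, $-\int u^{k+1}u_{xx} = (k+1)\int u^k u_x^2\geq 0$, but the Hilbert transform obstructs this direct manipulation. Following the strategy sketched in the introduction, I would first rewrite $S = \int u^{k+1}\,|D_x|u$ via Parseval; then, by scaling and periodization, transfer the problem to $\mathbb{T}$, where $u$ and $|D_x|u$ admit simultaneous Fourier localization. A density argument together with frequency truncation reduces the claim to trigonometric polynomials of degree at most $N$, i.e.\ to a functional on $\mathbb{C}^{2N}$. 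Since the functional is homogeneous, I may restrict it to the compact unit sphere $S^{2N-1}$, where a minimizer $u_\star$ exists. Writing out the Lagrange multiplier equation satisfied by $u_\star$ and testing it against scaled/multiplicative variations in the spirit of a Pohozaev identity should force the extremal value to be $\geq 0$; passing $N\to\infty$ and back to $\mathbb{R}$ gives $S\geq 0$ for all $u\in H^1$. This is the step I expect to demand the most care.

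To deduce the time-derivative form \eqref{eq:interaction gBO}, I would rewrite the double integral as $M(u)\int y\,e(y)\,dy - E(u)\int x\,\rho(x)\,dx$, differentiate in $t$, and invoke the current identities \eqref{eq:mass energy center}; the decay assumption $\langle x\rangle u\in C_{t,\mathrm{loc}}H^1$ is used only to justify exchanging $\partial_t$ with the spatial integral. The final conclusion $\partial_t(\langle x\rangle_E - \langle x\rangle_M) > 0$ follows by dividing through by $M(u)E(u) > 0$, noting that $\int u^{k+2} > 0$ as soon as $u\not\equiv 0$ since $k$ is even.
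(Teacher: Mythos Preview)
Your expansion of $\int\rho\int k - \int j\int e$ is correct, and your plan for the positivity of $S=\int u^{k+1}\mathcal H u_x$ via periodization, finite-dimensional truncation, and a Lagrange-multiplier/Pohozaev argument is exactly what the paper does (Lemma~\ref{lem:positivity of s}). The gap is in the step you pass over most quickly: absorbing the cross term $-\tfrac{k+3}{k+2}BC$.

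Two problems. First, by using $C^2\le AT$ to extract the coefficient $\tfrac{k^2}{2(k+2)^2}$ you have spent \emph{all} of $\tfrac12 AT$; nothing of the block $\tfrac12(AT-C^2)$ survives, yet you need it. Second, even keeping that block, a ``weighted Young/Cauchy--Schwarz'' using only the pairwise bounds $B^2\le AD$, $C^2\le AT$ and $S\ge 0$ is not enough. In the paper's normalized variables $q,r,s\in(0,1]$ (so that $B=aqM$, $C=brM$, $S=absM$, etc.) the inequality to prove is
\[
a^{2}(1-q^{2})+ab\Bigl(\tfrac32 s-\tfrac{k+3}{k+2}qr\Bigr)+\tfrac12 b^{2}(1-r^{2})\ge 0,
\]
and one can choose admissible values (e.g.\ $q=0.99$, $r=0.5$, $s\approx 0.37$, $b$ moderately larger than $a$) for which $a^2(1-q^2)+\tfrac32 abs<\tfrac{k+3}{k+2}abqr$. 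What rules such configurations out is the \emph{three-function} Cauchy--Schwarz: the Gram matrix of $(u,\mathcal H u_x,u^{k+1})$ is positive semidefinite, which beyond $q,r,s\le 1$ yields the determinant constraint $(s-qr)^2\le(1-q^2)(1-r^2)$, hence $s\ge qr-\sqrt{(1-q^2)(1-r^2)}$. The paper then checks the discriminant condition $\tfrac{k+3}{k+2}qr-\tfrac32 s\le\sqrt{2(1-q^2)(1-r^2)}$ using this lower bound together with $\tfrac{k+3}{k+2}\le\sqrt2$ and $s>0$. So the missing ingredient in your outline is precisely this $3\times 3$ Gram-matrix positivity; mere nonnegativity of $S$ does not control the cross term.
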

\begin{proof}
We closely follow Tao's argument, but we encounter a technical difficulty.
We will explain the difference on the way.

Note that the second and third assertion follow from the first assertion
using mass and energy conservation and \eqref{eq:mass energy center}.
The first assertion is elaborated as
\begin{multline*}
\Big(\int u^{2}\Big)\Big(\int u_{x}^{2}+\frac{3}{2}\int u^{k+1}\mathcal{H}u_{x}+\frac{1}{2}\int u^{2k+2}\Big)\\
-\Big(2\int u\mathcal{H}u_{x}+\frac{2(k+1)}{k+2}\int u^{k+2}\Big)\Big(\frac{1}{2}\int u\mathcal{H}u_{x}+\frac{1}{k+2}\int u^{k+2}\Big)\\
\geq\frac{k^{2}}{2(k+2)^{2}}\Big(\int u^{k+2}\int u^{2}\Big)^{2}.
\end{multline*}
Set real numbers $a,b,q,r,s$ such that 
\begin{align*}
a^{2}M(u) & =\int u_{x}^{2},\qquad aqM(u)=\int u\mathcal{H}u_{x},\\
b^{2}M(u) & =\int u^{2p},\qquad brM(u)=\int u^{k+2},\qquad absM(u)=\int u^{k+1}\mathcal{H}u_{x}.
\end{align*}
It then suffices to show that 
\[
a^{2}(1-q^{2})+ab\Big(\frac{3}{2}s-\frac{k+3}{k+2}qr\Big)+\frac{b^{2}}{2}\Big(1-\frac{4(k+1)}{(k+2)^{2}}r^{2}\Big)\geq\frac{k^{2}}{2(k+2)^{2}}b^{2}r^{2},
\]
or equivalently, 
\[
a^{2}(1-q^{2})+ab\Big(\frac{3}{2}s-\frac{k+3}{k+2}qr\Big)+\frac{b^{2}}{2}(1-r^{2})\geq0.
\]
It is obvious that $q$ and $r$ are positive. However, it is not
trivial whether $s$ is positive or not. For a moment, we assume $s>0$
and proceed to complete the proof. Then, we will provide a proof of
$s>0$ in Lemma \ref{lem:positivity of s}.
\begin{rem}
In case of \eqref{eq:gKdV}, $s$ corresponds to $\int p|u|^{p-1}u_{x}^{2}$.
So the positivity of $s$ is obvious.
\end{rem}
\begin{lem}
The real symmetric matrix
\[
\begin{pmatrix}1 & q & r\\
q & 1 & s\\
r & s & 1
\end{pmatrix}
\]
is positive semi-definite.
\end{lem}
\begin{proof}
For any real numbers $\alpha,\beta,\gamma$, a computation shows that
\begin{align*}
 & \begin{pmatrix}\gamma & \alpha & \beta\end{pmatrix}\begin{pmatrix}1 & q & r\\
q & 1 & s\\
r & s & 1
\end{pmatrix}\begin{pmatrix}\gamma\\
\alpha\\
\beta
\end{pmatrix}\\
 & =\gamma^{2}+\alpha^{2}+\beta^{2}+2\gamma\alpha q+2\gamma\beta r+2\alpha\beta s\\
 & =\frac{1}{M(u)}\int(\gamma u+\frac{\alpha}{a}\mathcal{H}u_{x}+\frac{\beta}{b}u^{k+1})^{2}
\end{align*}
is always nonnegative.
\end{proof}
Taking determinants and minors, we have
\[
0<q,r,s\leq1\qquad\text{and}\qquad1-q^{2}-r^{2}-s^{2}+2qrs\geq0.
\]
Using discriminants, it suffices to show that 
\[
\frac{k+3}{k+2}qr-\frac{3}{2}s\leq\sqrt{2(1-q^{2})(1-r^{2})}.
\]
Because $k\geq4$, we have $\frac{k+3}{k+2}\leq\sqrt{2}$. As $qr$
is positive, it reduces to 
\[
s\geq\frac{2\sqrt{2}}{3}[qr-\sqrt{(1-q^{2})(1-r^{2})}].
\]
On the other hand, we know $(s-qr)^{2}\leq(1-q^{2})(1-r^{2})$ by
positive-definiteness of the matrix. This yields 
\[
s\geq qr-\sqrt{(1-q^{2})(1-r^{2})}.
\]
Hence, assuming that $s$ is positive, this completes the proof.
\end{proof}
The rest of this section is to show that $s$ is positive. In other
words, it suffices to show that 
\begin{equation}
\int_{\mathbb{R}}u^{k+1}\mathcal{H}u_{x}>0.\label{eq:positivity of s}
\end{equation}
It seems not easy to prove \eqref{eq:positivity of s} directly. Indeed,
we were not able to prove \eqref{eq:positivity of s} using Fourier
expressions or integration by parts.

The key observation is as follows. Using the transference principle
between the Fourier series and transform, we change the problem defined
on the torus $\mathbb{T}$. We then use density argument to further
reduce to the finite-dimensional setting. More precisely, we can treat
\eqref{eq:positivity of s} as a functional defined on some finite-dimensional
Hilbert space. Moreover, as \eqref{eq:positivity of s} is homogeneous
in $u$, it suffices to restrict ourselves on the finite-dimensional
sphere, which is compact. We then use the Lagrange multiplier method
and the relations satisfied by a minimizer. It turns out that \eqref{eq:positivity of s}
for a minimizer is positive.

For the later use in the truncated monotonicity formula, we need more
general positivity lemma. \eqref{eq:positivity of s} a direct consequence
of substituting $\chi=1$ into the following lemma (technically, one
should mimic the proof).
\begin{lem}[Positivity of $s$]
\label{lem:positivity of s}For $\chi\in C_{c}^{\infty}$ and $u\in H^{1}$,
we have 
\[
\int_{\mathbb{R}}\chi^{2}u^{k+1}\mathcal{H}u_{x}>-c\|\partial_{xx}(\chi^{2})\|_{L^{\infty}}\|u\|_{\dot{H}^{\alpha}}^{k+2}
\]
where $\alpha=\frac{1}{2}-\frac{2}{k+2}$ and $c$ is some implicit
constant.
\end{lem}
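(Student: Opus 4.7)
My plan is to follow the Lagrange-multiplier / Pohozaev strategy the paper sketches for the $\chi\equiv 1$ case, but to carry along the boundary term $(\chi^2)''$ generated by the cutoff. The reduction to a compact optimization proceeds in three steps: (i) fix $L$ large enough that $\mathrm{supp}(\chi)\subset[-L/4,L/4]$ and transfer the inequality from $\mathbb R$ to $\mathbb T_L$, using that on $\mathrm{supp}(\chi)$ the periodic and Euclidean Hilbert kernels agree up to tails that vanish as $L\to\infty$; (ii) by density in $\dot H^\alpha(\mathbb T_L)$, restrict to $u$ with Fourier support in $[-N,N]$, a real $(2N+1)$-dimensional subspace; (iii) exploit that both sides of the inequality are homogeneous of degree $k+2$ in $u$ to restrict to the sphere $\|u\|_{\dot H^\alpha}=1$, now compact, so a minimizer $u_\ast$ of $s_\chi(u)\coloneqq\int\chi^2 u^{k+1}\mathcal H u_x$ exists.

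The Euler-Lagrange equation at $u_\ast$, using $\partial_x\mathcal H=|D|$ and the antisymmetry of $\mathcal H$, reads
\[
(k+1)\chi^2 u_\ast^k\mathcal H(u_\ast)_x+|D|(\chi^2 u_\ast^{k+1})=2\lambda\,|D|^{2\alpha}u_\ast.
\]
Pairing this against $u_\ast$ collapses both left-hand terms to $s_\chi(u_\ast)$, yielding the scaling identity $(k+2)s_\chi(u_\ast)=2\lambda$. Pairing against $\mathcal H(u_\ast)_x=|D|u_\ast$ and using $|D|^2=-\partial_{xx}$, followed by two integrations by parts that shift both derivatives onto $\chi^2$, gives
\[
(k+1)\int\chi^2 u_\ast^k\bigl[(\mathcal H(u_\ast)_x)^2+(u_\ast)_x^2\bigr]-\frac{1}{k+2}\int(\chi^2)''u_\ast^{k+2}=2\lambda\|u_\ast\|_{\dot H^{\alpha+1/2}}^2.
\]
The $(\chi^2)''$ is the only surviving boundary term: the first IBP takes a derivative off $u_\ast$ in $u_{\ast,xx}$, the algebraic identity $u_\ast^{k+1}(u_\ast)_x=\tfrac{1}{k+2}\partial_x u_\ast^{k+2}$ absorbs one factor, and a second IBP places the remaining derivative on $(\chi^2)_x$; this explains the $\|\partial_{xx}(\chi^2)\|_{L^\infty}$ factor in the claim.

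Since $k$ is even the first left-hand term is nonnegative, so $2\lambda\|u_\ast\|_{\dot H^{\alpha+1/2}}^2\ge-\tfrac{1}{k+2}\|\partial_{xx}(\chi^2)\|_{L^\infty}\|u_\ast\|_{L^{k+2}}^{k+2}$. Combining the 1D Sobolev embedding $\dot H^{\frac12-\frac1{k+2}}\hookrightarrow L^{k+2}$ with the interpolation
\[
\|u_\ast\|_{\dot H^{\frac12-\frac1{k+2}}}\le\|u_\ast\|_{\dot H^\alpha}^{k/(k+2)}\|u_\ast\|_{\dot H^{\alpha+1/2}}^{2/(k+2)}
\]
(the weight $1-\theta=2/(k+2)$ being forced by $(\tfrac12-\tfrac1{k+2})-\alpha=\tfrac1{k+2}$ and $(\alpha+\tfrac12)-\alpha=\tfrac12$) produces $\|u_\ast\|_{L^{k+2}}^{k+2}\lesssim\|u_\ast\|_{\dot H^{\alpha+1/2}}^2$ under the normalization $\|u_\ast\|_{\dot H^\alpha}=1$. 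Dividing by $\|u_\ast\|_{\dot H^{\alpha+1/2}}^2>0$ yields $\lambda\gtrsim-\|\partial_{xx}(\chi^2)\|_{L^\infty}$, hence $s_\chi(u_\ast)=2\lambda/(k+2)\gtrsim-\|\partial_{xx}(\chi^2)\|_{L^\infty}$, and by homogeneity, density and transference the full inequality on $\mathbb R$ follows. Strict $>$ is a rigidity check: equality forces the nonnegative term to vanish, hence $u_\ast\equiv 0$ on $\mathrm{supp}(\chi)$, which makes the left-hand side zero and the right-hand side strictly negative.

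The hard part will be the first reduction step: controlling the difference between the Euclidean and periodic Hilbert kernels quantitatively so that the transference error is of size $o_L(1)\|u\|_{\dot H^\alpha}^{k+2}$ and cannot compete with the $\|\partial_{xx}(\chi^2)\|_{L^\infty}$ factor. Everything after is bookkeeping, provided one verifies that $(\chi^2)''$ really is the only non-cancelling error in the second Pohozaev pairing, which hinges on the exact cancellation $\int u_\ast^{k+1}(u_\ast)_x=0$ holding term by term on the torus.
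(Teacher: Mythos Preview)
Your Lagrange multiplier / Pohozaev core is exactly the paper's argument: minimize $s_\chi$ on the $\dot H^\alpha$-sphere, pair the Euler--Lagrange relation with $u_\ast$ and with $D_x u_\ast$, and close with the scale-invariant Gagliardo--Nirenberg inequality $\|u\|_{L^{k+2}}^{k+2}\lesssim\|u\|_{\dot H^{\alpha+1/2}}^{2}\|u\|_{\dot H^{\alpha}}^{k}$. The $(\chi^2)''$ boundary term appears for precisely the reason you give.

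Where you diverge from the paper is in the transference step, and this is worth flagging. You propose a \emph{physical-space} reduction: periodize to $\mathbb T_L$ with $L$ large and compare the Euclidean and periodic Hilbert kernels on $\mathrm{supp}(\chi)$. The paper instead localizes in \emph{Fourier space}: by density and scaling it reduces to $\hat u,\hat\chi$ continuous and supported in $[0,1]$, rewrites $\int_{\mathbb R}\chi^2 u^{k+1}D_x u$ via Parseval as an integral over $\xi$-variables, discretizes that integral as a Riemann sum on the lattice $\tfrac{1}{N}\mathbb Z$, recognizes the Riemann sum as the torus functional for $u_N,\chi_N\in C^\infty(\mathbb T)$ with $\widehat{u_N}(0)=0$, applies the torus inequality, and passes $N\to\infty$. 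The paper's route sidesteps exactly the difficulty you call ``the hard part'': no kernel comparison is needed, and the zero mode is automatically excluded. Your route is workable (for fixed $u\in C_c^\infty$ the kernel error on $\mathrm{supp}(\chi)$ is $O(1/L)$, and you only need $o_L(1)$ for fixed $u,\chi$, not the uniform $o_L(1)\|u\|_{\dot H^\alpha}^{k+2}$ you worry about), but it is more laborious.

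One small gap to patch: your ``$(2N+1)$-dimensional subspace'' includes the zero Fourier mode, which the $\dot H^\alpha$-norm does not see, so your ``sphere'' is an unbounded cylinder and the minimizer need not exist. The paper works in $\mathcal H_N=\{u:\hat u(\xi)=0\text{ for }\xi=0\text{ or }|\xi|>N\}$, genuinely $2N$-dimensional, and the sphere there is compact. Once you exclude the zero mode your argument goes through.
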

\begin{proof}
We observe that both $u$ and $\mathcal{H}u$ cannot be localized
simultaneously in the physical space. So we are not able to directly
use the density argument to reduce for $u$ and $\mathcal{H}u\in C_{c}^{\infty}$.
Thus we localize them in the Fourier space instead. Lastly, we use
the transference principle to finish the proof on the real line.

We first show our assertion on the torus $\mathbb{T}$. More precisely,
we show that
\[
\int_{\mathbb{T}}\chi^{2}u^{k+1}D_{x}u>-c\|\partial_{xx}(\chi^{2})\|_{L^{\infty}(\mathbb{T})}\|u\|_{\dot{H}^{\alpha}(\mathbb{T})}^{k+2}
\]
for all $u\in C^{\infty}(\mathbb{T})$ with $\hat{u}(0)=0$. By density,
we may assume that $\hat{u}$ is compactly supported. We consider
a finite-dimensional Hilbert space $\mathcal{H}_{N}$ and a sphere
$S$ in $\mathcal{H}_{N}$ as follows. 
\begin{align*}
\mathcal{H}_{N} & \coloneqq\{u\in L^{2}(\mathbb{T}):|\hat{u}(\xi)|=0\text{ if }\xi=0\text{ or }|\xi|>N\},\\
S & \coloneqq\{u\in\mathcal{H}_{N}:\|u\|_{\dot{H}^{\alpha}(\mathbb{T})}=1\}.
\end{align*}
Compactness of $S$ in $\mathcal{H}_{N}$ will play a crucial role
in what follows. Let us define a function $f:\mathcal{H}_{N}\to\mathbb{R}$
by 
\[
f(u)\coloneqq\int_{\mathbb{T}}\chi^{2}u^{k+1}D_{x}u.
\]
Then $f:\mathcal{H}_{N}\to\mathbb{R}$ is smooth and 
\[
\nabla f(u)=Q_{\leq N}\big[(k+1)\chi^{2}u^{k}D_{x}u+D_{x}(\chi^{2}u^{k+1})\big]
\]
with respect to the usual $L^{2}(\mathbb{T})$ inner product. On the
other hand, the constraint function $g(u)\coloneqq\|u\|_{\dot{H}^{\alpha}(\mathbb{T})}^{2}$
satisfies 
\[
\nabla g(u)=2D_{x}^{2\alpha}u.
\]
Let $u_{0}$ be a point in $S$ that attains the minimum of $f$.
By the Lagrange multiplier theorem, we have
\[
\nabla f(u_{0})=\lambda D_{x}^{2\alpha}u_{0}
\]
for some $\lambda\in\mathbb{R}$. In a spirit of Pohozaev identities,
we compute
\[
\lambda=\langle\lambda D_{x}^{2\alpha}u_{0},u_{0}\rangle=\langle\nabla f(u),u_{0}\rangle=(k+2)f(u_{0})
\]
and 
\begin{align*}
\lambda\langle D_{x}^{2\alpha}u_{0},D_{x}u_{0}\rangle & =\langle\nabla f(u_{0}),D_{x}u_{0}\rangle\\
 & =(k+1)\int\chi^{2}u_{0}^{k}[(D_{x}u_{0})^{2}+(\partial_{x}u_{0})^{2}]-\frac{1}{k+2}\int\partial_{xx}(\chi^{2})u_{0}^{k+2}.
\end{align*}
Therefore, 
\[
\lambda>-\frac{1}{(k+2)^{2}}\|\partial_{xx}(\chi^{2})\|_{L^{\infty}(\mathbb{T})}\frac{\|u_{0}\|_{L^{k+2}(\mathbb{T})}^{k+2}}{\|u_{0}\|_{\dot{H}^{\alpha+\frac{1}{2}}(\mathbb{T})}^{2}}.
\]
Because $\|u_{0}\|_{\dot{H}^{\alpha}}=1$ and $\|u\|_{L^{k+2}}^{k+2}\lesssim\|u\|_{\dot{H}^{\alpha+\frac{1}{2}}}^{2}\|u\|_{\dot{H}^{\alpha}}^{k}$
for any $u\in C^{\infty}(\mathbb{T})$ having mean zero (see \cite{BenyiOh2013}),
we have
\[
f(u_{0})>-c\|\partial_{xx}(\chi^{2})\|_{L^{\infty}(\mathbb{T})}
\]
for some $c>0$. Therefore, we have
\[
\int_{\mathbb{T}}\chi^{2}u^{k+1}D_{x}u>-c\|\partial_{xx}(\chi^{2})\|_{L^{\infty}(\mathbb{T})}\|u\|_{\dot{H}^{\alpha}(\mathbb{T})}^{k+2}
\]
proving the claim on the torus $\mathbb{T}$.

We now show how to transfer the result on the torus $\mathbb{T}$
to that on the real line $\mathbb{R}$. Let $u$ be a function in
$H^{1}(\mathbb{R})$. By density and scaling, we may assume that $\hat{\chi}$
and $\hat{u}$ are compactly supported on $[0,1]$ and continuous.
Observe by Parseval's identity that 
\[
\int_{\mathbb{R}}\chi^{2}u^{k+1}D_{x}u=\int_{\eta_{1},\eta_{2},\xi_{1},\dots,\xi_{k+1}\in[0,1]}\hat{\chi}(\eta_{1})\hat{\chi}(\eta_{2})\hat{u}(\xi_{1})\cdots\hat{u}(\xi_{k+1})|\xi|\hat{u}(\xi)
\]
where $\xi\coloneqq\eta_{1}+\eta_{2}+\xi_{1}+\cdots+\xi_{k+1}$. In
light of Riemann sum, the right hand side in the above is expressed
as
\[
\lim_{N\to\infty}\frac{1}{N^{k+4}}\sum_{\eta_{1},\eta_{2},\xi_{1},\dots,\xi_{k+1}\in A_{N}}\hat{\chi}(\frac{\eta_{1}}{N})\hat{\chi}(\frac{\eta_{2}}{N})\hat{u}(\frac{\xi_{1}}{N})\cdots\hat{u}(\frac{\xi_{k+1}}{N})|\xi|\hat{u}(\frac{\xi}{N})
\]
where $\xi=\eta_{1}+\eta_{2}+\xi_{1}+\cdots+\xi_{k+1}$ and $A_{N}=\{-N,-N+1,\dots,-1,1,\dots,N-1,N\}$.
Notice that $A_{N}$ does not contain $0$. The above display now
equals
\[
\lim_{N\to\infty}\frac{1}{N^{k+4}}\int_{\mathbb{T}}\chi_{N}^{2}u_{N}^{k+1}D_{x}u_{N}
\]
where $\chi_{N}$ and $u_{N}$ are functions defined on the torus
$\mathbb{T}$ by
\[
\chi_{N}(x)\coloneqq\sum_{\eta\in A_{N}}\hat{\chi}(\frac{\eta}{N})e^{2\pi i\eta x}\quad\text{and}\quad u_{N}(x)\coloneqq\sum_{\xi\in A_{N}}\hat{u}(\frac{\xi}{N})e^{2\pi i\xi x}.
\]
Applying the result on the torus case, we have 
\[
\int_{\mathbb{R}}\chi^{2}u^{k+1}\mathcal{H}u_{x}>-c\limsup_{N\to\infty}\frac{1}{N^{k+4}}\|\partial_{xx}(\chi_{N}^{2})\|_{L^{\infty}(\mathbb{T})}\|u_{N}\|_{\dot{H}^{\alpha}(\mathbb{T})}^{k+2}.
\]
In view of 
\begin{align*}
\lim_{N\to\infty}\frac{1}{N^{4}}\|\partial_{xx}(\chi_{N}^{2})\|_{L^{\infty}(\mathbb{T})} & =\|\partial_{xx}(\chi^{2})\|_{L^{\infty}(\mathbb{R})},\\
\lim_{N\to\infty}\frac{1}{N^{k}}\|u_{N}\|_{\dot{H}^{\alpha}(\mathbb{T})}^{k+2} & =\|u\|_{\dot{H}^{\alpha}(\mathbb{R})}^{k+2},
\end{align*}
we conclude 
\[
\int_{\mathbb{R}}\chi^{2}u^{k+1}\mathcal{H}u_{x}>-c\|\partial_{xx}(\chi^{2})\|_{L^{\infty}(\mathbb{R})}\|u\|_{\dot{H}^{\alpha}(\mathbb{R})}^{k+2}.
\]
\end{proof}

\section{\label{sec:WP theory}Well-posedness Theory}

One of the main ingredient of compactness-contradiction argument is
the perturbation theory. This is basically inherited from local well-posedness
of the Cauchy problem. As like other canonical nonlinear dispersive
equations, \eqref{eq:gBO}'s local theory is based on local smoothing
estimates. A satisfactory critical well-posedness was obtained by
Vento \cite{Vento2010}. Due to a full derivative in the nonlinearity,
the local theory is more delicate than other equations such as nonlinear
Schr\"odinger equations and generalized Korteweg-de Vries equation.
In this section, we begin reviewing Vento's proof of the critical
local well-posedness. We then prove subcritical local theory and global
well-posedness.

Denote by $V(t)$ the linear propagator associated to the linear Benjamin-Ono
flow. By Duhamel's formula, a nonlinear solution $u$ to initial data
$u_{0}$ satisfies 
\[
u(t)=V(t)u_{0}+\int_{0}^{t}V(t-s)F(u(s))ds
\]
where $F(u)=-\partial_{x}(u^{k+1})$ is the nonlinearity of \eqref{eq:gBO}.
Using the local smoothing estimates, one has 
\[
\|V(t)u_{0}\|_{\mathcal{S}}\lesssim\|u_{0}\|_{\dot{H}^{s}}
\]
where $\mathcal{S}$ is some space where the linear evolution lies
and $\dot{H}^{s}$ is a Sobolev space where initial data lies. Usually,
combined with Christ-Kiselev lemma, the nonlinear evolution part can
be estimated by
\[
\Big\|\int_{0}^{t}V(t-s)F(s)ds\Big\|_{\mathcal{S}}\lesssim\|F\|_{\mathcal{N}}
\]
where $\mathcal{N}$ is the dual space of $\mathcal{S}$. The main
goal of the local theory is to find suitable function spaces $\mathcal{S}$
and $\mathcal{N}$ such that a nonlinear estimate 
\[
\|F(u)\|_{\mathcal{N}}\lesssim\|u\|_{\mathcal{S}}^{k+1}
\]
holds and $\mathcal{S}$ is embedded into $C_{T}\dot{H}^{s}$.

In Section \ref{subsec:review of Vento's work}, we review the critical
local theory of \eqref{eq:gBO} by Vento \cite{Vento2010}. In Section
\ref{subsec:GWP}, we prove subcritical local well-posedness in $H^{s}$
($s>s_{k}$), and deduce global well-posedness in $H^{s}$ ($s\geq\frac{1}{2}$)
by using  conservation laws.

\subsection{\label{subsec:review of Vento's work}Review of Vento's argument}

In this subsection, we review Vento's approach \cite{Vento2010} in
proving critical local well-posedness of \eqref{eq:gBO}. Because
our nonlinearity $F(u)=-\partial_{x}(u^{k+1})$ has one derivative,
we should somehow recover one derivative. More precisely, in view
of Duhamel's formula, we want to estimate the integral part as
\[
\Big\|\int_{0}^{t}V(t-s)F(u(s))ds\Big\|_{\mathcal{S}}\lesssim\|\partial_{x}(u^{k+1})\|_{\mathcal{N}}\lesssim\|u\|_{\mathcal{S}}^{k+1}.
\]
In the linear Benjamin-Ono equation, it is well-known that local smoothing
estimates can recover at most half derivative:
\[
\|D_{x}^{s_{k}+\frac{1}{2}}V(t)u_{0}\|_{L_{x}^{\infty}L_{t}^{2}}\lesssim\|u_{0}\|_{\dot{H}^{s_{k}}}.
\]
In a spirit of $TT^{\ast}$ formulation, one can recover the full
derivative only when we use $L_{x}^{\infty}L_{t}^{2}$ norm for $\mathcal{S}$
and $L_{x}^{1}L_{t}^{2}$ norm for $\mathcal{N}$. However, in order
to bound $L_{x}^{1}L_{t}^{2}$ norm by $L_{x}^{\infty}L_{t}^{2}$
(and other norms), we are forced to use maximal-type norms, $L_{x}^{p}L_{t}^{\infty}$.
A naive choice would be as follows
\begin{equation}
\|D_{x}^{s_{k}-\frac{1}{2}}\partial_{x}(u^{k+1})\|_{L_{x}^{1}L_{T}^{2}}\lesssim\|D_{x}^{s_{k}+\frac{1}{2}}u\|_{L_{x}^{\infty}L_{T}^{2}}\|u\|_{L_{x}^{k}L_{T}^{\infty}}^{k}.\label{eq:double local smoothing}
\end{equation}
However, for large data, since one takes $L_{T}^{\infty}$, it is
not possible to make small $L_{x}^{k}L_{T}^{\infty}$ by shrinking
$T$ small. This prohibits us from running contraction argument.

Vento \cite{Vento2010} overcomes this difficulty by distorting the
linear propagator. Consider a general form of a nonlinear evolution
equation
\[
\partial_{t}u+\mathcal{L}u=F(u)
\]
for some linear operator $\mathcal{L}$. In a spirit of paraproduct
decomposition, we extract the strong interaction term $\mathcal{N}_{1}(u)$
of $F(u)$, for which one cannot establish a desired nonlinear estimate.
and rewrite the equation by
\[
\partial_{t}u+\mathcal{L}u-\mathcal{N}_{1}(u)=\mathcal{N}_{2}(u),
\]
where $\mathcal{N}_{2}(u)\coloneqq F(u)-\mathcal{N}_{1}(u)$. The
strong interaction term $\mathcal{N}_{1}(u)$ is nonlinear in $u$,
we approximate it by some linear operator $u\mapsto\mathcal{N}_{1}(u_{0};u)$
using the initial data $u_{0}$. We then have the distorted equation
\[
\partial_{t}u+\tilde{\mathcal{L}}u=[\mathcal{N}_{1}(u)-\mathcal{N}_{1}(u_{0};u)]+\mathcal{N}_{2}(u),
\]
where $\tilde{\mathcal{L}}\coloneqq\mathcal{L}-\mathcal{N}_{1}(u_{0};\cdot)$.
It turns out that $\mathcal{N}_{1}(u_{0};u)$ contains a strong low-high
interaction, $u_{\mathrm{low}}^{k}\partial_{x}u_{\mathrm{high}}$.
This phenomenon is universal in other nonlinear dispersive equations
containing derivative nonlinearities such as KP-I, Benjamin-Ono, higher
order KdV, and so on.

Vento's observations are as follows. At first, the linear part to
the above distorted equation still admits analogous linear estimates
for the linear Benjamin-Ono equation, at least for short times. Secondly,
even though we cannot avoid usage of $L_{x}^{k}L_{T}^{\infty}$, we
have small $\|u-u_{0}\|_{L_{x}^{k}L_{T}^{\infty}}$ at least for short
times. Indeed, this holds when $u$ is a linear solution to either
Benjamin-Ono equation or distorted linear solution. This fact is well-exploited
in $\mathcal{N}_{1}(u)-\mathcal{N}_{1}(u_{0};u)$ estimate. Finally
the good term $\mathcal{N}_{2}(u)$, as its naming suggests, can be
estimated small by shrinking $T$.

\ 

We first start with linear estimates for the linear Benjamin-Ono flow.
A triplet $(\alpha,p,q)\in\mathbb{R}^{3}$ is said to be \emph{admissible}
if $(\alpha,p,q)=(\frac{1}{2},\infty,2)$ or 
\[
4\leq p<\infty,\quad2<q\leq\infty,\quad\frac{2}{p}+\frac{1}{q}\leq\frac{1}{2},\quad\alpha=\frac{1}{p}+\frac{2}{q}-\frac{1}{2}.
\]
We then have linear estimates for these admissible triples as follows.
\begin{lem}
\label{lem:local smoothing estimates}Let $(\alpha,p,q)$ and $(\tilde{\alpha},\tilde{p},\tilde{q})$
be admissible triplets. We then have
\begin{align*}
\|D_{x}^{\alpha}V(t)\varphi\|_{L_{x}^{p}L_{t}^{q}} & \lesssim\|\varphi\|_{L^{2}},\\
\Big\| D_{x}^{\alpha}\int V(-s)G\Big\|_{L^{2}} & \lesssim\|G\|_{L_{x}^{\tilde{p}'}L_{t}^{\tilde{q}'}},\\
\Big\| D_{x}^{\alpha+\tilde{\alpha}}\int_{0}^{t}V(t-s)G(s)ds\Big\|_{L_{x}^{p}L_{t}^{q}} & \lesssim\|G\|_{L_{x}^{\tilde{p}'}L_{t}^{\tilde{q}'}},
\end{align*}
where $\tilde{p}'$ and $\tilde{q}'$ are conjugate Lebesgue exponents
of $\tilde{p}$ and $\tilde{q}$, respectively.
\end{lem}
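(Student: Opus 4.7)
The plan is to derive the three inequalities from two extremal homogeneous endpoint estimates for the linear Benjamin--Ono propagator $V(t)$, namely the Kenig--Ponce--Vega local smoothing bound $\|D_x^{1/2}V(t)\varphi\|_{L_x^\infty L_t^2}\lesssim\|\varphi\|_{L^2}$ at $(\alpha,p,q)=(\tfrac12,\infty,2)$ and the Kenig--Ruiz-type maximal estimate $\|V(t)\varphi\|_{L_x^4 L_t^\infty}\lesssim\|D_x^{1/4}\varphi\|_{L^2}$ at $(\alpha,p,q)=(-\tfrac14,4,\infty)$. The admissibility relation $\alpha=1/p+2/q-1/2$ is affine in $(1/p,1/q)$, so once these vertices are in hand, complex interpolation of the analytic operator family $D_x^{\alpha(z)}V(t)$ sweeps out the whole admissible triangle $\{(1/p,1/q):\,2/p+1/q\le 1/2\}$ and yields the first inequality. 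The second is then the $L^2$-adjoint of the first, and the third follows by $TT^\ast$ composition together with the Christ--Kiselev lemma.

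For the local smoothing endpoint I would write $\widehat{V(t)\varphi}(\xi)=e^{-it\xi|\xi|}\hat\varphi(\xi)$, split the Fourier integral into the contributions from $\xi>0$ and $\xi<0$, and change variables $\eta=\xi|\xi|$ on each piece; the Jacobian $d\eta=2|\xi|\,d\xi$ generates precisely the factor absorbed into $D_x^{1/2}$, and Plancherel in $t$ then gives the uniform-in-$x$ bound. For the maximal endpoint I would Littlewood--Paley decompose $\varphi=\sum_j Q_j\varphi$; stationary phase on the kernel of $V(t)Q_j$ yields the dispersive bound $|K_j(t,x)|\lesssim\min(2^j,2^{j/2}|t|^{-1/2})$, a $TT^\ast$/Hardy--Littlewood argument on each dyadic block gives $\|V(t)Q_j\varphi\|_{L_x^4 L_t^\infty}\lesssim 2^{j/4}\|Q_j\varphi\|_{L^2}$, and the pieces can be reassembled in $\ell^2_j$ via the Littlewood--Paley square function, since $p=4>2$.

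The second inequality is simply the spacetime $L^2$-adjoint of the first under the pairing $\langle V(t)\varphi,G\rangle_{t,x}=\langle\varphi,\int V(-s)G(s)\,ds\rangle_x$, after moving $D_x^{\tilde\alpha}$ across, and requires no new input. For the third (retarded) estimate, composing the primal and dual bounds via $TT^\ast$ gives the non-retarded version
\[
\Big\|D_x^{\alpha+\tilde\alpha}\int_\mathbb{R}V(t-s)G(s)\,ds\Big\|_{L_x^p L_t^q}\lesssim\|G\|_{L_x^{\tilde p'} L_t^{\tilde q'}},
\]
and the Christ--Kiselev lemma restricts the integration to $s<t$ whenever the strict inequality $q>\tilde q'$ holds on the time exponents; this is satisfied for every admissible pairing except the diagonal double-endpoint $(\alpha,p,q)=(\tilde\alpha,\tilde p,\tilde q)=(\tfrac12,\infty,2)$.

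The double local-smoothing endpoint is where I expect the main obstacle, since Christ--Kiselev is no longer available when $q=\tilde q'=2$. The plan to bypass it is to split the sharp time cutoff as $\mathbf{1}_{s<t}=\tfrac12+\tfrac12\mathrm{sgn}(t-s)$: the constant half reduces directly to the non-retarded bound just proved, while for the $\mathrm{sgn}(t-s)$ half, after the same change of variables $\eta=\xi|\xi|$ that diagonalized $V(t)$ in the local smoothing proof, the time convolution turns into a Hilbert transform in the conjugate variable at each fixed $x$, and is therefore bounded on $L_t^2$ by the $L^2$-boundedness of $\mathcal{H}$. Combining the two contributions closes the endpoint and completes the lemma.
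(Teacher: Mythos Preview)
Your approach is essentially the paper's: the two KPV endpoints $(\tfrac12,\infty,2)$ and $(-\tfrac14,4,\infty)$ plus Stein--Weiss/complex interpolation for the first inequality, duality for the second, $TT^\ast$ plus Christ--Kiselev for the retarded estimate, and a separate treatment of the double local-smoothing endpoint (the paper cites \cite[Theorem~2.1]{KPV1994TransAMS}; your $\mathbf{1}_{s<t}=\tfrac12+\tfrac12\,\mathrm{sgn}(t-s)$ split together with the change of variables $\eta=\xi|\xi|$ is precisely that argument).

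One caveat in your sketch of the maximal endpoint: reassembling the frequency-localized bounds $\|V(t)Q_j\varphi\|_{L_x^4L_t^\infty}\lesssim 2^{j/4}\|Q_j\varphi\|_{L^2}$ in $\ell^2_j$ ``via the Littlewood--Paley square function, since $p=4>2$'' is not legitimate as stated. The $L_t^\infty$ sits inside the $L_x^4$, and there is no pointwise bound $|f(x)|\lesssim(\sum_j|Q_jf(x)|^2)^{1/2}$ to let you commute the supremum in $t$ past the square function; the triangle inequality only gives an $\ell^1_j$ sum, landing you in $\dot B^{1/4,1}_2$ rather than $\dot H^{1/4}$. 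The paper avoids this by simply citing \cite[Theorem~4.1]{KPV1991Indiana}, whose proof proceeds by a direct $TT^\ast$/oscillatory-integral argument on the full (non-localized) kernel rather than dyadic summation; you should either cite that result or reproduce its direct argument.
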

\begin{proof}
From \cite[Theorem 2.5 and 4.1]{KPV1991Indiana}, one has
\[
\|D_{x}^{\frac{1}{2}}V(t)\varphi\|_{L_{x}^{\infty}L_{t}^{2}}\sim\|\varphi\|_{L^{2}}\quad\text{and}\quad\|V(t)\varphi\|_{L_{x}^{4}L_{t}^{\infty}}\lesssim\|D_{x}^{\frac{1}{4}}\varphi\|_{L^{2}}.
\]
We then apply Stein-Weiss interpolation to obtain the first estimate.
The second estimate easily follows by duality. For the last one, we
have by first two estimates that 
\[
\Big\| D_{x}^{\alpha+\tilde{\alpha}}\int_{\mathbb{R}}V(t-s)G(s)ds\Big\|_{L_{x}^{p}L_{t}^{q}}\lesssim\|G\|_{L_{x}^{\tilde{p}'}L_{t}^{\tilde{q}'}}.
\]
By the Christ-Kiselev lemma for reversed norm \cite[Theorem B]{BurqPlanchon2006},
we have 
\[
\Big\| D_{x}^{\alpha+\tilde{\alpha}}\int_{0}^{t}V(t-s)G(s)ds\Big\|_{L_{x}^{p}L_{t}^{q}}\lesssim\|G\|_{L_{x}^{\tilde{p}'}L_{t}^{\tilde{q}'}}
\]
except the case where $(\alpha,p,q)=(\tilde{\alpha},\tilde{p},\tilde{q})=(\frac{1}{2},\infty,2)$.
In that case, we refer to \cite[Theorem 2.1]{KPV1994TransAMS}.
\end{proof}
We use Besov-type function spaces for $s\in\mathbb{R}$
\[
\dot{\mathcal{S}}_{I}^{s,\theta}=\dot{\mathcal{B}}_{\frac{4}{1-\theta}}^{s+\frac{3\theta-1}{4},2}\big(L_{I}^{\frac{2}{\theta}}\big),\qquad\dot{\mathcal{N}}_{I}^{s}=\dot{\mathcal{B}}_{1}^{s-\frac{1}{2},2}(L_{I}^{2}),
\]
where $\theta\in[0,1]$. One can think that the space $\dot{\mathcal{S}}_{I}^{s,\theta}$
contains solutions to \eqref{eq:gBO} and $\dot{\mathcal{N}}_{I}^{s}$
contains the nonlinearity of \eqref{eq:gBO}. We do not parametrize
the space for nonlinearity because we will only consider $\dot{\mathcal{N}}_{I}^{s}$.

We define the space $\dot{X}_{I}^{s}$ by
\[
\dot{X}_{I}^{s}\coloneqq\dot{\mathcal{S}}_{I}^{s,\epsilon}\cap\dot{\mathcal{S}}_{I}^{s,1}.
\]
The parameter $0<\epsilon\ll1$ only depends on $k$ and is chosen
small. For the choice of $\epsilon$, see \cite{Vento2010}. As $\dot{X}_{T}^{s}$-norm
does not contain $L_{T}^{\infty}$-norm, it becomes smaller as $T$
becomes smaller. This property is crucial in our argument. Of course,
solutions to \eqref{eq:gBO} should lie in $C_{T}\dot{H}_{x}^{s}$.
For this purpose, we finally work on the Banach space 
\[
\dot{Z}_{I}^{s}\coloneqq\{u\in C_{I}\dot{H}_{x}^{s}\cap L_{x}^{k}L_{I}^{\infty}\cap\dot{\mathcal{S}}_{I}^{s,0}\cap\dot{\mathcal{S}}_{I}^{s,1}:u\text{ satisfies }\eqref{eq:continuity assumption 1 for (k,infty)}\text{ and }\eqref{eq:continuity assumption 2 for (k,infty)}\},
\]
where ($J\subseteq I$ is a subinterval of $I$)
\begin{gather}
\text{for any }t_{0}\in I,\ \|u-u(t_{0})\|_{L_{x}^{k}L_{J}^{\infty}}\to0\text{ whenever }t_{0}\in J\text{ and }|J|\to0,\label{eq:continuity assumption 1 for (k,infty)}\\
\lim_{T\to+\infty}\|u\|_{L_{x}^{k}L_{[T,+\infty)}^{\infty}}=0=\lim_{T\to-\infty}\|u\|_{L_{x}^{k}L_{(-\infty,T]}^{\infty}}.\label{eq:continuity assumption 2 for (k,infty)}
\end{gather}
The inhomogeneous form of the spaces $X$ and $Z$ for subcritical
well-posedness theory are defined as follows. For $s>0$, we define
\[
X_{I}^{s}\coloneqq\dot{X}_{I}^{0}\cap\dot{X}_{I}^{s}\quad\text{and}\quad Z_{I}^{s}\coloneqq\dot{Z}_{I}^{0}\cap\dot{Z}_{I}^{s}.
\]

\begin{prop}[Linear estimates]
Let $k\geq4$ and $s\in\mathbb{R}$. We then have
\begin{align*}
\|V(t)\varphi\|_{\dot{\mathcal{S}}_{I}^{s,0}\cap\dot{\mathcal{S}}_{I}^{s,1}\cap L_{t}^{\infty}\dot{H}_{x}^{s}} & \lesssim\|\varphi\|_{\dot{H}^{s}},\\
\|V(t)\varphi\|_{L_{x}^{k}L_{t}^{\infty}} & \lesssim\|\varphi\|_{\dot{H}^{s_{k}}}.
\end{align*}
For retarded estimates, we have
\begin{align*}
\Big\|\int_{0}^{t}V(t-s)G(s)ds\Big\|_{\dot{X}_{t}^{s}\cap L_{t}^{\infty}\dot{H}_{x}^{s}} & \lesssim\|G\|_{\dot{\mathcal{N}}_{t}^{s}},\\
\Big\|\int_{0}^{t}V(t-s)G(s)ds\Big\|_{L_{x}^{k}L_{t}^{\infty}} & \lesssim\|G\|_{\dot{\mathcal{N}}_{t}^{s_{k}}}.
\end{align*}
\end{prop}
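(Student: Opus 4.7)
The plan is to deduce each of the four estimates from a frequency-localized application of the admissibility Lemma~\ref{lem:local smoothing estimates}, exploiting that every Littlewood--Paley projector $Q_j$ commutes with $V(t)$ and then reassembling the frequency pieces through the $\ell^2_j$-Besov structure of $\dot{\mathcal S}^{s,\theta}$ and $\dot{\mathcal N}^s$.

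For the homogeneous estimates, I first verify that $(\alpha,p,q)=\bigl(\tfrac{3\theta-1}{4},\tfrac{4}{1-\theta},\tfrac{2}{\theta}\bigr)$ is admissible for $\theta\in\{0,\epsilon,1\}$ (a direct computation gives $\alpha=\tfrac1p+\tfrac2q-\tfrac12$ and $\tfrac2p+\tfrac1q=\tfrac12$), apply Lemma~\ref{lem:local smoothing estimates} to $Q_j\varphi$, absorb $D_x^\alpha$ as the weight $2^{j\alpha}$, and square-sum in $j$ with weight $2^{2js}$ to recover the Besov norm on the left and $\|\varphi\|_{\dot H^s}^2$ on the right; the $L_t^\infty\dot H_x^s$-component is immediate from unitarity of $V(t)$, and $\|V(t)\varphi\|_{L_x^kL_t^\infty}\lesssim\|\varphi\|_{\dot H^{s_k}}$ is a direct application of Lemma~\ref{lem:local smoothing estimates} with $(-s_k,k,\infty)$, which is admissible precisely because $k\geq 4$. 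For the retarded bounds into $\dot X_t^s\cap L_t^\infty\dot H_x^s$ I pair the same target triplets with the source $(\tilde\alpha,\tilde p,\tilde q)=(\tfrac12,\infty,2)$ so that $L_x^{\tilde p'}L_t^{\tilde q'}=L_x^1L_t^2$ matches the basic block of $\dot{\mathcal N}^s=\dot{\mathcal B}_1^{s-1/2,2}(L_t^2)$, apply the retarded part of Lemma~\ref{lem:local smoothing estimates} to $Q_jG$, and square-sum in $j$; the $L_t^\infty\dot H_x^s$-part is handled by dualizing the sharp local smoothing $\|D_x^{1/2}V(t)\varphi\|_{L_x^\infty L_t^2}\lesssim\|\varphi\|_{L^2}$ to $\|D_x^{1/2}\int V(-s)H\,ds\|_{L^2}\lesssim\|H\|_{L_x^1L_t^2}$, applying this to $H=\mathbf{1}_{[0,t]}Q_jG$, using that $V$ is an $L^2$-isometry, and square-summing in $j$.

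The retarded $L_x^kL_t^\infty$ bound is the main obstacle. A monolithic application of the admissibility lemma at $(-s_k,k,\infty)$ and $(\tfrac12,\infty,2)$ yields only $\|\int_0^tV(t-s)G\,ds\|_{L_x^kL_t^\infty}\lesssim\|D_x^{-1/k}G\|_{L_x^1L_t^2}$, but this right-hand side is \emph{not} controlled by $\|G\|_{\dot{\mathcal N}^{s_k}}$ through naive summation because $L_x^1$ admits no Littlewood--Paley square-function characterization; symmetrically, the frequency-piecewise bound $\|Q_ju\|_{L_x^kL_t^\infty}\lesssim 2^{-j/k}\|Q_jG\|_{L_x^1L_t^2}$ cannot be reassembled via vector-valued Littlewood--Paley because $L_t^\infty$ is not UMD. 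The workaround I would adopt is to route through the Besov intermediate $\dot{\mathcal S}^{s_k,0}=\dot{\mathcal B}_4^{s_k-1/4,2}(L_t^\infty)$ controlled in the previous step: apply the retarded admissibility inequality to each $Q_jG$ at the endpoint pair $(s_k-\tfrac14,4,\infty)$ and $(\tfrac12,\infty,2)$, promote the spatial integrability from $4$ to $k$ at the frequency level via Bernstein (Young's inequality for $L_t^\infty$-valued convolutions in $x$), and finally sum the resulting pieces by combining the scalar $L_x^k$ Littlewood--Paley characterization applied pointwise in $t$ with a time-maximal argument that transfers the sup in $t$ across the square sum. This reassembly step, where the UMD obstruction is sidestepped by packaging the $L_t^\infty$ control inside the Besov target $\dot{\mathcal S}^{s_k,0}$ that already sums, is the technical crux of the argument.
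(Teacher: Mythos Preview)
Your treatment of the first three estimates matches the paper's: apply Lemma~\ref{lem:local smoothing estimates} to each $Q_j\varphi$ or $Q_jG$ and take the $\ell_j^2$ sum, using that the target and source Besov norms are built from $\ell_j^2$.

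For the fourth estimate, however, your proposed reassembly has a genuine gap. After Bernstein you obtain $\|Q_j u\|_{L_x^kL_t^\infty}\lesssim 2^{j(s_k-1/2)}\|Q_jG\|_{L_x^1L_t^2}$, which puts the frequency pieces in $\ell_j^2$; but the passage from $\bigl(\sum_j\|Q_ju\|_{L_x^kL_t^\infty}^2\bigr)^{1/2}$ back to $\|u\|_{L_x^kL_t^\infty}$ is exactly the $L_t^\infty$-valued Littlewood--Paley inequality you yourself flagged as unavailable. The ``time-maximal argument that transfers the sup in $t$ across the square sum'' is not a known device: pointwise in $x$ one only has $\sup_t|u(x,t)|\le\sum_j\sup_t|Q_ju(x,t)|$, which yields $\ell_j^1$ and loses. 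Routing through $\dot{\mathcal S}^{s_k,0}$ does not help, since the embedding $\dot{\mathcal B}_4^{s_k-1/4,2}(L_t^\infty)\hookrightarrow L_x^kL_t^\infty$ would again require vector-valued square-function bounds in $L_t^\infty$.

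The paper avoids the obstacle altogether by \emph{not} decomposing on the $L_x^kL_t^\infty$ side. One writes the untruncated Duhamel term as $V(t)\psi$ with $\psi=\int_{\mathbb R}V(-s)G(s)\,ds$, applies the already-proven homogeneous bound $\|V(t)\psi\|_{L_x^kL_t^\infty}\lesssim\|\psi\|_{\dot H^{s_k}}$, and then estimates $\|\psi\|_{\dot H^{s_k}}=\|\psi\|_{\dot{\mathcal B}_2^{s_k,2}}$ by the dual local smoothing estimate applied to each $Q_jG$ and square-summed (legitimate because the intermediate space is $L^2$-based). Christ--Kiselev then truncates the time integral. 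The key idea you are missing is this factorization through $\dot H^{s_k}$, which confines the $\ell_j^2$ summation to an $L^2$-based norm where it is harmless.
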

\begin{proof}
Note that $\dot{\mathcal{B}}_{2}^{s,2}$ is equal to $\dot{H}^{s}$
by definition. We use the linear estimates (Lemma \ref{lem:local smoothing estimates})
for each frequency piece $Q_{j}\varphi$ or $Q_{j}G$ and take $\ell_{j}^{2}$
summation. For instance, the last estimate follows from
\begin{align*}
\Big\| V(t)\int_{\mathbb{R}}V(-s)G(s)ds\Big\|_{L_{x}^{k}L_{t}^{\infty}} & \lesssim\Big\|\int_{\mathbb{R}}V(-s)G(s)ds\Big\|_{\dot{\mathcal{B}}_{2}^{s_{k},2}}\\
 & \lesssim\|G\|_{\dot{\mathcal{N}}_{t}^{s_{k}}}.
\end{align*}
We then use Christ-Kiselev lemma \cite[Theorem B]{BurqPlanchon2006}
to conclude.
\end{proof}
We now review the paraproduct decomposition of the nonlinearity $F(u)=-\partial_{x}(u^{k+1})$
following \cite{Vento2010}. For simplicity of notations, we shall
group similar frequency pieces when they satisfy the same estimates
and number of them is bounded by a universal constant, not depending
on $J$. For instance, we group $\sum_{\ell=0}^{k}u_{<r+1}^{k-\ell}u_{<r}^{\ell}$
by $u_{<r+1}^{k}$. We observe that
\begin{align*}
\partial_{x}Q_{j}(u^{k+1}) & =\partial_{x}Q_{j}\Big(\lim_{r\to\infty}u_{<r}^{k+1}\Big)\\
 & =\partial_{x}Q_{j}\Big(\sum_{r\in\mathbb{Z}}u_{<r+1}^{k+1}-u_{<r}^{k+1}\Big)\\
 & =\partial_{x}Q_{j}\Big(\sum_{r\in\mathbb{Z}}u_{r}u_{<r+1}^{k}\Big)\\
 & =\partial_{x}Q_{j}\Big(\sum_{|r-j|\leq C_{k}}u_{r}u_{\leq r-J}^{k}\Big)+\partial_{x}Q_{j}\Big(\sum_{|r-j|\leq C_{k}}u_{r}u_{\sim r}u_{<r+1}^{k-1}\Big)\\
 & \quad+\partial_{x}Q_{j}\Big(\sum_{r\geq j+C_{k}}u_{r}u_{\sim r}u_{<r+1}^{k-1}\Big)\\
 & =\pi_{j}(u,u)-g_{j}(u)
\end{align*}
where $C_{k}$ is some natural number only depending on $k$ and 
\begin{align*}
\pi_{j}(\psi,\phi) & \coloneqq\partial_{x}Q_{j}(\psi_{\ll j}^{k}\phi_{\sim j})\\
g_{j}(u) & \coloneqq-\partial_{x}Q_{j}\Big(\sum_{r-j\geq-C_{k}}u_{\sim r}^{2}u_{\lesssim r}^{k-1}\Big).
\end{align*}
Summing up in $j$, we have 
\[
F(u)=-\pi(u,u)+g(u),
\]
where 
\begin{align*}
\pi(\psi,\phi) & \coloneqq\sum_{j}\pi_{j}(\psi,\phi),\\
g(\varphi) & \coloneqq\sum_{j}g_{j}(\varphi).
\end{align*}

Using the linear estimates, we can estimate each components of the
nonlinearity.
\begin{lem}[{Nonlinear estimates, \cite[Proposition 4.1]{Vento2010}}]
\label{lem:nonlinearity estimate}We have 
\begin{align*}
\|\pi(\varphi,\psi)\|_{\dot{\mathcal{N}}_{I}^{s_{k}}} & \lesssim\|\varphi\|_{L_{x}^{k}L_{T}^{\infty}}^{k}\|\psi\|_{\dot{X}_{I}^{s_{k}}},\\
\|g(u)\|_{\dot{\mathcal{N}}_{I}^{s_{k}}} & \lesssim\|u\|_{L_{x}^{k}L_{I}^{\infty}}^{k-1}\|u\|_{\dot{X}_{I}^{s_{k}}}^{2},\\
\|\pi(\varphi_{1},\psi)-\pi(\varphi_{2},\psi)\|_{\dot{\mathcal{N}}_{I}^{s_{k}}} & \lesssim\|\varphi_{1}-\varphi_{2}\|_{L_{x}^{k}L_{I}^{\infty}}(\|\varphi_{1}\|_{L_{x}^{k}L_{I}^{\infty}}^{k-1}+\|\varphi_{2}\|_{L_{x}^{k}L_{I}^{\infty}}^{k-1})\|\psi\|_{\dot{X}_{I}^{s_{k}}},\\
\|g(u_{1})-g(u_{2})\|_{\dot{\mathcal{N}}_{I}^{s_{k}}} & \lesssim\|u_{1}-u_{2}\|_{L_{x}^{k}L_{I}^{\infty}}(\|u_{1}\|_{L_{x}^{k}L_{I}^{\infty}}^{k-2}+\|u_{2}\|_{L_{x}^{k}L_{I}^{\infty}}^{k-2})(\|u_{1}\|_{\dot{X}_{I}^{s_{k}}}+\|u_{2}\|_{\dot{X}_{I}^{s_{k}}})^{2}\\
 & \quad+\|u_{1}-u_{2}\|_{\dot{X}_{I}^{s_{k}}}(\|u_{1}\|_{L_{x}^{k}L_{I}^{\infty}}^{k-1}+\|u_{2}\|_{L_{x}^{k}L_{I}^{\infty}}^{k-1})(\|u_{1}\|_{\dot{X}_{I}^{s_{k}}}+\|u_{2}\|_{\dot{X}_{I}^{s_{k}}})
\end{align*}
\end{lem}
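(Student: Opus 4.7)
The plan is to bound each of the four quantities dyadically, working frequency‑by‑frequency in the paraproduct decomposition and then resumming in $\ell^2_j$ with the help of Cauchy--Schwarz. The weight $2^{j(s_k-1/2)}$ in $\dot{\mathcal{N}}_I^{s_k}$ will combine with the $2^j$ from $\partial_x$ to produce exactly $2^{j(s_k+1/2)}$, which is precisely the Sobolev scaling that the linear estimates of $\dot{\mathcal{S}}_I^{s_k,\theta}$ are calibrated to absorb.

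For the first estimate, write
\[
\|\pi_j(\varphi,\psi)\|_{L_x^1 L_I^2} \lesssim 2^j\,\bigl\|\varphi_{\ll j}^k\,\psi_{\sim j}\bigr\|_{L_x^1 L_I^2}.
\]
By H\"older's inequality with $L_x^k L_I^\infty$ for each of the $k$ low-frequency factors and $L_x^\infty L_I^2$ for the single high-frequency factor,
\[
\|\pi_j(\varphi,\psi)\|_{L_x^1 L_I^2} \lesssim 2^j\,\|\varphi\|_{L_x^k L_I^\infty}^k\,\|\psi_{\sim j}\|_{L_x^\infty L_I^2}.
\]
Since $\dot{X}_I^{s_k}\hookrightarrow \dot{\mathcal{S}}_I^{s_k,1}=\dot{\mathcal{B}}_\infty^{s_k+1/2,2}(L_I^2)$, one has $\|\psi_{\sim j}\|_{L_x^\infty L_I^2}\lesssim 2^{-j(s_k+1/2)} c_j \|\psi\|_{\dot{X}_I^{s_k}}$ with $(c_j)\in\ell^2_j$. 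Multiplying by the Besov weight $2^{j(s_k-1/2)}$ and taking $\ell^2_j$ yields the bound.

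For the $g$ estimate, each summand is $\partial_x Q_j(u_{\sim r}^{\,2} u_{\lesssim r}^{\,k-1})$ with $r\geq j-C_k$. Again the $\partial_x$ produces $2^j$, and $(k-1)$ of the low-frequency copies go into $L_x^k L_I^\infty$. The remaining two high-frequency copies $u_{\sim r}$ must land in an $L_x^{2k} L_I^{4}$-type space (so that H\"older closes back to $L_x^1 L_I^2$); one produces such a norm by interpolating between the two defining components $\dot{\mathcal{S}}_I^{s_k,\epsilon}$ and $\dot{\mathcal{S}}_I^{s_k,1}$ of $\dot{X}_I^{s_k}$, which gives $\|D_x^{\alpha} u_{\sim r}\|_{L_x^{2k}L_I^{4}}\lesssim c_r\|u\|_{\dot{X}_I^{s_k}}$ for the appropriate admissible exponent $\alpha$. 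Carrying this through produces, for each $r\geq j-C_k$, a bound of the form $2^{-(r-j)\delta} c_r\,\|u\|_{L_x^kL_I^\infty}^{k-1}\|u\|_{\dot{X}_I^{s_k}}^2$ at the Besov-weighted scale, with a strictly positive gain $\delta>0$ coming from the high-high $\to$ low imbalance between the frequencies $2^r$ and $2^j$. Summing geometrically in $r-j\geq -C_k$ and then taking $\ell^2_j$ delivers the stated bound.

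The two difference estimates follow by the standard multilinear telescoping $a_1\cdots a_{k+1}-b_1\cdots b_{k+1}=\sum_i b_1\cdots b_{i-1}(a_i-b_i)a_{i+1}\cdots a_{k+1}$ applied separately to the $\pi$- and $g$-type summands, after which each resulting term is estimated by exactly the same H\"older and Besov interpolation arguments as above, and the two types of factors ($L_x^kL_I^\infty$ versus $\dot{X}_I^{s_k}$) are distributed precisely as in the statement. The main technical obstacle is the $g$ bound: because $r$ ranges up to $+\infty$, nothing forces $r\sim j$ and one must extract a genuine off-diagonal decay factor $2^{-\delta(r-j)}$ from the frequency imbalance; locating the right admissible exponent in the interpolation of $\dot{\mathcal{S}}_I^{s_k,\epsilon}$ and $\dot{\mathcal{S}}_I^{s_k,1}$ so that this gain is positive (uniformly in $k\geq 4$) is the delicate step, and is exactly what constrains the choice of the small parameter $\epsilon$ in the definition of $\dot{X}_I^{s}$.
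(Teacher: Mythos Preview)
Your proposal is correct and follows essentially the same route as the paper: H\"older with $(L_x^k L_I^\infty)^k \times L_x^\infty L_I^2$ for $\pi$, and for $g$ placing the $k-1$ low-frequency factors in $L_x^k L_I^\infty$, the two high-frequency factors in $L_x^{2k}L_I^4$ (obtained from $\dot X_I^{s_k}$ by interpolation/Bernstein), and resumming the geometric tail in $r-j$. Two small points of precision: in your $g$ bound the coefficient should be $c_r^2$ (not $c_r$), so the resummation really uses either Young with $\|K\|_{\ell^2}\|c^2\|_{\ell^1}$ or, as the paper does, the embedding $\ell_j^2\hookrightarrow\ell_j^4$; and the decay exponent $\delta=s_k+\tfrac12$ is automatic once the exponents are fixed --- the role of the small $\epsilon$ is only to ensure that the $L_x^{2k}L_I^4$ norm with weight $(s_k+\tfrac12)/2$ is actually controlled by $\dot X_I^{s_k}$, not to make $\delta$ positive.
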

\begin{proof}
We only prove first two estimates following \cite[Proposition 4.1]{Vento2010}.
The remaining estimates can be shown in a similar manner. For the
first estimate, observe that 
\begin{align*}
\|\pi(\varphi,\psi)\|_{\dot{\mathcal{N}}_{I}^{s_{k}}} & \lesssim\Big\|2^{j(s_{k}+\frac{1}{2})}\|\varphi_{\ll j}^{k}\psi_{\sim j}\|_{L_{x}^{1}L_{I}^{2}}\Big\|_{\ell_{j}^{2}}\\
 & \lesssim\Big(\sup_{j\in\mathbb{Z}}\|\varphi_{\ll j}\|_{L_{x}^{k}L_{I}^{\infty}}^{k}\Big)\Big\|2^{j(s_{k}+\frac{1}{2})}\|\psi_{\sim j}\|_{L_{x}^{\infty}L_{I}^{2}}\Big\|_{\ell_{j}^{2}}\\
 & \lesssim\|\varphi\|_{L_{x}^{k}L_{I}^{\infty}}^{k}\|\psi\|_{\dot{X}_{I}^{s_{k}}}.
\end{align*}
For the second estimate, observe that
\begin{align*}
\|g(u)\|_{\dot{X}_{I}^{s_{k}}} & \lesssim\Big\|2^{j(s_{k}+\frac{1}{2})}\|\sum_{r\gtrsim j}u_{\sim r}^{2}u_{\lesssim r}^{k-1}\|_{L_{x}^{1}L_{I}^{2}}\Big\|_{\ell_{j}^{2}}\\
 & \lesssim\Big(\sup_{r\in\mathbb{Z}}\|u_{\lesssim r}\|_{L_{x}^{k}L_{I}^{\infty}}^{k-1}\Big)\Big\|\sum_{r\gtrsim j}2^{(j-r)(s_{k}+\frac{1}{2})}\|2^{r(s_{k}+\frac{1}{2})}u_{\sim r}^{2}\|_{L_{x}^{k}L_{I}^{2}}\Big\|_{\ell_{j}^{2}}\\
 & \lesssim\|u\|_{L_{x}^{k}L_{I}^{\infty}}^{k-1}\Big\|2^{j(s_{k}+\frac{1}{2})}\|u_{\sim j}^{2}\|_{L_{x}^{k}L_{I}^{2}}\Big\|_{\ell_{j}^{2}},
\end{align*}
where we used Young's inequality in $j$. We then observe that
\[
\Big\|2^{j(s_{k}+\frac{1}{2})}\|u_{\sim j}^{2}\|_{L_{x}^{k}L_{I}^{2}}\Big\|_{\ell_{j}^{2}}=\Big\|2^{j(s_{k}+\frac{1}{2})/2}\|u_{\sim j}\|_{L_{x}^{2k}L_{I}^{4}}\Big\|_{\ell_{j}^{4}}^{2}.
\]
Using the embedding $\ell_{j}^{2}\hookrightarrow\ell_{j}^{4}$ and
the definition of $\dot{X}_{I}^{s_{k}}$ norm, we have the assertion.
\end{proof}
Vento's proof of critical local well-posedness of \eqref{eq:gBO}
goes as follows. The paraproduct decomposition allows us to rewrite
\eqref{eq:gBO} as so called \emph{distorted equation} 
\begin{equation}
(\partial_{t}+\mathcal{H}\partial_{xx})u+\pi(u_{0},u)=[\pi(u_{0},u)-\pi(u,u)]+g(u).\label{eq:distorted eq}
\end{equation}
Let us denote by $U(t)$ the linear propagator associated to \eqref{eq:distorted eq}.
As alluded to above, the propagator $U(t)$ still obeys analogous
linear estimates what the propagator $V(t)$ satisfy, at least for
short times (see Corollary \ref{cor:distorted linear estimates} or
\cite[Proposition 3.2]{Vento2010} where $T$ depends on $u_{0}\in\dot{H}^{s_{k}}$
instead of $\|u_{0}\|_{H^{\frac{1}{2}}}$). Moreover, further shrinking
$T$ if necessary, we may assume that $\|U(t)u_{0}-u_{0}\|_{L_{x}^{k}L_{T}^{\infty}}$
and $\|u\|_{\dot{X}_{T}^{s_{k}}}$ are sufficiently small. Hence,
using Duhamel's formula to \eqref{eq:distorted eq} and Lemma \ref{lem:nonlinearity estimate}
to the nonlinearity of \eqref{eq:distorted eq}, one can iterate on
the space $B_{\dot{X}_{T}^{s_{k}}}(0;r)\cap B_{L_{x}^{k}L_{T}^{\infty}}(u_{0};r)$
for sufficiently small $r$.

\subsection{\label{subsec:GWP}Subcritical Local Theory and Global Well-posedness}

In this subsection, we prove subcritical well-posedness of \eqref{eq:gBO}
in $H^{s}$ with $s>s_{k}$ and global well-posedness of \eqref{eq:gBO}
in $H^{s}$ with $s\geq\frac{1}{2}$. The critical local theory by
Vento \cite{Vento2010} guarantees profile dependent lifespans for
$\dot{H}^{s_{k}}$ and $H^{s}$ $(s\geq s_{k})$ solutions, but does
not answer the question of subcritical well-posedness. Here, we slightly
modify Vento's argument to obtain $H^{s}$ $(s>s_{k})$ subcritical
well-posedness, whose lifespan depends on $H^{s}$ norm of the initial
data. Afterward, we obtain the persistence of regularity and combine
with conservation laws to get the global well-posedness in $H^{s}$
$(s\geq\frac{1}{2})$.

We first obtain a subcritical version (Corollary \ref{cor:distorted linear estimates})
of the distorted linear estimates \cite[Proposition 3.2]{Vento2010}.
To do this, it suffices to obtain smallness of 
\[
\|V(t)u_{0}-u_{0}\|_{L_{x}^{k}L_{T}^{\infty}}\quad\text{and}\quad\|D_{x}^{0+}V(t)u_{0}\|_{L_{x}^{k+}L_{T}^{\infty-}}
\]
by choosing $T$ only depending on $H^{s}$ norm of $u_{0}$.
\begin{lem}
\label{lem:subcritical-V(t)-comparison}Let $s>s_{k}$, $u_{0}\in H^{s}$,
and $\eta>0$. Then, there exists $T=T(\|u_{0}\|_{H^{s}},s,\eta)>0$
such that 
\[
\|V(t)u_{0}-u_{0}\|_{L_{x}^{k}L_{T}^{\infty}\cap L_{T}^{\infty}\dot{H}_{x}^{s_{k}}}+\|D_{x}^{0+}V(t)u_{0}\|_{L_{x}^{k+}L_{T}^{\infty-}}<\eta.
\]
\end{lem}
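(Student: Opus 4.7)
The strategy is a standard high/low frequency decomposition. Given $\eta > 0$, write $u_0 = u_0^{\mathrm{lo}} + u_0^{\mathrm{hi}}$ with $u_0^{\mathrm{lo}} = Q_{\leq N} u_0$ and $u_0^{\mathrm{hi}} = Q_{>N} u_0$, choose $N = N(\|u_0\|_{H^s}, s, \eta)$ large enough that the high-frequency contribution is absorbed, and then choose $T = T(N, \eta)$ small enough that the low-frequency contribution is absorbed.

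For the high-frequency piece, the hypothesis $s > s_k$ gives $\|u_0^{\mathrm{hi}}\|_{\dot{H}^{s_k}} \lesssim N^{-(s-s_k)} \|u_0\|_{H^s}$. Combining this with the linear estimates of the preceding proposition,
\[
\|V(t) u_0^{\mathrm{hi}}\|_{L_x^k L_T^\infty \cap L_T^\infty \dot{H}_x^{s_k}} \lesssim \|u_0^{\mathrm{hi}}\|_{\dot{H}^{s_k}},
\]
together with the Sobolev embedding $\|u_0^{\mathrm{hi}}\|_{L^k} \lesssim \|u_0^{\mathrm{hi}}\|_{\dot{H}^{s_k}}$, the triangle inequality bounds the high-frequency contribution to the first two norms by $O(N^{-(s-s_k)} \|u_0\|_{H^s})$. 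For the third norm, choose an admissible triplet $(\alpha, p, q)$ with $p = k+$, $q = \infty-$, and $\alpha = -s_k + 0+$; such a triplet exists because $k \geq 4$ leaves the admissibility condition $2/p + 1/q \leq 1/2$ with just enough slack when perturbed off the endpoint. The corresponding linear estimate gives $\|D_x^{0+} V(t) u_0^{\mathrm{hi}}\|_{L_x^{k+} L_T^{\infty-}} \lesssim \|u_0^{\mathrm{hi}}\|_{\dot{H}^{s_k+0+}} \lesssim N^{-(s - s_k - 0+)} \|u_0\|_{H^s}$, again made small by taking $N$ large.

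For the low-frequency piece, $V(t)$ commutes with $Q_{\leq N}$, so $V(t)u_0^{\mathrm{lo}}$ remains frequency-localized at $|\xi| \lesssim N$ and Bernstein's inequality is available. For the first two norms I use the fundamental theorem of calculus
\[
V(t) u_0^{\mathrm{lo}} - u_0^{\mathrm{lo}} = -\int_0^t \mathcal{H} \partial_{xx} V(\tau) u_0^{\mathrm{lo}} \, d\tau,
\]
and bound each of $\|\mathcal{H}\partial_{xx} V(\tau) u_0^{\mathrm{lo}}\|_{L^k}$ and $\|\mathcal{H}\partial_{xx} V(\tau) u_0^{\mathrm{lo}}\|_{\dot{H}^{s_k}}$ by $C(N) \|u_0\|_{H^s}$ via two derivatives (each giving a factor of $N$ on the frequency-localized function), Bernstein, and $L^2$-unitarity of $V(\tau)$; integrating in $\tau \in [0,t] \subseteq [0,T]$ produces a prefactor $T \cdot C(N) \|u_0\|_{H^s}$. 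For the third norm, Minkowski's inequality (valid since $k+ \leq \infty-$) and Hölder in time yield
\[
\|D_x^{0+} V(t) u_0^{\mathrm{lo}}\|_{L_x^{k+} L_T^{\infty-}} \leq T^{\theta} \|D_x^{0+} V(t) u_0^{\mathrm{lo}}\|_{L_T^\infty L_x^{k+}} \lesssim T^{\theta} N^{\gamma} \|u_0\|_{L^2}
\]
for some $\theta, \gamma > 0$, using Bernstein on the frequency-localized $V(t) u_0^{\mathrm{lo}}$.

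The main technical point is the asymmetry of the $L_x^{k+} L_T^{\infty-}$ norm: one cannot obtain smallness directly from a single admissible estimate, but must combine a slightly subcritical admissible estimate (which yields the $N^{-(s-s_k-0+)}$ decay for the high part) with a Minkowski-plus-Hölder-in-time argument (which yields the $T^\theta$ decay for the low part). Once the existence of the perturbed admissible triplet is verified and the exponents are chosen consistently, selecting $N$ large and then $T$ small makes each of the three norms less than $\eta/3$.
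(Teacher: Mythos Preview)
Your treatment of the first two norms $\|V(t)u_0-u_0\|_{L_x^kL_T^\infty\cap L_T^\infty\dot H_x^{s_k}}$ is correct and essentially identical to the paper's proof: high/low split, smallness of the high part from $\|Q_{>N}u_0\|_{\dot H^{s_k}}\lesssim N^{-(s-s_k)}\|u_0\|_{H^s}$ plus the linear estimate, and smallness of the low part from the fundamental theorem of calculus (the paper phrases this as Duhamel for the equation $(\partial_t+\mathcal H\partial_{xx})u=-\mathcal H\partial_{xx}u_0$, $u(0)=0$, which is the same thing).

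For the third norm $\|D_x^{0+}V(t)u_0\|_{L_x^{k+}L_T^{\infty-}}$, however, there is a genuine gap in your low-frequency step. You write
\[
\|D_x^{0+}V(t)u_0^{\mathrm{lo}}\|_{L_x^{k+}L_T^{\infty-}}\le T^\theta\|D_x^{0+}V(t)u_0^{\mathrm{lo}}\|_{L_T^\infty L_x^{k+}},
\]
invoking Minkowski ``valid since $k+\le\infty-$''. But Minkowski for mixed norms runs the other way: when the outer exponent is no larger than the inner exponent, the mixed norm with the \emph{larger} exponent inside dominates. Concretely, $\|f\|_{L_x^pL_t^q}\ge\|f\|_{L_t^qL_x^p}$ whenever $p\le q$ (think of $p=1$, $q=\infty$: $\int_x\sup_t|f|\ge\sup_t\int_x|f|$). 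So after H\"older in time you are left with $\|f\|_{L_x^{k+}L_T^\infty}$, which is a genuine maximal norm and is \emph{not} bounded by $\|f\|_{L_T^\infty L_x^{k+}}$.

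The cleanest fix is the paper's: do not split this term at all. Applying H\"older in time inside the $L_x^{k+}$ norm and then the local smoothing estimate for an admissible triplet $(k+,q)$ with $q$ slightly larger than your target $\infty-$ gives directly
\[
\|D_x^{0+}V(t)u_0\|_{L_x^{k+}L_T^{\infty-}}\lesssim T^{0+}\|u_0\|_{\dot H^{s_k+}}\lesssim T^{0+}\|u_0\|_{H^s},
\]
with no frequency decomposition needed. (If you insist on keeping the split, the low-frequency piece can be handled by a Bernstein-in-$x$ argument through the convolution kernel of $Q_{\le N}$, which does commute with $\sup_t$; but this is more work than necessary.) Your high-frequency argument for this norm is fine and in fact already contains the admissible-triplet idea---you just do not need the low-frequency counterpart at all.
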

\begin{proof}
We proceed as in \cite{Vento2010}. Observe that $u\coloneqq V(t)u_{0}-u_{0}$
solves
\begin{align*}
(\partial_{t}+\mathcal{H}\partial_{xx})u & =-\mathcal{H}\partial_{xx}u_{0},\\
u(0) & =0.
\end{align*}
By Duhamel's formula, we have
\begin{align*}
V(t)Q_{<N}u_{0}-Q_{<N}u_{0} & =-\int_{0}^{t}V(t-s)\mathcal{H}\partial_{xx}Q_{<N}u_{0}ds\\
 & =-\int_{0}^{t}V(t)\mathcal{H}\partial_{xx}Q_{<N}u_{0}ds.
\end{align*}
Therefore, we have 
\[
\|V(t)Q_{<N}u_{0}-Q_{<N}u_{0}\|_{L_{x}^{k}L_{T}^{\infty}\cap L_{T}^{\infty}\dot{H}_{x}^{s_{k}}}\lesssim T2^{2N}\|u_{0}\|_{\dot{H}^{s_{k}}}.
\]
On the other hand, by the triangle inequality and linear estimates,
we have
\[
\|V(t)Q_{\geq N}u_{0}-Q_{\geq N}u_{0}\|_{L_{x}^{k}L_{T}^{\infty}\cap L_{T}^{\infty}\dot{H}_{x}^{s_{k}}}\lesssim N^{-(s-s_{k})}\|u_{0}\|_{H^{s}}.
\]
Only depending on $H^{s}$ norm of $u_{0}$, choose $N\gg1$ large
and then $T>0$ small to obtain the first estimate. For the second
estimate, use Sobolev embedding in $x$, H\"older's inequality in
$t$, and linear estimates to have
\[
\|D_{x}^{0+}V(t)u_{0}\|_{L_{x}^{k+}L_{T}^{\infty-}}\lesssim T^{0+}\|u_{0}\|_{\dot{H}^{s_{k}+}}\lesssim T^{0+}\|u_{0}\|_{H^{s}}
\]
We can choose $T>0$ small to conclude.
\end{proof}
\begin{cor}[Distorted linear estimates]
\label{cor:distorted linear estimates}Let $s>s_{k}$. For $u_{0}\in H^{s}$,
there exists $T=T(\|u_{0}\|_{H^{s}},s)>0$ and a nondecreasing polynomial
$p_{k}$ such that whenever $u$ is a solution to
\begin{align*}
(\partial_{t}+\mathcal{H}\partial_{xx})u+\pi(u_{0},u) & =\tilde{f}\\
u(0) & =\tilde{u}_{0}
\end{align*}
for $\tilde{u}_{0}\in\dot{H}^{s_{k}}$ and $\tilde{f}\in\dot{\mathcal{N}}_{T}^{s_{k}}$,
we have $u\in\dot{Z}_{T}^{s_{k}}$ with the bound
\[
\|u\|_{\dot{Z}_{T}^{s_{k}}}\leq p_{k}(\|u_{0}\|_{\dot{H}^{s_{k}}})\big(\|\tilde{u}_{0}\|_{\dot{H}^{s_{k}}}+\|\tilde{f}\|_{\dot{\mathcal{N}}_{T}^{s_{k}}}\big).
\]
If in addition $\tilde{u}_{0}\in H^{s}$ and $\tilde{f}\in\mathcal{N}_{T}^{s}$,
then we have $u\in Z_{T}^{s}$ with the bound
\[
\|u\|_{Z_{T}^{s}}\leq p_{k}(\|u_{0}\|_{\dot{H}^{s_{k}}})\big(\|\tilde{u}_{0}\|_{H^{s}}+\|\tilde{f}\|_{\mathcal{N}_{T}^{s}}\big).
\]
\end{cor}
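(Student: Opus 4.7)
The plan is to adapt Vento's proof of the homogeneous distorted linear estimate (\cite[Proposition 3.2]{Vento2010}), whose lifespan depends on the \emph{profile} of $u_0\in\dot H^{s_k}$, so that the lifespan becomes quantitatively controlled by $\|u_0\|_{H^s}$, and then to upgrade the resulting $\dot H^{s_k}$ bound to an $H^s$ bound by a linear persistence-of-regularity argument.

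For the $\dot H^{s_k}$ statement, I would set up a fixed-point argument on a ball in $\dot Z_T^{s_k}$ for the Duhamel formulation
\[
u(t)=V(t)\tilde u_0+\int_0^t V(t-\sigma)\bigl[\tilde f(\sigma)-\pi(u_0,u)(\sigma)\bigr]\,d\sigma.
\]
The linear estimates of Section~\ref{subsec:review of Vento's work} and the first inequality of Lemma~\ref{lem:nonlinearity estimate} reduce the contraction to the smallness of a maximal-type quantity attached to $u_0$, essentially of the form $\sup_j\|(V(\cdot)u_0)_{\ll j}\|_{L_x^kL_T^\infty}$ and a derivative variant. In Vento's original argument this smallness is obtained qualitatively by dominated convergence, which forces the lifespan to depend on the profile of $u_0$. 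Here I would replace that step by Lemma~\ref{lem:subcritical-V(t)-comparison}, which provides exactly the quantitative bound needed, with $T$ depending only on $\|u_0\|_{H^s}$. The contraction then closes on each short subinterval, and iterating over a finite number of such subintervals covering $[0,T]$ bootstraps to a global-in-$T$ bound polynomial in $\|u_0\|_{\dot H^{s_k}}\le \|u_0\|_{H^s}$.

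For the $H^s$ statement, since the equation is linear in $u$, I would apply $\langle D_x\rangle^{s-s_k}$ to both sides and use the paraproduct structure of $\pi$ (with $u_0$ at low frequencies and $u$ at comparable frequencies) to write
\[
(\partial_t+\mathcal H\partial_{xx})\bigl(\langle D_x\rangle^{s-s_k}u\bigr)+\pi\bigl(u_0,\langle D_x\rangle^{s-s_k}u\bigr)=\langle D_x\rangle^{s-s_k}\tilde f+R,
\]
where $R=[\langle D_x\rangle^{s-s_k},\pi(u_0,\cdot)]u$ is a low-high commutator. Because the Littlewood--Paley supports of $(u_0)_{\ll j}$ and $u_{\sim j}$ are separated, $R$ admits a paraproduct representation of the same form as $\pi$ and can be estimated in $\dot{\mathcal N}_T^{s_k}$ by a constant times $\|u_0\|_{\dot H^{s_k}}\|u\|_{\dot X_T^{s_k}}$ (already bounded by the first step) using exactly Lemma~\ref{lem:nonlinearity estimate}. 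Applying the $\dot H^{s_k}$ estimate to this new equation with forcing $\langle D_x\rangle^{s-s_k}\tilde f+R$ and data $\langle D_x\rangle^{s-s_k}\tilde u_0$, and combining with the $\dot H^{s_k}$ bound already obtained, yields the claimed $Z_T^s$ bound with the same polynomial $p_k(\|u_0\|_{\dot H^{s_k}})$.

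The main technical obstacle is the bookkeeping of the commutator $R$: the paraproduct $\pi$ is defined via sharp Littlewood--Paley projections and $\langle D_x\rangle^{s-s_k}$ is not a paramultiplier, so one must explicitly use the frequency separation to rewrite $R$ as a controllable modification of $\pi$, as opposed to invoking a generic commutator identity. I expect this to be the only point in the argument requiring any real care; everything else is either Vento's fixed point, the frequency-uniform choice of $T$ afforded by Lemma~\ref{lem:subcritical-V(t)-comparison}, or routine Besov-space summation already carried out in the proof of Lemma~\ref{lem:nonlinearity estimate}.
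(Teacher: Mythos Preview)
Your Duhamel formulation treats $\pi(u_0,u)$ as a forcing against the free propagator $V(t)$; via the first estimate of Lemma~\ref{lem:nonlinearity estimate} this only gives
\[
\|\pi(u_0,u)\|_{\dot{\mathcal N}_T^{s_k}}\lesssim\|u_0\|_{L_x^k}^k\|u\|_{\dot X_T^{s_k}},
\]
and $\|u_0\|_{L_x^k}$ is not small for large data, no matter how small $T$ is. So the contraction as you wrote it does not close. The paper (following Vento) does not proceed perturbatively: it works frequency by frequency after a gauge transform $w_j=e^{-iF_{\ll j}}Q_ju$ (with $F$ a primitive of $u_0^k$), which cancels the principal part of $\pi_j(u_0,u)$ and leaves four explicit remainder terms $A,B,C,D$. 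Two of these ($A,B$) are handled by choosing $J$ large depending on $\|u_0\|_{L^k}$; the other two ($C,D$) are the ones where $V(t)u_0$ and its $D_x^{0+}$ variant appear, and Lemma~\ref{lem:subcritical-V(t)-comparison} is used to make them small with $T=T(\|u_0\|_{H^s})$. Your mention of $\sup_j\|(V(\cdot)u_0)_{\ll j}\|_{L_x^kL_T^\infty}$ and ``a derivative variant'' suggests you may have had $C,D$ in mind, but the gauge step that produces them is essential and missing from your outline. Also, no iteration over subintervals is needed: the statement only asks for some $T>0$, and the paper obtains the bound directly on $[-T,T]$.

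For the $H^s$ upgrade, the paper does not commute with $\langle D_x\rangle^{s-s_k}$; it simply reruns the frequency-localized argument with weight $\langle 2^j\rangle^s$ in place of $2^{js_k}$, which goes through verbatim since all estimates are proved per dyadic piece. Your commutator route is not wrong in spirit (the low--high frequency separation in $\pi$ does make $[\langle D_x\rangle^{s-s_k},\pi(u_0,\cdot)]$ tractable), but it is a detour, and your claimed bound $\|R\|_{\dot{\mathcal N}_T^{s_k}}\lesssim\|u_0\|_{\dot H^{s_k}}\|u\|_{\dot X_T^{s_k}}$ has the wrong homogeneity (it should scale like $\|u_0\|^k$, not $\|u_0\|$), which is another symptom of not having unpacked the structure of $\pi$.
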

\begin{proof}
We only prove the first $\dot{H}^{s_{k}}$-critical estimate. The
remaining inhomogeneous estimate easily follows by mimicking the proof.
We proceed as in \cite{Vento2010}. We start from the estimate presented
in the proof of \cite[Proposition 3.2]{Vento2010}. With notations
$u_{j}=Q_{j}u$ and $u_{L,\ll j}=V(t)u_{0,\ll j}$ for any fixed $j\in\mathbb{Z}$,
our starting point is
\[
\|u_{j}\|_{\dot{\mathcal{S}}_{I}^{s_{k},0}\cap\dot{\mathcal{S}}_{I}^{s_{k},1}}\leq p_{k}(\|u_{0}\|_{\dot{H}^{s_{k}}})\big\{\|\tilde{u}_{0,j}\|_{\dot{H}^{s_{k}}}+\|\tilde{f}_{j}\|_{\dot{\mathcal{N}}_{T}^{s_{k}}}+(A+B+C+D)\big\}
\]
where 
\begin{align*}
A & =\|\partial_{x}(u_{0,\ll j})^{k}u_{j}\|_{\dot{\mathcal{N}}_{T}^{s_{k}}},\\
B & =\|(u_{0,\ll j})^{2k}u_{j}\|_{\dot{\mathcal{N}}_{T}^{s_{k}}},\\
C & =\|\partial_{x}[Q_{j},u_{0,\ll j}^{k}-u_{L,\ll j}^{k}]u_{\sim j}\|_{\dot{\mathcal{N}}_{T}^{s_{k}}},\\
D & =\|\partial_{x}[Q_{j},u_{L,\ll j}^{k}]u_{\sim j}\|_{\dot{\mathcal{N}}_{T}^{s_{k}}}.
\end{align*}
We show that $A+B\ll\|u_{j}\|_{\dot{X}_{T}^{s_{k}}}$. By Bernstein
estimates, we have 
\begin{align*}
\|\partial_{x}(u_{0,\ll j})^{k}u_{j}\|_{\dot{\mathcal{N}}_{T}^{s_{k}}} & \lesssim2^{j(s_{k}-\frac{1}{2})}2^{j-J}\|(u_{0,\ll j})^{k}\|_{L_{x}^{1}}\|u_{j}\|_{L_{x}^{\infty}L_{T}^{2}}\\
 & \lesssim2^{-J}\|u_{0}\|_{L_{x}^{k}}^{k}\|u_{j}\|_{\dot{X}_{T}^{s_{k}}}
\end{align*}
and 
\begin{align*}
\|(u_{0,\ll j})^{2k}u_{j}\|_{\dot{\mathcal{N}}_{T}^{s_{k}}} & \lesssim2^{j(s_{k}-\frac{1}{2})}\|(u_{0,\ll j})^{k}\|_{L_{x}^{1}}\|(u_{0,\ll j})^{k}\|_{L_{x}^{\infty}}\|u_{j}\|_{L_{x}^{\infty}L_{T}^{2}}\\
 & \lesssim2^{-J}\|u_{0}\|_{L_{x}^{k}}^{2k}\|v_{j}\|_{\dot{X}_{T}^{s_{k}}}.
\end{align*}
We choose sufficiently large $J=J(\|u_{0}\|_{L^{k}})$ to obtain $A+B\ll\|u_{j}\|_{\dot{X}_{T}^{s_{k}}}$.
Therefore, we have
\[
\|u_{j}\|_{\dot{\mathcal{S}}_{I}^{s_{k},0}\cap\dot{\mathcal{S}}_{I}^{s_{k},1}}\leq p_{k}(\|u_{0}\|_{\dot{H}^{s_{k}}})\big\{\|\tilde{u}_{0,j}\|_{\dot{H}^{s_{k}}}+\|\tilde{f}_{j}\|_{\dot{\mathcal{N}}_{T}^{s_{k}}}+(C+D)\big\}.
\]
From now on, we fix $J$ determined as above and claim that $C+D\ll\|u_{\sim j}\|_{\dot{X}_{T}^{s_{k}}}$
by choosing $T=T(\|u_{0}\|_{H^{s}})>0$ sufficiently small. On the
one hand, observe that 
\begin{align*}
C & \lesssim2^{j(s_{k}+\frac{1}{2})}\|u_{0,\ll j}^{k}-u_{L,\ll j}^{k}\|_{L_{x}^{1}L_{T}^{\infty}}\|u_{\sim j}\|_{L_{x}^{\infty}L_{T}^{2}}\\
 & \lesssim\|u_{0}-u_{L}\|_{L_{x}^{k}L_{T}^{\infty}}\|u_{0,\ll j}\|_{L_{x}^{k}}^{k-1}\|u_{\sim j}\|_{\dot{X}_{T}^{s_{k}}}.
\end{align*}
On the other hand, by the commutator estimate \cite[Lemma 2.4]{BurqPlanchon2006},
we obtain
\begin{align*}
D & \lesssim2^{j(s_{k}-\frac{1}{2})}\|\partial_{x}(u_{L,\ll j})^{k}\|_{L_{x}^{1+}L_{T}^{\infty-}}\|u_{\sim j}\|_{L_{x}^{\infty-}L_{T}^{2+}}\\
 & \lesssim2^{j(s_{k}-\frac{1}{2})}\|\partial_{x}u_{L,\ll j}\|_{L_{x}^{k+}L_{T}^{\infty-}}\|u_{L,\ll j}\|_{L_{x}^{k}L_{T}^{\infty}}^{k-1}\|u_{\sim j}\|_{L_{x}^{\infty-}L_{T}^{2+}}\\
 & \lesssim\|D_{x}^{0+}u_{L,\ll j}\|_{L_{x}^{k+}L_{T}^{\infty-}}\|u_{L,\ll j}\|_{L_{x}^{k}L_{T}^{\infty}}^{k-1}\|u_{\sim j}\|_{\dot{\mathcal{S}}_{T}^{s_{k},1-}}.
\end{align*}
Applying Lemma \ref{lem:subcritical-V(t)-comparison}, there is $T=T(\|u_{0}\|_{H^{s}})>0$
such that $C+D\ll\|u_{\sim j}\|_{\dot{X}_{T}^{s_{k}}}$. This yields
\[
\|u_{j}\|_{\dot{\mathcal{S}}_{I}^{s_{k},0}\cap\dot{\mathcal{S}}_{I}^{s_{k},1}}\leq p_{k}(\|u_{0}\|_{\dot{H}^{s_{k}}})(\|\tilde{u}_{0,j}\|_{\dot{H}^{s_{k}}}+\|\tilde{f}_{j}\|_{\dot{\mathcal{N}}_{T}^{s_{k}}}).
\]
Taking $\ell_{j}^{2}$ summation proves the assertion for $\dot{\mathcal{S}}_{T}^{s_{k},0}\cap\dot{\mathcal{S}}_{T}^{s_{k},1}$
norm.

In order to estimate $\dot{Z}_{T}^{s_{k}}$ norm of $u$, observe
that $u$ satisfies
\[
(\partial_{t}+\mathcal{H}\partial_{xx})u=-\pi(u_{0},u)+\tilde{f}.
\]
From Duhamel's formula and Lemma \ref{lem:nonlinearity estimate},
we obtain
\begin{align*}
\|u\|_{L_{x}^{k}L_{T}^{\infty}\cap L_{T}^{\infty}\dot{H}_{x}^{s_{k}}} & \lesssim\|V(t)\tilde{u}_{0}\|_{L_{x}^{k}L_{T}^{\infty}\cap L_{t}^{\infty}\dot{H}_{x}^{s_{k}}}+\|u_{0}\|_{L_{x}^{k}}^{k}\|u\|_{\dot{X}_{T}^{s_{k}}}+\|\tilde{f}\|_{\dot{\mathcal{N}}_{T}^{s_{k}}}\\
 & \leq p_{k}(\|u_{0}\|_{\dot{H}^{s_{k}}})(\|\tilde{u}_{0}\|_{\dot{H}^{s_{k}}}+\|\tilde{f}\|_{\dot{\mathcal{N}}_{T}^{s_{k}}}).
\end{align*}
To check the continuity condition \eqref{eq:continuity assumption 1 for (k,infty)},
consider the estimate
\begin{align*}
 & \|u-u(t_{0})\|_{L_{x}^{k}L_{I}^{\infty}\cap L_{I}^{\infty}\dot{H}_{x}^{s_{k}}}\\
 & \lesssim\|V(t-t_{0})u(t_{0})-u(t_{0})\|_{L_{x}^{k}L_{I}^{\infty}\cap L_{I}^{\infty}\dot{H}_{x}^{s_{k}}}+\|\pi(u_{0},u)\|_{\dot{\mathcal{N}}_{I}^{s_{k}}}+\|\tilde{f}\|_{\dot{\mathcal{N}}_{I}^{s_{k}}}
\end{align*}
for any interval $I\subset[-T,T]$ with $t_{0}\in I$. We then use
Lemma \ref{lem:subcritical-V(t)-comparison} for the first term and
Lebesgue's DCT for the remaining terms.
\end{proof}
\begin{rem}
\label{rem:remark 3.6}If one merely assumes that $u_{0}\in\dot{H}^{s_{k}}$
in Lemma \ref{lem:subcritical-V(t)-comparison}, then $T$ depends
on the profile of $u_{0}$. Hence, the corresponding distorted estimate
is only valid on the time interval whose length depends on the profile
of $u_{0}$. More precisely, one has the following. Let $u_{0}\in\dot{H}^{s_{k}}$
and $\eta>0$ be fixed. Then, there exists $T=T(u_{0},\eta)>0$ and
$r=r(u_{0},\eta)>0$ such that 
\[
\|V(t)u_{0}^{\ast}-u_{0}^{\ast}\|_{L_{x}^{k}L_{T}^{\infty}\cap L_{T}^{\infty}\dot{H}_{x}^{s_{k}}}+\|D_{x}^{0+}V(t)u_{0}^{\ast}\|_{L_{x}^{k+}L_{T}^{\infty-}}<\eta
\]
holds for $u_{0}^{\ast}\in\dot{H}^{s_{k}}$ with $\|u_{0}^{\ast}-u_{0}\|_{\dot{H}^{s_{k}}}<r$.
As a direct consequence, the distorted estimate for $u_{0}^{\ast}\in\dot{H}^{s_{k}}$
holds with $T=T(u_{0})>0$. See \cite{Vento2010} for details.
\end{rem}
We now turn into obtaining $H^{s}$ ($s>s_{k})$ norm dependent lifespan
of a solution. Denote by $U(t)u_{0}$ the solution to the equation
\begin{align*}
\partial_{t}u+\mathcal{H}\partial_{xx}u+\pi(u_{0},u) & =0,\\
u(0) & =u_{0}.
\end{align*}
Following the argument in \cite[Section 4.2]{Vento2010}, we should
obtain smallness of 
\[
\|U(t)u_{0}\|_{\dot{X}_{T}^{s_{k}}}\quad\text{and}\quad\|U(t)u_{0}-u_{0}\|_{L_{x}^{k}L_{T}^{\infty}\cap L_{T}^{\infty}\dot{H}_{x}^{s_{k}}}
\]
by choosing $T=T(\|u_{0}\|_{H^{\frac{1}{2}}})>0$ small.
\begin{lem}
\label{lem:subcritical U(t) comparison}Let $s>s_{k}$, $u_{0}\in H^{s}$,
and $\eta>0$. Then, there exists $T=T(\|u_{0}\|_{H^{s}},s,\eta)>0$
such that 
\[
\|U(t)u_{0}\|_{\dot{X}_{T}^{s_{k}}}+\|U(t)u_{0}-u_{0}\|_{L_{x}^{k}L_{T}^{\infty}\cap L_{T}^{\infty}\dot{H}_{x}^{s_{k}}}<\eta.
\]
\end{lem}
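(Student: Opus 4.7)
The plan is to reduce the statement for the distorted propagator $U(t)$ to the corresponding statement for the undistorted propagator $V(t)$, which has just been established in Lemma \ref{lem:subcritical-V(t)-comparison}. I would first set $w(t) := U(t)u_{0}-V(t)u_{0}$. Subtracting the equations satisfied by $U(t)u_{0}$ and $V(t)u_{0}$ shows that $w$ solves the distorted Cauchy problem
\[
(\partial_{t}+\mathcal{H}\partial_{xx})w+\pi(u_{0},w)=-\pi(u_{0},V(t)u_{0}),\qquad w(0)=0.
\]
Applying Corollary \ref{cor:distorted linear estimates} on $[-T,T]$, for $T$ below the threshold $T_{0}(\|u_{0}\|_{H^{s}})$ that corollary provides, together with the first nonlinear estimate in Lemma \ref{lem:nonlinearity estimate}, yields
\[
\|w\|_{\dot{Z}_{T}^{s_{k}}}\leq p_{k}(\|u_{0}\|_{\dot{H}^{s_{k}}})\,\|\pi(u_{0},V(t)u_{0})\|_{\dot{\mathcal{N}}_{T}^{s_{k}}}\lesssim p_{k}(\|u_{0}\|_{\dot{H}^{s_{k}}})\,\|u_{0}\|_{L_{x}^{k}}^{k}\,\|V(t)u_{0}\|_{\dot{X}_{T}^{s_{k}}}.
\]
Thus it remains to show that $\|V(t)u_{0}\|_{\dot{X}_{T}^{s_{k}}}$ can be made arbitrarily small by shrinking $T$, with $T$ depending only on $\|u_{0}\|_{H^{s}}$ and the target smallness $\eta$.

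For this smallness, I would repeat the high/low frequency split used in Lemma \ref{lem:subcritical-V(t)-comparison}, writing $u_{0}=Q_{<N}u_{0}+Q_{\geq N}u_{0}$. The high-frequency part is controlled by the standard linear estimate,
\[
\|V(t)Q_{\geq N}u_{0}\|_{\dot{X}_{T}^{s_{k}}}\lesssim\|Q_{\geq N}u_{0}\|_{\dot{H}^{s_{k}}}\lesssim N^{-(s-s_{k})}\|u_{0}\|_{H^{s}},
\]
which is small for large $N$ thanks to the subcritical gap $s>s_{k}$. For the low-frequency piece $V(t)Q_{<N}u_{0}$, the decisive structural observation is that $\dot{X}_{T}^{s_{k}}=\dot{\mathcal{S}}_{T}^{s_{k},\epsilon}\cap\dot{\mathcal{S}}_{T}^{s_{k},1}$ involves only the time-Lebesgue exponents $L_{T}^{2/\theta}$ with $\theta\in[\epsilon,1]$, none of which is $L_{T}^{\infty}$. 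Since $V(t)Q_{<N}u_{0}\in\dot{X}_{\mathbb{R}}^{s_{k}}$ by the global-in-time local smoothing estimate, dominated convergence in $t$ applied to each dyadic block forces $\|V(t)Q_{<N}u_{0}\|_{\dot{X}_{T}^{s_{k}}}\to0$ as $T\to0$, with a rate that depends on $N$ and $\|u_{0}\|_{L^{2}}$. Choosing $N$ large first and then $T$ small (depending on $N$ and $\|u_{0}\|_{H^{s}}$) delivers the desired smallness.

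Finally, the triangle inequality applied to $U(t)u_{0}=V(t)u_{0}+w$ gives
\[
\|U(t)u_{0}\|_{\dot{X}_{T}^{s_{k}}}+\|U(t)u_{0}-u_{0}\|_{L_{x}^{k}L_{T}^{\infty}\cap L_{T}^{\infty}\dot{H}_{x}^{s_{k}}}\leq\|V(t)u_{0}\|_{\dot{X}_{T}^{s_{k}}}+\|V(t)u_{0}-u_{0}\|_{L_{x}^{k}L_{T}^{\infty}\cap L_{T}^{\infty}\dot{H}_{x}^{s_{k}}}+2\|w\|_{\dot{Z}_{T}^{s_{k}}}.
\]
The first two terms on the right are handled by the previous paragraph and Lemma \ref{lem:subcritical-V(t)-comparison} respectively, and the third by the \emph{a priori} bound on $w$. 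The main subtlety I expect is controlling the low-frequency piece $\|V(t)Q_{<N}u_{0}\|_{\dot{X}_{T}^{s_{k}}}$ for fixed $N$ as $T\to0$; this step is precisely where the absence of any $L_{T}^{\infty}$ factor in $\dot{X}_{T}^{s_{k}}$ is crucial, since it is this endpoint that obstructed direct estimates of the nonlinearity and forced Vento to introduce the distortion $\pi(u_{0},\cdot)$ in the first place.
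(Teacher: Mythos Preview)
Your proposal is correct and follows the same route as the paper: reduce from $U(t)$ to $V(t)$ via the difference $w=U(t)u_{0}-V(t)u_{0}$, and exploit that $\dot{X}_{T}^{s_{k}}$ contains no $L_{T}^{\infty}$ factor to force smallness as $T\to0$. The organization differs slightly. The paper bounds $\|U(t)u_{0}\|_{\dot{X}_{T}^{s_{k}}}$ in one line via Sobolev embedding in $x$ and H\"older in $t$,
\[
\|U(t)u_{0}\|_{\dot{X}_{T}^{s_{k}}}\lesssim T^{0+}\|U(t)u_{0}\|_{\mathcal{S}_{T}^{s_{k}+,0}\cap\mathcal{S}_{T}^{s_{k}+,1}}\leq T^{0+}p_{k}(\|u_{0}\|_{\dot{H}^{s_{k}}})\|u_{0}\|_{\dot{H}^{s_{k}+}},
\]
and then controls $[U(t)-V(t)]u_{0}$ by $\|u_{0}\|_{L_{x}^{k}}^{k}\|U(t)u_{0}\|_{\dot{X}_{T}^{s_{k}}}$. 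You instead bound $\|V(t)u_{0}\|_{\dot{X}_{T}^{s_{k}}}$ first (via a high/low split) and then control $w$ by $\|u_{0}\|_{L_{x}^{k}}^{k}\|V(t)u_{0}\|_{\dot{X}_{T}^{s_{k}}}$ through Corollary~\ref{cor:distorted linear estimates}. These are mirror images of the same argument.

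One small point: dominated convergence alone gives $\|V(t)Q_{<N}u_{0}\|_{\dot{X}_{T}^{s_{k}}}\to0$ but not a rate that is \emph{uniform} over $\{u_{0}:\|u_{0}\|_{L^{2}}\leq C\}$, which you need for $T$ to depend only on $\|u_{0}\|_{H^{s}}$. The fix is immediate: Bernstein gives $\|Q_{<N}u_{0}\|_{\dot{H}^{s_{k}+}}\lesssim 2^{N\cdot0+}\|u_{0}\|_{L^{2}}$, and then the same Sobolev/H\"older trick yields $\|V(t)Q_{<N}u_{0}\|_{\dot{X}_{T}^{s_{k}}}\lesssim T^{0+}2^{N\cdot0+}\|u_{0}\|_{L^{2}}$. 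With this in place your high/low split is in fact unnecessary and your argument collapses to the paper's.
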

\begin{proof}
Recall that $\dot{X}_{T}^{s_{k}}$ norm does not contain $L_{T}^{\infty}$
component. Hence, we can use Sobolev embedding in space and H\"older's
inequality in time to have 
\[
\|U(t)u_{0}\|_{\dot{X}_{T}^{s_{k}}}\lesssim T^{0+}\|U(t)u_{0}\|_{\mathcal{S}_{T}^{s_{k}+,0}\cap\mathcal{S}_{T}^{s_{k}+,1}}\leq T^{0+}p_{k}(\|u_{0}\|_{\dot{H}^{s_{k}}})\|u_{0}\|_{\dot{H}^{s_{k}+}}.
\]
Possibly taking $T$ much smaller (only depending on $\eta$ and $H^{s}$
norm of $u_{0}$), we have 
\[
\|U(t)u_{0}\|_{\dot{X}_{T}^{s_{k}}}<\eta,
\]
which is the first assertion. In order to show the second assertion,
we observe that 
\begin{align*}
 & \|U(t)u_{0}-u_{0}\|_{L_{x}^{k}L_{T}^{\infty}\cap L_{T}^{\infty}\dot{H}_{x}^{s_{k}}}\\
 & \lesssim\|V(t)u_{0}-u_{0}\|_{L_{x}^{k}L_{T}^{\infty}\cap L_{T}^{\infty}\dot{H}_{x}^{s_{k}}}+\|[U(t)-V(t)]u_{0}\|_{L_{x}^{k}L_{T}^{\infty}\cap L_{T}^{\infty}\dot{H}_{x}^{s_{k}}}\\
 & \lesssim\|V(t)u_{0}-u_{0}\|_{L_{x}^{k}L_{T}^{\infty}\cap L_{T}^{\infty}\dot{H}_{x}^{s_{k}}}+\|u_{0}\|_{L_{x}^{k}}^{k}\|U(t)u_{0}\|_{\dot{X}_{T}^{s_{k}}}.
\end{align*}
Use the first assertion and Lemma \ref{lem:subcritical-V(t)-comparison}.
\end{proof}
\begin{cor}
\label{cor:subcritical lifespan of s_k solution}Let $u_{0}$ be a
initial data in $H^{s}$. Then, there exists $T=T(\|u_{0}\|_{H^{s}},s)>0$
such that $u_{0}$ admits a unique  $\dot{H}^{s_{k}}$-solution $u$
defined on $[0,T]$.
\end{cor}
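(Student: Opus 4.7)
The plan is to run a contraction argument on the distorted equation, exactly as in Vento's critical theory, but with the reference state $U(t)u_0$ now chosen on a time interval whose length depends only on the subcritical norm $\|u_0\|_{H^s}$. The whole point of Lemma \ref{lem:subcritical U(t) comparison} and Corollary \ref{cor:distorted linear estimates} is to make each of the two ``smallness'' ingredients available with such an $H^s$-dependent lifespan, so the remaining work is assembly.

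Concretely, set $R=\|u_0\|_{H^s}$ and write $u=U(t)u_0+v$, so that $v$ formally satisfies
\[
(\partial_t+\mathcal{H}\partial_{xx})v+\pi(u_0,v)=[\pi(u_0,u)-\pi(u,u)]+g(u),\qquad v(0)=0.
\]
Define the map $\Psi$ by sending $v$ to the solution of this distorted linear equation with source term $\tilde f(u)\coloneqq[\pi(u_0,u)-\pi(u,u)]+g(u)$. Corollary \ref{cor:distorted linear estimates} gives
\[
\|\Psi(v)\|_{\dot{Z}^{s_k}_T}\leq p_k(\|u_0\|_{\dot H^{s_k}})\,\|\tilde f(u)\|_{\dot{\mathcal N}^{s_k}_T}.
\]
I will work on the closed ball
\[
B\coloneqq\{v\in\dot{Z}^{s_k}_T:v(0)=0,\ \|v\|_{\dot{Z}^{s_k}_T}\leq r\}.
\]
Given $v\in B$, Lemma \ref{lem:subcritical U(t) comparison} (applied with a parameter $\eta$ to be chosen) and the embedding $\dot{Z}^{s_k}_T\hookrightarrow L_x^kL_T^\infty\cap \dot{X}^{s_k}_T$ yield the three bounds
\[
\|u\|_{\dot{X}^{s_k}_T}\leq\eta+r,\qquad \|u-u_0\|_{L_x^kL_T^\infty}\leq\eta+r,\qquad \|u\|_{L_x^kL_T^\infty}\leq \|u_0\|_{L_x^k}+\eta+r\leq CR+r,
\]
where Sobolev embedding was used in the last one. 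Inserting these into the nonlinear estimates of Lemma \ref{lem:nonlinearity estimate} gives
\[
\|\pi(u_0,u)-\pi(u,u)\|_{\dot{\mathcal N}^{s_k}_T}+\|g(u)\|_{\dot{\mathcal N}^{s_k}_T}\lesssim (CR+r)^{k-1}(\eta+r)^2,
\]
so that $\|\Psi(v)\|_{\dot{Z}^{s_k}_T}\leq C(R)(\eta+r)^2$. A parallel computation using the difference versions in Lemma \ref{lem:nonlinearity estimate} shows that for $v_1,v_2\in B$,
\[
\|\Psi(v_1)-\Psi(v_2)\|_{\dot{Z}^{s_k}_T}\leq C(R)(\eta+r)\,\|v_1-v_2\|_{\dot{Z}^{s_k}_T}.
\]
I now first fix $r=r(R)$ so small that $C(R)r\leq \tfrac{1}{4}$, then apply Lemma \ref{lem:subcritical U(t) comparison} to obtain $T=T(R,s)$ with an $\eta=\eta(R)\ll r$ satisfying $C(R)\eta\leq \tfrac{1}{4}$. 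This makes $\Psi$ a contraction on $B$, and its unique fixed point $v^*$ produces the desired $\dot H^{s_k}$-solution $u=U(t)u_0+v^*\in \dot{Z}_T^{s_k}$ on $[0,T]$.

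Uniqueness within the $\dot H^{s_k}$ class is standard: if $u^{(1)},u^{(2)}$ are two such solutions on $[0,T]$ sharing initial data $u_0$, then $w=u^{(1)}-u^{(2)}$ satisfies the distorted equation with zero data and an inhomogeneity controlled by the difference estimates of Lemma \ref{lem:nonlinearity estimate}. A continuity-in-$T$ argument, shrinking the time interval if necessary to absorb $\|w\|_{\dot{Z}^{s_k}_{T'}}$ into the left-hand side, forces $w\equiv 0$. The single real obstacle here is exactly the one Vento identified at the critical level: neither $L_x^kL_T^\infty$ nor $\dot{X}^{s_k}_T$ shrinks to zero as $T\to 0$ in a profile-independent manner. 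That is why I must iterate around $U(t)u_0$ rather than around $V(t)u_0$ or $0$, and why the subcritical gain $s>s_k$ in Lemmas \ref{lem:subcritical-V(t)-comparison}--\ref{lem:subcritical U(t) comparison} is essential: it converts the Bernstein loss $T\cdot 2^{2N}$ versus the high-frequency tail $N^{-(s-s_k)}$ into an $H^s$-norm-only lifespan, which is precisely what the corollary asserts.
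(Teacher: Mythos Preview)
Your proof is correct and follows essentially the same approach as the paper: both run a contraction argument for the distorted equation using Corollary \ref{cor:distorted linear estimates}, Lemma \ref{lem:nonlinearity estimate}, and Lemma \ref{lem:subcritical U(t) comparison}, with the only cosmetic difference that you iterate on the fluctuation $v=u-U(t)u_0$ in a $\dot{Z}_T^{s_k}$-ball while the paper iterates on $u$ itself in the recentred norm $\|u\|_{Y}=\|u\|_{\dot{X}_T^{s_k}}+\|u-u_0\|_{L_x^kL_T^\infty}+\|u-u_0\|_{L_T^\infty\dot{H}_x^{s_k}}$.
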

\begin{proof}
Consider the operator
\[
[\Phi u](t)=U(t)u_{0}+\int_{0}^{t}U(t-t')f(u(t'))dt',
\]
where $f(u)=\pi(u_{0},u)-\pi(u,u)+g(u)$. We will iterate in the space
\[
B_{\dot{X}_{T}^{s_{k}}}(0;\eta)\cap B_{L_{x}^{k}L_{T}^{\infty}}(u_{0};\eta)\cap B_{L_{T}^{\infty}\dot{H}_{x}^{s_{k}}}(u_{0};\eta)
\]
with $0<\eta\ll\min\{1,\|u_{0}\|_{\dot{H}^{s_{k}}}\}$ chosen later.
For convenience, denote 
\[
\|u\|_{Y}\coloneqq\|u\|_{\dot{X}_{T}^{s_{k}}}+\|u-u_{0}\|_{L_{x}^{k}L_{T}^{\infty}}+\|u-u_{0}\|_{L_{T}^{\infty}\dot{H}_{x}^{s_{k}}}.
\]
Taking $T=T(\|u_{0}\|_{H^{s}},s,\eta)>0$ sufficiently small, combining
with Corollary \ref{cor:distorted linear estimates} with Lemma \ref{lem:nonlinearity estimate},
there exists $C=C(\|u_{0}\|_{\dot{H}^{s_{k}}})$ such that 
\[
\|\Phi u\|_{Y}\leq\|U(t)u_{0}\|_{Y}+C\|u\|_{Y}^{2}.
\]
On the other hand, since
\begin{align*}
\|\Phi u_{1}-\Phi u_{2}\|_{\dot{Z}_{T}^{s_{k}}} & \leq C\|f(u_{1})-f(u_{2})\|_{\dot{\mathcal{N}}_{T}^{s_{k}}}
\end{align*}
with 
\begin{multline*}
f(u_{1})-f(u_{2})=[\pi(u_{0},u_{1}-u_{2})-\pi(u_{1},u_{1}-u_{2})]-[\pi(u_{1},u_{2})-\pi(u_{2},u_{2})]\\
+[g(u_{1})-g(u_{2})],
\end{multline*}
we have
\[
\|\Phi u_{1}-\Phi u_{2}\|_{\dot{Z}_{T}^{s_{k}}}\leq C\|u_{1}-u_{2}\|_{\dot{Z}_{T}^{s_{k}}}(\|u_{1}\|_{\dot{X}_{T}^{s_{k}}}+\|u_{2}\|_{\dot{X}_{T}^{s_{k}}}).
\]
Therefore, we choose $\eta>0$ small and then $T=T(\|u_{0}\|_{H^{s}},s)>0$
much smaller (to guarantee smallness of $\|U(t)u_{0}\|_{Y}$ by Lemma
\ref{lem:subcritical U(t) comparison}), we see that $\Phi$ becomes
a well-defined contraction on $B_{Y}(\eta)$. By a fixed-point procedure,
we can find a solution in $\dot{Z}_{T}^{s_{k}}$. The remaining part
of the proof is standard.
\end{proof}
So far, if the initial data $u_{0}$ lies in $H^{s}$ with $s>s_{k}$,
we have constructed $\dot{H}^{s_{k}}$ solution with the lifespan
only depending on $\|u_{0}\|_{H^{s}}$. It turns out that this $\dot{H}^{s_{k}}$
solution can be upgraded to the $H^{s}$ solution.
\begin{prop}[Persistence of regularity]
\label{prop:persistence of regularity}Let $s>s_{k}$. Suppose that
we have a $\dot{H}^{s_{k}}$ solution $u\in\dot{Z}_{I}^{s_{k}}$ for
some compact time interval $I$ with $u(t_{0})\in H^{s}$ for some
$t_{0}\in I$. Then, $u\in Z_{I}^{s}$.
\end{prop}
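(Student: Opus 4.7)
The plan is to propagate $H^s$ regularity from $t_0$ throughout the compact interval $I$ by a finite induction over a partition of $I$ into subintervals on which the critical norms of $u$ are small, combining the inhomogeneous distorted linear estimate of Corollary~\ref{cor:distorted linear estimates} with $H^s$-level versions of the paraproduct estimates in Lemma~\ref{lem:nonlinearity estimate}.

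First, since $u\in\dot{Z}_I^{s_k}$ on the compact interval $I$, the continuity property \eqref{eq:continuity assumption 1 for (k,infty)} built into $\dot{Z}_I^{s_k}$ together with absolute continuity of the $\dot{X}_I^{s_k}$ norm in the time interval produce a finite partition $I=\bigcup_{j=0}^{N-1} I_j$ into closed intervals $I_j=[\tau_j,\tau_{j+1}]$ meeting only at endpoints (with $t_0$ among the $\tau_j$) such that on each $I_j$,
\[
\|u\|_{\dot{X}_{I_j}^{s_k}}+\|u-u(\tau_j)\|_{L_x^k L_{I_j}^\infty}<\eta,
\]
where $\eta=\eta(\|u\|_{L_I^\infty\dot{H}^{s_k}})>0$ is a small parameter to be chosen at the end. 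The number $N$ depends only on $|I|$ and $\|u\|_{\dot{Z}_I^{s_k}}$. Using compactness of the orbit $\{u(t):t\in I\}$ in $\dot{H}^{s_k}$ and Remark~\ref{rem:remark 3.6}, refining further if necessary, I can arrange simultaneously that the critical distorted estimate with pivot $u(\tau_j)$ is valid on each $I_j$.

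Next, I propagate regularity by induction on $j$, running both forward and backward from $t_0$. Assume $u(\tau_j)\in H^s$. On $I_j$, $u$ solves the distorted equation \eqref{eq:distorted eq} with pivot $u(\tau_j)$, data $u(\tau_j)$, and source $\tilde f=[\pi(u(\tau_j),u)-\pi(u,u)]+g(u)$. The $H^s$-inhomogeneous part of Corollary~\ref{cor:distorted linear estimates} yields
\[
\|u\|_{Z_{I_j}^s}\leq p_k(\|u\|_{L_I^\infty\dot{H}^{s_k}})\Bigl(\|u(\tau_j)\|_{H^s}+\|\pi(u(\tau_j),u)-\pi(u,u)\|_{\mathcal{N}_{I_j}^s}+\|g(u)\|_{\mathcal{N}_{I_j}^s}\Bigr).
\]
The two nonlinear terms are estimated by inhomogeneous $H^s$ analogues of Lemma~\ref{lem:nonlinearity estimate}, obtained by running Vento's paraproduct decomposition with the $D_x^s$ weight placed on the highest-frequency factor (absorbed by the $L_t^\infty H_x^s$ or $\dot{X}^s$ component of $\|u\|_{Z_{I_j}^s}$), while the remaining factors are controlled by the subdivision smallness. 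Schematically,
\[
\|\pi(u(\tau_j),u)-\pi(u,u)\|_{\mathcal{N}_{I_j}^s}+\|g(u)\|_{\mathcal{N}_{I_j}^s}\lesssim_{\|u\|_{L_I^\infty\dot{H}^{s_k}}}\eta\,\|u\|_{Z_{I_j}^s}.
\]
Choosing $\eta$ small enough (depending only on $\|u\|_{L_I^\infty\dot{H}^{s_k}}$), the nonlinear contribution is absorbed into the left-hand side, yielding $\|u\|_{Z_{I_j}^s}\lesssim\|u(\tau_j)\|_{H^s}$. In particular $u(\tau_{j+1})\in H^s$, closing the induction; summing over the finite partition gives $u\in Z_I^s$.

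The main technical obstacle is the $H^s$-level nonlinear estimate, which is not recorded in Lemma~\ref{lem:nonlinearity estimate}. The argument parallels Vento's proof: for the resonant term $g(u)$ where two factors share the highest frequency, the weight $D_x^s$ is split symmetrically and one uses the $\dot{\mathcal{B}}_{2k}^{\cdot,2}(L_{I_j}^4)$ structure together with the $\ell^2\hookrightarrow\ell^4$ embedding exactly as in the $s_k$-case; for $\pi(u(\tau_j),u)-\pi(u,u)$ one places the full $D_x^s$ on the single resonant factor and keeps the remaining $k$ factors in $L_x^k L_{I_j}^\infty$, so that $\|u-u(\tau_j)\|_{L_x^k L_{I_j}^\infty}<\eta$ supplies the smallness. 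A secondary subtlety is establishing the subdivision uniformly over all of $I$ rather than only near $t_0$, which is handled by compactness of the orbit in $\dot{H}^{s_k}$ together with Remark~\ref{rem:remark 3.6}.
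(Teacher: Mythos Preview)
Your proof is correct and uses the same essential ingredients as the paper: the inhomogeneous distorted linear estimate with pivot in $\dot{H}^{s_k}$ (via Remark~\ref{rem:remark 3.6} and compactness of the orbit), together with $H^s$-level analogues of Lemma~\ref{lem:nonlinearity estimate}. The organization, however, is genuinely different. The paper argues by contradiction with a maximal $H^s$-existence time: it invokes Theorem~\ref{thm:LWP} to construct an $H^s$ solution $\tilde u$ on a maximal interval $[0,T_1)$, and if $T_1\le T$ it uses a \emph{single} short interval $[t_1,T_1]$ near the putative blowup time (chosen via Remark~\ref{rem:remark 3.6}) on which the same absorption estimate you write down shows $\sup_{t<T_1}\|u(t)\|_{H^s}<\infty$, hence $u(T_1)\in H^s$ by weak limits, contradicting maximality. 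Your route instead partitions the whole of $I$ in advance and runs a finite forward/backward induction. Your approach is more constructive and yields an explicit bound $\|u\|_{Z_I^s}\lesssim C^N\|u(t_0)\|_{H^s}$ with $N$ depending on the $\dot{Z}_I^{s_k}$ profile of $u$; the paper's maximal-time argument avoids tracking the growth through $N$ steps and sidesteps the need to uniformly subdivide all of $I$. One small imprecision: the implicit constant in your schematic nonlinear bound should depend on $\|u\|_{\dot{Z}_I^{s_k}}$ (in particular on $\|u\|_{L_x^kL_I^\infty}$), not merely on $\|u\|_{L_I^\infty\dot{H}^{s_k}}$; the paper records the precise form as $\|\tilde f\|_{\mathcal{N}^s}\lesssim\|u-u(t_1)\|_{L_x^kL^\infty}^k\|u\|_{X^s}+\|u\|_{L_x^kL^\infty}^{k-1}\|u\|_{\dot{X}^{s_k}}\|u\|_{X^s}$.
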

\begin{proof}
By time translation and reversing symmetries, we may assume that $I=[0,T]$
and $t_{0}=0$. Let $u_{0}\coloneqq u(t_{0})\in H^{s}$. Applying
the critical well-posedness (Theorem \ref{thm:LWP}), we can construct
$H^{s}$ solution $\tilde{u}$ with initial data $u_{0}\in H^{s}$.
Suppose that this $H^{s}$ solution $\tilde{u}$ is defined on $[0,T_{1})$
where $T_{1}>0$ is chosen to be maximal. If $T_{1}>T$, then $\tilde{u}=u$
on $[0,T]$ by uniqueness, so the conclusion follows easily.

We now consider the case when $T_{1}\leq T$. In this case, uniqueness
only tells us that $\tilde{u}=u$ on $[0,T_{1})$ and we do not know
whether $\tilde{u}$ can be defined at time $T_{1}$ or not. Due to
continuity of $t\mapsto u(t)\in\dot{H}^{s_{k}}$ at $t=T_{1}$ and
Remark \ref{rem:remark 3.6}, there exists $\delta>0$ such that for
any $t_{1}\in[T_{1}-\delta,T_{1}]$ the following distorted linear
estimate is valid on the interval $[T_{1}-\delta,T_{1}]$: if $u$
solves
\[
(\partial_{t}+\mathcal{H}\partial_{xx})u+\pi(u(t_{1}),u)=\tilde{f}
\]
then we have the bound
\[
\|u\|_{Z_{[t_{1},T_{1}-\eta]}^{s}}\lesssim\|u(t_{1})\|_{H^{s}}+\|\tilde{f}\|_{\mathcal{N}_{[t_{1},T_{1}-\eta]}^{s}}.
\]
for any $\eta\in(0,T_{1}-t_{1})$. We remark that the implicit constant
above does not depend on choice of $t_{1}$ and $\eta$. In our situation,
we substitute $\tilde{f}=\pi(u(t_{1}),u)-\pi(u,u)+g(u)$. If we mimic
the proof of Lemma \ref{lem:nonlinearity estimate}, then (where $\tilde{I}\coloneqq[t_{1},T_{1}-\eta]$)
\[
\|\tilde{f}\|_{\mathcal{N}_{\tilde{I}}^{s}}\lesssim\|u-u(t_{1})\|_{L_{x}^{k}L_{[t_{1},T_{1}]}^{\infty}}^{k}\|u\|_{X_{\tilde{I}}^{s}}+\|u\|_{L_{x}^{k}L_{[t_{1},T_{1}]}^{\infty}}^{k-1}\|u\|_{\dot{X}_{[t_{1},T_{1}]}^{s_{k}}}\|u\|_{X_{\tilde{I}}^{s}}.
\]
Because
\[
\lim_{t_{1}\uparrow T_{1}}\big(\|u-u(t_{1})\|_{L_{x}^{k}L_{[t_{1},T_{1}]}^{\infty}}+\|u\|_{\dot{X}_{[t_{1},T_{1}]}^{s_{k}}}\big)=0,
\]
we have $\|\tilde{f}\|_{\mathcal{N}_{[t_{1},T_{1}-\eta]}^{s}}\ll\|u\|_{X_{[t_{1},T_{1}-\eta]}^{s}}$
whenever $t_{1}$ is sufficiently close to $T_{1}$. Fixing such $t_{1}$,
we have 
\[
\|u\|_{Z_{[t_{1},T_{1}-\eta]}^{s}}\lesssim\|u(t_{1})\|_{H^{s}}.
\]
As $\eta\in(0,T_{1}-t_{1})$ is arbitrary, we obtain in particular
\[
\sup_{t\in[0,T_{1})}\|u(t)\|_{H^{s}}<\infty.
\]
Since $u(t)\to u(T_{1})$ in $\dot{H}^{s_{k}}$ as $t\uparrow T_{1}$,
we have
\[
\|u(T_{1})\|_{H^{s}}\leq\liminf_{t\uparrow T_{1}}\|u(t)\|_{H^{s}}<\infty.
\]
Therefore, $u(T_{1})$ lies in $H^{s}$ and we can construct $H^{s}$
solution at time $T_{1}$ both forward and backward in time. This
should extend the solution $\tilde{u}$, which yields a contradiction.
\end{proof}
\begin{proof}[Proof of Theorem \ref{thm:subcritical LWP, GWP}]
(Subcritical LWP) Let $s>s_{k}$. For any $u_{0}\in H^{s}$, by Corollary
\ref{cor:subcritical lifespan of s_k solution}, there exists $T=T(\|u_{0}\|_{H^{s}})>0$
and $\dot{H}^{s_{k}}$ solution $u\in\dot{Z}_{T}^{s_{k}}$ with initial
data $u_{0}$. By persistence of regularity (Proposition \ref{prop:persistence of regularity}),
the solution $u$ indeed lies in $Z_{T}^{s}$. This proves existence
part. The uniqueness part and continuity of the solution map is standard.

(GWP and conservation laws) Note that persistence of regularity and
local well-posedness guarantees that we can approximate any $H^{\frac{1}{2}}$
solutions by smooth solutions. Since the mass and energy functionals
are $H^{\frac{1}{2}}$ continuous, we obtain mass and energy conservation
laws. Combining with the subcritical well-posedness, the global well-posedness
follows for $H^{\frac{1}{2}}$ solutions. If $s\geq\frac{1}{2}$,
then we can use persistence of regularity to transfer the results
for $H^{\frac{1}{2}}$ solutions to $H^{s}$ solutions.
\end{proof}
\begin{rem}
Even in case of the focusing (gBO) with $k\geq4$, subcritical well-posedness
part of Theorem \ref{thm:subcritical LWP, GWP} still holds. Therefore,
one can have global well-posedness of focusing (gBO) with initial
data satisfying certain mass, energy, and kinetic energy assumptions.
See \cite[Theorem 1.1]{FarahLinaresPastor2014}.
\end{rem}

\section{\label{sec:existence of critical elt}Existence of Critical Element}

From this section, we start proving Theorem \ref{thm:scattering}.
The scheme of the proof is a compactness-contradiction argument, which
originates from the pioneering work of Kenig and Merle in the setting
of energy-critical nonlinear Schr\"odinger and wave equation \cite{Kenig-Merle2006Invent,KenigMerle2008Acta}.
The argument then successfully implemented in the scattering problem
of various semilinear equations, such as nonlinear Schr\"odinger
or wave equations. In case of mass-critical and mass-supercritical
\eqref{eq:gKdV}, see \cite{Dodson2017AnnPDE} and \cite{FarahLinaresPastorVisciglia2017}.

Having the global well-posedness, we first derive a criterion that
determines whether a solution scatters or not. It will be written
in terms of finiteness of the spacetime norms $\dot{X}_{t}^{s_{k}}$
and $L_{x}^{k}L_{t}^{\infty}$. Having established the criterion,
suppose that Theorem \ref{thm:scattering} fails. Then, there exists
a critical element that does not scatter and attains minimal mass/energy.
A genuine property of this critical element is that it should stay
in a compact set modulo symmetries of the equation (modulo spatial
translations in our setting.) This section is devoted to obtain existence
of such a critical element and its compactness property. To this end,
as in other contexts, we use the profile decomposition and perturbation
theory. Theorem \ref{thm:scattering} will be proved in Section \ref{sec:precllusion of a.p. sol}
once we show that such a critical element indeed cannot exist.

In other literatures, the linear profile decomposition and long-time
perturbation theory are used to obtain the nonlinear profile decomposition,
that is, the sum of nonlinear profiles becomes an approximate solution.
In our setting, however, we encounter a technical difficulty to deduce
long-time perturbation theory from local well-posedness. It is because
the iteration norms contain $L^{\infty}$-type norms $L_{x}^{k}L_{t}^{\infty}$
and $\dot{X}_{t}^{s_{k}}$, in which subdivision of time interval
does not guarantee the smallness of $L_{x}^{k}L_{I_{j}}^{\infty}$
for a short time interval $I_{j}$. Instead of obtaining a general
form of long-time perturbation, we directly prove that nonlinear profile
decomposition holds.

In Section \ref{subsec:scattering criterion}, we derive the aforementioned
criterion. In Section \ref{subsec:profile decomposition}, we derive
and discuss more about the linear profile decomposition. In Section
\ref{subsec:perturbation theory}, we prove the nonlinear profile
decomposition. In Section \ref{subsec:existence of minimal blowup solution},
we finally construct a critical element. The last subsection (Section
\ref{subsec:proof of error estimation}) is devoted to estimate the
error term appeared in Section \ref{subsec:perturbation theory}.

\subsection{\label{subsec:scattering criterion}Scattering Criterion}

In this subsection, we derive the scattering criterion in terms of
spacetime norms. This is nothing but the local theory at time $\pm\infty$.
In contrast to the usual local theory, we cannot make small $L_{x}^{\infty}L_{T+}^{2}$
norm with choosing $T$ large.\footnote{In view of the sharp local smoothing estimate $\|D_{x}^{1/2}V(t)u_{0}\|_{L_{x}^{\infty}L_{t}^{2}}\sim\|u_{0}\|_{L_{x}^{2}}$,
we do not expect that $L_{x}^{\infty}L_{[T,+\infty)}^{2}$ norm becomes
small.} Fortunately, we can have small $L_{x}^{k}L_{T+}^{\infty}$ norm and
this allows us to achieve local well-posedness at time $+\infty$.
This will be done for the original formulation of \eqref{eq:gBO},
not the distorted equation \eqref{eq:distorted eq}.
\begin{lem}[Vanishing of $L_{x}^{k}L_{T+}^{\infty}$]
\label{lem:vanishing of (k,infty) for large time}Let $k\geq4$.
For any $\eta>0$ and $\varphi\in\dot{H}^{s_{k}}$, there exists $T=T(\eta,\varphi)<\infty$
such that 
\[
\|V(t)\varphi\|_{L_{x}^{k}L_{T+}^{\infty}}<\eta.
\]
\end{lem}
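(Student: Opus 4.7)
The plan is to combine a density argument with pointwise dispersive decay via dominated convergence. By the linear estimate already established, $\|V(t)\psi\|_{L_x^k L_t^\infty(\mathbb{R})} \lesssim \|\psi\|_{\dot H^{s_k}}$, so any $\dot H^{s_k}$-approximation of $\varphi$ contributes an error that is small uniformly in $T$. Thus first I would pick $\tilde\varphi$ with $\hat{\tilde\varphi}\in C_c^\infty(\mathbb{R})$ supported in a set of the form $\{\delta\le|\xi|\le R\}$ with $\|\varphi-\tilde\varphi\|_{\dot H^{s_k}}$ as small as desired; such functions are dense in $\dot H^{s_k}$. The linear estimate then gives $\|V(t)(\varphi-\tilde\varphi)\|_{L_x^k L_{T+}^\infty}<\eta/2$ for every $T$.

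The remaining task is to prove $\|V(t)\tilde\varphi\|_{L_x^k L_{T+}^\infty}\to 0$ as $T\to\infty$, and this I would do by dominated convergence in $x$. Define
\[
h(x)\coloneqq\|V(t)\tilde\varphi(x)\|_{L_t^\infty(\mathbb{R})},\qquad h_T(x)\coloneqq\|V(t)\tilde\varphi(x)\|_{L_t^\infty([T,\infty))}.
\]
Then $0\le h_T\le h$, and the linear estimate applied to $\tilde\varphi$ gives $h\in L_x^k$, which will serve as the dominating function. If I can show that $h_T(x)\to 0$ pointwise in $x$ as $T\to\infty$, dominated convergence yields $\|h_T\|_{L_x^k}\to 0$, which is exactly what we need. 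Pointwise convergence in turn is equivalent to $V(t)\tilde\varphi(x)\to 0$ as $t\to\infty$ for each fixed $x$.

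To verify that last pointwise decay I would look at the oscillatory integral representation
\[
V(t)\tilde\varphi(x)=\int_{\mathbb{R}} e^{i(x\xi-t|\xi|\xi)}\hat{\tilde\varphi}(\xi)\,d\xi.
\]
The phase $\phi(\xi)=x\xi-t|\xi|\xi$ has $\phi'(\xi)=x-2t|\xi|\operatorname{sgn}(\xi)$, so its stationary points satisfy $|\xi|=|x|/(2t)$; for $t>|x|/(2\delta)$ no stationary point lies in the support of $\hat{\tilde\varphi}$, and on that support $|\phi'(\xi)|\ge 2t\delta-|x|\gtrsim t$. Repeated integration by parts in $\xi$ then produces $|V(t)\tilde\varphi(x)|\lesssim_{x,\tilde\varphi,N} t^{-N}$ for every $N$, which is much more than the required pointwise decay to zero.

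Combining these pieces, I would choose $\tilde\varphi$ first to absorb $\eta/2$ of the error, and then choose $T=T(\eta,\tilde\varphi)$ large enough so the main piece is also $<\eta/2$. I do not expect serious obstacles here: the density step is standard and the non-stationary phase argument is straightforward because $\hat{\tilde\varphi}$ is compactly supported away from the origin. The only minor subtlety is confirming that dominated convergence may be applied with dominating function $h^k$, which is guaranteed by the local smoothing linear estimate rather than by any pointwise-in-$x$ smallness of $V(t)\tilde\varphi$.
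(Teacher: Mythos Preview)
Your proof is correct and follows essentially the same strategy as the paper: density plus dominated convergence in $x$, reducing to pointwise-in-$x$ decay of $V(t)\tilde\varphi(x)$ as $t\to\infty$. The only cosmetic difference is that the paper invokes the standard dispersive estimate for the pointwise decay, whereas you spell it out via non-stationary phase for band-limited data; both yield the needed conclusion.
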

\begin{proof}
From the linear estimates, we have $\|V(t)\varphi\|_{L_{x}^{k}L_{t}^{\infty}}\lesssim\|\varphi\|_{\dot{H}^{s_{k}}}$.
By density argument, we may assume that $\varphi$ is Schwartz. By
DCT, it suffices to show that for each $x\in\mathbb{R}$, $\|[V(t)\varphi](x)\|_{L_{T+}^{\infty}}$
goes to zero as $T\to+\infty$. This follows by the usual dispersive
estimate.
\end{proof}
As we shall use the formulation \eqref{eq:gBO} instead of \eqref{eq:distorted eq},
we need to estimate the whole nonlinearity $F(u)$ of \eqref{eq:gBO}.
The following nonlinear estimates are direct consequences of Lemma
\ref{lem:nonlinearity estimate}.
\begin{cor}[More estimates]
\label{cor:more nonlinearity estimate}Let $k\geq4$ and $I$ be
an interval. We have
\begin{align*}
\|F(u)\|_{\dot{\mathcal{N}}_{I}^{s_{k}}} & \lesssim\|u\|_{L_{x}^{k}L_{I}^{\infty}}^{k-1}\|u\|_{\dot{Z}_{I}^{s_{k}}}^{2},\\
\|F(u)-F(v)\|_{\dot{\mathcal{N}}_{I}^{s_{k}}} & \lesssim\|u-v\|_{\dot{Z}_{I}^{s_{k}}}\big(\|u\|_{L_{x}^{k}L_{I}^{\infty}}^{k-2}+\|v\|_{L_{x}^{k}L_{I}^{\infty}}^{k-2}\big)\big(\|u\|_{\dot{Z}_{I}^{s_{k}}}^{2}+\|v\|_{\dot{Z}_{I}^{s_{k}}}^{2}\big).
\end{align*}
\end{cor}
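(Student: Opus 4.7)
The plan is to obtain both estimates by combining the paraproduct decomposition $F(u) = -\pi(u,u) + g(u)$ from Subsection \ref{subsec:review of Vento's work} with the four estimates of Lemma \ref{lem:nonlinearity estimate}, and then absorbing the $L_x^k L_I^\infty$ and $\dot{X}_I^{s_k}$ factors into $\dot{Z}_I^{s_k}$ via the trivial embeddings
\[
\|w\|_{L_x^k L_I^\infty} + \|w\|_{\dot{X}_I^{s_k}} \lesssim \|w\|_{\dot{Z}_I^{s_k}},
\]
which are built into the definition of $\dot{Z}_I^{s_k}$.

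For the first estimate, I would write $\|F(u)\|_{\dot{\mathcal{N}}_I^{s_k}} \leq \|\pi(u,u)\|_{\dot{\mathcal{N}}_I^{s_k}} + \|g(u)\|_{\dot{\mathcal{N}}_I^{s_k}}$ and apply the first two bounds of Lemma \ref{lem:nonlinearity estimate} directly, obtaining
\[
\|F(u)\|_{\dot{\mathcal{N}}_I^{s_k}} \lesssim \|u\|_{L_x^k L_I^\infty}^{k}\|u\|_{\dot{X}_I^{s_k}} + \|u\|_{L_x^k L_I^\infty}^{k-1}\|u\|_{\dot{X}_I^{s_k}}^{2}.
\]
The first term is bounded by $\|u\|_{L_x^k L_I^\infty}^{k-1}\|u\|_{\dot{Z}_I^{s_k}}^{2}$ after pulling out one $L_x^k L_I^\infty$ factor and one $\dot{X}_I^{s_k}$ factor, each of which is controlled by $\|u\|_{\dot{Z}_I^{s_k}}$; the second term is bounded by the same quantity even more directly.

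For the difference estimate, I would telescope
\[
\pi(u,u) - \pi(v,v) = [\pi(u,u) - \pi(v,u)] + \pi(v, u-v),
\]
using linearity of $\pi(\,\cdot\,,\,\cdot\,)$ in the second slot. Then the third inequality of Lemma \ref{lem:nonlinearity estimate} applied to the first bracket, the first inequality of Lemma \ref{lem:nonlinearity estimate} applied to $\pi(v, u-v)$, and the fourth inequality applied to $g(u) - g(v)$ give the bound
\begin{align*}
\|F(u) - F(v)\|_{\dot{\mathcal{N}}_I^{s_k}} &\lesssim \|u-v\|_{L_x^k L_I^\infty}(\|u\|_{L_x^k L_I^\infty}^{k-1} + \|v\|_{L_x^k L_I^\infty}^{k-1})\|u\|_{\dot{X}_I^{s_k}} \\
&\quad + \|v\|_{L_x^k L_I^\infty}^{k}\|u-v\|_{\dot{X}_I^{s_k}} \\
&\quad + \|u-v\|_{L_x^k L_I^\infty}(\|u\|_{L_x^k L_I^\infty}^{k-2} + \|v\|_{L_x^k L_I^\infty}^{k-2})(\|u\|_{\dot{X}_I^{s_k}} + \|v\|_{\dot{X}_I^{s_k}})^{2} \\
&\quad + \|u-v\|_{\dot{X}_I^{s_k}}(\|u\|_{L_x^k L_I^\infty}^{k-1} + \|v\|_{L_x^k L_I^\infty}^{k-1})(\|u\|_{\dot{X}_I^{s_k}} + \|v\|_{\dot{X}_I^{s_k}}).
\end{align*}
In each of these four terms the total degree is $k+1$; after replacing $L_x^k L_I^\infty$ and $\dot{X}_I^{s_k}$ by $\dot{Z}_I^{s_k}$ everywhere and extracting one factor of $\|u-v\|_{\dot{Z}_I^{s_k}}$, $(k-2)$ factors drawn from $\{\|u\|_{L_x^k L_I^\infty}, \|v\|_{L_x^k L_I^\infty}\}$, and the remaining degree into the $\|u\|_{\dot{Z}_I^{s_k}}^2 + \|v\|_{\dot{Z}_I^{s_k}}^2$ factor, every term is dominated by the right-hand side of the target inequality.

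There is no serious obstacle; the only minor bookkeeping is checking that each of the four terms can be put into the symmetric form $\|u-v\|_{\dot{Z}_I^{s_k}}\bigl(\|u\|_{L_x^k L_I^\infty}^{k-2} + \|v\|_{L_x^k L_I^\infty}^{k-2}\bigr)\bigl(\|u\|_{\dot{Z}_I^{s_k}}^{2} + \|v\|_{\dot{Z}_I^{s_k}}^{2}\bigr)$, which is immediate once one writes, for instance, $\|u\|_{L_x^k L_I^\infty}^{k-1} = \|u\|_{L_x^k L_I^\infty}^{k-2}\,\|u\|_{L_x^k L_I^\infty}$ and absorbs the extra $L_x^k L_I^\infty$ factor into one of the two $\dot{Z}_I^{s_k}$ factors.
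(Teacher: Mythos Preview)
Your proof is correct and follows essentially the same approach as the paper: decompose $F(u)=-\pi(u,u)+g(u)$ and apply Lemma \ref{lem:nonlinearity estimate} term by term, then absorb the resulting $L_x^kL_I^\infty$ and $\dot{X}_I^{s_k}$ factors into $\dot{Z}_I^{s_k}$. The only cosmetic difference is that the paper telescopes as $\pi(u,u)-\pi(v,v)=\pi(u,u-v)+[\pi(u,v)-\pi(v,v)]$ rather than your $[\pi(u,u)-\pi(v,u)]+\pi(v,u-v)$, which is immaterial.
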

\begin{proof}
Apply Lemma \ref{lem:nonlinearity estimate} for the following expressions.
\begin{align*}
F(u) & =-\pi(u,u)+g(u)\\
F(u)-F(v) & =-\pi(u,u-v)-[\pi(u,v)-\pi(v,v)]+[g(u)-g(v)].
\end{align*}
\end{proof}
\begin{prop}[Existence of wave operator]
\label{prop:LWP at infty}Let $k\geq4$.\\
1. For any $\tilde{u}_{+}\in\dot{H}^{s_{k}}$, there exists $r>0$
and $T=T(\tilde{u}_{+})<+\infty$ such that any $u_{+}\in B_{\dot{H}^{s_{k}}}(\tilde{u}_{+};r)$
admits a unique $\dot{H}^{s_{k}}$ solution $u$ in $C_{T+}\dot{H}^{s_{k}}\cap L_{x}^{k}L_{T+}^{\infty}\cap\dot{X}_{T+}^{s_{k}}$
which scatters to $V(t)u_{+}$ in $\dot{H}^{s_{k}}$ forward in time.
Moreover, the solution $u$ indeed lies in $\dot{Z}_{T+}^{s_{k}}$
and the solution map $u_{+}\mapsto u\in\dot{Z}_{T+}^{s_{k}}$ is locally
Lipschitz.\\
2. Let $s\geq s_{k}$. Then, the $H^{s}$-version of the result holds.
That is, the same result holds for $C_{T+}H^{s}\cap L_{x}^{k}L_{T+}^{\infty}\cap X_{T+}^{s}$,
$H^{s}$, and $Z_{T+}^{s}$.
\end{prop}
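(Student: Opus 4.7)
The plan is to solve the Duhamel equation at infinity
\[
\Phi(u)(t) \coloneqq V(t)u_+ - \int_{t}^{+\infty} V(t-s) F(u(s))\, ds
\]
by a contraction argument on the interval $[T,+\infty)$ for $T$ large. Unlike the local theory at a finite time, the distorted formulation \eqref{eq:distorted eq} is not needed here: the nonlinearity will be estimated directly by Corollary \ref{cor:more nonlinearity estimate}, and the smallness required for contraction will come from Lemma \ref{lem:vanishing of (k,infty) for large time}, which supplies smallness of $\|V(t)u_+\|_{L_x^k L_{T+}^\infty}$ for large $T$.

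First, I would fix $\tilde{u}_+ \in \dot{H}^{s_k}$ and a parameter $\eta=\eta(\|\tilde u_+\|_{\dot H^{s_k}})>0$ to be chosen small. Choose $r>0$, and then $T=T(\tilde u_+,\eta)$ large enough that
\[
\|V(t)u_+\|_{L_x^k L_{T+}^\infty} + \|V(t)u_+\|_{\dot X_{T+}^{s_k}} < \eta
\qquad \text{for all } u_+ \in B_{\dot H^{s_k}}(\tilde u_+;r).
\]
The first norm is handled by Lemma \ref{lem:vanishing of (k,infty) for large time} applied to $\tilde u_+$, together with the linear bound $\|V(t)(u_+-\tilde u_+)\|_{L_x^k L_t^\infty}\lesssim r$ to absorb the perturbation. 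The second norm is made small by DCT applied Littlewood--Paley piece by piece, using that $\dot{\mathcal S}^{s_k,\theta}$ involves no $L_t^\infty$ factor, followed by $\ell_j^2$--summation and density.

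Next, I would iterate on
\[
\mathcal{B} \coloneqq \bigl\{ u \in \dot Z_{T+}^{s_k} : \|u - V(t)u_+\|_{\dot Z_{T+}^{s_k}} \leq \eta,\ \|u\|_{L_x^k L_{T+}^\infty} \leq 2\eta \bigr\},
\]
equipped with the $\dot Z_{T+}^{s_k}$ metric. Since $\dot Z^{s_k}$ is built from Besov spaces that are dual to $\dot{\mathcal N}^{s_k}$, the retarded linear estimate (valid for integration from $t$ to $+\infty$ by time reversal of the one from $0$ to $t$, combined with Christ--Kiselev) gives $\|\Phi(u)-V(t)u_+\|_{\dot Z_{T+}^{s_k}}\lesssim \|F(u)\|_{\dot{\mathcal N}_{T+}^{s_k}}$. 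Corollary \ref{cor:more nonlinearity estimate} then yields
\[
\|\Phi(u)-V(t)u_+\|_{\dot Z_{T+}^{s_k}} \lesssim \|u\|_{L_x^k L_{T+}^\infty}^{k-1} \|u\|_{\dot Z_{T+}^{s_k}}^{2} \lesssim \eta^{k-1}\bigl(\|u_+\|_{\dot H^{s_k}}+\eta\bigr)^{2},
\]
which is $\leq \eta$ once $\eta$ is small (using $k-1\geq 3$). The Lipschitz half of Corollary \ref{cor:more nonlinearity estimate} provides the contraction
\[
\|\Phi(u)-\Phi(v)\|_{\dot Z_{T+}^{s_k}} \lesssim \eta^{k-2}\bigl(\|u_+\|_{\dot H^{s_k}}+\eta\bigr)^{2}\|u-v\|_{\dot Z_{T+}^{s_k}}.
\]
Banach's fixed point theorem then produces the unique solution $u\in\mathcal B$, and the Lipschitz estimate with $u_+$ appearing as a parameter gives the locally Lipschitz solution map $u_+\mapsto u$. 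Scattering follows from $V(-t)u(t)-u_+ = -\int_t^{+\infty} V(-s)F(u(s))\,ds$ and the $L_t^\infty\dot H_x^{s_k}$ linear bound:
\[
\|V(-t)u(t)-u_+\|_{\dot H^{s_k}} \lesssim \|F(u)\|_{\dot{\mathcal N}_{t+}^{s_k}} \xrightarrow[t\to +\infty]{} 0
\]
by DCT applied to the Besov norm defining $\dot{\mathcal N}^{s_k}$. For part 2, one either repeats the same fixed-point argument in $Z_{T+}^{s}$ (all the linear and nonlinear estimates go through in the inhomogeneous setting) or invokes persistence of regularity (Proposition \ref{prop:persistence of regularity}) on a large compact subinterval together with the $\dot H^{s_k}$ solution just constructed.

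The main obstacle I expect is the quantitative choice of $T$ that is uniform over the ball $B_{\dot H^{s_k}}(\tilde u_+;r)$, particularly for the $\dot X_{T+}^{s_k}$ norm: Lemma \ref{lem:vanishing of (k,infty) for large time} is phrased pointwise in $\varphi$, so turning it into a uniform-in-$u_+$ statement requires splitting $u_+ = \tilde u_+ + (u_+-\tilde u_+)$ and absorbing the second piece into $\eta$ via the $\dot H^{s_k}$ bound on the linear evolution. A secondary technical point is the mild argument for the $\dot X$ component going to zero as $T\to +\infty$, which requires $\ell_j^2$--continuity of the Littlewood--Paley decomposition combined with DCT on each piece; this rests on the same density arguments used already in Section \ref{subsec:review of Vento's work}.
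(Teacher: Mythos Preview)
Your overall strategy---running a contraction for the undistorted Duhamel formula at infinity, with smallness supplied solely by $\|V(t)u_+\|_{L_x^k L_{T+}^\infty}$ via Lemma~\ref{lem:vanishing of (k,infty) for large time} and the nonlinearity handled by Corollary~\ref{cor:more nonlinearity estimate}---matches the paper's. However, the claim that $\|V(t)u_+\|_{\dot X_{T+}^{s_k}}$ can be made small by choosing $T$ large is \emph{false}: the space $\dot{\mathcal S}^{s_k,1}$ is $L_x^\infty L_t^2$--based, and the sharp local smoothing identity $\|D_x^{1/2} V(t)\varphi\|_{L_x^\infty L_t^2} \sim \|\varphi\|_{L^2}$ (see the footnote preceding Proposition~\ref{prop:scattering criterion} and Remark~\ref{rem:no asym vanish of X}) shows this norm cannot decay as $T\to\infty$. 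The DCT argument you sketch fails because the $L_x^\infty$ sits \emph{outside} the time integral: a wave packet at frequency $\xi_0$ travels with speed $2|\xi_0|$, so $\sup_x\int_T^\infty |V(t)Q_j u_0(x)|^2\,dt$ recovers the full $L^2$ mass regardless of $T$.

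Fortunately, you never actually use this false claim. Your self-mapping and contraction estimates only invoke $\|u\|_{L_x^k L_{T+}^\infty}\le 2\eta$ together with the non-small bound $\|u\|_{\dot Z_{T+}^{s_k}}\lesssim \|u_+\|_{\dot H^{s_k}}+\eta$, and this suffices because Corollary~\ref{cor:more nonlinearity estimate} supplies $k-1\ge 3$ factors of the small $L_x^k L_t^\infty$ norm. Simply delete the $\dot X_{T+}^{s_k}$ smallness requirement from your setup and the proof stands as written. The paper arrives at the same conclusion by a slightly different device: rather than centering the ball at $V(t)u_+$, it introduces a weighted norm $\|u\|_\Lambda = \|u\|_{L_x^k L_{T+}^\infty} + \delta\,\|u\|_{\dot X_{T+}^{s_k}\cap L_{T+}^\infty \dot H_x^{s_k}}$ with $0<\delta\ll 1$ and iterates on $B_\Lambda(0;\delta^{3/4})$, so that the $\dot X$ contribution of the linear flow is made small by the weight $\delta$ rather than by a (nonexistent) decay in $T$. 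Your centered-ball formulation is arguably cleaner once the spurious claim is removed.
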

\begin{proof}
We only prove the first statement, as the second one can be proven
in a similar manner. Given $u_{+}\in\dot{H}^{s_{k}}$, consider the
operator 
\[
[\Phi u](t)=V(t)u_{+}-\int_{t}^{\infty}V(t-s)F(u(s))ds.
\]
Let us consider the norm
\[
\|u\|_{\Lambda}\coloneqq\|u\|_{L_{x}^{k}L_{T+}^{\infty}}+\delta\|u\|_{\dot{X}_{T+}^{s_{k}}\cap L_{T+}^{\infty}\dot{H}_{x}^{s_{k}}},
\]
where $0<\delta\ll1$ to be chosen later. We shall apply the contraction
mapping principle on $B_{\Lambda}(0;\delta^{\frac{3}{4}})$.

We present linear and nonlinear estimates. For the linear evolution,
we estimate
\[
\|V(t)u_{+}\|_{\Lambda}\lesssim\|V(t)\tilde{u}_{+}\|_{L_{x}^{k}L_{T+}^{\infty}}+\|\tilde{u}_{+}-u_{+}\|_{\dot{H}^{s_{k}}}+\delta\|u_{+}\|_{\dot{H}^{s_{k}}}.
\]
For the nonlinear estimate, we apply Corollary \ref{cor:more nonlinearity estimate}
to obtain
\begin{align*}
\|F(u)-F(v)\|_{\dot{\mathcal{N}}_{T+}^{s_{k}}} & \lesssim\|u-v\|_{\Lambda}(\|u\|_{L_{x}^{k}L_{T+}^{\infty}}^{k-2}+\|v\|_{L_{x}^{k}L_{T+}^{\infty}}^{k-2})(\|u\|_{\Lambda}^{2}+\|v\|_{\Lambda}^{2})\\
 & \lesssim\delta^{\frac{3k}{4}-2}\|u-v\|_{\dot{Z}_{T+}^{s_{k}}}.
\end{align*}
If in particular $v=0$, then 
\[
\|F(u)\|_{\dot{\mathcal{N}}_{T+}^{s_{k}}}\lesssim\delta^{\frac{3k}{4}-2}\|u\|_{\Lambda}.
\]
Therefore,
\begin{align*}
\|\Phi u\|_{\Lambda} & \lesssim\|V(t)u_{+}\|_{L_{x}^{k}L_{T+}^{\infty}}+r+\delta\|u_{+}\|_{\dot{H}^{s_{k}}}+\delta^{\frac{3k}{4}-2}\|u\|_{\Lambda},\\
\|\Phi u-\Phi v\|_{\Lambda} & \lesssim\delta^{\frac{3k}{4}-2}\|u-v\|_{\Lambda}.
\end{align*}
If we choose $\delta$ sufficiently small, $r$ small, and choosing
$T=T(u_{+})$ sufficiently large, then $\Phi$ becomes a contraction.

We now show that any solution $u\in C_{T+}\dot{H}^{s_{k}}\cap L_{x}^{k}L_{T+}^{\infty}\cap\dot{X}_{T+}^{s_{k}}$
indeed lies in $\dot{Z}_{T+}^{s_{k}}$. We only show \eqref{eq:continuity assumption 2 for (k,infty)},
that is, $\|u\|_{L_{x}^{k}L_{T+}^{\infty}}\to0$ as $T\to+\infty$.
To show this, by the Duhamel formula,
\[
\|u\|_{L_{x}^{k}L_{T+}^{\infty}}\leq\|V(t)u_{+}\|_{L_{x}^{k}L_{T+}^{\infty}}+\|F(u)\|_{\dot{\mathcal{N}}_{T+}^{s_{k}}}.
\]
As $T\to+\infty$, we use Lemma \ref{lem:vanishing of (k,infty) for large time}
for the linear evolution and DCT for the nonlinear evolution to obtain
$u\in\dot{Z}_{T+}^{s_{k}}$.
\end{proof}
We now state the scattering criterion. In contrast to the case of
mass-critical \eqref{eq:gKdV}, we should keep track of two norms
$L_{x}^{k}L_{T+}^{\infty}$ and $\dot{X}_{T+}^{s_{k}}$. As we discussed
in Section \ref{sec:WP theory}, we have one derivative in the nonlinearity,
but the local smoothing estimate recovers at most half derivative.
Thus we are forced to use $L^{\infty}$ type spacetime norms. Consult
Section \ref{eq:double local smoothing}. 
\begin{prop}[Scattering Criterion]
\label{prop:scattering criterion}Let $u$ be a global $H^{\frac{1}{2}}$
solution satisfying $\|u\|_{L_{x}^{k}L_{T+}^{\infty}}+\|u\|_{\dot{X}_{T+}^{s_{k}}}<\infty$
for some $T<+\infty$. Then, $u$ scatters in $H^{\frac{1}{2}}$ forward
in time. In particular, $u$ belongs to $Z_{T+}^{\frac{1}{2}}$. The
analogous statement holds for backward in time.
\end{prop}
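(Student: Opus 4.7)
The plan is to deduce scattering in three stages: (i) construct a scattering state $u_+$ in the critical space $\dot{H}^{s_k}$ by a Cauchy-sequence argument, (ii) upgrade $u_+$ to the energy space $H^{\frac{1}{2}}$ using the uniform-in-time bound coming from mass/energy conservation (Theorem \ref{thm:subcritical LWP, GWP}), and (iii) invoke the wave-operator construction of Proposition \ref{prop:LWP at infty} together with its uniqueness assertion to identify $u$ with a solution already known to lie in $Z_{T_1+}^{\frac{1}{2}}$.

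For (i), the key observation is an asymmetry between the two finite norms. Although $\|u\|_{L_x^k L_{[t_2,\infty)}^\infty}$ is only \emph{bounded} as $t_2\to\infty$ (the inner norm being $L_t^\infty$), every Littlewood-Paley block of $\dot{X}^{s_k}=\dot{\mathcal S}^{s_k,\epsilon}\cap\dot{\mathcal S}^{s_k,1}$ involves $L_t^q$ with $q<\infty$, so dominated convergence, applied first in time on each block and then on the $\ell_j^2$ dyadic sum, gives $\|u\|_{\dot{X}_{[t_2,\infty)}^{s_k}}\to 0$ as $t_2\to\infty$. For $T\leq t_2<t_1$, Duhamel and the dual retarded estimate yield
\[
\bigl\|V(-t_1)u(t_1)-V(-t_2)u(t_2)\bigr\|_{\dot{H}^{s_k}}\lesssim \|F(u)\|_{\dot{\mathcal N}_{[t_2,t_1]}^{s_k}},
\]
and decomposing $F(u)=-\pi(u,u)+g(u)$ and invoking Lemma \ref{lem:nonlinearity estimate} gives
\[
\|F(u)\|_{\dot{\mathcal N}_{[t_2,\infty)}^{s_k}}\lesssim \|u\|_{L_x^k L_{T+}^\infty}^{k-1}\bigl(\|u\|_{L_x^k L_{T+}^\infty}+\|u\|_{\dot{X}_{[t_2,\infty)}^{s_k}}\bigr)\|u\|_{\dot{X}_{[t_2,\infty)}^{s_k}}\xrightarrow{t_2\to\infty} 0.
\]
Consequently $\{V(-t)u(t)\}$ is Cauchy in $\dot{H}^{s_k}$, and $V(-t)u(t)\to u_+$ in $\dot{H}^{s_k}$ for some $u_+\in\dot{H}^{s_k}$.

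For (ii), global well-posedness and mass/energy conservation (Theorem \ref{thm:subcritical LWP, GWP}) give $\|u(t)\|_{H^{\frac{1}{2}}}\leq C_0$ uniformly in $t$; since $V$ preserves $H^{\frac{1}{2}}$, $\{V(-t)u(t)\}$ is bounded in $H^{\frac{1}{2}}$, and any weakly convergent subsequence has weak limit agreeing with the strong $\dot{H}^{s_k}$ limit $u_+$, so $u_+\in H^{\frac{1}{2}}$. For (iii), enlarging $T_1\geq T$ if necessary so that $\|u\|_{L_x^k L_{T_1+}^\infty}+\|u\|_{\dot{X}_{T_1+}^{s_k}}$ lies inside the contraction radius furnished by $u_+$, the $H^{\frac{1}{2}}$ version of Proposition \ref{prop:LWP at infty} produces a solution $\tilde u\in Z_{T_1+}^{\frac{1}{2}}$ scattering to $V(t)u_+$ in $H^{\frac{1}{2}}$. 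Both $\tilde u$ and $u|_{[T_1,\infty)}$ lie in the uniqueness class $C_{T_1+}\dot{H}^{s_k}\cap L_x^k L_{T_1+}^\infty\cap \dot{X}_{T_1+}^{s_k}$ and share the scattering state $u_+$ in $\dot{H}^{s_k}$, so the uniqueness part of Proposition \ref{prop:LWP at infty} forces $u\equiv\tilde u$ on $[T_1,\infty)$. Hence $u$ scatters in $H^{\frac{1}{2}}$ and lies in $Z_{T_1+}^{\frac{1}{2}}$; gluing with $u\in Z_{[T,T_1]}^{\frac{1}{2}}$ from the subcritical GWP (Theorem \ref{thm:subcritical LWP, GWP}) on the compact piece yields $u\in Z_{T+}^{\frac{1}{2}}$.

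The main obstacle is precisely the asymmetry noted above: the lack of small-$L_x^k L_t^\infty$ at infinity prevents running a contraction on $[t_2,\infty)$ with a universally small constant. We bypass this by using only the $\dot{X}^{s_k}$-smallness to push the nonlinear error to zero in $\dot{\mathcal N}^{s_k}$, which suffices for a Cauchy argument; only after obtaining $u_+\in H^{\frac{1}{2}}$ do we return to the contraction-based wave operator, where one has to check carefully that the uniqueness classes in Proposition \ref{prop:LWP at infty} line up with the a priori regularity of $u$ so that the identification $u=\tilde u$ is legitimate.
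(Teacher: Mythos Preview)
Your proof is correct and follows the same three-step route as the paper: establish $\dot{H}^{s_k}$ scattering via dominated convergence (the paper applies DCT directly to $\|F(u)\|_{\dot{\mathcal{N}}_{T+}^{s_k}}$ rather than first to $\|u\|_{\dot{X}^{s_k}}$, but the effect is identical), upgrade $u_+$ to $H^{\frac{1}{2}}$ via conservation laws, then invoke Proposition~\ref{prop:LWP at infty}. One small point: your ``contraction radius'' manoeuvre in step~(iii) is neither needed nor quite justified as written (you yourself noted that $\|u\|_{L_x^k L_{T_1+}^\infty}$ is a priori only bounded), but the uniqueness in Proposition~\ref{prop:LWP at infty} is already stated in the full class $C_{T+}\dot{H}^{s_k}\cap L_x^k L_{T+}^\infty\cap \dot{X}_{T+}^{s_k}$, which $u$ inhabits by hypothesis, so the identification $u=\tilde u$ goes through directly.
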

\begin{proof}
We first show that $u$ scatters in $\dot{H}^{s_{k}}$. To this end,
it suffices to show that 
\[
t\mapsto\int_{0}^{t}V(-s)F(u(s))ds
\]
converges in $\dot{H}^{s_{k}}$ as $t\to+\infty$. By linear estimates,
observe that 
\[
\Big\|\int_{t_{0}}^{t_{1}}V(-s)F(u(s))ds\Big\|_{\dot{H}^{s_{k}}}\lesssim\|F(u)\|_{\dot{\mathcal{N}}_{[t_{0},t_{1}]}^{s_{k}}}.
\]
Since $\|F(u)\|_{\dot{\mathcal{N}}_{T+}^{s_{k}}}<\infty$, Lebesgue's
DCT implies $\dot{H}^{s_{k}}$ scattering. Denote the scattering state
by $u_{+}\in\dot{H}^{s_{k}}$.

To conclude that $u$ scatters in $H^{\frac{1}{2}}$, observe from
the mass/energy conservation that $u_{+}$ indeed belongs to $H^{\frac{1}{2}}$.
Applying Proposition \ref{prop:LWP at infty}, $u$ indeed lies in
$Z_{T+}^{\frac{1}{2}}$ and scatters to $u_{+}$.
\end{proof}
For small initial data, we have both global well-posedness and scattering.
\begin{thm}[Small Data GWP and Scattering]
\label{thm:small data theory}Let $k\geq4$.\\
1. There exists $\eta>0$ such that any initial data $u_{0}\in\dot{H}^{s_{k}}$
with $\|u_{0}\|_{\dot{H}^{s_{k}}}<\eta$ admits a unique  $\dot{H}^{s_{k}}$-solution
$u$ to (gBO) in $\dot{Z}_{t}^{s_{k}}$. Moreover, the solution map
from $B_{\dot{H}^{s_{k}}}(0;\eta)$ to $\dot{Z}_{t}^{s_{k}}$ is Lipschitz.\\
2. The solution $u$ scatters in $\dot{H}^{s_{k}}$ both forward and
backward in time.\\
3. For $s\geq s_{k}$, the $H^{s}$-version of the result holds.
\end{thm}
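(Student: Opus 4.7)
The plan is a standard contraction mapping argument on a small ball of a global spacetime norm, using the linear estimates and the nonlinear estimates already established (Corollary \ref{cor:more nonlinearity estimate}). Because the data is small in the critical norm, there is no need to pass to the distorted equation as in Vento's subcritical argument: the delicate low-high interaction $\pi(u_0, u)$ itself is already controlled by $\|u_0\|_{\dot H^{s_k}}^k \|u\|_{\dot X_t^{s_k}}$, which is tiny. So I would set up the ordinary Duhamel map
\[
  [\Phi u](t) = V(t) u_0 + \int_0^t V(t-s) F(u(s))\, ds
\]
and iterate on $B_{\dot Z_t^{s_k}}(0;\eta)$ for a suitable small $\eta>0$.

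The two key ingredients are the global linear estimate
\[
  \|V(t) u_0\|_{\dot Z_t^{s_k}} \lesssim \|u_0\|_{\dot H^{s_k}},
\]
which follows from Lemma \ref{lem:local smoothing estimates} together with Lemma \ref{lem:vanishing of (k,infty) for large time} (to verify the vanishing condition \eqref{eq:continuity assumption 2 for (k,infty)} for the free evolution), and the global nonlinear estimates from Corollary \ref{cor:more nonlinearity estimate}, which give
\[
  \|F(u)\|_{\dot{\mathcal N}_t^{s_k}} \lesssim \|u\|_{L_x^k L_t^\infty}^{k-1}\|u\|_{\dot Z_t^{s_k}}^2 \lesssim \|u\|_{\dot Z_t^{s_k}}^{k+1}
\]
and the analogous difference bound $\|F(u)-F(v)\|_{\dot{\mathcal N}_t^{s_k}} \lesssim (\|u\|_{\dot Z_t^{s_k}}^{k}+\|v\|_{\dot Z_t^{s_k}}^{k})\|u-v\|_{\dot Z_t^{s_k}}$. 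Combining these with the retarded linear estimate, one obtains
\[
  \|\Phi u\|_{\dot Z_t^{s_k}} \leq C\|u_0\|_{\dot H^{s_k}} + C\|u\|_{\dot Z_t^{s_k}}^{k+1}, \qquad \|\Phi u - \Phi v\|_{\dot Z_t^{s_k}} \leq C\eta^k \|u-v\|_{\dot Z_t^{s_k}}.
\]
Choosing $\eta>0$ so that $C\eta < \eta/2$ (i.e.\ so that $C\eta^k < 1/2$) and then taking $\|u_0\|_{\dot H^{s_k}} < \eta/(2C)$, $\Phi$ is a contraction on $B_{\dot Z_t^{s_k}}(0;\eta)$, yielding a unique fixed point $u\in \dot Z_t^{s_k}$. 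Lipschitz dependence of the solution map follows by subtracting two Duhamel equations and absorbing the $\eta^k$ factor, as usual.

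Given the global $\dot Z_t^{s_k}$ bound, statement (2) is immediate from the scattering criterion (Proposition \ref{prop:scattering criterion}) applied both forward and backward in time; this also guarantees convergence of $t\mapsto \int_0^t V(-s) F(u(s))\,ds$ in $\dot H^{s_k}$ as $t\to\pm\infty$. For statement (3), one runs exactly the same iteration with the norm $\dot Z_t^{s_k}$ replaced by $Z_t^s = \dot Z_t^0 \cap \dot Z_t^s$ and the homogeneous estimates replaced by their inhomogeneous counterparts; alternatively, since the $\dot H^{s_k}$ solution just built is global and small in $\dot Z_t^{s_k}$, one may invoke persistence of regularity (Proposition \ref{prop:persistence of regularity}) on each compact subinterval together with a continuation argument to promote it to a global $H^s$ solution, and verify the global $Z_t^s$ bound by the same Duhamel/nonlinear-estimate loop at the $H^s$ level.

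The only mild obstacle is bookkeeping for the $\dot Z_t^{s_k}$ norm, which, unlike pure spacetime Lebesgue norms, encodes the continuity conditions \eqref{eq:continuity assumption 1 for (k,infty)}--\eqref{eq:continuity assumption 2 for (k,infty)}. For the linear part these conditions come from Lemma \ref{lem:vanishing of (k,infty) for large time} and density; for the nonlinear part they follow from dominated convergence applied to $\|F(u)\|_{\dot{\mathcal N}_I^{s_k}}$ as $|I|\to 0$ or as $I$ recedes to infinity, using that $F(u)\in \dot{\mathcal N}_t^{s_k}$ globally. Once this is verified, $\Phi$ maps $\dot Z_t^{s_k}$ to itself and the argument closes cleanly; no $L^\infty$-in-time smallness by time-truncation is ever needed, since smallness here is supplied entirely by the initial data.
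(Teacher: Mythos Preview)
Your proposal is correct and follows essentially the same route as the paper: a direct contraction-mapping argument for the undistorted Duhamel map on a small ball in $\dot Z_t^{s_k}$, using the linear estimates together with Corollary~\ref{cor:more nonlinearity estimate}. The paper omits the details you supply (verification of the $\dot Z$ continuity conditions, the scattering mechanism, and the $H^s$ upgrade), simply declaring them ``fairly standard''; one small caveat is that Proposition~\ref{prop:scattering criterion} is stated for $H^{1/2}$ solutions, but as you note the underlying argument---Cauchy convergence of $\int_0^t V(-s)F(u(s))\,ds$ in $\dot H^{s_k}$ via $\|F(u)\|_{\dot{\mathcal N}_t^{s_k}}<\infty$---applies verbatim at the $\dot H^{s_k}$ level.
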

\begin{proof}
The proof is standard once one obtains Corollary \ref{cor:more nonlinearity estimate}
and exploit smallness of the initial data. Indeed, given $u_{0}\in\dot{H}^{s_{k}}$
with $\|u_{0}\|_{\dot{H}^{s_{k}}}\leq\eta$, we consider the operator
$\Phi$ defined by 
\[
[\Phi u](t)\coloneqq V(t)u_{0}+\int_{0}^{t}V(t-s)F(s)ds.
\]
For some $r$ chosen later, we iterate on the space $B_{\dot{Z}_{t}^{s_{k}}}(0;r)$.
By the linear estimates and Corollary \ref{cor:more nonlinearity estimate},
we have
\begin{align*}
\|\Phi u\|_{\dot{Z}_{t}^{s_{k}}} & \lesssim\|u_{0}\|_{\dot{H}^{s_{k}}}+\|u\|_{\dot{Z}_{t}^{s_{k}}}^{k+1}\\
 & \lesssim\eta+r^{k+1}\\
\|\Phi u-\Phi v\|_{\dot{Z}_{t}^{s_{k}}} & \lesssim\|u-v\|_{\dot{Z}_{t}^{s_{k}}}\big(\|u\|_{\dot{Z}_{t}^{s_{k}}}^{k}+\|v\|_{\dot{Z}_{t}^{s_{k}}}^{k}).\\
 & \lesssim r^{k}\|u-v\|_{\dot{Z}_{t}^{s_{k}}}.
\end{align*}
By choosing $r$ small and then $\eta$ small, $\Phi$ becomes a contraction.
The remaining parts of the proof are fairly standard.
\end{proof}
\begin{rem}
In fact, the small data global well-posedness and scattering can be
obtained using the usual local smoothing norms instead of Besov type
norms. See \cite[Appendix A.1]{MolinetRibaud2004}. There, one avoids
paraproduct decomposition of the nonlinearity, but should use fractional
Leibniz rules. In the following, we only need local theory in terms
of $\dot{Z}_{t}^{s_{k}}$ norm.
\end{rem}

\subsection{\label{subsec:profile decomposition}Linear Profile Decomposition}

One of the main ingredients of compactness-contradiction argument
is the profile decomposition. This type of results was intensively
exploited in the study of critical dispersive equations for last decades.
We start with the linear profile decomposition. Consider the linear
estimate (for $k\geq4$)
\[
\|V(t)u_{0}\|_{L_{x}^{k}L_{t}^{\infty}}\lesssim\|u_{0}\|_{H^{\frac{1}{2}}}.
\]
This embedding is not compact due to two \emph{noncompact} \emph{symmetries}:\\
1. time translations $u_{0}\mapsto V(t_{0})u_{0}$ for any $t_{0}\in\mathbb{R}$,
and\\
2. spatial translations $u_{0}\mapsto u_{0}(\cdot-x_{0})$ for any
$x_{0}\in\mathbb{R}$.\\

\begin{rem}
If we replace $\|u_{0}\|_{H^{1/2}}$ by $\|u_{0}\|_{\dot{H}^{s_{k}}}$,
then we have one more additional symmetry: the scaling symmetry $u_{0}\mapsto\lambda^{1/k}u_{0}(\lambda\cdot)$
for any $\lambda>0$.
\end{rem}
Roughly speaking, the linear profile decomposition says that these
symmetries are essentially all the sources of lack of compactness
for the linear estimate $H^{\frac{1}{2}}\to L_{x}^{k}L_{t}^{\infty}$.
Let us state the linear profile decomposition.
\begin{prop}[Linear Profile Decomposition]
\label{prop:linear profile decomposition}Let $k>4$ and $\{u_{n}\}_{n\in\mathbb{N}}$
be bounded in $H^{\frac{1}{2}}$. After passing to a subsequence in
$n$ if necessary, there exist profiles $\{\phi^{j}\}_{j\in\mathbb{N}}\subset H^{\frac{1}{2}}$,
spatial parameters $\{x_{n}^{j}\}_{n,j\in\mathbb{N}}\subset\mathbb{R}$,
and time parameters $\{t_{n}^{j}\}_{n,j\in\mathbb{N}}\subset\mathbb{R}$,
and defining $w_{n}^{J}$ for each $J\in\mathbb{N}$ by 
\[
u_{n}=\sum_{j=1}^{J}V(t_{n}^{j})\phi^{j}(\cdot-x_{n}^{j})+w_{n}^{J},
\]
satisfying the following properties.\\
1. (Asymptotic orthogonality in $\dot{H}^{s}$) For any $s\in[0,\frac{1}{2}]$
and $J\in\mathbb{N}$, we have
\begin{equation}
\lim_{n\to\infty}\Big[\|u_{n}\|_{\dot{H}^{s}}^{2}-\sum_{j=1}^{J}\|\phi^{j}\|_{\dot{H}^{s}}^{2}-\|w_{n}^{J}\|_{\dot{H}^{s}}^{2}\Big]=0.\label{eq:asymp ortho H^s}
\end{equation}
2. (Asymptotic vanishing of the remainder) We have
\begin{equation}
\lim_{J\to\infty}\limsup_{n\to\infty}\|V(t)w_{n}^{J}\|_{L_{x}^{k}L_{t}^{\infty}}=0.\label{eq:asymp vanishing (k,infty)}
\end{equation}
3. (Asymptotic vanishing of weak limit) For any $j\leq J<\infty$,
we have
\begin{equation}
V(-t_{n}^{j})w_{n}^{J}(\cdot+x_{n}^{j})\rightharpoonup0\quad\text{weakly in }H^{\frac{1}{2}}.\label{eq:vanishing of weak limit}
\end{equation}
4. (Asymptotic separation of parameters) For each $j\neq j'$, we
have
\begin{equation}
|x_{n}^{j}-x_{n}^{j'}|+|t_{n}^{j}-t_{n}^{j'}|\to\infty.\label{eq:separation of parameters}
\end{equation}
\end{prop}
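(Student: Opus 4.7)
The plan is to follow the now-standard template of Bahouri--G\'erard \cite{BahouriGerard1999AJM} and Keraani: prove a \emph{refined linear estimate}, feed it into an \emph{inverse Strichartz} statement to extract one profile, iterate, and finally verify the orthogonality properties. The extraction uses only the symmetries listed (time and space translation), which is why the decomposition is phrased without scaling: the inhomogeneous $H^{1/2}$ norm already fixes the scale.

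First, I would establish a refined smoothing estimate of the form
\[
\|V(t)u_0\|_{L_x^k L_t^\infty} \lesssim \|u_0\|_{H^{1/2}}^{\theta}\cdot \sup_{j\in\mathbb{Z}} \|V(t) Q_j u_0\|_{L_x^k L_t^\infty}^{1-\theta}
\]
with some $\theta\in(0,1)$. The idea is to interpolate the endpoint local smoothing $\|V(t)u_0\|_{L_x^4 L_t^\infty}\lesssim \|u_0\|_{\dot H^{1/4}}$ of Kenig--Ponce--Vega (which sits at $k=4$ and is why the statement requires $k>4$) against the trivial $L_{t,x}^\infty$ estimate; Littlewood--Paley square-function arguments plus Bernstein then convert the $L_x^k L_t^\infty$ bound into its Besov analogue, and a standard square-sum versus supremum trick produces the $\sup_j$ factor. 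From this refinement, a frequency-localization followed by spatial/time pigeonholing within each dyadic block yields the inverse Strichartz statement: if $\|u_n\|_{H^{1/2}}\le A$ and $\liminf_n \|V(t)u_n\|_{L_x^k L_t^\infty}\ge \varepsilon$, then after passing to a subsequence there exist $(t_n,x_n)$ and $\phi\in H^{1/2}$ with
\[
V(-t_n)u_n(\cdot+x_n)\rightharpoonup \phi \qquad\text{weakly in } H^{1/2},
\]
and $\|\phi\|_{H^{1/2}}\gtrsim \varepsilon^{1/\theta} A^{1-1/\theta}$. This refined estimate is the \emph{main obstacle}; all other steps are bookkeeping.

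Next I would run the iterative extraction. Set $u_n^{(0)}=u_n$; at stage $j$, apply the inverse Strichartz statement to $u_n^{(j-1)}$ (if its $L_x^k L_t^\infty$ norm has nonzero $\limsup$) to produce $(t_n^j,x_n^j,\phi^j)$, and put $u_n^{(j)} := u_n^{(j-1)} - V(t_n^j)\phi^j(\cdot-x_n^j)$. The weak convergence built into the inverse Strichartz yields, at each stage,
\[
\|u_n^{(j-1)}\|_{H^{1/2}}^2 = \|\phi^j\|_{H^{1/2}}^2 + \|u_n^{(j)}\|_{H^{1/2}}^2 + o_n(1),
\]
which together with the quantitative lower bound $\|\phi^j\|_{H^{1/2}}\gtrsim \varepsilon_j^{1/\theta}A^{1-1/\theta}$ forces $\varepsilon_j := \limsup_n \|V(t)u_n^{(j-1)}\|_{L_x^k L_t^\infty} \to 0$. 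Setting $w_n^J := u_n^{(J)}$ gives \eqref{eq:asymp vanishing (k,infty)}.

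Finally I would verify the remaining properties. Asymptotic separation \eqref{eq:separation of parameters} follows from an induction: if for some $j<j'$ one had $(t_n^{j'}-t_n^j, x_n^{j'}-x_n^j)\to (t_\infty,x_\infty)$ along a subsequence, then $V(-t_n^{j'})u_n^{(j'-1)}(\cdot+x_n^{j'})$ would have a nonzero weak limit incompatible with the stage-$j$ subtraction, which already killed the contribution of $\phi^j$ translated by those parameters. The weak-limit statement \eqref{eq:vanishing of weak limit} is then immediate from the construction together with separation. For the Pythagorean decomposition \eqref{eq:asymp ortho H^s} at general $s\in[0,\tfrac12]$, I would expand $\|u_n\|_{\dot H^s}^2 = \langle D_x^{2s} u_n, u_n\rangle$, substitute the decomposition, and observe that all cross terms of the form $\langle D_x^{2s} V(t_n^j)\phi^j(\cdot-x_n^j),\ V(t_n^{j'})\phi^{j'}(\cdot-x_n^{j'})\rangle$ go to zero by \eqref{eq:separation of parameters} (via a density argument reducing to Schwartz profiles and using the dispersive/pointwise decay of $V(t)$), while the cross terms with $w_n^J$ vanish by \eqref{eq:vanishing of weak limit} applied at the level of $D_x^{2s}$. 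The details are deferred to Appendix~\ref{sec:proof of linear profile decomposition} as indicated earlier in the paper.
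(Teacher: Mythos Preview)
Your proposal is correct and would produce a valid proof, but the paper's route (following Holmer--Roudenko \cite{HolmerRoudenko2008}) is more direct than the Bahouri--G\'erard/Keraani template you outline. The main difference is in the inverse step: rather than first establishing a Besov-refined smoothing estimate of the form $\|V(t)u_0\|_{L_x^kL_t^\infty}\lesssim\|u_0\|_{H^{1/2}}^\theta\sup_j\|V(t)Q_ju_0\|_{L_x^kL_t^\infty}^{1-\theta}$ (which, as you note, is the main obstacle and is somewhat delicate because of the $L_t^\infty$ inside the mixed norm), the paper simply truncates $w_n^{J-1}$ to a fixed frequency annulus $\{\tfrac{1}{r}\le|\xi|\le r\}$ with $r$ chosen so that the truncation error is small relative to $c_J$, and then interpolates $L_x^kL_t^\infty$ between $L_x^4L_t^\infty$ and $L_{t,x}^\infty$ (this is exactly where $k>4$ enters). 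This yields a lower bound on $\|\chi\ast V(t)w_n^{J-1}\|_{L_{t,x}^\infty}$ directly, from which the point $(t_n^J,x_n^J)$ and the weak-limit profile $\phi^J$ are extracted in one step, with the quantitative lower bound $\|\phi^J\|_{H^{1/2}}\gtrsim c_J^{\,4/k}A_J^{-k/(k-4)}$ following from pairing against the convolution kernel. This sidesteps your ``main obstacle'' entirely.

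A second, smaller difference: for the $\dot H^s$ Pythagorean expansion \eqref{eq:asymp ortho H^s}, the paper does not expand all profile--profile cross terms and kill them via separation of parameters. Instead it uses that at stage $J$ the weak convergence $V(-t_n^J)w_n^J(\cdot+x_n^J)\rightharpoonup0$ in $H^{1/2}$ already gives $\|w_n^{J-1}\|_{\dot H^s}^2=\|\phi^J\|_{\dot H^s}^2+\|w_n^J\|_{\dot H^s}^2+o_n(1)$ for each $s\in[0,\tfrac12]$, and then telescopes. This is cleaner than your proposed expansion and avoids a separate density argument.
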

\begin{proof}
The proof is standard, especially very similar to that in \cite[Lemma 2.1]{HolmerRoudenko2008}.
For sake of completeness, we include it in Appendix \ref{sec:proof of linear profile decomposition}.
\end{proof}
\begin{rem}
Although the embedding $\|V(t)u_{0}\|_{L_{x}^{k}L_{t}^{\infty}}\lesssim\|u_{0}\|_{H^{\frac{1}{2}}}$
is true for $k\geq4$, we do not know whether \eqref{eq:asymp vanishing (k,infty)}
is true for $k=4$. The $L_{x}^{4}L_{t}^{\infty}$ estimate is at
the genuine endpoint in local smoothing estimates. As one can see
in the proof of Proposition \ref{prop:linear profile decomposition},
one has to use interpolation. If \eqref{eq:asymp vanishing (k,infty)}
were true for $k=4$, Theorem \ref{thm:scattering} holds for $k=4$.
\end{rem}
\begin{rem}
\label{rem:no asym vanish of X}One may ask whether \eqref{eq:asymp vanishing (k,infty)}
is true if we replace $L_{x}^{k}L_{t}^{\infty}$-norm by $\dot{X}_{t}^{s_{k}}$-norm.
However, this seems to be impossible. Indeed, the linear estimate
$\|D_{x}^{1/2}V(t)u_{0}\|_{L_{x}^{\infty}L_{t}^{2}}\sim\|u_{0}\|_{L_{x}^{2}}$
says that $\|V(t)w_{n}^{J}\|_{\dot{X}_{t}^{s_{k}}}\sim\|w_{n}^{J}\|_{\dot{H}^{s_{k}}}$.
Hence, asymptotic vanishing of $\dot{X}_{t}^{s_{k}}$-norm is equivalent
to asymptotic vanishing of $\dot{H}^{s_{k}}$-norm, which seems to
be impossible.
\end{rem}
Let us discuss the statements of Proposition \ref{prop:linear profile decomposition}.
There can be infinitely many profiles $\phi^{j}$. However, the time
and spatial parameters associated to each profiles should be far enough,
so that each (time and spatial translated) profile contributes to
$u_{n}$ almost orthogonally. After deleting the contributions of
profiles, the remainder term should asymptotically vanish in $L_{x}^{k}L_{t}^{\infty}$
norm. This is the heart of compactness modulo symmetries.

\subsection{\label{subsec:perturbation theory}Nonlinear Profile Decomposition}

As we are dealing with the nonlinear equation, we will obtain a nonlinear
version of the linear profile decomposition. We refer to as \emph{the}
\emph{nonlinear profile decomposition} (Theorem \ref{thm:nonlinear profile decomposition})
\emph{holds} if the sum of nonlinear profiles approximate nonlinear
solutions to the original equation. In other contexts, this is achieved
by the long-time perturbation theory. The long-time perturbation theory
is basically a consequence of concatenating short-time perturbation
theory, in which one exploits smallness of solution norms on each
small time interval. In our case, however, the solution norm contains
$L_{x}^{k}L_{I}^{\infty}$ norm that may not be small even for a short
time interval $I$. Thus obtaining the long-time perturbation and
hence nonlinear profile decomposition here becomes more delicate.

We now introduce the nonlinear profile decomposition in more concrete
terms. Let $\{u_{n}(0)\}_{n\in\mathbb{N}}$ be a bounded sequence
in $H^{\frac{1}{2}}$. We apply the linear profile decomposition (Proposition
\ref{prop:linear profile decomposition}) to $\{u_{n}(0)\}_{n\in\mathbb{N}}$
and follow the notations made in that proposition.

Possibly taking a further subsequence, we may assume that $t_{n}^{j}$
converges in $[-\infty,+\infty]$. If $t_{n}^{j}\to t^{j}\in(-\infty,+\infty)$,
then we may replace $\phi^{j}$ by $V(t^{j})\phi^{j}$ and $t_{n}^{j}$
by $t_{n}^{j}-t^{j}$ to assume that $t_{n}^{j}\to0$. Moreover, replacing
$w_{n}^{J}$ by $w_{n}^{J}+\sum_{j=1}^{J}[V(t_{n}^{j})\phi^{j}-\phi^{j}]$,
we may assume that $t_{n}^{j}\equiv0$.

By global well-posedness of \eqref{eq:gBO} (Theorem \ref{thm:subcritical LWP, GWP}),
there exist global  $H^{\frac{1}{2}}$ solutions $v^{j}$ for each
$j\in\mathbb{N}$ such that $v^{j}(0)=\phi^{j}$ if $t_{n}^{j}\equiv0$,
$v^{j}$ scatters forward to $V(t)\phi^{j}$ if $t_{n}^{j}\to+\infty$,
and $v^{j}$ scatters to forward to $V(t)\phi^{j}$ if $t_{n}^{j}\to-\infty$.
These $v^{j}$'s are referred to as \emph{nonlinear profiles} associated
to $\{u_{n}(0)\}_{n\in\mathbb{N}}$. We define
\begin{align*}
v_{n}^{j}(t) & \coloneqq v^{j}(t+t_{n}^{j},\cdot-x_{n}^{j}),\\
u_{n}^{J}(t) & \coloneqq\sum_{j=1}^{J}v_{n}^{j}(t),\\
\tilde{u}_{n}^{J}(t) & \coloneqq u_{n}^{J}(t)+V(t)w_{n}^{J}.
\end{align*}
Note that $\tilde{u}_{n}^{J}$ and $u_{n}^{J}$ are globally defined.
The approximate solutions $\{\tilde{u}_{n}^{J}\}_{n,J\in\mathbb{N}}$
will be referred to as a \emph{nonlinear profile decomposition} associated
to $\{u_{n}(0)\}_{n\in\mathbb{N}}$.\footnote{Here, we use the terminology ``nonlinear profile decomposition'' in
two different ways. On the one hand, we call $\{\tilde{u}_{n}^{J}\}_{n,J\in\mathbb{N}}$
as a nonlinear profile decomposition. On the other hand, the statement
of Theorem \ref{thm:nonlinear profile decomposition} is said that
nonlinear profile decomposition holds.} We now state the main theorem of this subsection.
\begin{thm}[Nonlinear Profile Decomposition]
\label{thm:nonlinear profile decomposition}Consider a nonlinear
profile decomposition $\{\tilde{u}_{n}^{J}\}_{n,J\in\mathbb{N}}$
associated to a bounded sequence $\{u_{n}(0)\}_{n\in\mathbb{N}}\subset H^{\frac{1}{2}}$.
Let $I\subseteq\mathbb{R}$ be a fixed interval (possibly $I=\mathbb{R}$)
containing $0$. If $\|v^{j}\|_{\dot{Z}_{I}^{s_{k}}}<\infty$ for
all $j\in\mathbb{N}$, then 
\[
\lim_{J\to\infty}\limsup_{n\to\infty}\|\tilde{u}_{n}^{J}-u_{n}\|_{\dot{Z}_{I}^{s_{k}}}=0.
\]
In particular, combining this with Lemma \ref{lem:uniform boundedness of approximate solutions}
below, we have
\[
\limsup_{n\to\infty}\|u_{n}\|_{\dot{Z}_{I}^{s_{k}}}<\infty.
\]
\end{thm}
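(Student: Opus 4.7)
The plan is to set $r_n^J := u_n - \tilde{u}_n^J$, which by construction satisfies $r_n^J(0)=0$ (after normalising $t_n^j \equiv 0$), and to reduce the theorem to showing $\|r_n^J\|_{\dot{Z}_I^{s_k}} \to 0$ as $n,J \to \infty$. Together with the uniform boundedness of $\tilde{u}_n^J$ in $\dot{Z}_I^{s_k}$ supplied by Lemma~\ref{lem:uniform boundedness of approximate solutions}, this gives the theorem, including the ``in particular'' statement. Since each $v_n^j$ solves \eqref{eq:gBO} exactly, a direct computation yields
\[
(\partial_t+\mathcal{H}\partial_{xx})r_n^J = \bigl[F(\tilde{u}_n^J+r_n^J)-F(\tilde{u}_n^J)\bigr]+e_n^J,\qquad e_n^J := F(\tilde{u}_n^J)-\sum_{j=1}^J F(v_n^j),
\]
so the problem becomes a stability question with small forcing $e_n^J$ and zero initial data.

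The first substantive step is to prove $\limsup_{n \to \infty}\|e_n^J\|_{\dot{\mathcal{N}}_I^{s_k}}=o_J(1)$. Expanding $F(u)=-\partial_x(u^{k+1})$ by the multinomial theorem, the terms of $e_n^J$ split into (a) cross products $\partial_x(v_n^{j_1}\cdots v_n^{j_{k+1}})$ with at least two distinct indices, and (b) terms containing at least one factor $V(t)w_n^J$. For (a), the asymptotic parameter separation \eqref{eq:separation of parameters} makes the $v_n^j$'s concentrate on asymptotically disjoint spacetime regions --- using that each $v^j$ has finite $\dot{Z}_I^{s_k}$ norm and scatters to its linear profile near its scattering times --- so each cross term vanishes in $\dot{\mathcal{N}}_I^{s_k}$ as $n\to\infty$, with only the finitely many ``large'' profiles needing individual treatment; the contribution of the tail of ``small'' profiles is handled by the small data theory (Theorem~\ref{thm:small data theory}) together with the $\dot{H}^{s_k}$-orthogonality \eqref{eq:asymp ortho H^s}. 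For (b), the asymptotic vanishing \eqref{eq:asymp vanishing (k,infty)} of $\|V(t)w_n^J\|_{L_x^kL_t^\infty}$ combined with Corollary~\ref{cor:more nonlinearity estimate} bounds these terms by $o_{n,J}(1)$ times a uniform factor.

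The main obstacle will be the concluding stability step. As stressed in the introduction, the standard long-time perturbation lemma is unavailable because subdividing $I$ does not make the $L_x^kL_I^\infty$ component of the solution norm small; a naive concatenation of local theory therefore fails. To circumvent this, I would not attempt a fully general long-time perturbation but instead exploit the specific structure of $\tilde{u}_n^J$, recasting the equation for $r_n^J$ in Vento's distorted form so that Corollary~\ref{cor:distorted linear estimates} supplies a linear estimate on $I$. The finite collection of ``large'' nonlinear profiles gives a fixed, $n$-independent combinatorial decomposition of $I$ (rather than an arbitrary temporal one) on which the Lipschitz bound for $F(\tilde{u}_n^J+r_n^J)-F(\tilde{u}_n^J)$ coming from Corollary~\ref{cor:more nonlinearity estimate} can be iterated. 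A continuity bootstrap in the maximal time on which $\|r_n^J\|_{\dot{Z}^{s_k}}$ stays small, starting from $r_n^J(0)=0$ and driven by the smallness of $e_n^J$, then closes the estimate on the whole of $I$.
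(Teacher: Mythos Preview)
Your overall architecture matches the paper's: set up $r_n^J$ with asymptotically vanishing initial data, isolate the forcing $e_n^J$, show it is small in $\dot{\mathcal N}_I^{s_k}$, then run a stability argument. Two points, however, are genuinely incomplete in your sketch.

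\textbf{Error estimation.} Your treatment of terms containing a factor $V(t)w_n^J$ appeals only to the vanishing of $\|V(t)w_n^J\|_{L_x^kL_t^\infty}$. This does not cover the paraproduct pieces in which the \emph{high-frequency} factor is $V(t)w_n^J$: there the nonlinear estimate (Lemma~\ref{lem:nonlinearity estimate}) produces $\|V(t)w_n^J\|_{\dot X_t^{s_k}}$, which does \emph{not} vanish (Remark~\ref{rem:no asym vanish of X}). The paper handles exactly this case separately (Section~\ref{subsec:proof of error estimation}, term~(4.5) with $\ell=k$) by proving pointwise-in-frequency decoupling lemmas $\|(Q_{\sim m}V(t)w_n^J)(Q_{\ll m}v_n^{j'})\|_{L_x^kL_t^2}\to 0$ via weak convergence (Lemmas~\ref{lem:(k,2) decoupling 1}, \ref{lem:(k,2) decoupling 2}). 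Without this, your error bound does not close.

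\textbf{The stability step.} You write that Corollary~\ref{cor:distorted linear estimates} ``supplies a linear estimate on $I$''; it does not. That corollary is valid only on intervals of length $T=T(\|u_0\|_{H^s})$, so on unbounded $I$ (or even long compact $I$) you must still explain how to concatenate. The paper's mechanism is a \emph{temporal} subdivision (Lemma~\ref{lem:subdivision of R}) into two kinds of subintervals: type-I intervals that are short enough for the distorted estimate (Lemma~\ref{lem:short-time 1}), and type-II intervals on which $\|\tilde u_n^J\|_{L_x^kL_I^\infty}$ itself is small, so that the \emph{undistorted} perturbation (Lemma~\ref{lem:short-time 2}) applies. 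The type-II intervals come from the scattering tails $[T,+\infty)$, $(-\infty,-T]$ of the finitely many large profiles; the remaining compact window is then cut into type-I pieces using the $H^{1/2}$ regularity to force smallness of $\dot X^{s_k}$ there. Your ``fixed, $n$-independent combinatorial decomposition'' gestures at this but does not articulate the type-I/type-II dichotomy or the second perturbation lemma, without which a continuity bootstrap alone cannot propagate across $I$.
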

Theorem \ref{thm:nonlinear profile decomposition} will be proved
after we establish an appropriate perturbation theory. We now list
some properties of the nonlinear profile decomposition associated
to $\{u_{n}(0)\}_{n\in\mathbb{N}}$. Namings of the lemmas presented
below follow \cite{Killip-Visan2013Clay}.
\begin{lem}[Asymptotic agreement of initial data]
\label{lem:asymptotic agreement of initial data}We have
\[
\sup_{J}\limsup_{n\to\infty}\|\tilde{u}_{n}^{J}(0)-u_{n}(0)\|_{H^{\frac{1}{2}}}=0.
\]
\end{lem}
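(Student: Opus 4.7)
The plan is to decompose $\tilde{u}_{n}^{J}(0)-u_{n}(0)$ into a sum (of fixed length $J$) whose individual summands all vanish in $H^{\frac{1}{2}}$ as $n\to\infty$, and then to conclude by the triangle inequality. Subtracting the linear profile decomposition $u_{n}(0)=\sum_{j=1}^{J}V(t_{n}^{j})\phi^{j}(\cdot-x_{n}^{j})+w_{n}^{J}$ from the definition $\tilde{u}_{n}^{J}(0)=\sum_{j=1}^{J}v_{n}^{j}(0)+w_{n}^{J}$, the remainders $w_{n}^{J}$ cancel, leaving
\[
\tilde{u}_{n}^{J}(0)-u_{n}(0)=\sum_{j=1}^{J}\bigl[v^{j}(t_{n}^{j},\cdot-x_{n}^{j})-V(t_{n}^{j})\phi^{j}(\cdot-x_{n}^{j})\bigr].
\]
By translation invariance of the $H^{\frac{1}{2}}$ norm, the $j$-th summand has norm $\|v^{j}(t_{n}^{j})-V(t_{n}^{j})\phi^{j}\|_{H^{1/2}}$, so the whole problem reduces to showing that each of these vanishes as $n\to\infty$.

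The heart of the argument is to bound this $j$-th term, and I would split according to the limit of $t_{n}^{j}$ in $[-\infty,+\infty]$. If $t_{n}^{j}\equiv 0$ (the case into which the paper's preliminary reduction sends every profile with bounded time parameter), then $v^{j}(0)=\phi^{j}$ and the term vanishes identically for every $n$. If $t_{n}^{j}\to+\infty$ (respectively $-\infty$), then the nonlinear profile $v^{j}$ was constructed via the $H^{\frac{1}{2}}$-version of the wave operator of Proposition \ref{prop:LWP at infty}, so by definition it scatters forward (respectively backward) to $V(t)\phi^{j}$ in $H^{\frac{1}{2}}$; in particular $\|v^{j}(t_{n}^{j})-V(t_{n}^{j})\phi^{j}\|_{H^{1/2}}\to 0$ as $n\to\infty$.

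Since the sum is finite for each fixed $J$, the triangle inequality then yields
\[
\limsup_{n\to\infty}\|\tilde{u}_{n}^{J}(0)-u_{n}(0)\|_{H^{1/2}}\leq\sum_{j=1}^{J}\limsup_{n\to\infty}\|v^{j}(t_{n}^{j})-V(t_{n}^{j})\phi^{j}\|_{H^{1/2}}=0,
\]
and taking the supremum over $J$ is then automatic (the bound is $0$ uniformly in $J$). I do not foresee a genuine obstacle in this lemma: it is essentially a definitional check using the scattering property of the nonlinear profiles. The only subtlety worth flagging is to invoke the $H^{\frac{1}{2}}$-wave operator (part 2 of Proposition \ref{prop:LWP at infty}), rather than the $\dot{H}^{s_{k}}$-version coming from the scattering definition, so that the convergence lands in the ambient energy space $H^{\frac{1}{2}}$ as required.
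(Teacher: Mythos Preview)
Your proposal is correct and follows essentially the same approach as the paper: both subtract the linear profile decomposition from $\tilde{u}_{n}^{J}(0)$ to cancel $w_{n}^{J}$, reduce to the individual terms $\|v^{j}(t_{n}^{j})-V(t_{n}^{j})\phi^{j}\|_{H^{1/2}}$, and observe that each vanishes either trivially (when $t_{n}^{j}\equiv0$) or by the scattering construction of $v^{j}$ (when $|t_{n}^{j}|\to\infty$). Your write-up is in fact more explicit than the paper's, and your remark about invoking the $H^{\frac{1}{2}}$-wave operator from Proposition~\ref{prop:LWP at infty} is a correct and useful clarification.
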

\begin{proof}
Observe that 
\[
\|\tilde{u}_{n}^{J}(0)-u_{n}(0)\|_{H^{\frac{1}{2}}}=\sum_{j\leq J:|t_{n}^{j}|\to\infty}\|v^{j}(t_{n}^{j})-V(t_{n}^{j})\phi^{1}\|_{H^{\frac{1}{2}}}.
\]
The conclusion follows by the construction of $v^{j}$.
\end{proof}
\begin{lem}[Uniform boundedness of approximate solutions]
\label{lem:uniform boundedness of approximate solutions}Let $I\subseteq\mathbb{R}$
be a fixed interval (possibly $I=\mathbb{R}$). If $\|v^{j}\|_{\dot{Z}_{I}^{s_{k}}}<\infty$
for all $j\in\mathbb{N}$, we have
\[
\sup_{J}\limsup_{n\to\infty}\big(\|\tilde{u}_{n}^{J}\|_{\dot{Z}_{I}^{s_{k}}}+\|\tilde{u}_{n}^{J}\|_{L_{I}^{\infty}H_{x}^{\frac{1}{2}}}\big)<\infty.
\]
\end{lem}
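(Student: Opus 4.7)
\smallskip

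\noindent\textbf{Proof proposal.} The plan is to decompose the sum defining $\tilde{u}_{n}^{J}$ into a finite collection of ``large'' nonlinear profiles, an infinite tail of ``small'' profiles that will be handled by the small-data scattering theory, and the linear remainder term $V(t)w_{n}^{J}$, and then to combine the three contributions using the asymptotic separation \eqref{eq:separation of parameters} of the parameters $(x_{n}^{j},t_{n}^{j})$.

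First, by the asymptotic $\dot{H}^{s_{k}}$-orthogonality \eqref{eq:asymp ortho H^s} (applied with $s=s_{k}$, which lies in $[0,\tfrac12]$) and the uniform $H^{\frac{1}{2}}$-boundedness of $\{u_{n}(0)\}$, we have
\[
\sum_{j=1}^{\infty}\|\phi^{j}\|_{\dot{H}^{s_{k}}}^{2}\ \leq\ \limsup_{n\to\infty}\|u_{n}(0)\|_{\dot{H}^{s_{k}}}^{2}\ <\ \infty,
\]
and analogously in $H^{\frac{1}{2}}$. Let $\eta>0$ denote the small-data threshold from Theorem \ref{thm:small data theory}. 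Pick $J_{0}$ so that $\|\phi^{j}\|_{\dot{H}^{s_{k}}}<\eta$ for every $j>J_{0}$. For $j>J_{0}$, Theorem \ref{thm:small data theory} together with the construction of $v^{j}$ (which scatters to $V(t)\phi^{j}$ whenever $|t_{n}^{j}|\to\infty$, or equals the solution with data $\phi^{j}$ when $t_{n}^{j}\equiv0$) gives a global solution with $\|v^{j}\|_{\dot{Z}_{t}^{s_{k}}}\lesssim\|\phi^{j}\|_{\dot{H}^{s_{k}}}$. For $j\leq J_{0}$, the hypothesis $\|v^{j}\|_{\dot{Z}_{I}^{s_{k}}}<\infty$ furnishes a uniform bound for each of these finitely many profiles.

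Next I combine the profiles. Since the spacetime norms comprising $\dot{Z}_{I}^{s_{k}}$ are invariant under spatial and temporal translations, we have $\|v_{n}^{j}\|_{\dot{Z}_{I}^{s_{k}}}\leq\|v^{j}\|_{\dot{Z}_{\mathbb{R}}^{s_{k}}}$. The crucial step is the asymptotic orthogonality estimate
\[
\limsup_{n\to\infty}\Big\|\sum_{j=1}^{J}v_{n}^{j}\Big\|_{\dot{Z}_{I}^{s_{k}}}^{2}\ \lesssim\ \sum_{j=1}^{J}\|v^{j}\|_{\dot{Z}_{\mathbb{R}}^{s_{k}}}^{2},
\]
(with the analogous bound for the $L_{I}^{\infty}H_{x}^{\frac{1}{2}}$ norm), which is driven by \eqref{eq:separation of parameters}. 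The point is that each $v^{j}\in\dot{Z}_{\mathbb{R}}^{s_{k}}$ satisfies the vanishing conditions \eqref{eq:continuity assumption 1 for (k,infty)}--\eqref{eq:continuity assumption 2 for (k,infty)}, so its $L_{x}^{k}L_{t}^{\infty}$-mass is essentially concentrated on a bounded region in spacetime (after truncating the small tails using $\varepsilon$), whence separation of $(x_{n}^{j},t_{n}^{j})$ forces the supports of any two $v_{n}^{j},v_{n}^{j'}$ to become disjoint in the limit. A truncation-and-square argument for $L_{x}^{k}L_{I}^{\infty}$ (using $k>2$) and a Fourier/Plancherel computation for the $\dot{X}_{I}^{s_{k}}$ and $L_{I}^{\infty}\dot{H}_{x}^{s_{k}}$ pieces then kill the cross terms asymptotically. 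Once this is in hand, splitting $\sum_{j=1}^{J}$ at $J_{0}$, the large-profile block is a finite sum, while the small-profile block is bounded by $\sum_{j>J_{0}}\|\phi^{j}\|_{\dot{H}^{s_{k}}}^{2}$, uniformly in $J$.

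Finally, the linear remainder is handled by the linear estimates of Section \ref{sec:WP theory}, which yield $\|V(t)w_{n}^{J}\|_{\dot{Z}_{I}^{s_{k}}}+\|V(t)w_{n}^{J}\|_{L_{I}^{\infty}H_{x}^{\frac{1}{2}}}\lesssim\|w_{n}^{J}\|_{H^{\frac{1}{2}}}$, which is uniformly bounded by \eqref{eq:asymp ortho H^s}. The main technical obstacle is the asymptotic orthogonality step for the $L_{x}^{k}L_{I}^{\infty}$ and $\dot{X}_{I}^{s_{k}}$ norms: these contain an $L^{\infty}$ factor in one variable, so orthogonality cannot be obtained by Plancherel alone and instead requires exploiting the $\varepsilon$-concentration of each $v^{j}$ in spacetime together with \eqref{eq:separation of parameters} to make distinct profiles eventually supported in disjoint regions.
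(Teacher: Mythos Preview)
Your overall strategy---split into finitely many large profiles, a tail of small profiles, and the linear remainder---matches the paper, and your treatment of the large profiles and of $V(t)w_{n}^{J}$ is fine. The gap is the step you yourself flag as ``the main technical obstacle'': the asymptotic $\ell^{2}$-orthogonality
\[
\limsup_{n\to\infty}\Big\|\sum_{j=1}^{J}v_{n}^{j}\Big\|_{\dot{Z}_{I}^{s_{k}}}^{2}\ \lesssim\ \sum_{j=1}^{J}\|v^{j}\|_{\dot{Z}_{\mathbb{R}}^{s_{k}}}^{2}.
\]
Your sketch (``Fourier/Plancherel computation for the $\dot{X}_{I}^{s_{k}}$ and $L_{I}^{\infty}\dot{H}_{x}^{s_{k}}$ pieces'') does not go through: the $\dot{X}_{I}^{s_{k}}$ norm is built from $L_{x}^{p}L_{I}^{q}$ norms with $p\neq2$, so there is no Plancherel identity to kill cross terms, and the $L^{\infty}$ factors prevent any clean support-separation argument. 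Likewise, for $L_{I}^{\infty}\dot{H}_{x}^{s_{k}}$ you would need decoupling at \emph{every} fixed time, which for pairs with $|t_{n}^{j}-t_{n}^{j'}|\to\infty$ but bounded $|x_{n}^{j}-x_{n}^{j'}|$ requires information about the profiles at large times that you have not invoked.

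The paper sidesteps this entirely. For the tail $j>J_{1}$ it applies Duhamel to the \emph{sum} $\sum_{j>J_{1}}^{J}v_{n}^{j}$: since each $v_{n}^{j}$ solves \eqref{eq:gBO} individually, the sum satisfies $(\partial_{t}+\mathcal{H}\partial_{xx})\big(\sum v_{n}^{j}\big)=\sum F(v_{n}^{j})$, and the linear estimates give
\[
\Big\|\sum_{j>J_{1}}^{J}v_{n}^{j}\Big\|_{\dot{Z}_{t}^{s_{k}}}\ \lesssim\ \Big\|\sum_{j>J_{1}}^{J}v_{n}^{j}(0)\Big\|_{\dot{H}^{s_{k}}}+\sum_{j>J_{1}}^{J}\|F(v^{j})\|_{\dot{\mathcal{N}}_{t}^{s_{k}}}.
\]
The first term is $\lesssim\eta_{0}$ by the $\dot{H}^{s_{k}}$-orthogonality \eqref{eq:asymp ortho H^s}, which \emph{is} available. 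For the second, small-data theory gives $\|F(v^{j})\|_{\dot{\mathcal{N}}_{t}^{s_{k}}}\lesssim\|\phi^{j}\|_{\dot{H}^{s_{k}}}^{k+1}$, and since $k+1>2$ one has $\sum_{j>J_{1}}\|\phi^{j}\|_{\dot{H}^{s_{k}}}^{k+1}\lesssim\eta_{0}^{k-1}\sum_{j>J_{1}}\|\phi^{j}\|_{\dot{H}^{s_{k}}}^{2}\lesssim\eta_{0}^{k+1}$. The same argument handles $L_{t}^{\infty}H_{x}^{\frac{1}{2}}$. Thus the only orthogonality needed is the one already provided by the linear profile decomposition, and no decoupling in the solution norm $\dot{Z}_{I}^{s_{k}}$ is required.
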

\begin{proof}
For notational simplicity, we only prove the case $I=\mathbb{R}$.
Fix $0<\eta_{0}\ll1$ such that we can apply the small data theory
for $\phi^{j}$ having $H^{\frac{1}{2}}$-norm less than $\eta_{0}$.
Choose $J_{1}\in\mathbb{N}$ such that 
\[
\Big(\sum_{j>J_{1}}\|\phi^{j}\|_{H^{\frac{1}{2}}}^{2}\Big)^{\frac{1}{2}}\leq\eta_{0}.
\]
We then see by Duhamel's principle that 
\begin{align*}
 & \sup_{J}\limsup_{n\to\infty}\|\sum_{j>J_{1}}^{J}v_{n}^{j}\|_{\dot{Z}_{t}^{s_{k}}}\\
 & \lesssim\sup_{J}\limsup_{n\to\infty}\|\sum_{j>J_{1}}^{J}v_{n}^{j}(0)\|_{\dot{H}^{s_{k}}}+\sum_{j>J_{1}}^{J}\|\pi(v^{j},v^{j})\|_{\dot{\mathcal{N}}_{t}^{s_{k}}}+\|g(v^{j})\|_{\dot{\mathcal{N}}_{t}^{s_{k}}}\\
 & \lesssim\sup_{J}\Big(\sum_{j>J_{1}}^{J}\|\phi^{j}\|_{\dot{H}^{s_{k}}}^{2}\Big)^{\frac{1}{2}}+\sum_{j>J_{1}}^{J}\|\phi^{j}\|_{\dot{H}^{s_{k}}}^{k+1}\\
 & \lesssim\eta_{0}.
\end{align*}
Applying exactly the same method with replacing $\dot{H}^{s_{k}}$
by $H^{\frac{1}{2}}$, we have
\[
\sup_{J}\limsup_{n\to\infty}\|\sum_{j>J_{1}}^{J}v_{n}^{j}\|_{L_{t}^{\infty}H_{x}^{\frac{1}{2}}}\lesssim\eta_{0}.
\]
Since only finitely many profiles are left, (applying mass/energy
conservation when we bound $L_{t}^{\infty}H_{x}^{\frac{1}{2}}$) the
conclusion follows.
\end{proof}
\begin{lem}[Error estimation]
\label{lem:error estimation}Let $I\subseteq\mathbb{R}$ be a fixed
interval (possibly $I=\mathbb{R}$). If $\|v^{j}\|_{\dot{Z}_{I}^{s_{k}}}<\infty$
for all $j\in\mathbb{N}$, we have
\[
\lim_{J\to\infty}\limsup_{n\to\infty}\|F(\tilde{u}_{n}^{J})-\sum_{j=1}^{J}F(v_{n}^{j})\|_{\dot{\mathcal{N}}_{I}^{s_{k}}}=0.
\]
\end{lem}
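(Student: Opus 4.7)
The plan is to expand
\[
F(\tilde{u}_n^J) - \sum_{j=1}^J F(v_n^j) = -\partial_x\Bigl[\bigl(u_n^J + V(t)w_n^J\bigr)^{k+1} - \sum_{j=1}^J (v_n^j)^{k+1}\Bigr]
\]
by the multinomial theorem. After the pure diagonal monomials $(v_n^j)^{k+1}$ are cancelled, the difference becomes a finite linear combination of degree-$(k+1)$ monomials in the letters $v_n^1,\ldots,v_n^J$ and $w := V(t)w_n^J$ in which either (i) at least one factor of $w$ appears, or (ii) no $w$ appears but at least two distinct profile indices are present. I will handle groups (i) and (ii) separately.

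For group (i), I invoke the natural $(k+1)$-linear extension of the paraproduct estimates of Lemma \ref{lem:nonlinearity estimate}, proved by the same frequency-localized decomposition: any $\partial_x$ of a $(k+1)$-fold product is bounded in $\dot{\mathcal{N}}_I^{s_k}$ by placing one factor in the $\dot{X}_I^{s_k}$ slot and the remaining $k$ factors in $L_x^k L_I^{\infty}$ slots. Since every monomial in group (i) contains some factor of $w$, I place $w$ in an $L_x^k L_I^{\infty}$ slot; by \eqref{eq:asymp vanishing (k,infty)}, $\|w\|_{L_x^k L_I^{\infty}}\to 0$ in the iterated limit $J\to\infty$ after $n\to\infty$, while all other factors are uniformly bounded in $L_x^k L_I^{\infty}\cap\dot{X}_I^{s_k}$ by hypothesis and Lemma \ref{lem:uniform boundedness of approximate solutions}. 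So group (i) vanishes in the double limit.

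For group (ii), I first truncate to finitely many profiles: for small $\eta_0>0$ I choose $J_1$ large so that $\sum_{j>J_1}\|\phi^j\|_{H^{1/2}}^2\leq\eta_0^2$, which is available by \eqref{eq:asymp ortho H^s}. The combined contribution of all monomials involving some index $j>J_1$ is then controlled by a power of $\eta_0$, via the same small-data argument used in Lemma \ref{lem:uniform boundedness of approximate solutions}. For the remaining finite collection of monomials indexed in $\{1,\ldots,J_1\}$, I approximate each $v^j$ in $\dot{Z}_I^{s_k}$-norm by a function $\tilde v^j$ that is smooth with compact support in $I\times\mathbb{R}$, using the continuity/vanishing conditions \eqref{eq:continuity assumption 1 for (k,infty)}--\eqref{eq:continuity assumption 2 for (k,infty)} in the definition of $\dot{Z}_I^{s_k}$; for profiles with $t_n^j\to\pm\infty$, I additionally use that $v_n^j$ restricted to any fixed time window is essentially $V(t+t_n^j)\phi^j$ and so already vanishes in $L_x^k L_I^{\infty}$ as $n\to\infty$. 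For the smooth compactly supported approximations, the asymptotic separation \eqref{eq:separation of parameters} makes the spacetime supports of $\tilde v_n^j$ and $\tilde v_n^{j'}$ pairwise disjoint for $n$ large, so each cross-profile monomial vanishes identically.

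The principal technical obstacle is the mismatch between the \emph{physical} decoupling of profiles supplied by \eqref{eq:separation of parameters} and the \emph{frequency-based} norm $\dot{\mathcal{N}}_I^{s_k}$, which is insensitive to spatial translations: the decoupling cannot be read off the paraproduct estimate alone and must be routed through compactly-supported approximations. A second subtlety is that the approximation argument degrades with the number of profiles, which is why the tail $j>J_1$ has to be peeled off first via $H^{1/2}$-smallness before the separation argument is applied to a fixed finite collection.
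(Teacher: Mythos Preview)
Your treatment of group (i) has a genuine gap. The claimed multilinear estimate---place any one factor in $\dot{X}_I^{s_k}$ and the remaining $k$ in $L_x^k L_I^\infty$---is not available with a free choice of slot. In the paraproduct decomposition $F=-\pi+g$ underlying Lemma~\ref{lem:nonlinearity estimate}, the $\dot{X}$-factor in $\pi(\varphi,\psi)$ is forced to be the \emph{high-frequency} input $\psi$. Hence for the piece $\pi(u_n^J,\,V(t)w_n^J)$---that is, $k$ low-frequency copies of $u_n^J$ against one high-frequency copy of $w$---the only bound the paraproduct yields is $\|u_n^J\|_{L_x^kL_I^\infty}^k\,\|V(t)w_n^J\|_{\dot{X}_I^{s_k}}$. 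But $\|V(t)w_n^J\|_{\dot{X}_I^{s_k}}\sim\|w_n^J\|_{\dot{H}^{s_k}}$ does \emph{not} vanish; this is precisely Remark~\ref{rem:no asym vanish of X}, a consequence of the sharp local smoothing $\|D_x^{1/2}V(t)\phi\|_{L_x^\infty L_t^2}\sim\|\phi\|_{L^2}$. The paper isolates this term explicitly (the $\ell=k$ case of \eqref{eq:error-1-1}) and handles it by a different mechanism: a frequency-localized $L_x^k L_t^2$ decoupling (Lemma~\ref{lem:(k,2) decoupling 2}) that pairs $Q_{\sim j}V(t)w_n^J$ against $Q_{\ll j}v_n^{j'}$ and uses weak convergence of $V(-t_n^{j'})w_n^J(\cdot+x_n^{j'})$ in $H^{1/2}$ together with dominated convergence in the frequency index.

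The same structural issue undermines your group (ii) argument. Approximation of $v^j$ in $\dot{Z}_I^{s_k}$ by spacetime compactly supported functions fails for the $\dot{\mathcal S}^{s_k,1}$ component (an $L_x^\infty L_I^2$-type norm): again by sharp local smoothing, spatial truncation does not make this norm small. If instead you approximate only in $L_x^k L_I^\infty$, the error estimate still forces some factor into $\dot{X}$, and for the cross term $\pi(v_n^j,v_n^{j'})$ with $j\neq j'$ neither $\|v_n^j\|_{L_x^kL_I^\infty}$ nor $\|v_n^{j'}\|_{\dot X_I^{s_k}}$ vanishes with $n$. The paper's route here is the companion $L_x^k L_t^2$ decoupling between distinct profiles (Lemma~\ref{lem:(k,2) decoupling 1}), proved via weak convergence after unwinding the translations. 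Your outline correctly identifies the truncation to $J_1$ large profiles and the role of parameter separation, but the passage from separation to smallness in $\dot{\mathcal N}_I^{s_k}$ requires this extra bilinear decoupling step rather than a pure physical-support argument.
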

\begin{proof}
We postpone the proof in Section \ref{subsec:proof of error estimation}.
\end{proof}
We now go back to our discussion of technical difficulties arising
from long-time perturbation theory. It is instructive to compare with
the mass-critical \eqref{eq:gKdV} long-time perturbation theory \cite{KillipVisanKwonShao2012},
which we shall state as follows. Suppose that $\tilde{u}$ solves
\[
(\partial_{t}+\partial_{xxx})\tilde{u}=\partial_{x}(\tilde{u}^{5})+e
\]
with initial data $\tilde{u}_{0}$ and satisfies 
\[
\|\tilde{u}_{0}\|_{L_{I}^{\infty}L_{x}^{2}}\leq M\quad\text{and}\quad\|\tilde{u}\|_{L_{x}^{5}L_{I}^{10}}\leq L
\]
for some finite $M$ and $L$. Then, there exists $\epsilon_{0}=\epsilon_{0}(M,L)>0$
such that whenever 
\[
\|u_{0}-\tilde{u}_{0}\|_{L_{x}^{2}}+\|D_{x}^{-1}e\|_{L_{x}^{1}L_{I}^{2}}\leq\epsilon
\]
for some $0<\epsilon<\epsilon_{0}$, we have 
\[
\|u-\tilde{u}\|_{L_{x}^{5}L_{I}^{10}}\lesssim_{M,L}\epsilon.
\]
The case when $L$ is chosen sufficiently small constant is called
the short-time perturbation theory.

The main step in proving the above perturbation theory is to subdivide
the interval $I$ into $L$-dependently many subintervals (say $j=1,\dots,N(J)$)
on which $\tilde{u}$ has sufficiently small $L_{x}^{5}L_{I_{j}}^{10}$
norm. Once $\tilde{u}$ has small $L_{x}^{5}L_{I_{j}}^{10}$ norm,
by a short-time perturbation, one can conclude that $\tilde{u}$ is
close to $u$. One then inductively applies the short-time perturbation,
but only $L$-dependently many times to have the desired long-time
perturbation.

In our case, $\dot{X}_{I}^{s_{k}}$ and $L_{x}^{k}L_{I}^{\infty}$
are the iteration spaces in place of $L_{x}^{5}L_{I}^{10}$. The problem
is that, as $\dot{X}_{I}^{s_{k}}$ and $L_{x}^{k}L_{I}^{\infty}$
contains $L^{\infty}$ norm (either in $x$ or $t$), subdivision
of the interval $I$ into small subintervals does not guarantee small
$\dot{X}_{I_{j}}^{s_{k}}$ or $L_{x}^{k}L_{I_{j}}^{\infty}$ norm.
Thus, we do not expect the long-time perturbation analogous to that
of \eqref{eq:gKdV}, which is written in terms of $M$ and $L$. Therefore,
we shall not attempt to obtain such a general long-time perturbation
theory. Instead, we directly prove the nonlinear profile decomposition
(Theorem \ref{thm:nonlinear profile decomposition}).

The first observation in proving the nonlinear profile decomposition
is that, as in many other literatures, short-time perturbation is
merely a slight generalization of the local theory (Section \ref{sec:WP theory}).
This still holds in our setting, so we can obtain short-time perturbation
similar to that of \eqref{eq:gKdV}. In other words, the short-time
perturbation can be obtained by exploiting the smallness of $\|v^{j}-v^{j}(0)\|_{L_{x}^{k}L_{T}^{\infty}}$
and $\|v^{j}\|_{\dot{X}_{T}^{s_{k}}}$ to the distorted equation \eqref{eq:distorted eq}
when $T$ is sufficiently small. This justifies approximating two
close solutions on compact time intervals. However, this cannot take
care of the case when we compare two solutions on unbounded intervals.

In order to take care of unbounded intervals, our second observation
is to utilize Proposition \ref{prop:LWP at infty}. In Proposition
\ref{prop:LWP at infty}, we saw that smallness of $L_{x}^{k}L_{T+}^{\infty}$
is obtained for scattering solutions, so a perturbation theory on
$[T,+\infty)$ can be obtained.

Therefore, we must obtain two types of perturbation lemmas. The first
one is associated to the usual local theory, which takes care of compact
intervals. The second one is associated to the local theory at time
$\pm\infty$, which takes care of remaining unbounded intervals.
\begin{lem}[Perturbation lemma, I]
\label{lem:short-time 1}Suppose that $\tilde{u}$ and $u$ solve
\begin{align*}
(\partial_{t}+\mathcal{H}\partial_{xx})\tilde{u} & =F(\tilde{u})+e\\
(\partial_{t}+\mathcal{H}\partial_{xx})u & =F(u)
\end{align*}
and satisfy
\[
\|\tilde{u}\|_{\dot{Z}_{I}^{s_{k}}}+\|\tilde{u}\|_{L_{I}^{\infty}H_{x}^{\frac{1}{2}}}\leq M.
\]
Then, there exists $\epsilon_{0}=\epsilon_{0}(M)>0$ such that whenever
$I$ has length shorter than $\epsilon_{0}$ and 
\begin{align*}
\|\tilde{u}-\tilde{u}(t_{0})\|_{L_{x}^{k}L_{I}^{\infty}}+\|\tilde{u}\|_{\dot{X}_{I}^{s_{k}}} & \leq\epsilon_{0},\\
\|\tilde{u}(t_{0})-u(t_{0})\|_{\dot{H}^{s_{k}}} & \leq\epsilon<\epsilon_{0},\\
\|e\|_{\dot{\mathcal{N}}_{I}^{s_{k}}} & \leq\epsilon<\epsilon_{0}
\end{align*}
for some $t_{0}\in I$ and $\epsilon\in(0,\epsilon_{0})$, we have
\[
\|\tilde{u}-u\|_{\dot{Z}_{I}}\lesssim\epsilon.
\]
\end{lem}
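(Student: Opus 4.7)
The plan is to recast the difference equation in Vento's distorted form and apply Corollary \ref{cor:distorted linear estimates}. Setting $w \coloneqq u - \tilde u$ and choosing the distortion center $u_0^{\diamond} \coloneqq \tilde u(t_0)$ (which lies in $\dot H^{s_k}$ with norm bounded by $cM$ via $H^{\frac{1}{2}}\hookrightarrow\dot H^{s_k}$), $w$ satisfies
\begin{equation*}
(\partial_t + \mathcal{H}\partial_{xx})w + \pi(u_0^{\diamond}, w) = \tilde f,
\end{equation*}
with $w(t_0) = u(t_0) - \tilde u(t_0)$ and $\tilde f = [F(u) - F(\tilde u)] + \pi(u_0^{\diamond}, w) - e$. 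Using $F(v) = -\pi(v,v) + g(v)$ together with $\pi(u,u) - \pi(\tilde u, \tilde u) = [\pi(u,u) - \pi(\tilde u, u)] + \pi(\tilde u, w)$, I reorganize
\begin{equation*}
\tilde f = -[\pi(u,u) - \pi(\tilde u, u)] - [\pi(\tilde u, w) - \pi(u_0^{\diamond}, w)] + [g(u) - g(\tilde u)] - e,
\end{equation*}
so that every bracket is either directly small ($e$) or a difference amenable to Lemma \ref{lem:nonlinearity estimate}.

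Shrinking $\epsilon_0 = \epsilon_0(M)$ if necessary so that $|I| < \epsilon_0$ meets the hypothesis of Corollary \ref{cor:distorted linear estimates} with center $u_0^{\diamond}$, that corollary yields
\begin{equation*}
\|w\|_{\dot Z_I^{s_k}} \leq p_k(M)\bigl(\epsilon + \|\tilde f\|_{\dot{\mathcal N}_I^{s_k}}\bigr).
\end{equation*}
Each nonlinear bracket in $\tilde f$ is then estimated by Lemma \ref{lem:nonlinearity estimate}: the first uses the third inequality with $(\varphi_1,\varphi_2,\psi) = (u,\tilde u, u)$, producing $\|w\|_{L_x^k L_I^\infty} \lesssim \|w\|_{\dot Z_I^{s_k}}$ together with $\|u\|_{\dot X_I^{s_k}} \leq \epsilon_0 + \|w\|_{\dot Z_I^{s_k}}$; the second uses the third inequality with $(\tilde u, u_0^{\diamond}, w)$ and exploits the hypothesis $\|\tilde u - \tilde u(t_0)\|_{L_x^k L_I^\infty} \leq \epsilon_0$, producing the key bound $\lesssim \epsilon_0 M^{k-1}\|w\|_{\dot X_I^{s_k}}$; the third bracket is handled analogously via the fourth inequality. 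Collecting,
\begin{equation*}
\|\tilde f\|_{\dot{\mathcal N}_I^{s_k}} \leq \epsilon + C(M)\bigl(\epsilon_0 + \|w\|_{\dot Z_I^{s_k}}\bigr)\|w\|_{\dot Z_I^{s_k}}.
\end{equation*}

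A continuity/bootstrap argument closes the proof. Let $J \subseteq I$ be the maximal closed subinterval containing $t_0$ on which $\|w\|_{\dot Z_J^{s_k}} \leq 4 p_k(M)\epsilon$. Such $J$ is nonempty by the initial condition and is relatively open by continuity of the $\dot Z^{s_k}$-norm in the endpoint, which follows from $\tilde u \in \dot Z_I^{s_k}$ (hypothesis) and $u \in \dot Z^{s_k}$ near $t_0$ (Theorem \ref{thm:LWP} applied at $t_0$ with initial data $u(t_0)$, whose $\dot H^{s_k}$-norm is finite). Running the two displayed inequalities on $J$ and choosing $\epsilon_0 = \epsilon_0(M)$ so small that $p_k(M) C(M)\bigl(\epsilon_0 + 4 p_k(M)\epsilon_0\bigr) \leq 1/2$, I obtain $\|w\|_{\dot Z_J^{s_k}} \leq 4 p_k(M)\epsilon$ with strict inequality, forcing $J = I$ and yielding the desired bound $\|w\|_{\dot Z_I^{s_k}} \lesssim \epsilon$.

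The principal obstacle is that the naive difference estimate (Corollary \ref{cor:more nonlinearity estimate}) for $F(u) - F(\tilde u)$ carries an unabsorbable factor $\|\tilde u\|_{L_x^k L_I^\infty}^{k-2}(\|u\|_{\dot Z_I^{s_k}}^2+\|\tilde u\|_{\dot Z_I^{s_k}}^2) \sim M^k$; this estimate ignores the structural smallness of $\tilde u - \tilde u(t_0)$ and cannot close a perturbation bound. The distortion trick is what converts the obstructive low-high term $\pi(\tilde u, w)$ into the difference $\pi(\tilde u, w) - \pi(u_0^{\diamond}, w)$, whose Lemma \ref{lem:nonlinearity estimate} estimate is controlled by the small quantity $\|\tilde u - \tilde u(t_0)\|_{L_x^k L_I^\infty} \leq \epsilon_0$, which is the structural input enabling absorption.
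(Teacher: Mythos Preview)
Your proof is correct and follows essentially the same approach as the paper: recast the difference $w$ in the distorted form centered at $\tilde u(t_0)$, invoke Corollary~\ref{cor:distorted linear estimates} (with the short-interval hypothesis supplied by $\|\tilde u(t_0)\|_{H^{1/2}}\le M$), estimate each bracket via Lemma~\ref{lem:nonlinearity estimate}, and close by continuity. Your algebraic decomposition of $\tilde f$ is in fact slightly cleaner than the paper's displayed one (which omits the $e$ term and has a minor slip in the $\pi$-terms), but the resulting estimates and the absorption mechanism are identical.
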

\begin{proof}
Let $w=\tilde{u}-u$. As we do not have smallness of $\|\tilde{u}\|_{L_{x}^{k}L_{I}^{\infty}}$,
we use
\begin{align*}
(\partial_{t}+\mathcal{H}\partial_{xx})\tilde{u}+\pi(\tilde{u}(t_{0}),\tilde{u}) & =[\pi(\tilde{u}(t_{0}),\cdot)-\pi(\tilde{u},\cdot)](\tilde{u})+g(\tilde{u})+e,\\
(\partial_{t}+\mathcal{H}\partial_{xx})u+\pi(\tilde{u}(t_{0}),u) & =[\pi(\tilde{u}(t_{0}),\cdot)-\pi(u,\cdot)](u)+g(u).
\end{align*}
Therefore, we have
\begin{align*}
 & (\partial_{t}+\mathcal{H}\partial_{xx})w+\pi(\tilde{u}(t_{0}),w)\\
 & =[\pi(\tilde{u}(t_{0}),\cdot)-\pi(\tilde{u},\cdot)](w)+[\pi(\tilde{u},\cdot)-\pi(u,\cdot)](u)+g(\tilde{u})-g(u).
\end{align*}
As we are assuming that $I$ is short enough, we apply Corollary \ref{cor:distorted linear estimates}
and Lemma \ref{lem:nonlinearity estimate} to obtain
\[
\|w\|_{\dot{Z}_{I}^{s_{k}}}\lesssim p_{k}(M)\big[\|w(t_{0})\|_{\dot{H}^{s_{k}}}+\epsilon_{0}M^{k-1}\|w\|_{\dot{Z}_{I}^{s_{k}}}+\epsilon_{0}^{2}M^{k-2}\|w\|_{\dot{Z}_{I}^{s_{k}}}+p_{k,M}(\|w\|_{\dot{Z}_{I}^{s_{k}}})\big]
\]
for some higher order polynomial $p_{k,M}$. By a continuity argument,
we have 
\[
\|w\|_{\dot{Z}_{I}^{s_{k}}}\lesssim\|w(t_{0})\|_{\dot{H}^{s_{k}}}\lesssim\epsilon.
\]
\end{proof}
\begin{lem}[Perturbation lemma, II]
\label{lem:short-time 2}Suppose that $\tilde{u}$ and $u$ solve
\begin{align*}
(\partial_{t}+\mathcal{H}\partial_{xx})\tilde{u} & =F(\tilde{u})+e\\
(\partial_{t}+\mathcal{H}\partial_{xx})u & =F(u)
\end{align*}
and satisfy
\[
\|\tilde{u}\|_{\dot{Z}_{I}^{s_{k}}}\leq M.
\]
Then there exists $\epsilon_{0}=\epsilon_{0}(M)>0$ such that whenever
\begin{align*}
\|\tilde{u}\|_{L_{x}^{k}L_{I}^{\infty}} & \leq\epsilon_{0},\\
\|\tilde{u}(t_{0})-u(t_{0})\|_{\dot{H}^{s_{k}}} & \leq\epsilon<\epsilon_{0},\\
\|e\|_{\dot{\mathcal{N}}_{I}^{s_{k}}} & \leq\epsilon<\epsilon_{0}
\end{align*}
for some $t_{0}\in I$ and $\epsilon\in(0,\epsilon_{0})$, we have
\[
\|\tilde{u}-u\|_{\dot{Z}_{I}^{s_{k}}}\lesssim\epsilon.
\]
\end{lem}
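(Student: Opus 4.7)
In contrast to Lemma~\ref{lem:short-time 1}, the hypothesis directly supplies smallness of $\|\tilde u\|_{L_x^k L_I^\infty}$, so I would work with the undistorted equation and use the full nonlinear difference estimate of Corollary~\ref{cor:more nonlinearity estimate} rather than paralinearizing around $u(t_0)$. Setting $w \coloneqq \tilde u - u$, the difference satisfies
\[
(\partial_t + \mathcal{H}\partial_{xx})w = [F(\tilde u) - F(u)] + e, \qquad w(t_0) = \tilde u(t_0) - u(t_0),
\]
so on any subinterval $J \subseteq I$ containing $t_0$, Duhamel's formula together with the standard linear estimates for $V(t)$ yield
\[
\|w\|_{\dot Z_J^{s_k}} \leq C\bigl(\|w(t_0)\|_{\dot H^{s_k}} + \|e\|_{\dot{\mathcal{N}}_J^{s_k}} + \|F(\tilde u) - F(u)\|_{\dot{\mathcal{N}}_J^{s_k}}\bigr).
\]

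Next, I would invoke Corollary~\ref{cor:more nonlinearity estimate} on the nonlinear difference, combined with the triangle-inequality bounds $\|u\|_{L_x^k L_J^\infty} \leq \epsilon_0 + \|w\|_{\dot Z_J^{s_k}}$ and $\|u\|_{\dot Z_J^{s_k}} \leq M + \|w\|_{\dot Z_J^{s_k}}$. Writing $X(J) \coloneqq \|w\|_{\dot Z_J^{s_k}}$, this produces a closed inequality of the form
\[
X(J) \leq C\epsilon + C_M \bigl(\epsilon_0^{k-2} + X(J)^{k-2}\bigr)X(J),
\]
where $C_M$ depends only on $k$ and $M$. Crucially, since $k-2 \geq 2 > 0$, the factor $\epsilon_0^{k-2}$ is a genuine smallness: choosing $\epsilon_0 = \epsilon_0(M)$ so that $C_M\epsilon_0^{k-2} \leq \tfrac{1}{2}$ absorbs the linear-in-$X(J)$ term into the left-hand side, leaving
\[
X(J) \leq 2C\epsilon + 2C_M X(J)^{k-1}.
\]

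To close the estimate I would run a continuity/bootstrap argument. Restricting to $J = [t_0, t]$ with $t \geq t_0$ in $I$, the quantity $X_+(t) \coloneqq \|w\|_{\dot Z_{[t_0,t]}^{s_k}}$ is continuous and non-decreasing in $t$ (continuity follows from $w \in C_I\dot H^{s_k}$, the continuity condition \eqref{eq:continuity assumption 1 for (k,infty)} built into $\dot Z$, and dominated convergence for the remaining $L_t^q$ components with $q < \infty$), with $X_+(t_0) \lesssim \epsilon$ by the Sobolev embedding $\dot H^{s_k} \hookrightarrow L^k$. Whenever $X_+(t) \leq \eta_0(M) \coloneqq (4C_M)^{-1/(k-2)}$, the higher-order term is reabsorbed and one gets $X_+(t) \leq 4C\epsilon$. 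Further shrinking $\epsilon_0$ so that $4C\epsilon < \eta_0(M)/2$ prevents $X_+(t)$ from ever crossing the threshold $\eta_0(M)$ by continuity, giving $X_+(t) \leq 4C\epsilon$ throughout the forward half of $I$; the backward half is handled symmetrically.

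The principal conceptual point --- and the structural reason this lemma exists alongside Lemma~\ref{lem:short-time 1} --- is that one cannot hope for smallness of $\dot X_I^{s_k}$ simply by shrinking $|I|$ (cf.\ Remark~\ref{rem:no asym vanish of X}), so the decisive smallness must be \emph{assumed} in the form $\|\tilde u\|_{L_x^k L_I^\infty} \leq \epsilon_0$. This is exactly what enables the argument to run on unbounded intervals such as $[T, +\infty)$, supplying precisely the portion of the nonlinear profile decomposition that Lemma~\ref{lem:short-time 1} cannot reach.
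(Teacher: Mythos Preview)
Your proposal is correct and follows essentially the same approach as the paper: set $w=\tilde u-u$, apply Duhamel with the undistorted propagator $V(t)$, invoke Corollary~\ref{cor:more nonlinearity estimate} to bound $\|F(\tilde u)-F(u)\|_{\dot{\mathcal N}_I^{s_k}}$, and close by a continuity argument exploiting the smallness of $\epsilon_0^{k-2}$. The paper is terser (it simply writes the resulting inequality as $\|w(t_0)\|_{\dot H^{s_k}}+\epsilon_0^{k-2}M^2\|w\|_{\dot Z_I^{s_k}}+\|e\|_{\dot{\mathcal N}_I^{s_k}}+p_{k,M}(\|w\|_{\dot Z_I^{s_k}})$ and says ``by a continuity argument''), whereas you spell out the bootstrap and the role of Remark~\ref{rem:no asym vanish of X} more explicitly, but the argument is the same.
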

\begin{proof}
Let $w=\tilde{u}-u$. By Duhamel's formula, 
\[
w(t)=V(t-t_{0})w(t_{0})+\int_{t_{0}}^{t}V(t-s)[F(\tilde{u})-F(u)+e](s)ds.
\]
By Corollary \ref{cor:more nonlinearity estimate} and our hypothesis,
we have 
\begin{align*}
\|w\|_{\dot{Z}_{I}^{s_{k}}} & \lesssim\|w(t_{0})\|_{\dot{H}^{s_{k}}}+\|F(\tilde{u})-F(u)\|_{\dot{\mathcal{N}}_{I}^{s_{k}}}+\|e\|_{\dot{\mathcal{N}}_{I}^{s_{k}}}\\
 & \lesssim\|w(t_{0})\|_{\dot{H}^{s_{k}}}+\epsilon_{0}^{k-2}M^{2}\|w\|_{\dot{Z}_{I}^{s_{k}}}+\|e\|_{\dot{\mathcal{N}}_{I}^{s_{k}}}+p_{k,M}(\|w\|_{\dot{Z}_{t}^{s_{k}}}),
\end{align*}
for some higher order polynomial $p_{k,M}$. By a continuity argument,
we have 
\[
\|w\|_{\dot{Z}_{I}^{s_{k}}}\lesssim\|w(t_{0})\|_{\dot{H}^{s_{k}}}\lesssim\epsilon.
\]
\end{proof}
With the above perturbation lemmas in hand, we explain how we handle
the long-time control of $\tilde{u}_{n}^{J}$ and $u_{n}$. In view
of uniform boundedness of initial data in $H^{\frac{1}{2}}$, there
are only finitely many large profiles. Note that there can be infinitely
many small profiles, but they will not cause problems due to almost
orthogonality and small data global theory. We immediately estimate
the sum of small profiles by a small $Z$ norm.

It remains to treat large profiles, which are finitely many! Because
each nonlinear profile scatters by our hypothesis in Theorem \ref{thm:nonlinear profile decomposition},
we can truncate intervals $(-\infty,-T]$ and $[T,+\infty)$ on which
local theory at time $\pm\infty$ (Lemma \ref{lem:short-time 2})
holds. Then, the remaining intermediate interval is compact. We then
subdivide that compact interval into finitely many short subintervals
so that we can apply Lemma \ref{lem:short-time 1}. This is the main
idea of Lemma \ref{lem:subdivision of R} how we can chop the time
interval into subintervals on which perturbation lemmas are applicable,
and hence, prove Theorem \ref{thm:nonlinear profile decomposition}.
\begin{proof}[Proof of Theorem \ref{thm:nonlinear profile decomposition}]
From Lemma \ref{lem:uniform boundedness of approximate solutions},
we fix $M<\infty$ satisfying 
\[
\sup_{J}\limsup_{n\to\infty}\big(\|\tilde{u}_{n}^{J}\|_{\dot{Z}_{I}^{s_{k}}}+\|\tilde{u}_{n}^{J}\|_{L_{I}^{\infty}H_{x}^{\frac{1}{2}}}\big)<M.
\]
Choose $\eta=\epsilon_{0}(2M)>0$ in Lemmas \ref{lem:short-time 2}
and \ref{lem:short-time 1}. We then apply the following lemma.
\begin{lem}[Subdivision of $\mathbb{R}$]
\label{lem:subdivision of R}Let $\{\tilde{u}_{n}^{J}\}_{n,J\in\mathbb{N}}$
and $M$ as above. For any $\eta>0$, there exists $N\in\mathbb{N}$
such that for all large $J$ and $n>n(J)$, there exist intervals
$I_{n,1}^{J}$, $I_{n,2}^{J}$, $\dots$, $I_{n,N}^{J}$ partitioning
$I$ such that every $I_{n,\ell}^{J}$ satisfies one of the following
properties:\\
1. (type-I) we have $\|\tilde{u}_{n}^{J}-\tilde{u}_{n}^{J}(\inf I_{n,\ell})\|_{L_{x}^{k}L_{I_{n,\ell}^{J}}^{\infty}}+\|\tilde{u}_{n}^{J}\|_{\dot{X}_{I_{n,\ell}^{J}}^{s_{k}}}\lesssim\eta$
and the length of $I_{n,\ell}^{J}$ is sufficiently short so that
we can apply Corollary \ref{cor:distorted linear estimates}.\\
2. (type-II) we have $\|\tilde{u}_{n}^{J}\|_{L_{x}^{k}L_{I_{n,\ell}^{J}}^{\infty}}\lesssim\eta$.
\begin{proof}
For notational simplicity, we only prove for the case of $I=\mathbb{R}$.
Let us first consider small data part. Choose $J_{1}\in\mathbb{N}$
such that 
\[
\Big(\sum_{j>J_{1}}\|\phi^{j}\|_{H^{\frac{1}{2}}}^{2}\Big)^{\frac{1}{2}}\lesssim\eta
\]
and small data theory is applicable for each $v^{j}$, $j>J_{1}$.
As in the proof of Lemma \ref{lem:uniform boundedness of approximate solutions},
we have
\[
\lim_{J\to\infty}\limsup_{n\to\infty}\|\sum_{j>J_{1}}^{J}v_{n}^{j}\|_{\dot{Z}_{t}^{s_{k}}}\lesssim\eta.
\]

We now consider large profiles $v_{n}^{1},\dots,v_{n}^{J_{1}}$. For
each $v_{n}^{j}$, by Proposition \ref{prop:scattering criterion}
and definition of the space $\dot{Z}_{t}^{s_{k}}$, we can partition
$\mathbb{R}$ into 
\[
\mathbb{R}=(-\infty,s_{n,1}^{j}]\cup[s_{n,1}^{j},s_{n,2}^{j}]\cup\cdots\cup[s_{n,N_{j}-1}^{j},s_{n,N_{j}}^{j}]\cup[s_{n,N_{j}}^{j},+\infty)
\]
so that $(-\infty,s_{n,1}^{j}]$ and $[s_{n,N_{j}}^{j},\infty)$ are
type-II intervals with $v_{n}^{j}$, $\frac{\eta}{J_{1}}$ in place
of $\tilde{u}_{n}^{J}$, $\eta$, respectively and $[s_{n,1}^{j},s_{n,2}^{j}]$,
$[s_{n,2}^{j},s_{n,3}^{j}]$, $\dots$, and $[s_{n,N_{j}-1}^{j},s_{n,N_{j}}^{j}]$
are type-I intervals with $v_{n}^{j}$, $\frac{\eta}{J_{1}}$ in place
of $\tilde{u}_{n}^{J}$, $\eta$, respectively. We may further assume
that $|s_{n,\ell}^{j}-s_{n,\ell-1}^{j}|<\epsilon$ for all $\ell=2,\dots,N_{j}$,
where $\epsilon>0$ satisfies the following property:
\[
\sup_{1\leq j\leq J_{1}}\Big(\|v^{j}-v^{j}(t_{0})\|_{L_{x}^{k}L_{\tilde{I}}^{\infty}}+\|v^{j}\|_{\dot{X}_{\tilde{I}}^{s_{k}}}\Big)\lesssim\frac{\eta}{J_{1}}
\]
whenever $\tilde{I}$ is a time interval whose length is shorter than
$\epsilon$ and $t_{0}\in\tilde{I}$. Note that such $\epsilon$ exists
because each $v^{j}$ lies in $Z_{t}^{\frac{1}{2}}$ (by Proposition
\ref{prop:scattering criterion}) and we can use regularity. We then
consider the refinement of the all intervals found above and use the
triangle inequality.

For the remaining term, note that 
\[
\lim_{J\to\infty}\limsup_{n\to\infty}\|V(t)w_{n}^{J}\|_{L_{x}^{k}L_{t}^{\infty}}=0,
\]
so it suffices to consider $\dot{X}_{t}^{s_{k}}$-norm on type-I intervals
(say $I$). By the Sobolev embedding and H\"older's inequality, we
have
\[
\|V(t)w_{n}^{J}\|_{\dot{X}_{I}^{s_{k}}}\lesssim\epsilon^{0+}\|w_{n}^{J}\|_{H^{\frac{1}{2}}}\lesssim\epsilon^{0+}M.
\]
Therefore, it suffices to choose $\epsilon=\epsilon(M,\eta)>0$ small
so that $\epsilon^{0+}M\lesssim\eta$ and Corollary \ref{cor:distorted linear estimates}
is applicable on intervals having length less than $\epsilon$.
\end{proof}
\end{lem}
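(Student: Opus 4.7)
The plan is to build the partition by superimposing partitions tailored to each large nonlinear profile, controlling small profiles globally and handling the linear remainder at the end. First I would split profiles into ``large'' and ``small'' by fixing $J_{1}=J_{1}(\eta)$ so that $\bigl(\sum_{j>J_{1}}\|\phi^{j}\|_{H^{1/2}}^{2}\bigr)^{1/2}\lesssim\eta$ and the small data theory (Theorem \ref{thm:small data theory}) applies to each $v^{j}$ with $j>J_{1}$. Then, as in the proof of Lemma \ref{lem:uniform boundedness of approximate solutions}, the tail $\sum_{j>J_{1}}^{J}v_{n}^{j}$ has $\dot{Z}_{t}^{s_{k}}$ (and in particular $L_{x}^{k}L_{t}^{\infty}$ and $\dot{X}_{t}^{s_{k}}$) norm $\lesssim\eta$ uniformly; by asymptotic $\dot{H}^{s_{k}}$-orthogonality \eqref{eq:asymp ortho H^s}, this tail never obstructs type-I or type-II estimates on \emph{any} subinterval.

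Next, for each large profile $v^{j}$ ($1\le j\le J_{1}$), since $\|v^{j}\|_{\dot{Z}_{t}^{s_{k}}}<\infty$ I use Proposition \ref{prop:scattering criterion} and the vanishing condition \eqref{eq:continuity assumption 2 for (k,infty)} to pick $T_{j}$ so large that $\|v^{j}\|_{L_{x}^{k}L_{(-\infty,-T_{j}]}^{\infty}}+\|v^{j}\|_{L_{x}^{k}L_{[T_{j},+\infty)}^{\infty}}\lesssim\eta/J_{1}$; these two unbounded pieces become the type-II candidates. The compact interval $[-T_{j},T_{j}]$ is then subdivided into finitely many pieces of length $<\epsilon$, where $\epsilon=\epsilon(\eta,J_{1})>0$ is chosen small enough that for any subinterval $\tilde{I}$ of length $<\epsilon$ and any $t_{0}\in\tilde{I}$, $\|v^{j}-v^{j}(t_{0})\|_{L_{x}^{k}L_{\tilde{I}}^{\infty}}+\|v^{j}\|_{\dot{X}_{\tilde{I}}^{s_{k}}}\lesssim\eta/J_{1}$ holds for every $1\le j\le J_{1}$. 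Such $\epsilon$ exists by uniform continuity in $t$ of the $L_{x}^{k}$-valued map $t\mapsto v^{j}(t)$ (each $v^{j}$ belongs to $Z_{t}^{1/2}$ via persistence of regularity) and the analogous continuity for $\dot{X}^{s_{k}}$. Translating by $(t_{n}^{j},x_{n}^{j})$ yields partitions of $\mathbb{R}$ associated to each $v_{n}^{j}$.

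Now I take the common refinement of the $J_{1}$ partitions just constructed. Its cardinality $N$ is bounded purely by $J_{1}$ and $\epsilon^{-1}T_{\max}$, so $N=N(\eta)$ is independent of $n$ and $J$. On a refined subinterval $I_{n,\ell}^{J}$: if it is contained in a tail $(-\infty,-T_{j}]$ or $[T_{j},+\infty)$ of some profile $v_{n}^{j}$ and lies inside the short-subdivision region of \emph{every other} large profile, I combine the $\eta/J_{1}$ bounds via the triangle inequality to obtain a type-II estimate for $\sum_{j=1}^{J_{1}}v_{n}^{j}$; if instead every large profile is in its short-interval regime on $I_{n,\ell}^{J}$, I get the type-I estimate. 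The small-profile tail $\sum_{j>J_{1}}v_{n}^{j}$ contributes $\lesssim\eta$ in both cases by the first paragraph.

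Finally I incorporate the linear remainder $V(t)w_{n}^{J}$. The $L_{x}^{k}L_{t}^{\infty}$ contribution is handled directly by \eqref{eq:asymp vanishing (k,infty)}, which gives $\lesssim\eta$ for all $n\ge n(J)$ and all large $J$. The type-I subintervals additionally demand a small $\dot{X}^{s_{k}}$ contribution; here I use Sobolev embedding in $x$ and H\"older in $t$ on the short interval $I_{n,\ell}^{J}$ to bound $\|V(t)w_{n}^{J}\|_{\dot{X}_{I_{n,\ell}^{J}}^{s_{k}}}\lesssim\epsilon^{0+}\|w_{n}^{J}\|_{H^{1/2}}\lesssim\epsilon^{0+}M$, which is $\lesssim\eta$ after possibly shrinking $\epsilon$. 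The main obstacle is precisely this interplay between $J_{1}$ and $\epsilon$: the $\eta/J_{1}$ budget must simultaneously absorb the finitely many large profiles \emph{and} still be compatible with the regularity-based choice of $\epsilon$; once $J_{1}$ is fixed first (by smallness of the tail) and $\epsilon$ is fixed afterwards (in terms of $J_{1}$, $M$, and the equicontinuity moduli of $v^{1},\dots,v^{J_{1}}$), the argument closes, and one should also verify using \eqref{eq:separation of parameters} that translated large profiles decouple so no cross term spoils type-II on the unbounded end pieces.
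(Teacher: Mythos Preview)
Your overall strategy matches the paper's proof exactly: split into small and large profiles, get a global $\dot{Z}_{t}^{s_{k}}$ bound on the small tail, partition $\mathbb{R}$ for each large profile into two unbounded tails plus finitely many short compact pieces, take the common refinement, and handle $V(t)w_{n}^{J}$ via \eqref{eq:asymp vanishing (k,infty)} and the $\epsilon^{0+}M$ bound.

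There is, however, a genuine error in how you classify the refined subintervals. In your case (A) you take $I_{n,\ell}^{J}$ to lie in the tail of one profile but in the short-subdivision region of every other profile, and you claim a type-II bound by triangle inequality. This does not follow: for the profiles $j'$ in their short region you only know $\|v_{n}^{j'}-v_{n}^{j'}(\inf I_{n,\ell}^{J})\|_{L_{x}^{k}L_{I_{n,\ell}^{J}}^{\infty}}\lesssim\eta/J_{1}$, not $\|v_{n}^{j'}\|_{L_{x}^{k}L_{I_{n,\ell}^{J}}^{\infty}}\lesssim\eta/J_{1}$, so the $L_{x}^{k}L^{\infty}$ norm of the sum need not be small. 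Your appeal to \eqref{eq:separation of parameters} does not repair this: when $|x_{n}^{j}-x_{n}^{j'}|\to\infty$ but the time parameters stay close, the compact ``active'' regions of different profiles can overlap for all $n$, and on that overlap each $\|v_{n}^{j'}\|_{L_{x}^{k}L_{I'}^{\infty}}$ is of order one.

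The fix is simpler than what you wrote, and it is what the paper does implicitly. The correct dichotomy is: either $I_{n,\ell}^{J}$ lies in the tail of \emph{every} large profile (then type-II follows by summing $J_{1}$ bounds of size $\eta/J_{1}$), or $I_{n,\ell}^{J}$ is contained in some profile's short piece and hence has length $<\epsilon$. In the latter case the key point is that $\epsilon$ was chosen so that \emph{every} $v^{j}$, $1\le j\le J_{1}$, satisfies the type-I estimate on \emph{any} interval of length $<\epsilon$ (this is the uniform-in-$t$ statement you already invoked), so all $J_{1}$ large profiles contribute $\lesssim\eta/J_{1}$ to the type-I bound simultaneously. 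No decoupling via separation of parameters is needed.
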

From Lemma \ref{lem:subdivision of R}, we have finitely many (profile
decomposition dependently, not $n$ or $J$ dependently) intervals
partitioning $\mathbb{R}$ on which Lemma \ref{lem:short-time 2}
and \ref{lem:short-time 1} are applicable. Because we have asymptotic
agreement of initial data (Lemma \ref{lem:asymptotic agreement of initial data})
and asymptotic vanishing of error (Lemma \ref{lem:error estimation}),
we have the conclusion.
\end{proof}
Local well-posedness theory guarantees that if two initial data are
close, then corresponding solutions are close on a compact time interval
measured in $Z$ norm. Due to Theorem \ref{thm:nonlinear profile decomposition},
if $u$ is a scattering solution, we have a perturbation result in
a neighborhood of $u$, even on an unbounded time interval (say $[T,+\infty)$).
More precisely, we have the following corollary.
\begin{cor}[Closeness of solutions on an unbounded interval]
\label{cor:closeness of two solutions on unbounded interval}Let
$u$ and $u_{n}$ be $H^{\frac{1}{2}}$ solutions to (gBO) such that
$\|u\|_{\dot{Z}_{I}^{s_{k}}}<\infty$ and $u_{n}(t_{0})\to u(t_{0})$
in $H^{\frac{1}{2}}$ for some $t_{0}\in I$. Then, $\|u_{n}-u\|_{\dot{Z}_{I}^{s_{k}}}\to0$.
\end{cor}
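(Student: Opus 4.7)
\medskip

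The plan is to use $u$ itself as the reference (approximate) solution in the two perturbation lemmas (Lemma~\ref{lem:short-time 1} and Lemma~\ref{lem:short-time 2}) with source $e\equiv 0$, and to chain these applications across a finite partition of $I$ adapted to $u$ alone. Set
\[
M \coloneqq \|u\|_{\dot{Z}_{I}^{s_{k}}} + \|u\|_{L_{I}^{\infty}H_{x}^{\frac{1}{2}}},
\]
which is finite because the scattering criterion (Proposition~\ref{prop:scattering criterion}) upgrades $u$ to $Z_{I}^{\frac{1}{2}}$. Let $\eta_{0} = \min\{\epsilon_{0}^{\mathrm{I}}(M),\epsilon_{0}^{\mathrm{II}}(M)\}$ where $\epsilon_{0}^{\mathrm{I}}, \epsilon_{0}^{\mathrm{II}}$ are the thresholds in Lemmas \ref{lem:short-time 1} and \ref{lem:short-time 2}.

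First I would run the single-solution analogue of the subdivision procedure from Lemma~\ref{lem:subdivision of R} applied to $u$ itself. Because $u\in \dot{Z}_{I}^{s_{k}}$ and because $\|u-u(t_{0})\|_{L_{x}^{k}L_{\tilde{I}}^{\infty}} + \|u\|_{\dot{X}_{\tilde{I}}^{s_{k}}} \to 0$ as $|\tilde{I}|\to 0$ with $t_{0}\in\tilde{I}$ (by regularity of $u\in Z_{I}^{\frac{1}{2}}$), one obtains a partition $I = I_{1}\cup\cdots\cup I_{N}$ into finitely many subintervals (with $N = N(u,\eta_{0})$) on which each $I_{\ell}$ is either type-I (short enough for Corollary~\ref{cor:distorted linear estimates} and with $\|u-u(\inf I_{\ell})\|_{L_{x}^{k}L_{I_{\ell}}^{\infty}} + \|u\|_{\dot{X}_{I_{\ell}}^{s_{k}}}\leq \eta_{0}$) or type-II ($\|u\|_{L_{x}^{k}L_{I_{\ell}}^{\infty}}\leq \eta_{0}$). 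This is strictly simpler than the proof of Lemma~\ref{lem:subdivision of R} since no sequence and no remainder term is involved.

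Second, starting from $t_{0}$ I would apply the appropriate perturbation lemma on each $I_{\ell}$, marching forward and (symmetrically) backward. Taking $\tilde{u}\coloneqq u$, $e\equiv 0$, and starting time the endpoint already reached, each application yields
\[
\|u_{n}-u\|_{\dot{Z}_{I_{\ell}}^{s_{k}}} \leq C(M)\,\|u_{n}(t_{\ell})-u(t_{\ell})\|_{\dot{H}^{s_{k}}},
\]
and the embedding $\dot{Z}_{I_{\ell}}^{s_{k}}\hookrightarrow C_{I_{\ell}}\dot{H}_{x}^{s_{k}}$ transfers the bound to the next endpoint, giving
\[
\|u_{n}(t_{\ell+1})-u(t_{\ell+1})\|_{\dot{H}^{s_{k}}} \leq \bigl(1+C(M)\bigr)\,\|u_{n}(t_{\ell})-u(t_{\ell})\|_{\dot{H}^{s_{k}}}.
\]
Iterating through the $N$ subintervals and summing gives
\[
\|u_{n}-u\|_{\dot{Z}_{I}^{s_{k}}} \lesssim_{M,N} \|u_{n}(t_{0})-u(t_{0})\|_{\dot{H}^{s_{k}}} \xrightarrow[n\to\infty]{} 0.
\]

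The main obstacle is the bootstrap condition on the chaining: each step requires the current $\dot{H}^{s_{k}}$ discrepancy to lie below the threshold $\eta_{0}$. However, because $N$ and $C(M)$ are determined by $u$ alone (independent of $n$), the accumulated factor $(1+C(M))^{N}$ is a fixed finite constant, so for all $n$ sufficiently large the initial smallness $\|u_{n}(t_{0})-u(t_{0})\|_{H^{\frac{1}{2}}}\to 0$ keeps every intermediate discrepancy below $\eta_{0}$, closing the induction and finishing the proof.
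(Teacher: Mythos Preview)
Your proof is correct and rests on the same mechanism as the paper's---subdivision of $I$ into type-I/type-II pieces followed by chaining the two perturbation lemmas---but the paper packages this differently. Instead of taking $\tilde u=u$ with $e\equiv 0$ and re-running the subdivision/chaining by hand, the paper observes that $\tilde u_n(t)\coloneqq u(t)+V(t-t_0)\bigl(u_n(t_0)-u(t_0)\bigr)$ is a nonlinear profile decomposition of $\{u_n(t_0)\}$ with a single profile $\phi^1=u(t_0)$, $v^1=u$, and remainder $w_n^1=u_n(t_0)-u(t_0)\to 0$ in $H^{1/2}$; it then invokes Theorem~\ref{thm:nonlinear profile decomposition} as a black box to get $\|\tilde u_n-u_n\|_{\dot Z_I^{s_k}}\to 0$, and finishes with the trivial linear bound $\|\tilde u_n-u\|_{\dot Z_I^{s_k}}\lesssim\|u_n(t_0)-u(t_0)\|_{\dot H^{s_k}}$. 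Your route is slightly more elementary in that it avoids carrying the linear remainder $V(t-t_0)w_n^1$ and hence the error-estimation Lemma~\ref{lem:error estimation}, while the paper's route is shorter because the subdivision/chaining work has already been done once inside Theorem~\ref{thm:nonlinear profile decomposition}.
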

\begin{proof}
Since $u_{n}(t_{0})$ converges to $u(t_{0})$ in $H^{\frac{1}{2}}$,
\[
\tilde{u}_{n}(t)\coloneqq u(t)+V(t-t_{0})(u_{n}(t_{0})-u(t_{0}))
\]
becomes a nonlinear profile decomposition associated to $\{u_{n}(t_{0})\}_{n\in\mathbb{N}}$
(with $J=1$). As $\|u\|_{\dot{Z}_{I}^{s_{k}}}<\infty$, we can use
the longtime perturbation theory to obtain 
\[
\lim_{n\to\infty}\|\tilde{u}_{n}-u_{n}\|_{\dot{Z}_{I}^{s_{k}}}=0.
\]
Because $\|\tilde{u}_{n}-u\|_{\dot{Z}_{I}^{s_{k}}}\lesssim\|u_{n}(t_{0})-u(t_{0})\|_{\dot{H}^{s_{k}}}$
goes to zero, we have
\[
\lim_{n\to\infty}\|\tilde{u}_{n}-u\|_{\dot{Z}_{I}^{s_{k}}}=0.
\]
This completes the proof.
\end{proof}

\subsection{\label{subsec:existence of minimal blowup solution}Existence of
the Critical Element}

Having established the nonlinear profile decomposition, we now construct
a critical element. Define functionals 
\begin{align*}
A(u) & \coloneqq M(u)+E(u),\\
S_{\mathbb{R}}(u) & \coloneqq\|u\|_{\dot{X}_{t}^{s_{k}}}+\|u\|_{L_{x}^{k}L_{t}^{\infty}},\\
L(A) & \coloneqq\sup\{S_{\mathbb{R}}(u):A(u)\leq A\}.
\end{align*}
Suppose that Theorem \ref{thm:scattering} fails. Then, there exists
some $A_{0}<\infty$ satisfying $L(A_{0})=+\infty$. We define a critical
value
\[
A_{c}\coloneqq\inf\{A:L(A)=+\infty\}.
\]
By the small data theory (Theorem \ref{thm:small data theory}), we
have $0<A_{c}\leq A_{0}<\infty$. In other words, $A_{c}$ is the
threshold in a sense that every solution having $A$ less than $A_{c}$
should scatter but a solution having $A$ greater than $A_{c}$ may
not scatter.
\begin{lem}
\label{lem:continuity of L}The function $L$ is continuous at $A=A_{c}$
and $L(A_{c})=+\infty$.
\end{lem}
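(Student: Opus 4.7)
The plan is to use the linear profile decomposition (Proposition \ref{prop:linear profile decomposition}) and the nonlinear profile decomposition (Theorem \ref{thm:nonlinear profile decomposition}) to extract a critical element, certifying $L(A_c)=+\infty$, and then to use continuous dependence on compact time intervals (Corollary \ref{cor:closeness of two solutions on unbounded interval}) to rule out a jump of $L$ from the left at $A_c$. Since $L$ is non-decreasing and identically $+\infty$ on $(A_c,\infty)$ by the very definition of the infimum, continuity at $A_c$ is equivalent to the single statement $\lim_{A\uparrow A_c}L(A)=L(A_c)=+\infty$.

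For the value $L(A_c)=+\infty$, I would pick $A_n\downarrow A_c$ with $L(A_n)=+\infty$ and diagonalize to produce solutions $u_n$ with $A(u_n)\leq A_n$ and $S_{\mathbb{R}}(u_n)\to+\infty$. The sequence $\{u_n(0)\}$ is bounded in $H^{1/2}$, and applying Proposition \ref{prop:linear profile decomposition} with the normalizations of Section \ref{subsec:perturbation theory} (so that each $t_n^j$ either vanishes or tends to $\pm\infty$) yields linear profiles $\phi^j$ and corresponding nonlinear profiles $v^j$. The main computational step is to upgrade \eqref{eq:asymp ortho H^s} for $s=0,\tfrac12$ to an asymptotic orthogonality of the full functional $A=M+E$; the only nontrivial ingredient is the $L^{k+2}$ piece of the energy, which decouples by a Br\'ezis--Lieb-type expansion combined with \eqref{eq:separation of parameters} and the dispersive decay $\|V(t_n^j)\phi^j\|_{L^{k+2}}\to 0$ in the case $|t_n^j|\to\infty$. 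This produces $\sum_j A(v^j)+\limsup_n A(w_n^J)\leq A_c$. Now a dichotomy. If every $A(v^j)<A_c$ strictly, then by the definition of $A_c$ each $v^j$ satisfies $\|v^j\|_{\dot Z_t^{s_k}}\leq L(A(v^j))<\infty$; applying Theorem \ref{thm:nonlinear profile decomposition} together with Lemma \ref{lem:uniform boundedness of approximate solutions} then yields $\limsup_n S_{\mathbb{R}}(u_n)<\infty$, contradicting $S_{\mathbb{R}}(u_n)\to+\infty$. Otherwise a single profile $\phi^1$ carries all of $A_c$, all other profiles and the remainder vanish in $H^{1/2}$, and the associated nonlinear solution $v^1$ is a critical element: $A(v^1)=A_c$, and $S_{\mathbb{R}}(v^1)=+\infty$, for if $S_{\mathbb{R}}(v^1)$ were finite then Corollary \ref{cor:closeness of two solutions on unbounded interval} on $I=\mathbb{R}$ would force $S_{\mathbb{R}}(u_n)$ bounded.

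For the left-continuity, I argue by contradiction: suppose $L(A)\leq M<+\infty$ for every $A<A_c$. The previous step supplies a global solution $u$ with $A(u)=A_c$ and $S_{\mathbb{R}}(u)>M+1$. For $\lambda\in(0,1)$ let $u_\lambda$ be the (globally defined, by Theorem \ref{thm:subcritical LWP, GWP}) solution with initial datum $(1-\lambda)u(0)$. Using $\int u\mathcal{H}u_x=\|u\|_{\dot H^{1/2}}^2\geq0$ and $\int u^{k+2}\geq0$ (as $k+2$ is even), a direct comparison gives $A(u_\lambda(0))<A(u(0))=A_c$, whence $S_{\mathbb{R}}(u_\lambda)\leq M$ by hypothesis. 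On any compact interval $[-T,T]$ the solution $u$ lies in $\dot Z_T^{s_k}$ by Theorem \ref{thm:subcritical LWP, GWP}, so Corollary \ref{cor:closeness of two solutions on unbounded interval} produces $u_\lambda\to u$ in $\dot Z_T^{s_k}$ as $\lambda\to 0^+$, whence
\[
\|u\|_{L_x^k L_T^\infty}+\|u\|_{\dot X_T^{s_k}}\leq\liminf_{\lambda\to 0^+}\bigl(\|u_\lambda\|_{L_x^k L_T^\infty}+\|u_\lambda\|_{\dot X_T^{s_k}}\bigr)\leq M.
\]
Letting $T\to+\infty$ gives $S_{\mathbb{R}}(u)\leq M$, contradicting $S_{\mathbb{R}}(u)>M+1$. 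I expect the main technical obstacle to be the asymptotic orthogonality of $A$ along the profile decomposition --- in particular, controlling the $L^{k+2}$ cross-terms in the expansion of $\bigl(\sum_j V(t_n^j)\phi^j(\cdot-x_n^j)+w_n^J\bigr)^{k+2}$ and extracting their vanishing from \eqref{eq:separation of parameters} and \eqref{eq:vanishing of weak limit}; the dichotomy and the continuous-dependence step are then routine.
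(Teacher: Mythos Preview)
Your approach is correct and, for the left-continuity claim, essentially identical to the paper's: both scale the initial datum by $(1-\epsilon)$, use $A((1-\epsilon)u_0)<A_c$, and pass through compact time intervals via local well-posedness to bound $S_I(u)$ uniformly.

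For $L(A_c)=+\infty$, your route differs from the paper's. The paper argues by contradiction: \emph{assume} $L(A_c)<+\infty$; then every profile, having $A(v^j)\leq A_c$, automatically has $\|v^j\|_{\dot Z_t^{s_k}}\leq L(A_c)<\infty$, so Theorem \ref{thm:nonlinear profile decomposition} applies directly and forces $\limsup_n S_{\mathbb R}(u_n)<\infty$---no dichotomy needed. Your direct argument via the dichotomy ``all $A(v^j)<A_c$'' versus ``one profile carries all of $A_c$'' is also valid, and has the bonus of producing the critical element already at this stage (the paper defers this to Proposition \ref{prop:Palais-Smale condition} and Theorem \ref{thm:existence of a.p. solutions}). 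One small correction: in the second branch of your dichotomy, to conclude $S_{\mathbb R}(v^1)=+\infty$ you should invoke Theorem \ref{thm:nonlinear profile decomposition} rather than Corollary \ref{cor:closeness of two solutions on unbounded interval}, since the latter needs $u_n(t_0)\to u(t_0)$ in $H^{1/2}$ at a fixed time, which is not immediate when $|t_n^1|\to\infty$; Theorem \ref{thm:nonlinear profile decomposition} handles all time-parameter regimes uniformly. You are also right to flag the $L^{k+2}$ decoupling needed for $A$-orthogonality; the paper is terse on this point but uses it implicitly.
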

\begin{proof}
We first show that $L(A_{c})=+\infty$. Suppose not; we assume $L(A_{c})<+\infty$.
Then, there exists a sequence $\{u_{n}(0)\}_{n\in\mathbb{N}}$ in
$H^{\frac{1}{2}}$ such that $A(u_{n})\downarrow A_{c}$ but $S_{\mathbb{R}}(u_{n})\uparrow+\infty$.
Consider a nonlinear profile decomposition associated to $u_{n}$.
Note that all profiles have $A$ less than or equal to $A_{c}$. Therefore,
we can apply longtime perturbation (Theorem \ref{thm:nonlinear profile decomposition})
and conclude that $\limsup_{n\to\infty}S_{\mathbb{R}}(u_{n})<\infty$.
This is absurd.

We now show that $L(A)\uparrow+\infty$ as $A\uparrow A_{c}$. For
any sufficiently large $M$, we can choose $u$ such that $A(u)=A_{c}$
and $S_{\mathbb{R}}(u)>M$. Then, we can choose a compact interval
$I\subset\mathbb{R}$ such that $S_{I}(u)>M$. Consider an initial
data $u_{0,\epsilon}=(1-\epsilon)u_{0}$. By local well-posedness,
we have $S_{I}(u_{0,\epsilon})>M$ for a sufficiently small $\epsilon$.
This completes the proof.
\end{proof}
The next proposition establishes the compactness property of critical
elements.
\begin{prop}[Palais-Smale Condition]
\label{prop:Palais-Smale condition}Suppose that $\{u_{n}\}_{n\in\mathbb{N}}$
is a sequence of  $H^{\frac{1}{2}}$-solutions satisfying $A(u_{n})\leq A_{c}$
and 
\[
\lim_{n\to\infty}S_{\geq t_{n}}(u_{n})=\lim_{n\to\infty}S_{\leq t_{n}}(u_{n})=+\infty
\]
for some $t_{n}$. Then, possibly taking a subsequence of $u_{n}$,
there exist spatial parameters $\{x_{n}\}_{n\in\mathbb{N}}$ such
that $u_{n}(t_{n},\cdot+x_{n})$ converges in $H^{\frac{1}{2}}$.
\end{prop}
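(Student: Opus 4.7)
The plan is the standard Kenig--Merle extraction: apply the linear profile decomposition to the time-translated sequence of initial data and show that exactly one profile survives, with vanishing time parameter.

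First I would use the time-translation symmetry of \eqref{eq:gBO}, which preserves $A$ and $S_I$, to reduce to $t_n\equiv 0$. Since $A(u)$ is comparable to $\|u\|_{H^{1/2}}^2+\|u\|_{H^{1/2}}^{k+2}$ via Sobolev embedding, $\{u_n(0)\}$ is bounded in $H^{1/2}$, so Proposition \ref{prop:linear profile decomposition} yields profiles $\phi^j$, parameters $(x_n^j,t_n^j)$, and remainders $w_n^J$. After a further subsequence and the standard absorption of any finite limit of $t_n^j$ into $\phi^j$, I may assume each $t_n^j$ is either identically $0$ or tends to $\pm\infty$. To each profile I associate a nonlinear profile $v^j$: the $H^{1/2}$ solution with data $\phi^j$ when $t_n^j\equiv 0$, and the wave operator from Proposition \ref{prop:LWP at infty} when $|t_n^j|\to\infty$.

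The crux is additivity of $A$ along profiles,
\[
\lim_{n\to\infty}\Big[A(u_n)-\sum_{j=1}^{J}A(\phi_n^j)-A(w_n^J)\Big]=0,\qquad \phi_n^j:=V(t_n^j)\phi^j(\cdot-x_n^j).
\]
The $L^2$ and $\dot H^{1/2}$ components decouple directly from \eqref{eq:asymp ortho H^s}; the $L^{k+2}$ component requires the parameter separation \eqref{eq:separation of parameters}, a density argument, and the dispersive decay of $V(t)$ on a dense subclass. All summands are nonnegative (Plancherel for the quadratic piece and positivity of $u^{k+2}$ since $k$ is even), and $A(v^j)=\lim_n A(\phi_n^j)$ by mass/energy conservation combined with the scattering of $v^j$ when $|t_n^j|\to\infty$ (the nonlinear piece of the energy vanishes along $V(t_n^j)\phi^j$). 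Hence $\sum_j A(v^j)+\lim_n A(w_n^J)\le A_c$.

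If every $A(v^j)<A_c$, the definition of $A_c$ gives $\|v^j\|_{\dot Z_t^{s_k}}+\|v^j\|_{L_x^kL_t^\infty}<\infty$ for each $j$, so Theorem \ref{thm:nonlinear profile decomposition} on $I=\mathbb{R}$ forces $\limsup_n S_\mathbb{R}(u_n)<\infty$, contradicting $S_{\ge 0}(u_n)\to\infty$. Hence exactly one $A(v^j)$ equals $A_c$, say for $j=1$, and additivity together with positivity force $\phi^j=0$ for $j\ge 2$ and $\|w_n^1\|_{H^{1/2}}\to 0$. It remains to exclude $t_n^1\to\pm\infty$: if $t_n^1\to+\infty$, Proposition \ref{prop:LWP at infty} gives $\|v^1\|_{\dot Z_{[0,\infty)}^{s_k}}<\infty$, and Corollary \ref{cor:closeness of two solutions on unbounded interval} (or Theorem \ref{thm:nonlinear profile decomposition} on $I=[0,\infty)$) yields $S_{\ge 0}(u_n)<\infty$, a contradiction; $t_n^1\to-\infty$ is symmetric. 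Thus $t_n^1\equiv 0$, and $u_n(0)=\phi^1(\cdot-x_n^1)+w_n^1\to\phi^1(\cdot-x_n^1)$ in $H^{1/2}$; setting $x_n:=x_n^1$ completes the extraction. The main obstacle is the $L^{k+2}$-decoupling of the energy across profiles; the remaining steps are mechanical applications of the tools assembled in Sections \ref{subsec:perturbation theory} and \ref{subsec:scattering criterion}.
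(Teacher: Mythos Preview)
Your proposal is correct and follows essentially the same approach as the paper: reduce to $t_n\equiv 0$, apply the linear/nonlinear profile decomposition, use decoupling of $A$ to force a single profile with vanishing remainder, then exclude $|t_n^1|\to\infty$ via the scattering construction of $v^1$. The paper's proof is considerably terser—it simply invokes ``asymptotic orthogonality in $H^{1/2}$'' to get a single profile and writes the formal inequality $S_{\ge 0}(u_n)\le S_{\ge t_n^1}(v^1)+\|w_n^1\|_{\dot H^{s_k}}$—whereas you correctly spell out that the potential-energy piece $\tfrac{1}{k+2}\|\cdot\|_{L^{k+2}}^{k+2}$ also has to decouple (and that $\lim_n A(\phi_n^j)=A(v^j)$ when $|t_n^j|\to\infty$), and you route the exclusion of $t_n^1\to\pm\infty$ through Theorem~\ref{thm:nonlinear profile decomposition} rather than a bare inequality; these are the same ideas made explicit.
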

\begin{proof}
By time translation, we may assume that $t_{n}\equiv0$. Consider
a nonlinear profile decomposition associated to $u_{n}$. We divide
into two cases.

We first consider the case when $\sup_{j}A(\phi^{j})<A_{c}$. By definition
of $A_{c}$, every $v^{j}$ has finite scattering norm. We apply Theorem
\ref{thm:nonlinear profile decomposition} to obtain $\limsup_{n\to\infty}S_{\mathbb{R}}(u_{n})<\infty$.
This is absurd.

The next case is when $\sup_{j}A(\phi^{j})=A_{c}$. By asymptotic
orthogonality in $H^{\frac{1}{2}}$, we have only one profile $u_{n}=v_{n}^{1}+V(t)w_{n}^{1}$
with $w_{n}^{1}\to0$ strongly in $H^{\frac{1}{2}}$. If $t_{n}^{1}\to+\infty$,
then $v^{1}$ scatters in forward by construction of $v^{1}$. We
combine
\[
S_{\geq0}(u_{n})\leq S_{\geq t_{n}^{1}}(v^{1})+\|w_{n}^{1}\|_{\dot{H}^{s_{k}}}
\]
with Proposition \ref{prop:LWP at infty} to obtain
\[
\limsup_{n\to\infty}S_{\geq0}(u_{n})\leq\limsup_{n\to\infty}S_{\geq t_{n}^{1}}(v^{1})<\infty,
\]
which is absurd. Similarly, we can exclude the case $t_{n}^{1}\to-\infty$.
Therefore, we conclude $t_{n}^{j}\equiv0$. In this case, $u_{n}(0,\cdot+x_{n})=\phi^{1}+w_{n}^{1}(\cdot+x_{n})$
converges to $\phi^{1}$ strongly in $H^{\frac{1}{2}}$.
\end{proof}
A global solution $u\in C_{t}H_{x}^{\frac{1}{2}}$ to \eqref{eq:gBO}
is \emph{almost periodic modulo spatial translations} (in short, \emph{almost
periodic}) if the set of cosets represented by $\{u(t):t\in\mathbb{R}\}$
is precompact in the quotient topology of $H^{\frac{1}{2}}$ modulo
spatial translation. Equivalently, there is $x(t)\in\mathbb{R}$ for
each $t$ such that the set
\[
\{u(t,\cdot+x(t)):t\in\mathbb{R}\}
\]
is precompact in $H^{\frac{1}{2}}$. 
\begin{thm}[Existence of the critical element]
\label{thm:existence of a.p. solutions}Suppose that Theorem \ref{thm:scattering}
fails. Then, there exists an almost periodic solution $u\in C_{t}H_{x}^{\frac{1}{2}}$
that does not scatter either forward nor backward in time.
\end{thm}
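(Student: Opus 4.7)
The plan is to extract the almost-periodic critical element from a minimizing sequence of non-scattering solutions via the Palais--Smale Condition (Proposition \ref{prop:Palais-Smale condition}), and then upgrade compactness of a single orbit to almost periodicity by reapplying Palais--Smale to arbitrary time translates.

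First I would use Lemma \ref{lem:continuity of L} to produce a sequence of global $H^{\frac{1}{2}}$-solutions $\{u_{n}\}$ with $A(u_{n})\to A_{c}$ and $S_{\mathbb{R}}(u_{n})\to\infty$. For each $n$, the function $t\mapsto S_{\geq t}(u_{n})-S_{\leq t}(u_{n})$ is continuous on $\mathbb{R}$ and, by the scattering criterion (Proposition \ref{prop:scattering criterion}) together with the definition of $\dot{Z}_{t}^{s_{k}}$, passes from $+\infty$ (as $t\to-\infty$) to $-\infty$ (as $t\to+\infty$); so by the intermediate value theorem there exists $t_{n}\in\mathbb{R}$ with
\[
S_{\geq t_{n}}(u_{n})=S_{\leq t_{n}}(u_{n})\geq\tfrac{1}{2}S_{\mathbb{R}}(u_{n})\to\infty.
\]
After time-translating, I may assume $t_{n}\equiv 0$ and both one-sided norms blow up.

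Next I would apply the Palais--Smale Condition (Proposition \ref{prop:Palais-Smale condition}) to this sequence at $t_{n}=0$: passing to a subsequence, there exist $\{x_{n}\}\subset\mathbb{R}$ such that $u_{n}(0,\cdot+x_{n})\to\phi$ strongly in $H^{\frac{1}{2}}$. Let $u$ denote the (global) $H^{\frac{1}{2}}$-solution to \eqref{eq:gBO} with $u(0)=\phi$ furnished by Theorem \ref{thm:subcritical LWP, GWP}. By $H^{\frac{1}{2}}$-continuity of mass and energy, $A(u)\leq A_{c}$. I claim $u$ cannot scatter forward in time: otherwise $\|u\|_{\dot{Z}_{[0,\infty)}^{s_{k}}}<\infty$ by Proposition \ref{prop:scattering criterion}, and since the spatially translated data $u_{n}(0,\cdot+x_{n})$ converge to $\phi$, Corollary \ref{cor:closeness of two solutions on unbounded interval} yields $\|u_{n}(\cdot,\cdot+x_{n})-u\|_{\dot{Z}_{[0,\infty)}^{s_{k}}}\to 0$, contradicting $S_{\geq 0}(u_{n})\to\infty$. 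The same argument, run on $(-\infty,0]$, rules out backward scattering. In particular $S_{\geq 0}(u)=S_{\leq 0}(u)=\infty$, and by the definition of $A_{c}$ together with $A(u)\leq A_{c}$ we must have $A(u)=A_{c}$.

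Finally, for almost periodicity I would show that every sequence $\{\tau_{n}\}\subset\mathbb{R}$ admits a subsequence along which $u(\tau_{n},\cdot+\xi_{n})$ converges in $H^{\frac{1}{2}}$ for some $\{\xi_{n}\}\subset\mathbb{R}$. Indeed, the time-translates $\tilde{u}_{n}(t):=u(t+\tau_{n})$ are $H^{\frac{1}{2}}$-solutions with $A(\tilde{u}_{n})=A_{c}$ and, because $u$ scatters in neither direction, $S_{\geq 0}(\tilde{u}_{n})=S_{\leq 0}(\tilde{u}_{n})=\infty$; the Palais--Smale Condition (Proposition \ref{prop:Palais-Smale condition}) applied with $t_{n}\equiv 0$ then delivers the required translate-convergence, proving that $\{u(t):t\in\mathbb{R}\}$ is precompact modulo spatial translations. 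The main obstacle is the verification that the limit $u$ actually fails to scatter and saturates $A_{c}$: this is exactly where the two-sided choice of $t_{n}$ and the unbounded-interval perturbation (Corollary \ref{cor:closeness of two solutions on unbounded interval}) are indispensable, since without long-time stability on $[0,\infty)$ and $(-\infty,0]$ one could not transfer the blow-up of $S_{\geq 0}(u_{n})$ and $S_{\leq 0}(u_{n})$ to a corresponding blow-up for the limit.
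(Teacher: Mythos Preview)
Your proposal is correct and follows essentially the same route as the paper: extract a minimizing sequence from Lemma~\ref{lem:continuity of L}, pick $t_n$ so that both one-sided scattering norms blow up, apply Palais--Smale (Proposition~\ref{prop:Palais-Smale condition}) to get a strong $H^{1/2}$ limit, rule out scattering of the limit via Corollary~\ref{cor:closeness of two solutions on unbounded interval}, and reapply Palais--Smale to arbitrary time translates for almost periodicity. One small inaccuracy: for fixed $n$ the map $t\mapsto S_{\geq t}(u_n)-S_{\leq t}(u_n)$ does not pass from $+\infty$ to $-\infty$ but from $S_{\mathbb{R}}(u_n)$ to $-S_{\mathbb{R}}(u_n)$, and for this you need $S_{\mathbb{R}}(u_n)<\infty$, i.e.\ you should take $A(u_n)<A_c$ (which Lemma~\ref{lem:continuity of L} indeed provides); the paper handles this by directly choosing $t_n$ with $S_{\geq t_n}(u_n)=\tfrac12 S_{\mathbb{R}}(u_n)$ via continuity, which is the same idea.
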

\begin{proof}
From continuity of $L$ at $A=A_{c}$ (Lemma \ref{lem:continuity of L}),
we can choose $u_{n}$ such that $A(u_{n})<A_{c}$, $A(u_{n})\uparrow A_{c}$,
and $S_{\mathbb{R}}(u_{n})\uparrow+\infty$. We claim that there exist
$t_{n}\in\mathbb{R}$ such that 
\[
\lim_{n\to\infty}S_{\geq t_{n}}(u_{n})=\lim_{n\to\infty}S_{\leq t_{n}}(u_{n})=+\infty.
\]
Because $A(u_{n})<A_{c}$, the solutions $u_{n}$ scatter in $H^{\frac{1}{2}}$
both forward and backward in time. Therefore, we can choose $\tilde{t}_{n}\in\mathbb{R}$
large such that $S_{\geq\tilde{t}_{n}}(u_{n})\lesssim A_{c}$. Hence,
using continuity of $\dot{Z}_{t}^{s_{k}}$ norm, we can choose $t_{n}\in\mathbb{R}$
for all large $n\in\mathbb{N}$ such that $S_{\geq t_{n}}(u_{n})=\frac{1}{2}S_{\mathbb{R}}(u_{n})$.
We then have $S_{\leq t_{n}}(u_{n})\geq S_{\mathbb{R}}(u_{n})-S_{\geq t_{n}}(u_{n})=\frac{1}{2}S_{\mathbb{R}}(u_{n})$
for all large $n\in\mathbb{N}$. The claim is now proved.

By Proposition \ref{prop:Palais-Smale condition}, (possibly passing
to a subsequence) there exist $x_{n}\in\mathbb{R}$ such that $u_{n}(t_{n},\cdot+x_{n})$
converges strongly in $H^{\frac{1}{2}}$, say $u(0)$. Let $u$ be
a global $H^{\frac{1}{2}}$ solution with initial data $u(0)$. We
show that $u$ does not scatter forward in time. Suppose that $S_{\geq0}(u)<\infty$.
Then, we have $\lim_{n\to\infty}S_{\geq t_{n}}(u_{n})=S_{\geq0}(u)<\infty$
by Corollary \ref{cor:closeness of two solutions on unbounded interval},
which makes a contradiction. Similarly, $u$ does not scatter backward
in time.

We now show that $u$ is almost periodic modulo spatial translations.
As $S_{\geq0}(u)=S_{\leq0}(u)=\infty$, we have $S_{\geq t}(u)=S_{\leq t}(u)=\infty$
for all $t\in\mathbb{R}$. Applying Proposition \ref{prop:Palais-Smale condition}
to $\{u(t)\}_{t\in\mathbb{R}}$, the conclusion follows.
\end{proof}
Let us conclude this subsection by noting a quantitative formulation
of almost periodic solutions. Using the Sobolev embedding $H^{\frac{1}{2}}\hookrightarrow L^{p}$
for any $2\leq p<\infty$ and the Arzela-Ascoli theorem in $L^{p}$
space, we have the following.
\begin{prop}[Arzela-Ascoli for a.p. solutions]
\label{prop:arzela-ascoli for a.p. solutions}Suppose that $u\in C_{t}H_{x}^{\frac{1}{2}}$
is almost periodic modulo spatial translations. Then, there exist
spatial parameters $x(t)\in\mathbb{R}$ for each $t\in\mathbb{R}$
such that for any $p\in[2,\infty)$ and $\eta>0$, there exists a
modulus $R=R(p,\eta,u)>0$ such that 
\[
\sup_{t\in\mathbb{R}}\Big(\int_{|x-x(t)|>R}|u(t,x)|^{p}dx\Big)<\eta.
\]
\end{prop}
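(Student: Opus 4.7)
The plan is to unwind the definition of almost periodicity modulo spatial translations and feed it through the Sobolev embedding into $L^p$, where a standard tightness argument applies.

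First, by hypothesis the set $K \coloneqq \{u(t,\cdot+x(t)) : t\in\mathbb{R}\}$ is precompact in $H^{\frac{1}{2}}(\mathbb{R})$ for some choice of spatial parameters $x(t)$. Since the Sobolev embedding $H^{\frac{1}{2}}(\mathbb{R}) \hookrightarrow L^p(\mathbb{R})$ is continuous for every $p\in[2,\infty)$, the image of $K$ under this embedding is precompact in $L^p(\mathbb{R})$ as well. Denote this image by $K_p$.

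Next, I would show that a precompact subset of $L^p(\mathbb{R})$ is uniformly tight, i.e.\ for any $\eta>0$ there exists $R>0$ such that $\int_{|y|>R}|f(y)|^p dy < \eta$ uniformly for $f\in K_p$. This is where the argument really happens, and it is the standard total-boundedness step: given $\epsilon>0$, cover $K_p$ by finitely many $L^p$-balls of radius $\epsilon$ around $f_1,\dots,f_N \in K_p$. Each $f_j$ has finite $L^p$ mass, so we may pick a single $R>0$ large enough that $\|f_j\|_{L^p(|y|>R)} < \epsilon$ for all $j=1,\dots,N$. For any $f \in K_p$, choosing $f_j$ with $\|f-f_j\|_{L^p(\mathbb{R})} < \epsilon$ and applying the triangle inequality gives
\[
\|f\|_{L^p(|y|>R)} \le \|f-f_j\|_{L^p(\mathbb{R})} + \|f_j\|_{L^p(|y|>R)} < 2\epsilon,
\]
and one chooses $\epsilon$ so that $(2\epsilon)^p<\eta$.

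Finally, applying this uniform tightness to $K_p$ and changing variables $y = x - x(t)$ yields
\[
\sup_{t\in\mathbb{R}}\int_{|x-x(t)|>R}|u(t,x)|^p\,dx < \eta,
\]
which is the desired conclusion, with $R$ depending on $p$, $\eta$ and (through the precompact set $K_p$) on $u$. There is no real obstacle here; the only thing to be slightly careful about is that the Sobolev embedding $H^{\frac{1}{2}}\hookrightarrow L^p$ fails at $p=\infty$, but the statement only requires $p\in[2,\infty)$, so this is not an issue.
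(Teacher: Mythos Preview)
Your proof is correct and follows essentially the same approach the paper indicates: use the Sobolev embedding $H^{\frac{1}{2}}\hookrightarrow L^{p}$ for $2\le p<\infty$ to transfer precompactness, then extract uniform tightness via total boundedness in $L^p$. The paper gives no further detail beyond citing the Sobolev embedding and ``the Arzela--Ascoli theorem in $L^{p}$ space'', so your write-up actually fills in what the paper leaves implicit.
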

\begin{rem}
\label{rem:4.23}In other contexts of critical setting, $\{u(t)\}_{t\in\mathbb{R}}$
is almost periodic modulo symmetries other than spatial translations,
such as scalings and frequency modulations. Thus, one introduces a
frequency scale function $N(t)$. In terms of behavior of $N(t)$,
it is decomposed into several scenarios. But in our case, as we work
on inhomogeneous setting $C_{t}H_{x}^{\frac{1}{2}}$, we only have
the case $N(t)\equiv1$.
\end{rem}

\subsection{\label{subsec:proof of error estimation}Proof of Lemma \ref{lem:error estimation}}

In this subsection, we prove Lemma \ref{lem:error estimation}. We
first introduce several lemmas, which account for decoupling of nonlinear
profiles.
\begin{lem}
\label{lem:good term another bound}For any $s>0$, $\ell=1,2,\dots,k-2$,
and $\varphi_{1},\varphi_{2},\psi_{1},\psi_{2}\in\dot{Z}_{t}^{s_{k}}$,
we have
\begin{align*}
 & \Big\|2^{js}\|\sum_{r\gtrsim j}(Q_{\sim r}\varphi_{1})(Q_{\sim r}\varphi_{2})(Q_{\lesssim r}\psi_{1})^{\ell}(Q_{\lesssim r}\psi_{2})^{k-1-\ell}\|_{L_{x}^{1}L_{t}^{2}}\Big\|_{\ell_{j}^{2}}\\
 & \lesssim_{s}\|\varphi_{2}\|_{L_{x}^{k}L_{t}^{\infty}}\|\psi_{1}\|_{L_{x}^{k}L_{t}^{\infty}}^{\ell-1}\|\psi_{2}\|_{L_{x}^{k}L_{t}^{\infty}}^{k-1-\ell}\Big\|2^{js}\|(Q_{\sim j}\varphi_{1})(Q_{\lesssim j}\psi_{1})\|_{L_{x}^{k}L_{t}^{2}}\Big\|_{\ell_{j}^{2}}.
\end{align*}
\end{lem}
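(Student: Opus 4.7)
The plan is straightforward multilinear H\"older in the physical variables combined with a weighted $\ell^{2}$-summation identity in the Littlewood--Paley index. First, bringing $\sum_{r\gtrsim j}$ outside the $L_{x}^{1}L_{t}^{2}$ norm by the triangle inequality, it suffices to handle, for each fixed $r\gtrsim j$, the product
\[
(Q_{\sim r}\varphi_{1})(Q_{\sim r}\varphi_{2})(Q_{\lesssim r}\psi_{1})^{\ell}(Q_{\lesssim r}\psi_{2})^{k-1-\ell},
\]
which has $k+1$ Littlewood--Paley factors in total.

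Next, I distribute the factors by H\"older in $x$ (with exponents $\tfrac{1}{k}+\tfrac{1}{k}+\tfrac{\ell-1}{k}+\tfrac{k-1-\ell}{k}=1$) and in $t$ (with exponents $\tfrac{1}{2}+0+0+0=\tfrac{1}{2}$), pairing one of the $\psi_{1}$ factors with $Q_{\sim r}\varphi_{1}$ so that the joint factor $(Q_{\sim r}\varphi_{1})(Q_{\lesssim r}\psi_{1})$ is measured in $L_{x}^{k}L_{t}^{2}$, while $Q_{\sim r}\varphi_{2}$, the remaining $\ell-1$ copies of $Q_{\lesssim r}\psi_{1}$, and the $k-1-\ell$ copies of $Q_{\lesssim r}\psi_{2}$ are all placed in $L_{x}^{k}L_{t}^{\infty}$ (after combining like factors, these contribute $L_{x}^{k/(\ell-1)}L_{t}^{\infty}$ and $L_{x}^{k/(k-1-\ell)}L_{t}^{\infty}$ respectively, which are just powers of the $L_{x}^{k}L_{t}^{\infty}$ norm). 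The boundedness of $Q_{\sim r}$ and $Q_{\lesssim r}$ on $L_{x}^{k}$ then lets me replace these three $L^{\infty}$-type factors by the full norms $\|\varphi_{2}\|_{L_{x}^{k}L_{t}^{\infty}}$, $\|\psi_{1}\|_{L_{x}^{k}L_{t}^{\infty}}^{\ell-1}$, and $\|\psi_{2}\|_{L_{x}^{k}L_{t}^{\infty}}^{k-1-\ell}$, all of which are independent of $r$ and $j$ and factor out of the sums.

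Writing $f_{r}:=\|(Q_{\sim r}\varphi_{1})(Q_{\lesssim r}\psi_{1})\|_{L_{x}^{k}L_{t}^{2}}$ and noting that $r\gtrsim j$ means $r\geq j-C$ for some absolute constant $C$, the remaining task is to bound $\|2^{js}\sum_{r\geq j-C}f_{r}\|_{\ell_{j}^{2}}$ by $\|2^{rs}f_{r}\|_{\ell_{r}^{2}}$. This follows from Young's convolution inequality applied to the identity
\[
2^{js}\sum_{r\geq j-C}f_{r}=\sum_{m\geq -C}2^{-ms}\bigl(2^{(j+m)s}f_{j+m}\bigr),
\]
since the kernel $m\mapsto 2^{-ms}\mathbf{1}_{m\geq -C}$ lies in $\ell^{1}(\mathbb{Z})$ precisely because $s>0$, with $\ell^{1}$-norm depending only on $s$.

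The proof is essentially bookkeeping; the only non-routine choice is pairing $Q_{\sim r}\varphi_{1}$ with one copy of $Q_{\lesssim r}\psi_{1}$ in the H\"older decomposition so that the frequency-localised factor carrying the weight $2^{js}$ survives inside the $\ell_{j}^{2}$-norm on the right-hand side, and the hypothesis $s>0$ is exactly what closes the Young-inequality step.
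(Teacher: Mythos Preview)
Your proof is correct and follows essentially the same route as the paper's: both arguments pull the $r$-sum out by the triangle inequality, apply H\"older in $L_{x}^{1}L_{t}^{2}$ with the same splitting (pairing $Q_{\sim r}\varphi_{1}$ with one copy of $Q_{\lesssim r}\psi_{1}$ in $L_{x}^{k}L_{t}^{2}$ and placing the remaining $k-1$ factors in $L_{x}^{k}L_{t}^{\infty}$), and then close with Young's inequality in $j$ via the $\ell^{1}$ kernel $m\mapsto 2^{-ms}\mathbf{1}_{m\ge -C}$, which is summable precisely because $s>0$.
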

\begin{proof}
Observe that
\begin{align*}
 & \Big\|2^{js}\|\sum_{r\gtrsim j}(Q_{\sim r}\varphi_{1})(Q_{\sim r}\varphi_{2})(Q_{\lesssim r}\psi_{1})^{\ell}(Q_{\lesssim r}\psi_{2})^{k-1-\ell}\|_{L_{x}^{1}L_{t}^{2}}\Big\|_{\ell_{j}^{2}}\\
 & \leq\Big\|\sum_{r\gtrsim j}2^{(j-r)s}\|(2^{rs}Q_{\sim r}\varphi_{1})(Q_{\sim r}\varphi_{2})(Q_{\lesssim r}\psi_{1})^{\ell}(Q_{\lesssim r}\psi_{2})^{k-1-\ell}\|_{L_{x}^{1}L_{t}^{2}}\Big\|_{\ell_{j}^{2}}\\
 & \lesssim\|\varphi_{2}\|_{L_{x}^{k}L_{t}^{\infty}}\|\psi_{1}\|_{L_{x}^{k}L_{t}^{\infty}}^{\ell-1}\|\psi_{2}\|_{L_{x}^{k}L_{t}^{\infty}}^{k-1-\ell}\Big\|\sum_{r\gtrsim j}2^{(j-r)s}\|(2^{rs}Q_{\sim r}\varphi_{1})(Q_{\lesssim r}\psi_{1})\|_{L_{x}^{1}L_{t}^{2}}\Big\|_{\ell_{j}^{2}}.
\end{align*}
We then use Young's inequality to obtain the conclusion.
\end{proof}
\begin{lem}
\label{lem:vanishing of weak limit for nonlinear solution}For any
$j\in\mathbb{N}$ and $t\in\mathbb{R}$, we have
\[
v^{j}(t+t_{n},\cdot+x_{n})\rightharpoonup0\quad\text{weakly in }H^{\frac{1}{2}},
\]
whenever $|t_{n}|+|x_{n}|\to\infty$.
\end{lem}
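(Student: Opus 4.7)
The plan is to combine a density argument with the scattering-defining property of $v^{j}$ so as to reduce the claim to decay of linear Benjamin--Ono evolutions translated to infinity. Passing to a subsequence, I may assume $t_{n}\to t_{\infty}\in[-\infty,+\infty]$. Since $\|v^{j}(t+t_{n},\cdot+x_{n})\|_{H^{1/2}}$ is uniformly bounded by mass/energy conservation, and since $\mathcal{S}(\mathbb{R})$ is dense in $H^{-1/2}$, it suffices to show that for every Schwartz $\psi$,
\[
\int v^{j}(t+t_{n},x+x_{n})\,\overline{\psi(x)}\,dx\to 0.
\]

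\textbf{Case 1:} $t_{\infty}\in\mathbb{R}$. The hypothesis $|t_{n}|+|x_{n}|\to\infty$ then forces $|x_{n}|\to\infty$. By the continuity $v^{j}\in C_{t}H^{1/2}_{x}$ (from Theorem \ref{thm:subcritical LWP, GWP}), $v^{j}(t+t_{n})$ converges strongly in $H^{1/2}$ to $f\coloneqq v^{j}(t+t_{\infty})$, and it suffices to show that $f(\cdot+x_{n})\rightharpoonup 0$. Passing to Fourier,
\[
\int f(x+x_{n})\,\overline{\psi(x)}\,dx=\int e^{-ix_{n}\xi}\,\hat{f}(\xi)\,\overline{\hat{\psi}(\xi)}\,d\xi\to 0,
\]
by Riemann--Lebesgue, since $\hat{f}\cdot\overline{\hat{\psi}}\in L^{1}$.

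\textbf{Case 2:} $t_{\infty}=\pm\infty$ (take $+\infty$; the other is symmetric). By the construction of $v^{j}$ as a nonlinear profile there exists $u^{j}_{+}\in H^{1/2}$ with $\|v^{j}(\tau)-V(\tau)u^{j}_{+}\|_{H^{1/2}}\to 0$ as $\tau\to+\infty$. Thus $v^{j}(t+t_{n})$ may be replaced by $V(t+t_{n})u^{j}_{+}$ up to a strongly vanishing error, and in turn $u^{j}_{+}$ by a Schwartz approximant $\tilde{u}$. The problem reduces to
\[
\int e^{-i(t+t_{n})|\xi|\xi+ix_{n}\xi}\,\hat{\tilde{u}}(\xi)\,\overline{\hat{\psi}(\xi)}\,d\xi\to 0.
\]
Away from the origin the phase $\phi(\xi)=-(t+t_{n})|\xi|\xi+x_{n}\xi$ satisfies $|\phi''|=2|t+t_{n}|$; on each Littlewood--Paley piece $|\xi|\sim 2^{r}$ with $r$ not too negative, stationary phase (or Van der Corput) provides decay $O(|t+t_{n}|^{-1/2})$, and the Schwartz decay of $\hat{\tilde{u}}\cdot\overline{\hat{\psi}}$ allows summation in $r$. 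The contribution of a neighborhood $|\xi|<\varepsilon$ is trivially bounded by $\varepsilon\|\hat{\tilde{u}}\hat{\psi}\|_{L^{\infty}}$, which is negligible after choosing $\varepsilon$ small and then letting $n\to\infty$.

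The main technical obstacle is the oscillatory-integral estimate in Case 2 near $\xi=0$, where $\phi$ is only $C^{1}$; the splitting above handles it by exploiting the Schwartz decay of the amplitude together with the non-degeneracy of $\phi$ away from the origin. An alternative, cleaner route is to invoke a direct $L^{1}\to L^{\infty}$ dispersive bound for $V(t)$ on Littlewood--Paley pieces, which immediately yields $\|V(t+t_{n})\tilde{u}\|_{L^{\infty}}\lesssim|t+t_{n}|^{-1/2}$, so that the pairing with $\psi(\cdot-x_{n})$ is $O(|t+t_{n}|^{-1/2})\to 0$ regardless of $x_{n}$.
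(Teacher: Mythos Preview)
Your proof is correct and follows essentially the same approach as the paper: reduce to subsequences, split into the cases $t_{n}\to t_{\infty}\in\mathbb{R}$ (continuity in $C_{t}H^{1/2}_{x}$ plus Riemann--Lebesgue) and $|t_{n}|\to\infty$ (scattering plus the $L^{1}\to L^{\infty}$ dispersive estimate, which is exactly the ``cleaner route'' you describe at the end). One small correction: in Case~2, scattering of $v^{j}$ in \emph{both} time directions is not guaranteed merely by its construction as a nonlinear profile (when $t_{n}^{j}\equiv 0$ the construction gives no scattering at all); it follows instead from the standing hypothesis $\|v^{j}\|_{\dot{Z}_{I}^{s_{k}}}<\infty$ of Lemma~\ref{lem:error estimation} via the scattering criterion (Proposition~\ref{prop:scattering criterion}).
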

\begin{proof}
It suffices to show that $\langle\phi,v^{j}(t+t_{n},\cdot+x_{n})\rangle_{H^{\frac{1}{2}}}\to0$
for any $\phi\in C_{c}^{\infty}(\mathbb{R})$ with $\|\phi\|_{H^{\frac{1}{2}}}=1$.
We only consider the following two cases: 1. $|t_{n}|\to\infty$ and
2. $t_{n}\to t_{0}\in\mathbb{R}$ and $|x_{n}|\to\infty$. The remaining
part of the proof is an easy exercise.

Let us consider the first case $|t_{n}|\to\infty$. Because $\|v^{j}\|_{\dot{Z}_{t}^{s_{k}}}<\infty$,
$v^{j}$ scatters in $H^{\frac{1}{2}}$ both forward and backward
in time. Hence, we can approximate $v^{j}(t+t_{n},\cdot+x_{n})$ by
$V(t+t_{n})\psi(\cdot+x_{n})$ for some $\psi\in H^{\frac{1}{2}}$
for all large $n$. We then approximate $\psi$ by some $\psi\in C_{c}^{\infty}$
in $H^{\frac{1}{2}}$ sense. If we consider
\begin{align*}
 & |\langle\phi,v^{j}(t+t_{n},\cdot+x_{n})\rangle_{H^{\frac{1}{2}}}|\\
 & \leq\|v^{j}(t+t_{n})-V(t+t_{n})\psi\|_{H^{\frac{1}{2}}}+\|\psi-\tilde{\psi}\|_{H^{\frac{1}{2}}}+\|\langle\nabla\rangle\phi\|_{L^{1}}\|V(t+t_{n})\tilde{\psi}\|_{L^{\infty}}
\end{align*}
and use the dispersive decay $\|V(t+t_{n})\tilde{\psi}(\cdot+x_{n})\|_{L^{\infty}}\lesssim\langle t+t_{n}\rangle^{-\frac{1}{2}}\|\tilde{\psi}\|_{L^{1}}$,
we get the conclusion.

We now consider the remaining case: $t_{n}\to t_{0}\in\mathbb{R}$
and $|x_{n}|\to\infty$. Because $v^{j}\in C_{t}H_{x}^{\frac{1}{2}}$,
we can approximate $v^{j}(t+t_{n})$ by $v^{j}(t+t_{0})$. We then
approximate $v^{j}(t+t_{0})$ by some $\psi\in C_{c}^{\infty}(\mathbb{R})$
in $H^{\frac{1}{2}}$ sense. More precisely, we consider
\begin{align*}
 & |\langle\phi,v^{j}(t+t_{n},\cdot+x_{n})\rangle_{H^{\frac{1}{2}}}|\\
 & \leq\|v^{j}(t+t_{n})-v^{j}(t+t_{0})\|_{H^{\frac{1}{2}}}+\|v^{j}(t+t_{0})-\psi\|_{H^{\frac{1}{2}}}+|\langle\phi,\psi(\cdot+x_{n})\rangle_{H^{\frac{1}{2}}}|.
\end{align*}
We then apply Riemann-Lebesgue lemma to obtain $|\langle\phi,\psi(\cdot+x_{n})\rangle_{H^{\frac{1}{2}}}|\to0$.
This completes the proof.
\end{proof}
\begin{lem}[Asymptotic decoupling of $L_{x}^{k}L_{t}^{2}$ norm, I]
\label{lem:(k,2) decoupling 1}For each $m\in\mathbb{Z}$ and $j\neq j'$,
we have
\[
\lim_{n\to\infty}\|(Q_{\sim m}v_{n}^{j})(Q_{\lesssim m}v_{n}^{j'})\|_{L_{x}^{k}L_{t}^{2}}=0.
\]
\end{lem}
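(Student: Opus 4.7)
The plan is to translate the asymptotic separation \eqref{eq:separation of parameters} into pointwise vanishing of one profile on the effective support of the other. After unwinding the space-time shifts defining $v_{n}^{j}$ and $v_{n}^{j'}$, the quantity to bound becomes
\[
\|f(s,y)\,g(s+\Delta t_{n},\,y-\Delta x_{n})\|_{L_{y}^{k}L_{s}^{2}},
\]
where $f=Q_{\sim m}v^{j}$, $g=Q_{\lesssim m}v^{j'}$, $\Delta t_{n}=t_{n}^{j'}-t_{n}^{j}$, $\Delta x_{n}=x_{n}^{j'}-x_{n}^{j}$, and $|\Delta t_{n}|+|\Delta x_{n}|\to\infty$. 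Bernstein applied to the $\dot{\mathcal{S}}^{s_{k},1}_{t}$ component of $\dot{Z}_{t}^{s_{k}}$ places the band-limited $g$ in $L_{x}^{\infty}L_{t}^{2}$, while $f$ lies in $L_{x}^{k}L_{t}^{\infty}\cap L_{t,x}^{\infty}$ (using band-limitation of $f$ together with $v^{j}\in L_{t}^{\infty}L_{x}^{2}$). The strategy is a standard truncation-plus-separation argument.

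For the truncation, given $\varepsilon>0$ the first step is to produce a bounded box $B=[-T,T]\times[-R,R]$ with $\|f\chi_{B^{c}}\|_{L_{x}^{k}L_{t}^{\infty}}<\varepsilon$. The time component uses the vanishing \eqref{eq:continuity assumption 2 for (k,infty)} built into $\dot{Z}_{t}^{s_{k}}$, and the spatial component uses that $t\mapsto v^{j}(t)\in H^{1/2}\hookrightarrow L_{x}^{k}$ is continuous on $[-T,T]$, so $\{f(t,\cdot):t\in[-T,T]\}$ is precompact in $L_{x}^{k}$ and thus has uniformly small spatial tails. The tail contribution to the product norm is then controlled by H\"older,
\[
\|(f\chi_{B^{c}})\,g(\cdot+\Delta t_{n},\cdot-\Delta x_{n})\|_{L_{x}^{k}L_{t}^{2}}\leq\|f\chi_{B^{c}}\|_{L_{x}^{k}L_{t}^{\infty}}\|g\|_{L_{x}^{\infty}L_{t}^{2}}\lesssim\varepsilon,
\]
while the main piece $(f\chi_{B})g(\cdot+\Delta t_{n},\cdot-\Delta x_{n})$ is supported in $B$ and therefore dominated by $C(T,R)\|f\|_{L^{\infty}}\|g\|_{L^{\infty}(B+(\Delta t_{n},-\Delta x_{n}))}$, reducing everything to showing that the $L^{\infty}$-norm of $g$ on the translated box tends to zero.

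For this pointwise vanishing on the translated box one passes to subsequences. Along any subsequence with $|\Delta t_{n}|\to\infty$, scattering of $v^{j'}$ in $H^{1/2}$ allows the replacement of $v^{j'}(s)$ by $V(s)u_{\pm}^{j'}$ modulo small $H^{1/2}$ error; splitting $u_{\pm}^{j'}$ into a Schwartz (hence $L^{1}$) piece plus a small $H^{1/2}$ remainder and combining the dispersive bound $\|V(s)\psi\|_{L^{\infty}}\lesssim|s|^{-1/2}\|\psi\|_{L^{1}}$ with Bernstein on the remainder yields $\|Q_{\lesssim m}v^{j'}(s)\|_{L^{\infty}}\to0$ as $|s|\to\infty$. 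Along any subsequence with $|\Delta t_{n}|$ bounded but $|\Delta x_{n}|\to\infty$, $s$ stays in a fixed compact interval while $y\to\pm\infty$; for each $s$, $Q_{\lesssim m}v^{j'}(s,\cdot)$ has $L^{1}$, compactly supported Fourier transform and thus vanishes at $|y|\to\infty$ by Riemann--Lebesgue, with uniformity in $s$ furnished by the $H^{1/2}$-compactness of $\{v^{j'}(s):s\in[-C,C]\}$ coming from continuity on a compact interval. The main obstacle is precisely this final uniform spatial-decay step and the unified treatment of both regimes via subsequence extraction; everything else is routine H\"older estimation.
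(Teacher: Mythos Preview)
Your overall strategy---truncate, then use pointwise vanishing on the translated box via scattering/dispersive decay or Riemann--Lebesgue---is sound, and the box argument in the second half is essentially a hands-on version of the weak-convergence step (Lemma~\ref{lem:vanishing of weak limit for nonlinear solution}) that the paper uses. But there is a genuine gap in the tail estimate: you assert that $g=Q_{\lesssim m}v^{j'}\in L_{x}^{\infty}L_{t}^{2}$ via Bernstein on the $\dot{\mathcal{S}}_{t}^{s_{k},1}$ component, and then apply $\|(f\chi_{B^{c}})g\|_{L_{x}^{k}L_{t}^{2}}\le\|f\chi_{B^{c}}\|_{L_{x}^{k}L_{t}^{\infty}}\|g\|_{L_{x}^{\infty}L_{t}^{2}}$. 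The projector $Q_{\lesssim m}$ contains arbitrarily low frequencies, and the $\dot{\mathcal{S}}_{t}^{s_{k},1}$ norm only gives $\|Q_{r}v^{j'}\|_{L_{x}^{\infty}L_{t}^{2}}\lesssim 2^{-r(s_{k}+\frac{1}{2})}$, which diverges as $r\to-\infty$; passing to the inhomogeneous $Z_{t}^{1/2}$ norm does not help (the exponent is still positive). Concretely, for a linear solution $V(t)\varphi$ the sharp smoothing $\|D_{x}^{1/2}V(t)\varphi\|_{L_{x}^{\infty}L_{t}^{2}}\sim\|\varphi\|_{L^{2}}$ shows $\|Q_{\lesssim m}V(t)\varphi\|_{L_{x}^{\infty}L_{t}^{2}}<\infty$ would force $Q_{\lesssim m}\varphi\in\dot{H}^{-1/2}$, which is not implied by $\varphi\in H^{1/2}$. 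So the H\"older step as written is not justified.

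The fix is to reverse the roles: the \emph{single-band} factor $f=Q_{\sim m}v^{j}$ does lie in $L_{x}^{\infty}L_{t}^{2}$ (this is exactly what $\dot{\mathcal{S}}_{t}^{s_{k},1}$ gives for one dyadic piece), while $g=Q_{\lesssim m}v^{j'}$ lies in $L_{x}^{k}L_{t}^{\infty}$ by Young's inequality. So change variables to the $v^{j'}$ frame instead, handle the time tail by $\|Q_{\lesssim m}v^{j'}\|_{L_{x}^{k}L_{|t|>T}^{\infty}}\to0$ (this is property~\eqref{eq:continuity assumption 2 for (k,infty)} of $\dot{Z}_{t}^{s_{k}}$) times the finite $\|Q_{\sim m}v^{j}\|_{L_{x}^{\infty}L_{t}^{2}}$, and then on the bounded window $[-T,T]$ run DCT with dominating function $|Q_{\lesssim m}v^{j'}|\cdot\|v^{j}\|_{L_{t}^{\infty}L_{x}^{2}}$ and pointwise vanishing of the shifted $Q_{\sim m}v^{j}$. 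That is precisely the paper's argument; your box-and-$L^{\infty}$ computation on the bounded window is a valid alternative to the DCT step, but the tail reduction must put $L_{x}^{\infty}L_{t}^{2}$ on the band-limited factor, not on the low-pass one.
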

\begin{proof}
We may assume that $m=0$. Using change of variables and the property
\eqref{eq:continuity assumption 2 for (k,infty)}, it suffices to
show that
\[
\lim_{n\to\infty}\|(Q_{\lesssim0}v^{j'})(Q_{\sim0}v^{j}(\cdot+t_{n})(\cdot+x_{n}))\|_{L_{x}^{k}L_{T}^{2}}=0
\]
whenever $T<\infty$ and $|t_{n}|+|x_{n}|\to\infty$. Here, the variables
for $\cdot+t_{n}$ and $\cdot+x_{n}$ are $t$ and $x$, respectively.
We shall show the assertion by DCT. Observe for any $t$ and $x$
that we have a pointwise bound
\[
\sup_{n\in\mathbb{N}}|Q_{\sim0}v^{j}(t+t_{n})(x+x_{n})|\lesssim\|v^{j}\|_{L_{t}^{\infty}L_{x}^{2}}<\infty
\]
and hence 
\[
\Big\||Q_{\lesssim0}v^{j'}|\|v^{j}\|_{L_{t}^{\infty}L_{x}^{2}}\Big\|_{L_{x}^{k}L_{T}^{2}}\lesssim\sqrt{T}\|v^{j}\|_{L_{t}^{\infty}L_{x}^{2}}\|v^{j'}\|_{L_{x}^{k}L_{t}^{\infty}}<\infty.
\]
This allows us to use DCT; we are now reduced to show that
\[
Q_{\sim0}v^{j}(t+t_{n})(x+x_{n})\to0
\]
for each fixed $t$ and $x$. Because $Q_{\sim0}$ has a Schwartz
convolution kernel, it suffices to show that 
\[
v^{j}(t+t_{n},\cdot+x_{n})\rightharpoonup0
\]
weakly in $H^{\frac{1}{2}}$ for each $t$. This follows from Lemma
\ref{lem:vanishing of weak limit for nonlinear solution}.
\end{proof}
\begin{lem}[Asymptotic decoupling of $L_{x}^{k}L_{t}^{2}$ norm, II]
\label{lem:(k,2) decoupling 2}For each $j\in\mathbb{Z}$ and $\ell'\leq J$,
we have
\[
\lim_{n\to\infty}\|(Q_{\sim j}V(t)w_{n}^{J})(Q_{\lesssim j}v_{n}^{\ell'})\|_{L_{x}^{k}L_{t}^{2}}=0.
\]
\end{lem}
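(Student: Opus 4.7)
The plan is to mirror the argument of Lemma~\ref{lem:(k,2) decoupling 1}, with the weak-vanishing input now supplied by the linear profile decomposition property \eqref{eq:vanishing of weak limit} rather than by Lemma~\ref{lem:vanishing of weak limit for nonlinear solution}. The high-level strategy has three steps: first, perform a space-time translation to fix the nonlinear profile $v_n^{\ell'}$ as $v^{\ell'}$ and simultaneously transform $V(t)w_n^J$ into a linear flow applied to a weakly null sequence in $L^2$; second, truncate to a compact time interval using the decay property \eqref{eq:continuity assumption 2 for (k,infty)} of $v^{\ell'}$; third, conclude by dominated convergence, with pointwise vanishing coming from weak convergence tested against a fixed $L^2$ profile.

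Concretely, I will change variables $t\mapsto t-t_n^{\ell'}$, $x\mapsto x+x_n^{\ell'}$, which preserves the $L_x^{k}L_t^{2}$ norm. Using the translation-invariance and group property of $V(t)$, together with the fact that $v_n^{\ell'}(t,x) = v^{\ell'}(t+t_n^{\ell'},x-x_n^{\ell'})$, the expression $(Q_{\sim j}V(t)w_n^J)(Q_{\lesssim j}v_n^{\ell'})$ becomes $(Q_{\sim j}V(t)\tilde{w}_n)(Q_{\lesssim j}v^{\ell'})$, where
\[
\tilde{w}_n \coloneqq V(-t_n^{\ell'})w_n^J(\cdot+x_n^{\ell'}).
\]
By \eqref{eq:vanishing of weak limit}, $\tilde{w}_n\rightharpoonup 0$ weakly in $H^{\frac{1}{2}}$, and since $H^{\frac{1}{2}}\hookrightarrow L^{2}$ continuously, also weakly in $L^{2}$. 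Combined with the uniform bound $\|\tilde{w}_n\|_{L^{2}}\lesssim\|w_n^J\|_{H^{\frac{1}{2}}}\lesssim 1$, this plays the role that Lemma~\ref{lem:vanishing of weak limit for nonlinear solution} plays in the proof of Lemma~\ref{lem:(k,2) decoupling 1}.

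To finish, I will exploit \eqref{eq:continuity assumption 2 for (k,infty)} for $v^{\ell'}$ together with the uniform pointwise bound $\|Q_{\sim j}V(t)\tilde{w}_n\|_{L_{t,x}^{\infty}}\lesssim 2^{j/2}\|\tilde{w}_n\|_{L^{2}}$ (from Bernstein and unitarity of $V(t)$) to reduce to proving, for each fixed $T<\infty$,
\[
\lim_{n\to\infty}\big\|(Q_{\sim j}V(t)\tilde{w}_n)(Q_{\lesssim j}v^{\ell'})\big\|_{L_x^{k}L_{[-T,T]}^{2}}=0.
\]
On this compact interval the integrand is dominated by $C\cdot 2^{j/2}|Q_{\lesssim j}v^{\ell'}|$, whose $L_x^{k}L_{[-T,T]}^{2}$ norm is bounded by $2^{j/2}\sqrt{T}\|v^{\ell'}\|_{L_x^{k}L_t^{\infty}}<\infty$. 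Pointwise in $(t,x)$,
\[
Q_{\sim j}V(t)\tilde{w}_n(x) = \langle \tilde{w}_n,\,V(-t)K_j(x-\cdot)\rangle_{L^{2}},
\]
where $K_j\in\mathcal{S}$ denotes the convolution kernel of $Q_{\sim j}$; since the test function $V(-t)K_j(x-\cdot)$ lies in $L^{2}$ and is fixed in $n$, the weak convergence $\tilde{w}_n\rightharpoonup 0$ in $L^{2}$ forces this pairing to $0$, and dominated convergence completes the proof. The only genuinely new ingredient over Lemma~\ref{lem:(k,2) decoupling 1} is the identification of the correct space-time translation so that \eqref{eq:vanishing of weak limit} becomes directly applicable to $V(t)w_n^J$; everything else is essentially routine.
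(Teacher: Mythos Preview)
Your argument is correct and mirrors the paper's proof: translate so that $v_n^{\ell'}$ becomes $v^{\ell'}$, truncate to a compact time interval via the $L_x^k L_t^\infty$ decay of $v^{\ell'}$ at $\pm\infty$, and apply dominated convergence with pointwise vanishing coming from \eqref{eq:vanishing of weak limit}. One small correction on the truncation step: the $L_{t,x}^\infty$ bound on $Q_{\sim j}V(t)\tilde w_n$ does not by itself control the tail (pairing it with $\|Q_{\lesssim j}v^{\ell'}\|_{L_x^k L_{|t|>T}^2}$ is useless since the latter is generally infinite); the right pairing is the local smoothing bound $\|Q_{\sim j}V(t)\tilde w_n\|_{L_x^\infty L_t^2}\lesssim 2^{-j/2}\|\tilde w_n\|_{L^2}$ against $\|Q_{\lesssim j}v^{\ell'}\|_{L_x^k L_{|t|>T}^\infty}$, which then vanishes by \eqref{eq:continuity assumption 2 for (k,infty)}.
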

\begin{proof}
We proceed similarly as in the proof of Lemma \ref{lem:(k,2) decoupling 1}.
We may assume that $j=0$. Using change of variables and $\lim_{T\to\infty}\|v^{\ell'}\|_{L_{x}^{k}L_{T+}^{\infty}}=0$,
it suffices to show that
\[
\lim_{n\to\infty}\|(Q_{\lesssim0}v^{\ell'})(Q_{\sim0}V(\cdot-t_{n}^{\ell'})w_{n}^{J}(\cdot+x_{n}^{\ell'}))\|_{L_{x}^{k}L_{T}^{2}}=0
\]
whenever $T<\infty$. We shall show this assertion by DCT. Observe
for any $t$ and $x$ that we have a pointwise bound
\[
\sup_{n\in\mathbb{N}}|Q_{\sim0}V(t-t_{n}^{\ell'})w_{n}^{J}(x+x_{n}^{\ell'})|\lesssim\sup_{n\in\mathbb{N}}\|w_{n}^{J}\|_{L_{x}^{2}}<\infty
\]
and hence 
\[
\Big\||Q_{\lesssim0}v^{\ell'}|\sup_{n\in\mathbb{N}}\|w_{n}^{J}\|_{L_{x}^{2}}\Big\|_{L_{x}^{k}L_{T}^{2}}\lesssim\sqrt{T}\Big(\sup_{n\in\mathbb{N}}\|w_{n}^{J}\|_{L_{x}^{2}}\Big)\|v^{j}\|_{L_{x}^{k}L_{t}^{\infty}}<\infty.
\]
This allows us to use DCT; we are now reduced to show that
\[
[Q_{\sim0}V(t-t_{n}^{\ell'})w_{n}^{J}](x+x_{n}^{\ell'})\to0
\]
for each fixed $t$ and $x$. Because $Q_{\sim0}$ has a Schwartz
convolution kernel and $t$ is fixed, it suffices to show that 
\[
[V(-t_{n}^{\ell'})w_{n}^{J}](\cdot+x_{n}^{\ell'})\rightharpoonup0
\]
weakly in $H^{\frac{1}{2}}$. This follows from Lemma \ref{lem:vanishing of weak limit for nonlinear solution}.
\end{proof}
\begin{proof}[Proof of Lemma \ref{lem:error estimation}]
We must show
\[
\lim_{J\to\infty}\limsup_{n\to\infty}\|F(\tilde{u}_{n}^{J})-\sum_{j=1}^{J}F(v_{n}^{j})\|_{\dot{\mathcal{N}}_{t}^{s_{k}}}=0.
\]
We decompose 
\begin{align*}
 & F(\tilde{u}_{n}^{J})-\sum_{j=1}^{J}F(v_{n}^{j})\\
 & =\Big[F(\tilde{u}_{n}^{J})-\big(F(u_{n}^{J})+F(V(t)w_{n}^{J})\big)\Big]+\Big[F(u_{n}^{J})-\sum_{j=1}^{J}F(v_{n}^{j})\Big]+F(V(t)w_{n}^{J})
\end{align*}
Observe first that $F(\tilde{u}_{n}^{J})-[F(u_{n}^{J})+F(V(t)w_{n}^{J})]$
is a linear combination of 
\[
\partial_{x}[(u_{n}^{J})^{\ell}(V(t)w_{n}^{J})^{k+1-\ell}]
\]
where $1\leq\ell\leq k$. We then observe that $F(u_{n}^{J})-\sum_{j=1}^{J}F(v_{n}^{j})$
is a linear combination of 
\[
\partial_{x}[(v_{n}^{j})^{\ell}(v_{n}^{j'})^{k+1-\ell}]
\]
where $1\leq\ell\leq k$ and $j\neq j'$ with $j,j'\leq J$. Therefore,
in order to show our lemma, it suffices to show the following: 
\begin{align}
\lim_{J\to\infty}\limsup_{n\to\infty}\|\partial_{x}[(u_{n}^{J})^{\ell}(V(t)w_{n}^{J})^{k+1-\ell}]\|_{\dot{\mathcal{N}}_{t}^{s_{k}}} & =0,\label{eq:error-1}\\
\lim_{n\to\infty}\|\partial_{x}[(v_{n}^{j})^{\ell}(v_{n}^{j'})^{k+1-\ell}]\|_{\dot{\mathcal{N}}_{t}^{s_{k}}} & =0,\label{eq:error-2}\\
\lim_{J\to\infty}\limsup_{n\to\infty}\|F(V(t)w_{n}^{J})\|_{\dot{\mathcal{N}}_{t}^{s_{k}}} & =0,\label{eq:error-3}
\end{align}
whenever $1\leq\ell\leq k$ (and $j\neq j'$ for \eqref{eq:error-2}).

For \eqref{eq:error-3}, combine \eqref{eq:asymp vanishing (k,infty)}
with the following estimate
\[
\eqref{eq:error-3}\lesssim\lim_{J\to\infty}\limsup_{n\to\infty}\|w_{n}^{J}\|_{\dot{H}^{s_{k}}}\|V(t)w_{n}^{J}\|_{L_{x}^{k}L_{t}^{\infty}}^{k}=0.
\]
It suffices to show the remaining estimates \eqref{eq:error-1} and
\eqref{eq:error-2}.

We now show \eqref{eq:error-1}. Fix $1\leq\ell\leq k$ and observe
using paraproduct decomposition that
\begin{align*}
 & \|\partial_{x}[(u_{n}^{J})^{\ell}(V(t)w_{n}^{J})^{k+1-\ell}]\|_{\dot{\mathcal{N}}_{t}^{s_{k}}}\\
 & \lesssim\eqref{eq:error-1-1}+\eqref{eq:error-1-2}+\eqref{eq:error-1-3}+\eqref{eq:error-1-4}+\eqref{eq:error-1-5}
\end{align*}
where 
\begin{gather}
\Big\|2^{j(s_{k}+\frac{1}{2})}\|(Q_{\sim j}V(t)w_{n}^{J})(Q_{\ll j}u_{n}^{J})^{\ell}(Q_{\ll j}V(t)w_{n}^{J})^{k-\ell}\|_{L_{x}^{1}L_{t}^{2}}\Big\|_{\ell_{j}^{2}}\label{eq:error-1-1}\\
\Big\|2^{j(s_{k}+\frac{1}{2})}\|(Q_{\sim j}u_{n}^{J})(Q_{\ll j}u_{n}^{J})^{\ell-1}(Q_{\ll j}V(t)w_{n}^{J})^{k+1-\ell}\|_{L_{x}^{1}L_{t}^{2}}\Big\|_{\ell_{j}^{2}}\label{eq:error-1-2}\\
\Big\|2^{j(s_{k}+\frac{1}{2})}\|\sum_{r\gtrsim j}(Q_{\sim r}V(t)w_{n}^{J})^{2}(Q_{\lesssim r}u_{n}^{J})^{\ell}(Q_{\lesssim r}V(t)w_{n}^{J})^{k-1-\ell}\|_{L_{x}^{1}L_{t}^{2}}\Big\|_{\ell_{j}^{2}}\label{eq:error-1-3}\\
\Big\|2^{j(s_{k}+\frac{1}{2})}\|\sum_{r\gtrsim j}(Q_{\sim r}V(t)w_{n}^{J})(Q_{\sim r}u_{n}^{J})(Q_{\lesssim r}u_{n}^{J})^{\ell-1}(Q_{\lesssim r}V(t)w_{n}^{J})^{k-\ell}\|_{L_{x}^{1}L_{t}^{2}}\Big\|_{\ell_{j}^{2}}\label{eq:error-1-4}\\
\Big\|2^{j(s_{k}+\frac{1}{2})}\|\sum_{r\gtrsim j}(Q_{\sim r}u_{n}^{J})^{2}(Q_{\lesssim r}u_{n}^{J})^{\ell-2}(Q_{\lesssim r}V(t)w_{n}^{J})^{k+1-\ell}\|_{L_{x}^{1}L_{t}^{2}}\Big\|_{\ell_{j}^{2}}.\label{eq:error-1-5}
\end{gather}
Of course, the terms \eqref{eq:error-1-3} and \eqref{eq:error-1-5}
should be ignored if $\ell=k$ and $\ell=1$, respectively. In what
follows, we separately estimate each term.

For the terms \eqref{eq:error-1-2}-\eqref{eq:error-1-5}, we again
use asymptotic vanishing of the remainder \eqref{eq:asymp vanishing (k,infty)}.
We apply Lemma \ref{lem:good term another bound} to obtain 
\begin{align*}
\eqref{eq:error-1-2} & \lesssim\|u_{n}^{J}\|_{\dot{X}_{t}^{s_{k}}}\|u_{n}^{J}\|_{L_{x}^{k}L_{t}^{\infty}}^{\ell-1}\|V(t)w_{n}^{J}\|_{L_{x}^{k}L_{t}^{\infty}}^{k+1-\ell},\\
\eqref{eq:error-1-3} & \lesssim\|w_{n}^{J}\|_{\dot{H}^{s_{k}}}\|u_{n}^{J}\|_{L_{x}^{k}L_{t}^{\infty}}^{\ell}\|V(t)w_{n}^{J}\|_{L_{x}^{k}L_{t}^{\infty}}^{k-\ell},\\
\eqref{eq:error-1-4} & \lesssim\|u_{n}^{J}\|_{\dot{X}_{t}^{s_{k}}}\|u_{n}^{J}\|_{L_{x}^{k}L_{t}^{\infty}}^{\ell-1}\|V(t)w_{n}^{J}\|_{L_{x}^{k}L_{t}^{\infty}}^{k+1-\ell},\\
\eqref{eq:error-1-5} & \lesssim\|u_{n}^{J}\|_{\dot{X}_{t}^{s_{k}}}\|u_{n}^{J}\|_{L_{x}^{k}L_{t}^{\infty}}^{\ell-1}\|V(t)w_{n}^{J}\|_{L_{x}^{k}L_{t}^{\infty}}^{k+1-\ell}.
\end{align*}
Recall that we assume $1\leq\ell\leq k-1$ for \eqref{eq:error-1-3}.
We then apply \eqref{eq:asymp vanishing (k,infty)} to conclude.

The term \eqref{eq:error-1-1} is trickier. When $\ell\neq k$, we
can proceed as before and use asymptotic vanishing of the remainder
\eqref{eq:asymp vanishing (k,infty)}. However, in case of $\ell=k$,
Lemma \ref{lem:good term another bound} only yields the bound
\[
\|u_{n}^{J}\|_{L_{x}^{k}L_{t}^{\infty}}^{k}\|V(t)w_{n}^{J}\|_{\dot{X}_{t}^{s_{k}}},
\]
where we do not have asymptotic vanishing of $\|V(t)w_{n}^{J}\|_{\dot{X}_{t}^{s_{k}}}$
(c.f. Remark \ref{rem:no asym vanish of X}). To resolve this difficulty,
by the linear estimate, we use
\begin{align*}
\eqref{eq:error-1-1} & \lesssim\|w_{n}^{J}\|_{\dot{H}^{s_{k}}}^{k-\ell}\|u_{n}^{J}\|_{L_{x}^{k}L_{t}^{\infty}}^{\ell-1}\Big\|2^{j(s_{k}+\frac{1}{2})}\|(Q_{\sim j}V(t)w_{n}^{J})(Q_{\ll j}u_{n}^{J})\|_{L_{x}^{k}L_{t}^{2}}\Big\|_{\ell_{j}^{2}}
\end{align*}
instead. Hence, it suffices to show for each $J\in\mathbb{N}$ that
\[
\lim_{n\to\infty}\Big\|2^{j(s_{k}+\frac{1}{2})}\|(Q_{\sim j}V(t)w_{n}^{J})(Q_{\ll j}u_{n}^{J})\|_{L_{x}^{k}L_{t}^{2}}\Big\|_{\ell_{j}^{2}}=0.
\]
By DCT in $j\in\mathbb{Z}$ and definition of $u_{n}^{J}$, it suffices
to show that 
\[
\lim_{n\to\infty}\|(Q_{\sim j}V(t)w_{n}^{J})(Q_{\ll j}v_{n}^{\ell'})\|_{L_{x}^{k}L_{t}^{2}}=0
\]
for each $j\in\mathbb{Z}$ and $1\leq\ell'\leq J$. This follows from
Lemma \ref{lem:(k,2) decoupling 2}. This completes the proof of \eqref{eq:error-1}.

We now show \eqref{eq:error-2}. Using paraproduct decomposition and
symmetric arguments in $j$ and $j'$, it suffices to show that 
\[
\lim_{n\to\infty}\eqref{eq:error-2-1}+\eqref{eq:error-2-2}+\eqref{eq:error-2-3}=0
\]
for all $\ell=1,2,\dots,k$, where 
\begin{align}
 & \Big\|2^{m(s_{k}+\frac{1}{2})}\|(Q_{\sim m}v_{n}^{j'})(Q_{\ll m}v_{n}^{j})^{\ell}(Q_{\ll m}v_{n}^{j'})^{k-\ell}\|_{L_{x}^{1}L_{t}^{2}}\Big\|_{\ell_{m}^{2}}\label{eq:error-2-1}\\
 & \Big\|2^{m(s_{k}+\frac{1}{2})}\|\sum_{r\gtrsim m}(Q_{\sim r}v_{n}^{j'})^{2}(Q_{\lesssim r}v_{n}^{j})^{\ell}(Q_{\lesssim r}v_{n}^{j'})^{k-1-\ell}\|_{L_{x}^{1}L_{t}^{2}}\Big\|_{\ell_{m}^{2}}\label{eq:error-2-2}\\
 & \Big\|2^{m(s_{k}+\frac{1}{2})}\|\sum_{r\gtrsim m}(Q_{\sim r}v_{n}^{j})(Q_{\sim r}v_{n}^{j'})(Q_{\lesssim r}v_{n}^{j})^{\ell-1}(Q_{\lesssim r}v_{n}^{j'})^{k-\ell}\|_{L_{x}^{1}L_{t}^{2}}\Big\|_{\ell_{m}^{2}}\label{eq:error-2-3}
\end{align}
Of course, one should ignore \eqref{eq:error-2-2} if $k=\ell$.

For \eqref{eq:error-2-1}, by H\"older's inequality and Lebesgue's
DCT in $m\in\mathbb{Z}$, it suffices to show that
\[
\lim_{n\to\infty}\|(Q_{\sim m}v_{n}^{j'})(Q_{\ll m}v_{n}^{j})\|_{L_{x}^{k}L_{t}^{2}}=0
\]
for each $m\in\mathbb{Z}$. This follows from Lemma \ref{lem:(k,2) decoupling 1}.

For \eqref{eq:error-2-2}, recall that $1\leq\ell\leq k-1$. By Lemma
\ref{lem:good term another bound}, it suffices to show that
\[
\lim_{n\to\infty}\Big\|2^{m(s_{k}+\frac{1}{2})}\|(Q_{\sim m}v_{n}^{j'})(Q_{\lesssim m}v_{n}^{j})\|_{L_{x}^{k}L_{t}^{2}}\Big\|_{\ell_{m}^{2}}=0.
\]
By Lebesgue's DCT in $m\in\mathbb{Z}$, it suffices to show that 
\[
\lim_{n\to\infty}\|(Q_{\sim m}v_{n}^{j})(Q_{\lesssim m}v_{n}^{j'})\|_{L_{x}^{k}L_{t}^{2}}=0
\]
for each $m\in\mathbb{Z}$. This follows from Lemma \ref{lem:(k,2) decoupling 1}.
For \eqref{eq:error-2-3}, one can argue similarly. This completes
the proof of \eqref{eq:error-2}.
\end{proof}

\section{\label{sec:precllusion of a.p. sol}Preclusion of Minimal Blowup
Solutions}

We now fix a critical element $u$ found in Theorem \ref{thm:existence of a.p. solutions}.
In this section, we assert that such $u$ cannot exist. This would
yield a contradiction, so we conclude Theorem \ref{thm:scattering}.

In order to preclude critical elements, we use the monotonicity formula
(Proposition \ref{prop:monotonicity formula}). Here, since $u\in C_{t}H_{x}^{\frac{1}{2}}$,
we are forced to truncate the weight $(x-y)$ in the formula. This
truncation was exploited by Dodson \cite{Dodson2017AnnPDE} in the
defocusing \eqref{eq:gKdV} and in many earlier works, for instance,
\cite{OgawaTsutsumi1991JDE}. As we work with the Hilbert transform,
derivative operators, and their commutators, we are involved in several
technical difficulties and tons of error terms.

In Section \ref{subsec:localized interaction functional}, we discuss
how we localize the monotonicity formula and use it to prove Theorem
\ref{thm:scattering}. Estimates for the error terms appeared in Section
\ref{subsec:localized interaction functional} will be postponed to
Section \ref{subsec:various error estimates}.

\subsection{\label{subsec:localized interaction functional}Localized Interaction
Functional}

Set large parameters $R\gg1$ and $R_{1}\ll R$ (e.g. $R_{1}\coloneqq R^{1-\epsilon}$
for some $0<\epsilon\ll1$). Define a smooth even cutoff function
$\chi_{R}$ on $\mathbb{R}$ satisfying $0\leq\chi\leq1$, $\chi(x)=0$
if $|x|\geq R+R_{1}$, and $\chi(x)=1$ if $|x|\leq R$. We can further
assume that $\|\partial_{x}^{j}\chi_{R}\|_{L^{\infty}}\lesssim_{j}R_{1}^{-j}$
and the support of $\partial_{x}^{j}\chi_{R}$ has measure $\leq2R_{1}$
whenever $j\geq1$. Define $\Phi(x)\coloneqq\int_{0}^{x}(\chi_{R}^{2}\ast\chi_{R}^{2})(s)\frac{ds}{R}$.
Then $\Phi$ is an odd function such that $\Phi(x)\approx x$ for
$|x|\lesssim R$ and $|\Phi(x)|\approx R$ for $|x|\gg R$. Moreover,
the convolution in $\Phi$ is well-suited for the monotonicity formula
of interaction form. Note that 
\[
\frac{1}{R}(\chi_{R}^{2}\ast\chi_{R}^{2})(y-x)=\int_{\mathbb{R}}\chi_{R}^{2}(y-s)\chi_{R}^{2}(x-s)\frac{ds}{R}.
\]
Similarly to the work of Dodson \cite{Dodson2017AnnPDE}, we define
the \emph{localized interaction functional} by
\begin{equation}
M(t)\coloneqq\int_{\mathbb{R}\times\mathbb{R}}\Phi(y-x)\rho(t,x)e(t,y)dxdy.\label{eq:localized functional}
\end{equation}

If $u$ lies merely in $C_{t}H_{x}^{\frac{1}{2}}$, the integral \eqref{eq:localized functional}
should not be interpreted in absolute sense as we now explain. As
in other defocusing equations, one may use a crude estimate
\[
|M(t)|\lesssim\int_{\mathbb{R}\times\mathbb{R}}R|\rho(t,x)||e(t,y)|dxdy\lesssim R\|u\|_{L_{t}^{\infty}L_{x}^{2}}\int_{\mathbb{R}}|e(t,y)|dy.
\]
If $e(t,y)$ were pointwise nonnegative, then $\int_{\mathbb{R}}|e(t,y)|dy=E(u)$
so $M(t)$ can be well-defined for $u\in C_{t}H_{x}^{\frac{1}{2}}$.
In our case, however, 
\[
e(t,y)=[\frac{1}{2}u\mathcal{H}u_{x}+\frac{1}{k+2}u^{k+2}](t,y)
\]
is not a pointwise positive function, c.f. we only have $E(u)=\int_{\mathbb{R}}e(t,y)dy>0.$
The quantity $\int_{\mathbb{R}}|e(t,y)|dy$ does not seem to be estimated
by $\|u\|_{L_{t}^{\infty}H^{\frac{1}{2}}}$. The following lemma says
how we can interpret \eqref{eq:localized functional} appropriately.
\begin{lem}[$H^{\frac{1}{2}}$ bound for localization]
\label{lem:H(1/2) bound for localization}Let $f\in H^{1}$ and $g\in C^{\infty}$
with $\|g\|_{W^{1,\infty}}\coloneqq\|g\|_{L^{\infty}}+\|\partial_{x}g\|_{L^{\infty}}\lesssim H$.
Then,
\[
\Big|\int gf\mathcal{H}f_{x}\Big|+\Big|\int gf^{k+2}\Big|\lesssim H(\|f\|_{H^{\frac{1}{2}}}^{2}+\|f\|_{H^{\frac{1}{2}}}^{k+2}).
\]
In particular, if $u\in C_{t}H_{x}^{\frac{1}{2}}$, then 
\begin{equation}
\sup_{t\in\mathbb{R}}|M(t)|\lesssim_{A(u)}R.\label{eq:upper bound monotonicity}
\end{equation}
\end{lem}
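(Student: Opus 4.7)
The plan splits into two scalar estimates, with the quadratic $\mathcal{H}$-term as the essential difficulty, and then a Fubini-type promotion to the bound on $M(t)$ in which $g$ plays the role of a translate of $\Phi$. For the quadratic term, I would use the identity $\mathcal{H}\partial_x = |D|$ (where $\widehat{|D|f}(\xi)=|\xi|\hat f(\xi)$) together with the self-adjointness of $|D|^{1/2}$ to write
\[
\int g\,f\,\mathcal{H}f_x\,dx \;=\; \langle gf,\,|D|f\rangle_{L^{2}} \;=\; \langle |D|^{1/2}(gf),\,|D|^{1/2}f\rangle_{L^{2}},
\]
which reduces the bound to the Leibniz-type estimate $\||D|^{1/2}(gf)\|_{L^{2}} \lesssim H\|f\|_{H^{1/2}}$.

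I would prove this last estimate via the Gagliardo seminorm $\||D|^{1/2}h\|_{L^{2}}^{2} \sim \iint |h(x)-h(y)|^{2}|x-y|^{-2}\,dx\,dy$ combined with the algebraic splitting $(gf)(x)-(gf)(y) = g(x)(f(x)-f(y)) + f(y)(g(x)-g(y))$. The first piece contributes at most $\|g\|_{L^{\infty}}^{2}\||D|^{1/2}f\|_{L^{2}}^{2}$. For the second piece, the inner integral $\int \frac{|g(x)-g(y)|^{2}}{|x-y|^{2}}\,dx$ is controlled, for each fixed $y$, by splitting the domain at the scale $L:=\|g\|_{L^{\infty}}/\|\partial_x g\|_{L^{\infty}}$: the Lipschitz bound handles $|x-y|\leq L$ and the $L^{\infty}$ bound handles $|x-y|>L$, yielding $\lesssim \|g\|_{L^{\infty}}\|\partial_x g\|_{L^{\infty}}\lesssim H^{2}$. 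Integrating against $|f(y)|^{2}$ then produces the remainder $H^{2}\|f\|_{L^{2}}^{2}$, so $\||D|^{1/2}(gf)\|_{L^{2}}\lesssim H\|f\|_{H^{1/2}}$ as required. The potential term $|\int g f^{k+2}|$ is immediate from H\"older's inequality and the one-dimensional Sobolev embedding $H^{1/2}(\mathbb{R})\hookrightarrow L^{k+2}(\mathbb{R})$, which gives $|\int g f^{k+2}|\leq \|g\|_{L^{\infty}}\|f\|_{L^{k+2}}^{k+2}\lesssim H\|f\|_{H^{1/2}}^{k+2}$.

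Given these two bounds, I would deduce \eqref{eq:upper bound monotonicity} as follows. For each fixed $t\in\mathbb{R}$ and $x\in\mathbb{R}$, set $g_{x}(y):=\Phi(y-x)$; by construction of $\Phi$ one has $\|g_{x}\|_{L^{\infty}}\lesssim R$ and $\|\partial_{y}g_{x}\|_{L^{\infty}}=\|\Phi'\|_{L^{\infty}}\lesssim 1$, hence $\|g_{x}\|_{W^{1,\infty}}\lesssim R$ uniformly in $x$. Since $k$ is even, $\int u^{k+2}\geq 0$, and energy conservation yields $\|u(t)\|_{H^{1/2}}^{2}\leq M(u)+2E(u)=2A(u)$. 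Applying the scalar inequality with $g=g_{x}$ and $f=u(t)$ therefore gives the uniform bound $|\int e(t,y)g_{x}(y)\,dy|\lesssim_{A(u)} R$. Integrating against $u^{2}(t,x)$ and using mass conservation,
\[
|M(t)| \;\leq\; \int u^{2}(t,x)\,\Bigl|\int e(t,y)\,g_{x}(y)\,dy\Bigr|\,dx \;\lesssim_{A(u)}\; R\cdot M(u)\;\lesssim_{A(u)}\;R,
\]
which is \eqref{eq:upper bound monotonicity}. The one genuinely nontrivial step is the fractional Leibniz bound for $|D|^{1/2}(gf)$, since $g$ is only $W^{1,\infty}$ (so $|D|^{1/2}g$ need not be in $L^{\infty}$ and the standard Kato--Ponce estimate is unavailable directly); everything after that step is bookkeeping.
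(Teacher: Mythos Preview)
Your proof is correct and follows the same overall architecture as the paper's: the potential term is dispatched by H\"older and Sobolev embedding, the $\mathcal{H}$-term is rewritten as $\langle D_x^{1/2}(gf),D_x^{1/2}f\rangle$ and reduced to the multiplier estimate $\|gf\|_{\dot H^{1/2}}\lesssim H\|f\|_{H^{1/2}}$, and the bound on $M(t)$ is obtained by freezing $x$, applying the scalar estimate with $g_x=\Phi(\cdot-x)$, and integrating against $\rho$.

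The only substantive difference is how the multiplier estimate is proved. The paper obtains it in one line by Stein--Weiss (complex) interpolation between the obvious bounds $\|gf\|_{L^2}\leq\|g\|_{L^\infty}\|f\|_{L^2}$ and $\|gf\|_{H^1}\lesssim\|g\|_{W^{1,\infty}}\|f\|_{H^1}$. You instead compute directly with the Gagliardo seminorm, splitting $(gf)(x)-(gf)(y)$ and handling the $g$-difference piece by a scale decomposition at $L=\|g\|_{L^\infty}/\|\partial_x g\|_{L^\infty}$. Your route is more hands-on but entirely elementary and avoids any black-box interpolation; the paper's route is shorter but relies on the interpolation machinery. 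Both yield the same bound with the same dependence on $H$. Two cosmetic points: the edge case $\partial_x g\equiv 0$ should be noted separately (then the $g$-difference piece vanishes), and the identity $M(u)+2E(u)=2A(u)$ is off by $M(u)$, though the conclusion $\|u(t)\|_{H^{1/2}}^2\lesssim A(u)$ is of course unaffected.
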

\begin{proof}
By H\"older's inequality and Sobolev embedding $H^{\frac{1}{2}}\hookrightarrow L^{k+2}$,
we have
\[
\Big|\int gf^{k+2}\Big|\lesssim H\|f\|_{H^{\frac{1}{2}}}^{k+2}.
\]
In order to estimate $\int gf\mathcal{H}f_{x}$, it suffices to establish
the inequality
\[
\|gf\|_{H^{\frac{1}{2}}}\lesssim H\|f\|_{H^{\frac{1}{2}}}
\]
because 
\[
\Big|\int gf\mathcal{H}f_{x}\Big|=\Big|\int D_{x}^{\frac{1}{2}}(gf)D_{x}^{\frac{1}{2}}f\Big|\lesssim\|gf\|_{H^{\frac{1}{2}}}\|f\|_{H^{\frac{1}{2}}}.
\]
Applying the Stein-Weiss interpolation between
\[
\|gf\|_{L^{2}}\lesssim\|g\|_{L^{\infty}}\|f\|_{L^{2}}\lesssim H\|f\|_{L^{2}}
\]
and 
\[
\|gf\|_{H^{1}}\lesssim(\|g\|_{L^{\infty}}+\|\partial_{x}g\|_{L^{\infty}})\|f\|_{L^{2}}+\|g\|_{L^{\infty}}\|\partial_{x}f\|_{L^{2}}\lesssim H\|f\|_{H^{1}},
\]
we get $\|gf\|_{H^{\frac{1}{2}}}\lesssim H\|f\|_{H^{\frac{1}{2}}}$.

To show the remaining assertion, pretend $u(t)\in H^{1}$ for each
$t\in\mathbb{R}$. For any fixed $x$, note that the function $y\mapsto\Phi(y-x)$
has $W^{1,\infty}$ norm $\lesssim R$. We then see that
\[
\sup_{t\in\mathbb{R}}|M(t)|\leq\sup_{t\in\mathbb{R}}\|\rho(t,x)\|_{L_{x}^{1}}\|\int_{y}\Phi(y-x)e(t,y)\|_{L_{x}^{\infty}}\lesssim_{A(u)}R.
\]
The argument allows us to use density argument for $u(t)\in H^{\frac{1}{2}}$,
so the assertion is true for $u\in C_{t}H_{x}^{\frac{1}{2}}$.
\end{proof}
In the following, a number of error terms appears and computations
are involved, so we abbreviate the notations to present as neatly
as possible. We write $\chi_{R,s}\coloneqq\chi_{R}(\cdot-s)$. For
operators $A$ and $B$, we let $[A,B]=AB-BA$ be the commutator.
We then abbreviate various integrals as follows.
\begin{gather*}
\int_{x}\coloneqq\int_{\mathbb{R}}dx,\quad\int_{y}\coloneqq\int_{\mathbb{R}}dy,\quad\int_{z}\coloneqq\int_{\mathbb{R}}dz,\\
\int_{s}\coloneqq\int_{\mathbb{R}}\frac{ds}{R},\quad\int_{r}\coloneqq\int_{0}^{1}dr.
\end{gather*}
Notice that we integrate over $\frac{ds}{R}$ in $s$-variable, and
only on $[0,1]$ for $r$-variable. Finally, we use $o_{A(u),R}(1)$
as $R\to\infty$ for a function decaying to zero. If we use $R_{1}=R^{1-\epsilon}$,
then one can observe $o_{A(u),R}(1)\lesssim C_{A(u)}R^{-\epsilon'}$
for some $\epsilon'>0$.

As a consequence of the monotonicity formula, we state the main proposition
of this section.
\begin{prop}
\label{prop:localized interaction functional}Let $u\in C_{t}H_{x}^{\frac{1}{2}}$
be an almost periodic global solution to \eqref{eq:gBO}. If $R$
is sufficiently large, then we have a lower bound
\begin{equation}
|M(t_{1})-M(t_{2})|\gtrsim_{u}|t_{1}-t_{2}|\label{eq:lower bound monotonicity}
\end{equation}
for any $t_{1},t_{2}\in\mathbb{R}$.

From a contradiction of \eqref{eq:upper bound monotonicity} and \eqref{eq:lower bound monotonicity}
for the critical element constructed in Theorem \ref{thm:existence of a.p. solutions},
we deduce that Theorem \ref{thm:scattering} holds.
\end{prop}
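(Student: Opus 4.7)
The plan is to prove $\partial_t M(t)\gtrsim_u 1$ uniformly in $t$ once $R$ is taken large, integrate to obtain \eqref{eq:lower bound monotonicity}, and then pair this with the upper bound from Lemma \ref{lem:H(1/2) bound for localization} to derive a contradiction for the critical element of Theorem \ref{thm:existence of a.p. solutions}. Differentiating $M(t)$ in $t$, substituting \eqref{eq:gBO} for $\partial_t u$ inside $\partial_t \rho$ and $\partial_t e$, and integrating by parts in $x$ and $y$, I would exploit the factorization $\Phi'(y-x)=\int_s\chi_{R,s}^2(x)\chi_{R,s}^2(y)$ to rewrite $\partial_t M(t)$ in a form where every derivative that has landed on $\Phi$ produces a symmetric $s$-integral in which both $x$ and $y$ are cut off by $\chi_{R,s}$. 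Organizing the computation in the spirit of the interaction identity \eqref{eq:interaction} but with the Hilbert transforms included, one obtains
\[
\partial_t M(t)=\int_s\mathcal{Q}_s(t)+\mathcal{E}_R(t),
\]
where $\mathcal{Q}_s(t)$ is precisely the interaction-form monotonicity quantity $M(v)\!\int k[v]-\bigl(\int j[v]\bigr)E(v)$ from Proposition \ref{prop:monotonicity formula} applied to the localized profile $v=\chi_{R,s} u(t)$, and $\mathcal{E}_R(t)$ collects all errors coming from derivatives of $\chi_{R,s}$ and Hilbert-transform commutators $[\mathcal{H},\chi_{R,s}^2]$ arising from $\mathcal{H}\partial_{xx}u$ and from the non-local density $\tfrac12 u\mathcal{H} u_x$ inside $e$.

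For the main term, the proof of Proposition \ref{prop:monotonicity formula}, whose positivity-of-$s$ step is already formulated in Lemma \ref{lem:positivity of s} for a general cutoff $\chi\in C_c^\infty$ for exactly this purpose, yields
\[
\mathcal{Q}_s(t)\geq \tfrac{k^2}{2(k+2)^2}\,M(\chi_{R,s} u(t))^2\Big(\int(\chi_{R,s} u(t))^{k+2}\Big)^{2}-C(A(u))\|\partial_{xx}(\chi_{R,s}^2)\|_{L^\infty}.
\]
Since $\|\partial_{xx}(\chi_{R,s}^2)\|_{L^\infty}\lesssim R_1^{-2}$, the last term contributes $o_R(1)$ after averaging in $s$. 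Almost periodicity then supplies the lower bound: since the critical element is nontrivial and $\{u(t,\cdot+x(t))\}_{t\in\mathbb{R}}$ is precompact in $H^{\frac12}$, Proposition \ref{prop:arzela-ascoli for a.p. solutions} yields $R_0=R_0(u)$ and $c(u)>0$ such that $\int_{|x-x(t)|\leq R_0}u(t,x)^{2}dx\geq c(u)$ and $\int_{|x-x(t)|\leq R_0}u(t,x)^{k+2}dx\geq c(u)$ uniformly in $t$. For $R\gg R_0$ and $s$ in the set $\{|s-x(t)|\leq R/2\}$ we have $\chi_{R,s}\equiv 1$ on $[x(t)-R_0,x(t)+R_0]$, whence $\int_s\mathcal{Q}_s(t)\gtrsim_u 1$ uniformly in $t$.

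The remaining and hardest step is to verify $\mathcal{E}_R(t)=o_{A(u),R}(1)$ uniformly in $t$, which I would defer to Subsection \ref{subsec:various error estimates}. The main obstacle is the Hilbert-transform commutators $[\mathcal{H},\chi_{R,s}^2]$, for which no pointwise Leibniz rule is available. The strategy is to exploit $\|\partial_x\chi_{R,s}\|_{L^\infty}\lesssim R_1^{-1}$ on a support of measure $\lesssim R_1$, combine with Calder\'on--type commutator bounds, and throughout invoke the $H^{\frac12}$ localization estimate of Lemma \ref{lem:H(1/2) bound for localization} to make sense of each error integral in energy-space terms. With the choice $R_1=R^{1-\epsilon}$, each commutator or derivative-of-$\chi$ contribution produces a negative power of $R$, so the total error is $o_R(1)$.

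Combining the three parts, $\partial_t M(t)\geq\tfrac12 c(u)$ for $R=R(u)$ sufficiently large, hence $|M(t_1)-M(t_2)|\gtrsim_u|t_1-t_2|$ for all $t_1,t_2\in\mathbb{R}$, proving \eqref{eq:lower bound monotonicity}. Paired with $\sup_t|M(t)|\lesssim_{A(u)} R$ from Lemma \ref{lem:H(1/2) bound for localization}, letting $|t_1-t_2|\to\infty$ forces a contradiction, so the almost-periodic critical element cannot exist and Theorem \ref{thm:scattering} follows.
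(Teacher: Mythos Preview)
Your proposal is correct and mirrors the paper's argument: compute $\partial_t M(t)$, use $\Phi'(y-x)=\int_s\chi_{R,s}^2(x)\chi_{R,s}^2(y)$ to extract the interaction quantity of Proposition~\ref{prop:monotonicity formula} for the localized profile $\chi_{R,s}u$, push all commutator and cutoff-derivative terms into $o_{A(u),R}(1)$ errors (handled in Section~\ref{subsec:various error estimates}, where the paper uses explicit kernel formulae and Schur's test rather than abstract Calder\'on bounds), and finish via almost periodicity and \eqref{eq:upper bound monotonicity}. One point the paper makes explicit that you should too: the differential computation is carried out for smooth solutions, the integrated inequality \eqref{eq:final result} is then passed to $H^{1/2}$ solutions by density, and only afterward is almost periodicity of the given $u$ invoked---since one cannot approximate an almost periodic solution by smooth almost periodic ones.
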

\begin{proof}
We rely on density argument and assume $u(t,x)$ lies in $C_{t,loc}^{\infty}H_{x}^{\infty}$.
We assume that $u$ is almost periodic modulo spatial translations,
but we are not sure that $u$ can be approximated by smooth almost
periodic solutions $u_{n}$. However, we shall use density argument
to \eqref{eq:final result}, which is true for arbitrary smooth solutions,
and hence, for $H^{\frac{1}{2}}$ solutions. We then apply almost
periodicity of $u$ to deduce \eqref{eq:lower bound monotonicity}.
From now on, we assume that $u$ is a global smooth solution.

In light of fundamental theorem of calculus, we shall calculate $\partial_{t}M(t)$.
When we calculate $\partial_{t}M(t)$, as $t$ is fixed, we drop the
variable $t$ for convenience. We start with
\[
\partial_{t}M(t)=\eqref{eq:deriv on energy}+\eqref{eq:deriv on mass}.
\]
where 
\begin{align}
 & \int_{x,y}\Phi(y-x)\rho(x)\partial_{t}e(y),\label{eq:deriv on energy}\\
 & \int_{x,y}\Phi(y-x)e(y)\partial_{t}\rho(x).\label{eq:deriv on mass}
\end{align}

We first estimate \eqref{eq:deriv on energy}. Note that 
\[
\eqref{eq:deriv on energy}=\int_{x,y}\Phi(y-x)\rho(x)[\frac{1}{2}u_{t}\mathcal{H}\partial_{y}u+\frac{1}{2}u\mathcal{H}\partial_{y}u_{t}+u^{k+1}u_{t}](y).
\]
We rearrange the terms by their degree, namely, $2$, $k+2$, and
$2k+2$ in $u(y)$. More precisely, we decompose 
\[
\eqref{eq:deriv on energy}=\eqref{eq:energy derivative order 2}+\eqref{eq:energy derivative order k+2}+\eqref{eq:energy derivative order 2k+2}
\]
where 
\begin{gather}
\frac{1}{2}\int_{x,y}\Phi(y-x)\rho(x)[-(\mathcal{H}\partial_{yy}u)(\mathcal{H}\partial_{y}u)+u\partial_{yyy}u](y),\label{eq:energy derivative order 2}\\
\int_{x,y}\Phi(y-x)\rho(x)[-\frac{1}{2}\partial_{y}(u^{k+1})(\mathcal{H}\partial_{y}u)-\frac{1}{2}u\mathcal{H}\partial_{yy}(u^{k+1})-u^{k+1}\mathcal{H}\partial_{yy}u](y),\label{eq:energy derivative order k+2}\\
\int_{x,y}\Phi(y-x)\rho(x)[-u^{k+1}\partial_{y}(u^{k+1})](y).\label{eq:energy derivative order 2k+2}
\end{gather}
For the term, \eqref{eq:energy derivative order 2}, we use integration
by parts in $y$-variable to have
\begin{align*}
\eqref{eq:energy derivative order 2} & =\int_{s}\int_{x}\rho[\chi_{R,s}u](x)\int_{y}[\frac{1}{4}(\chi_{R,s}\mathcal{H}u_{y})^{2}+\frac{3}{4}(\chi_{R,s}u_{y})^{2}-\frac{1}{2}u^{2}\partial_{yy}(\chi_{R,s}^{2})](y)\\
 & =\int_{s}\int_{x}\rho[\chi_{R,s}u](x)\int_{y}[\frac{1}{4}(\chi_{R,s}\mathcal{H}u_{y})^{2}+\frac{3}{4}(\chi_{R,s}u_{y})^{2}](y)-\frac{1}{2}\int_{x,y}\rho(x)E_{1}(y),
\end{align*}
where the error term $E_{1}$ is defined by
\begin{equation}
E_{1}(y)\coloneqq\Phi'''(y-x)u^{2}(y).\label{eq:energy derivative error 1}
\end{equation}
Similarly, we observe that 
\begin{align*}
\eqref{eq:energy derivative order 2k+2} & =\frac{1}{2}\int_{s}\int_{x}\rho[\chi_{R,s}u](x)\int_{y}\chi_{R,s}^{2}u^{2k+2}(y)\\
 & \geq\int_{s}\int_{x}\rho[\chi_{R,s}u](x)\int_{y}\frac{1}{2}(\chi_{R,s}u)^{2k+2}(y).
\end{align*}
We now consider \eqref{eq:energy derivative order k+2}. Let us temporarily
write $\Phi(y-x)$ and $u^{k+1}(y)$ by $Q(y)$ and $v(y)$, respectively.
Integration by parts in $y$-variable and self-adjointness of $\mathcal{H}\partial_{y}$
yield
\begin{align*}
\eqref{eq:energy derivative order k+2} & =\int_{x,y}Q(y)\rho(x)[-\frac{1}{2}v_{y}\mathcal{H}\partial_{y}u-\frac{1}{2}u\mathcal{H}\partial_{y}v_{y}-v\mathcal{H}\partial_{yy}u](y)\\
 & =\int_{x,y}Q'(y)\rho(x)[v\mathcal{H}\partial_{y}u](y)+\frac{1}{2}\int_{x,y}Q(y)\rho(x)[v_{y}\mathcal{H}\partial_{y}u-u\mathcal{H}\partial_{y}v_{y}](y)\\
 & =\int_{x,y}Q'(y)\rho(x)[v\mathcal{H}\partial_{y}u](y)+\frac{1}{2}\int_{x,y}\rho(x)\big(u[\mathcal{H}\partial_{y},Q]v_{y}\big)(y).
\end{align*}
We then use 
\begin{align*}
[\mathcal{H}\partial_{y},Q]v_{y} & =\mathcal{H}(Q'v_{y})+[\mathcal{H},Q]\partial_{yy}v\\
 & =\mathcal{H}\partial_{y}(Q'v)-\mathcal{H}(Q''v)+[\mathcal{H},Q]\partial_{yy}v
\end{align*}
and self-adjointness of $\mathcal{H}\partial_{y}$ again to obtain
\[
\eqref{eq:energy derivative order k+2}=\frac{3}{2}\int_{x,y}\Phi'(y-x)\rho(x)[v\mathcal{H}\partial_{y}u](y)+\frac{1}{2}\int_{x,y}\rho(x)[E_{2}+E_{3}](y)
\]
where error terms $E_{2}$ and $E_{3}$ are defined by
\begin{align}
E_{2} & \coloneqq u[\mathcal{H},\Phi(\cdot-x)]v_{yy},\label{eq:energy derivative error 2}\\
E_{3} & \coloneqq\Phi''(\cdot-x)u^{k+1}\mathcal{H}u.\label{eq:energy derivative error 3}
\end{align}
Therefore, collecting altogether,
\begin{align}
 & \int_{x,y}\Phi(y-x)\rho(x)\partial_{t}e(y)\nonumber \\
 & =\int_{s}\int_{x}\rho[\chi_{R,s}u](x)\int_{y}\chi_{R,s}^{2}(y)[\frac{1}{4}(\mathcal{H}u_{y})^{2}+\frac{3}{4}u_{y}^{2}+\frac{3}{2}u^{k+1}\mathcal{H}u_{y}+\frac{1}{2}u^{2k+2}](y)\label{eq:unrefined main term for energy derivative}\\
 & \quad+\frac{1}{2}\int_{x,y}\rho(x)[-E_{1}+E_{2}+E_{3}](y)\label{eq:energy derivative error}
\end{align}
where the error terms $E_{1},E_{2},E_{3}$ are defined in \eqref{eq:energy derivative error 1},
\eqref{eq:energy derivative error 2}, and \eqref{eq:energy derivative error 3},
respectively. In the next subsection (Lemmas \ref{lem:estimate of main term of energy derivative}
and \ref{lem:estimate of energy derivative error}), we prove that
\begin{align*}
\eqref{eq:unrefined main term for energy derivative} & \geq\int_{s}\int_{x}\rho[\chi_{R,s}u](x)\int_{y}k[\chi_{R,s}u](y)-o_{A(u),R}(1),\\
\eqref{eq:energy derivative error} & =o_{A(u),R}(1).
\end{align*}
This takes care of \eqref{eq:deriv on energy}.

We now compute \eqref{eq:deriv on mass}.
\begin{align}
\eqref{eq:deriv on mass} & =2\int_{x,y}\Phi(y-x)e(y)u(x)[-\mathcal{H}\partial_{xx}u-\partial_{x}(u^{k+1})](x)\nonumber \\
 & =-\int_{s}\int_{x}[\chi_{R,s}^{2}j](x)\int_{y}[\chi_{R,s}^{2}e](y)\label{eq:unrefined main term for mass derivative}\\
 & \quad+2\int_{x,y}\Phi(y-x)[u_{x}\mathcal{H}u_{x}](x)e(y).\label{eq:mass derivative error}
\end{align}
We regard \eqref{eq:unrefined main term for mass derivative} as the
main term and \eqref{eq:mass derivative error} as an error term.
In the next subsection (Lemmas \ref{lem:estimate of main term of mass derivative}
and \ref{lem:estimate of mass derivative error}), we prove that 
\begin{align*}
\eqref{eq:unrefined main term for mass derivative} & =-\int_{s}\int_{x}j[\chi_{R,s}u](x)\int_{y}e[\chi_{R,s}u](y)-o_{A(u),R}(1).\\
\eqref{eq:mass derivative error} & =o_{A(u),R}(1).
\end{align*}

In conclusion, we have 
\begin{align*}
\partial_{t}M(t) & =\eqref{eq:deriv on mass}+\eqref{eq:deriv on energy}\\
 & \geq\int_{s}\int_{x}\rho[\chi_{R,s}u](x)\int_{y}k[\chi_{R,s}u](y)-\int_{x}j[\chi_{R,s}u](x)\int_{y}e[\chi_{R,s}u](y)-o_{A(u),R}(1).
\end{align*}
Combining this with the monotonicity formula (Proposition \ref{prop:monotonicity formula}),
we obtain 
\[
\partial_{t}M(t)\geq\frac{k^{2}}{2(k+2)^{2}}\int_{s}\Big(\int_{x}(\chi_{R,s}u)^{2}(x)\Big)^{2}\Big(\int_{y}(\chi_{R,s}u)^{k+2}(y)\Big)^{2}-o_{A(u),R}(1).
\]
By the fundamental theorem of calculus, we finally obtain 
\begin{align}
 & |M(t_{1})-M(t_{2})|\nonumber \\
 & \gtrsim\int_{t_{2}}^{t_{1}}\int_{s}\Big(\int_{x}(\chi_{R,s}u)^{2}(t,x)\Big)^{2}\Big(\int_{y}(\chi_{R,s}u)^{k+2}(t,y)\Big)^{2}dt-|t_{1}-t_{2}|o_{A(u),R}(1)\label{eq:final result}
\end{align}
whenever $t_{1}>t_{2}$. By density argument, \eqref{eq:final result}
indeed holds whenever $u$ is an arbitrary $H^{\frac{1}{2}}$ solutions.

We now assume that $u$ is almost periodic modulo spatial translations.
In light of Proposition \ref{prop:arzela-ascoli for a.p. solutions},
there exist spatial center $\{x(t)\}_{t\in\mathbb{R}}$ and $R_{0}$
depending on $u$ such that whenever $R>R_{0}$ and $|s-x(t)|\leq\frac{R}{2}$,
we have
\[
\int_{x}(\chi_{R,s}u)^{2}(t,x)\geq\int_{|x-x(t)|\leq\frac{R}{2}}u^{2}(t,x)dx\geq\frac{M(u)}{2}
\]
and 
\[
\int_{y}(\chi_{R,s}u)^{k+2}(t,y)\geq\int_{|y-x(t)|\leq\frac{R}{2}}u^{k+2}(t,y)dy\gtrsim_{u}1.
\]
Therefore, 
\begin{align*}
\eqref{eq:final result} & \gtrsim_{u}|t_{1}-t_{2}|-|t_{1}-t_{2}|o_{A(u),R}(1)
\end{align*}
for all $R>R_{0}$. Choosing $R$ even larger, we finally have
\[
|M(t_{1})-M(t_{2})|\gtrsim_{u}|t_{1}-t_{2}|.
\]
This completes the proof.
\end{proof}

\subsection{\label{subsec:various error estimates}Various Error Estimates in
proof of Proposition \ref{prop:localized interaction functional}}

In this subsection, we estimate various error terms appeared in the
proof of Proposition \ref{prop:localized interaction functional}.
We have seen that many errors terms involve commutators of the Hilbert
transform and a smooth function, say $[\mathcal{H},Q]$ for some $Q\in C^{\infty}$.
Using the formula of the Hilbert transform, we obtain
\[
([\mathcal{H},Q]u)(y)=\frac{1}{\pi}\int_{z}\frac{Q(z)-Q(y)}{y-z}u(z)=-\frac{1}{\pi}\int_{z}\int_{r}Q'(rz+(1-r)y)u(z).
\]
Thus, by Fubini's theorem,
\[
\int_{y}v[\mathcal{H},Q]u=-\frac{1}{\pi}\int_{r}\int_{y,z}Q'(rz+(1-r)y)v(y)u(z).
\]
If $u$ or $v$ contain derivatives, we can use integration by parts
to move derivatives to $Q$. For instance,
\[
\int_{y}v_{y}[\mathcal{H},Q]u_{y}=-\frac{1}{\pi}\int_{r}\int_{y,z}r(1-r)Q'''(rz+(1-r)y)v(y)u(z).
\]
In practice, we will use $\Phi$, $\chi_{R,s}$, and $\chi_{R,s}'$
in place of $Q$. Derivatives of $Q$ decay in $R$ as $R$ grows.
This says that error terms involving commutator should be well-estimated
as $R$ grows. Our main tool is Schur's test, we state the exact formulation
we use here, for reader's convenience.
\begin{lem}[Schur's test]
\label{lem:Schur lemma}Let $K(y,z)$ be a kernel satisfying the
following three bounds:\\
1. (height) $\|K\|_{L^{\infty}}\lesssim H$,\\
2. (size of $y$-support) $\sup_{z}|\{y:K(y,z)\neq0\}|\lesssim R_{1}$,
and\\
3. (size of $z$-support) $\sup_{y}|\{z:K(y,z)\neq0\}|\lesssim R_{2}$.\\
Then, 
\[
\Big|\int_{y,z}K(y,z)u(y)v(z)\Big|\lesssim H\sqrt{R_{1}R_{2}}\|u\|_{L^{2}}\|v\|_{L^{2}}.
\]
\end{lem}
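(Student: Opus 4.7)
The plan is to prove this by the standard Cauchy--Schwarz trick that underlies Schur's test. The idea is to distribute the kernel $|K(y,z)|$ symmetrically between the two factors $u(y)$ and $v(z)$, so that after one application of Cauchy--Schwarz in the product variable $(y,z)$ we are left with two integrals that can be estimated purely from the geometry of the support of $K$.

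Concretely, I would first factor the integrand as
\[
|K(y,z)\,u(y)\,v(z)| = \bigl(|K(y,z)|^{1/2}|u(y)|\bigr)\cdot\bigl(|K(y,z)|^{1/2}|v(z)|\bigr),
\]
and apply Cauchy--Schwarz on the product space to bound
\[
\Big|\int_{y,z}K(y,z)u(y)v(z)\Big|
\leq \Big(\int_{y,z}|K(y,z)||u(y)|^{2}\Big)^{1/2}\Big(\int_{y,z}|K(y,z)||v(z)|^{2}\Big)^{1/2}.
\]
The first factor is estimated by doing the $z$-integral first: using hypotheses (1) and (3), for each fixed $y$ we have $\int_{z}|K(y,z)|\,dz \lesssim H\,R_{2}$, so the first factor is $\lesssim (H R_{2})^{1/2}\|u\|_{L^{2}}$. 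Symmetrically, doing the $y$-integral first in the second factor and using (1) and (2) gives $\lesssim (H R_{1})^{1/2}\|v\|_{L^{2}}$. Multiplying the two bounds yields the desired $H\sqrt{R_{1}R_{2}}\,\|u\|_{L^{2}}\|v\|_{L^{2}}$.

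There is no real obstacle here; the only things to check are that the Fubini step in each marginal integral is justified (which follows from $K$ being bounded and compactly supported in each slice, so the double integral is absolutely convergent for $u,v\in L^{2}$ by the computation above) and that the implicit constants in the three hypotheses combine cleanly, which they do because all three bounds are used exactly once. Thus the lemma follows immediately from one instance of Cauchy--Schwarz together with Fubini and the two marginal support bounds.
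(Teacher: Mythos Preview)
Your proof is correct and is exactly the standard Cauchy--Schwarz argument one expects for Schur's test; the paper itself simply declares the proof ``fairly standard'' without providing details, so your write-up fills in precisely what the authors omit.
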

\begin{proof}
The proof is fairly standard.
\end{proof}
\begin{lem}[Estimate of \eqref{eq:energy derivative error}]
\label{lem:estimate of energy derivative error}We have
\[
\eqref{eq:energy derivative error}=o_{A(u),R}(1).
\]
\end{lem}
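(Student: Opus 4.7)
The plan is to estimate the three error terms $E_1, E_2, E_3$ separately, repeatedly invoking the bounds
\[
\|\Phi''\|_{L^{\infty}}\lesssim R^{-1}\qquad\text{and}\qquad\|\Phi'''\|_{L^{1}}\lesssim R^{-1}.
\]
Both are immediate consequences of $\Phi'(x)=R^{-1}(\chi_{R}^{2}\ast\chi_{R}^{2})(x)$ together with $\|(\chi_{R}^{2})'\|_{L^{1}}\lesssim 1$, which in turn follow from the standing assumptions $\|\partial_{x}^{j}\chi_{R}\|_{L^{\infty}}\lesssim R_{1}^{-j}$ and $|\mathrm{supp}(\partial_{x}^{j}\chi_{R})|\lesssim R_{1}$ via Young's inequality applied to $\chi_{R}^{2}\ast\chi_{R}^{2}$ and its derivatives.

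The $E_{1}$ and $E_{3}$ contributions are routine. For $E_{1}$, Young's inequality combined with the Sobolev embedding $H^{1/2}\hookrightarrow L^{4}$ yields
\[
\Big|\int_{x,y}u^{2}(x)\Phi'''(y-x)u^{2}(y)\,dx\,dy\Big|\leq\|\Phi'''\|_{L^{1}}\|u\|_{L^{4}}^{4}\lesssim_{A(u)}R^{-1}.
\]
For $E_{3}$, H\"older's inequality, the $L^{2}$-boundedness of $\mathcal{H}$, and $H^{1/2}\hookrightarrow L^{2k+2}$ give
\[
\Big|\int_{x,y}u^{2}(x)\Phi''(y-x)u^{k+1}(y)\mathcal{H}u(y)\,dx\,dy\Big|\lesssim\|\Phi''\|_{L^{\infty}}\|u\|_{L^{2}}^{3}\|u\|_{L^{2k+2}}^{k+1}\lesssim_{A(u)}R^{-1}.
\]

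The main obstacle is $E_{2}=u[\mathcal{H},\Phi(\cdot-x)]v_{yy}$ with $v=u^{k+1}$, since a direct commutator estimate cannot absorb the two derivatives falling on $v$ in the $H^{1/2}$ setting. The strategy is to transfer both derivatives onto $\Phi$ via the integral representation of $[\mathcal{H},Q]$ recalled at the beginning of Section \ref{subsec:various error estimates}. Applying that formula with $Q=\Phi(\cdot-x)$ and integrating by parts twice in $z$ (each step producing a factor of $r$) yields
\[
\int_{x,y}\rho(x)E_{2}(y)\,dx\,dy = -\frac{1}{\pi}\int_{0}^{1}r^{2}\iiint\Phi'''(rz+(1-r)y-x)\,u^{2}(x)u(y)u^{k+1}(z)\,dx\,dy\,dz\,dr.
\]
Set $h\coloneqq\Phi'''\ast u^{2}$; Young's inequality gives $\|h\|_{L^{2}}\leq\|\Phi'''\|_{L^{1}}\|u\|_{L^{4}}^{2}\lesssim R^{-1}\|u\|_{L^{4}}^{2}$. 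Performing the $x$-integral identifies the inner factor with $h(rz+(1-r)y)$, and Cauchy--Schwarz in $y$ (for fixed $z$) together with the scaling identity $\|h(rz+(1-r)\cdot)\|_{L^{2}_{y}}=(1-r)^{-1/2}\|h\|_{L^{2}}$ produces
\[
\Big|\int u(y)h(rz+(1-r)y)\,dy\Big|\lesssim(1-r)^{-1/2}R^{-1}\|u\|_{L^{2}}\|u\|_{L^{4}}^{2}
\]
uniformly in $z$. The remaining $z$-integral contributes $\|u\|_{L^{k+1}}^{k+1}$ (since $k$ is even, $|u^{k+1}|=|u|^{k+1}$), and $\int_{0}^{1}r^{2}(1-r)^{-1/2}\,dr<\infty$ yields $\big|\int\rho\,E_{2}\big|\lesssim R^{-1}\|u\|_{H^{1/2}}^{k+4}\lesssim_{A(u)}R^{-1}$. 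Summing the three contributions concludes \eqref{eq:energy derivative error} $=o_{A(u),R}(1)$.
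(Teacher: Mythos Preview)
Your proof is correct. For $E_1$ and $E_3$ you do essentially what the paper does (the paper first factors out $\|\rho\|_{L^1_x}$ and then applies H\"older in $y$, but the net effect is the same). The genuine difference is in the treatment of $E_2$. The paper fixes $x$, bounds $\big|\int_y E_2\big|$ uniformly in $x$ by applying Schur's test (Lemma~\ref{lem:Schur lemma}) to the kernel $(y,z)\mapsto\Phi'''(rz+(1-r)y-x)$ using the pointwise height $\|\Phi'''\|_{L^\infty}\lesssim R_1^{-2}$ together with the $y$- and $z$-support sizes, which produces the decay rate $R/R_1^{2}$. You instead integrate out $x$ first, recognizing the $x$-integral as the convolution $h=\Phi'''\ast u^2$, and then use Young's inequality with $\|\Phi'''\|_{L^1}\lesssim R^{-1}$ followed by Cauchy--Schwarz in $y$ and the trivial $L^1_z$ bound. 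This avoids Schur's test altogether and yields the slightly sharper rate $R^{-1}$. Both arguments are short; yours is a touch more elementary and makes the convolution structure in $x$ do the work that the paper assigns to the support bookkeeping in Schur's test.
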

\begin{proof}
From H\"older's inequality 
\[
\Big|\int_{x,y}\rho(x)E_{i}(x,y)\Big|\lesssim\|\rho(x)\|_{L_{x}^{1}}\|\int_{y}E_{i}(x,y)\|_{L_{x}^{\infty}},
\]
it suffices to show 
\begin{align*}
\|\int_{y}E_{i}(x,y)\|_{L_{x}^{\infty}} & =o_{A(u),R}(1)
\end{align*}
for each $i=1,2,3$. When $i=1,3$, this follows easily by H\"older's
inequality in $y$-variable. When $i=2$, observe that 
\[
\int_{y}E_{2}(x,y)=-\frac{1}{\pi}\int_{r}\int_{y,z}r^{2}\Phi'''(rz+(1-r)y-x)u(y)u^{k+1}(z).
\]
Whenever $r$ is fixed, the kernel $(y,z)\mapsto\Phi'''(rz+(1-r)y-x)$
has height $\lesssim\frac{1}{R_{1}^{2}}$, $y$-support size $\lesssim\frac{R}{1-r}$,
and $z$-support size $\lesssim\frac{R}{r}$. Thus by Minkowski's
inequality in $r$ and Schur's test, we obtain
\[
\Big|\int_{y}E_{2}(x,y)\Big|\lesssim_{A(u)}\int_{r}r^{\frac{3}{2}}(1-r)^{-\frac{1}{2}}\frac{R}{R_{1}^{2}}=o_{A(u),R}(1).
\]
This completes the proof.
\end{proof}
\begin{lem}[Estimate of \eqref{eq:unrefined main term for energy derivative}]
\label{lem:estimate of main term of energy derivative}We have
\[
\eqref{eq:unrefined main term for energy derivative}\geq\int_{s}\int_{x}\rho[\chi_{R,s}u](x)\int_{y}k[\chi_{R,s}u](y)-o_{A(u),R}(1).
\]
\end{lem}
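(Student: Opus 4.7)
The plan is to expand the energy current $k[\chi_{R,s}u]$ term by term and compare it with the integrand $\chi_{R,s}^2\bigl[\tfrac14(\mathcal{H}u_y)^2+\tfrac34u_y^2+\tfrac32u^{k+1}\mathcal{H}u_y+\tfrac12u^{2k+2}\bigr]$ of \eqref{eq:unrefined main term for energy derivative}. Writing $\chi=\chi_{R,s}$ and $v=\chi u$ throughout, the goal is to show that the $y$-integral of $\chi^2[\,\cdots\,]-k[v]$, once multiplied by $\rho[v](x)=\chi^2u^2(x)$ and integrated over $(x,s)$ against $ds/R$, is bounded below by $-o_{A(u),R}(1)$. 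The lowest-order piece is immediate: $\chi\in[0,1]$ gives the pointwise inequality $\chi^2u^{2k+2}\ge\chi^{2k+2}u^{2k+2}=v^{2k+2}$, so this piece contributes favorably and needs no estimate.

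For the quadratic-derivative piece, I would expand $(\chi u)_y=\chi u_y+\chi'u$ and integrate the cross term by parts, reducing $\int v_y^2$ to $\int\chi^2u_y^2-\int\chi\chi''u^2$; meanwhile, inserting $\chi\mathcal{H}u_y=\mathcal{H}(\chi u_y)+[\chi,\mathcal{H}]u_y$ and using the $L^2$-isometry of $\mathcal{H}$ reduces $\int\chi^2(\mathcal{H}u_y)^2$ to $\int\chi^2u_y^2$ modulo two $[\chi,\mathcal{H}]$-commutator integrals. For the mixed term, inserting $\mathcal{H}v_y=\chi\mathcal{H}u_y-[\chi,\mathcal{H}]u_y+\mathcal{H}(\chi'u)$ yields
\[
\chi^2u^{k+1}\mathcal{H}u_y-v^{k+1}\mathcal{H}v_y=(\chi^2-\chi^{k+2})u^{k+1}\mathcal{H}u_y+\chi^{k+1}u^{k+1}\bigl([\chi,\mathcal{H}]u_y-\mathcal{H}(\chi'u)\bigr).
\]
All resulting error terms that carry a factor of $\chi'$, $\chi''$, or a commutator $[\chi,\mathcal{H}]$ will be estimated routinely using $\|\chi^{(j)}\|_{L^\infty}\lesssim R_1^{-j}$ with $|\operatorname{supp}\chi^{(j)}|\lesssim R_1$ for $j\ge 1$, the Calder\'on commutator bound $\|[\chi,\mathcal{H}]\partial_y\|_{L^2\to L^2}\lesssim\|\chi'\|_{L^\infty}$, and Schur-test reasoning as in Lemma~\ref{lem:estimate of energy derivative error}---after first rewriting $\mathcal{H}(\chi u_y)=D_x(\chi u)-\mathcal{H}(\chi'u)$ so that every remaining factor of $u$ sits in $H^{1/2}$.

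The hard part will be the ``transition'' piece $(\chi^2-\chi^{k+2})u^{k+1}\mathcal{H}u_y$: the coefficient is only $O(1)$ in $L^\infty$, supported on the annulus $\{R\le|y-s|\le R+R_1\}$, and it is paired with $\mathcal{H}u_y=D_x u$, which lies only in $\dot H^{-1/2}$. To get smallness I plan to exploit the $s$-integration: swapping order via Fubini, the full contribution of this piece becomes
\[
\tfrac32\int_{x,y}u^2(x)u^{k+1}(y)\mathcal{H}u_y(y)\,K(x-y)\,dx\,dy,\qquad K(w)=\int\chi_R^2(w+t)\psi_R(t)\,\tfrac{dt}{R},
\]
with $\psi_R=\chi_R^2-\chi_R^{k+2}$. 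Since $\|\psi_R\|_{L^\infty}\le 1$, $\|\psi_R\|_{L^1}\lesssim R_1$ and $\|(\chi_R^2)'\|_{L^\infty}\lesssim R_1^{-1}$, a direct computation gives $\|K\|_{L^\infty}\lesssim R_1/R$ and $\|K'\|_{L^\infty}\lesssim R^{-1}$, so that $G=u^2\ast K$ satisfies $\|G\|_{W^{1,\infty}}\lesssim A(u)\cdot R_1/R=o_{A(u),R}(1)$. Then $\dot H^{1/2}/\dot H^{-1/2}$ duality together with the product estimate $\|Gu^{k+1}\|_{H^{1/2}}\lesssim\|G\|_{W^{1,\infty}}\|u\|_{H^{1/2}}^{k+1}$ (Sobolev embedding plus Kato-Ponce) closes the bound, finishing the proof.
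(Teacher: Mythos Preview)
Your handling of the $u^{2k+2}$ piece and of the various commutator errors is in line with the paper's, and the $s$-averaging trick you invoke for the transition piece $(\chi^2-\chi^{k+2})u^{k+1}\mathcal{H}u_y$ is a natural idea (the paper uses the same device elsewhere, cf.\ Lemma~\ref{lem:estimate of main term of mass derivative}). The gap is in the very last step. After averaging you are left with $\int_y G\,u^{k+1}\mathcal{H}u_y$ where $\|G\|_{W^{1,\infty}}=o_{A(u),R}(1)$, and you close by the product estimate
\[
\|Gu^{k+1}\|_{H^{1/2}}\lesssim\|G\|_{W^{1,\infty}}\|u\|_{H^{1/2}}^{k+1}.
\]
This would force $\|u^{k+1}\|_{H^{1/2}}\lesssim\|u\|_{H^{1/2}}^{k+1}$, i.e.\ the algebra property of $H^{1/2}(\mathbb{R})$, and that is \emph{false}: $H^{1/2}$ is the borderline Sobolev space ($s=n/2$) and is not an algebra. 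Any Kato--Ponce decomposition of $D^{1/2}(u^{k+1})$ leaves either a factor $\|D^{1/2}u\|_{L^p}$ with $p>2$ (which needs $u\in H^{1/2+}$) or a factor $\|u^{\ell}\|_{L^\infty}$ (which needs $u\in L^\infty$), neither controlled by $\|u\|_{H^{1/2}}$. So a two-sided bound on $\int_y G\,u^{k+1}\mathcal{H}u_y$ in terms of $\|u\|_{H^{1/2}}$ alone is not available.

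The paper sidesteps this by observing that only a \emph{lower} bound is needed. Since $0\le\chi_{R,s}\le 1$, the weight $\chi_{R,s}^{2}-\chi_{R,s}^{k+2}$ is nonnegative, and Lemma~\ref{lem:positivity of s} applied with this weight gives directly
\[
\int_y(\chi_{R,s}^{2}-\chi_{R,s}^{k+2})\,u^{k+1}\mathcal{H}u_y \;>\; -c\,\bigl\|\partial_{yy}(\chi_{R,s}^{2}-\chi_{R,s}^{k+2})\bigr\|_{L^\infty}\,\|u\|_{\dot H^{\alpha}}^{k+2}\;\gtrsim_{A(u)}\;-R_1^{-2},
\]
with $\alpha=\tfrac12-\tfrac{2}{k+2}<\tfrac12$; see \eqref{eq:refining k error 4}. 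This is exactly why Lemma~\ref{lem:positivity of s} was stated with a general cutoff rather than only for $\chi\equiv 1$. Your approach discards the sign information and attempts an absolute-value estimate that is genuinely out of reach at $H^{1/2}$ regularity.
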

\begin{proof}
Recall the expression \eqref{eq:unrefined main term for energy derivative}
\[
\int_{s}\int_{x}\rho[\chi_{R,s}u](x)\int_{y}\chi_{R,s}^{2}(y)[\frac{1}{4}(\mathcal{H}u_{y})^{2}+\frac{3}{4}u_{y}^{2}+\frac{3}{2}u^{k+1}\mathcal{H}u_{y}+\frac{1}{2}u^{2k+2}](y).
\]
We first approximate $\int_{y}(\chi_{R,s}\mathcal{H}u_{y})^{2}$ and
$\int_{y}(\chi_{R,s}u_{y})^{2}$ by $\int_{y}(\chi_{R,s}u)_{y}^{2}$.
Invoking Lemma \ref{lem:positivity of s}, $\int_{y}\chi_{R,s}^{2}u^{k+1}\mathcal{H}u_{y}$
is essentially greater than $\int_{y}\chi_{R,s}^{k+2}u^{k+1}\mathcal{H}u_{y}$
with error $\lesssim_{A(u)}\|(\chi_{R,s}^{2})''\|_{L^{\infty}}$.
We then approximate $\int_{y}\chi_{R,s}^{k+2}u^{k+1}\mathcal{H}u_{y}$
by $\int_{y}(\chi_{R,s}u)^{k+1}\mathcal{H}(\chi_{R,s}u)_{y}$. Note
that $\int_{y}\chi_{R,s}^{2}u^{2k+2}$ is always greater than or equal
to $\int_{y}(\chi_{R,s}u)^{2k+2}$.

Moreover, from H\"older's inequality and Fubini's theorem, we have
\begin{align*}
\Big|\int_{s}\int_{x}\rho[\chi_{R,s}u](x)\int_{y}g(y,s)\Big| & \leq\|\int_{x}\rho[\chi_{R,s}u](x)\|_{L_{s}^{1}(\frac{ds}{R})}\|\int_{y}g(y,s)\|_{L_{s}^{\infty}(\frac{ds}{R})}\\
 & \lesssim\|u\|_{L_{t}^{\infty}L_{x}^{2}}\|\int_{y}g(y,s)\|_{L_{s}^{\infty}(\frac{ds}{R})}.
\end{align*}
Therefore, to show our lemma, it suffices to show for each $s$ that
\begin{align}
\int_{y}(\chi_{R,s}\mathcal{H}u_{y})^{2} & =\int_{y}(\chi_{R,s}u)_{y}^{2}+o_{A(u),R}(1)\label{eq:refining k error 1}\\
\int_{y}(\chi_{R,s}u_{y})^{2} & =\int_{y}(\chi_{R,s}u)_{y}^{2}+o_{A(u),R}(1)\label{eq:refining k error 2}\\
\int_{y}\chi_{R,s}^{k+2}u^{k+1}\mathcal{H}u_{y} & =\int_{y}(\chi_{R,s}u)^{k+1}\mathcal{H}(\chi_{R,s}u)_{y}+o_{A(u),R}(1),\label{eq:refining k error 3}
\end{align}
 and 
\begin{equation}
\sup_{s\in\mathbb{R}}\|(\chi_{R,s}^{2})''\|_{L^{\infty}}\|u\|_{L_{t}^{\infty}\dot{H}^{\alpha}}^{k+2}=o_{A(u),R}(1).\label{eq:refining k error 4}
\end{equation}
Since $\|\partial_{yy}(\chi_{R,s}^{2})\|_{L^{\infty}}\lesssim\frac{1}{R_{1}^{2}}$,
the \eqref{eq:refining k error 4} easily follows. We now establish
three remaining estimates \eqref{eq:refining k error 1}, \eqref{eq:refining k error 2},
and \eqref{eq:refining k error 3}.

For \eqref{eq:refining k error 1}, observe that 
\begin{align*}
\chi_{R,s}^{2}(\mathcal{H}u_{y})^{2} & =\chi_{R,s}\mathcal{H}u_{y}\mathcal{H}(\chi_{R,s}u)_{y}-(\chi_{R,s}\mathcal{H}u_{y})[\mathcal{H}\partial_{y},\chi_{R,s}]u\\
 & =[\mathcal{H}(\chi_{R,s}u)_{y}]^{2}-\big(\mathcal{H}(\chi_{R,s}u)_{y}+(\chi_{R,s}\mathcal{H}u_{y})\big)[\mathcal{H}\partial_{y},\chi_{R,s}]u.
\end{align*}
We then use two different expressions of $[\mathcal{H}\partial_{y},\chi_{R,s}]$
\begin{align*}
[\mathcal{H}\partial_{y},\chi_{R,s}]u & =[\mathcal{H},\chi_{R,s}]u_{y}+\mathcal{H}(\chi_{R,s}'u)\\
 & =[\mathcal{H},\chi_{R,s}]u_{y}+[\mathcal{H},\chi_{R,s}']u+\chi_{R,s}'\mathcal{H}u
\end{align*}
to obtain
\begin{align*}
 & [\mathcal{H}(\chi_{R,s}u)_{y}]^{2}-\chi_{R,s}^{2}(\mathcal{H}u_{y})^{2}\\
 & =\mathcal{H}(\chi_{R,s}u)_{y}\big([\mathcal{H},\chi_{R,s}]u_{y}+\mathcal{H}(\chi_{R,s}'u)\big)\\
 & \quad+(\chi_{R,s}\mathcal{H}u_{y})\big([\mathcal{H},\chi_{R,s}]u_{y}+[\mathcal{H},\chi_{R,s}']u+\chi_{R,s}'\mathcal{H}u\big).
\end{align*}
We estimate the above five terms separately. First, observe that
\begin{align*}
 & \Big|\int_{y}\mathcal{H}(\chi_{R,s}u)_{y}[\mathcal{H},\chi_{R,s}]u_{y}\Big|\\
 & \leq\frac{1}{\pi}\int_{r}r(1-r)\Big|\int_{y,z}\chi_{R,s}'''(rz+(1-r)y)\chi_{R,s}(y)u(y)u(z)\Big|.
\end{align*}
Note that the kernel $(y,z)\mapsto\chi_{R,s}'''(rz+(1-r)y)\chi_{R,s}(y)$
has height $\lesssim\frac{1}{R_{1}^{3}}$, $y$-support size $\lesssim R$,
and $z$-support size $\lesssim\frac{R}{r}$. Therefore, 
\[
\Big|\int_{y}\mathcal{H}(\chi_{R,s}u)_{y}[\mathcal{H},\chi_{R,s}]u_{y}\Big|\lesssim_{A(u)}\int_{r}\frac{R}{R_{1}^{3}}r^{\frac{1}{2}}(1-r)=o_{A(u),R}(1).
\]
 Next, we observe by integration by parts that 
\begin{align*}
\int_{y}\mathcal{H}(\chi_{R,s}u)_{y}\mathcal{H}(\chi_{R,s}'u) & =\int_{y}(\chi_{R,s}u)_{y}(\chi_{R,s}'u)\\
 & =\int_{y}(\chi_{R,s}')^{2}u^{2}-\frac{1}{4}\int_{y}(\chi_{R,s}^{2})''u^{2}
\end{align*}
and use H\"older's inequality on each term. Next, again by integration
by parts, we have
\begin{align*}
 & \Big|\int_{y}(\chi_{R,s}\mathcal{H}u_{y})[\mathcal{H},\chi_{R,s}]u_{y}\Big|\\
 & \leq\Big|\int_{y}(\chi_{R,s}\mathcal{H}u)_{y}[\mathcal{H},\chi_{R,s}]u_{y}\Big|+\Big|\int_{y}(\chi_{R,s}'\mathcal{H}u)[\mathcal{H},\chi_{R,s}]u_{y}\Big|.
\end{align*}
and
\begin{align*}
 & \Big|\int_{y}(\chi_{R,s}\mathcal{H}u_{y})[\mathcal{H},\chi_{R,s}']u\Big|\\
 & \leq\Big|\int_{y}(\chi_{R,s}\mathcal{H}u)_{y}[\mathcal{H},\chi_{R,s}']u\Big|+\Big|\int_{y}(\chi_{R,s}'\mathcal{H}u)[\mathcal{H},\chi_{R,s}']u\Big|.
\end{align*}
On each term in the right hand sides, one can proceed similarly as
before. Finally, we observe by integration by parts that 
\[
\int_{y}[\chi_{R,s}\chi_{R,s}'\mathcal{H}u_{y}\mathcal{H}u](y)=-\frac{1}{4}\int_{y}[(\chi_{R,s}^{2})''(\mathcal{H}u)^{2}](y)
\]
This can be treated using H\"older's inequality.

For \eqref{eq:refining k error 2}, observe that 
\[
\chi_{R,s}^{2}u_{y}^{2}=(\chi_{R,s}u)_{y}^{2}-2\chi_{R,s}\chi_{R,s}'u_{y}u-(\chi_{R,s}')^{2}u^{2}.
\]
Using integration by parts, we obtain
\[
\int_{y}\chi_{R,s}^{2}u_{y}^{2}=\int_{y}(\chi_{R,s}u)_{y}^{2}-\frac{1}{2}\int_{y}(\chi_{R,s}^{2})''u^{2}-\int_{y}(\chi_{R,s}')^{2}u^{2}.
\]
We then apply H\"older's inequality to the last two terms in the
right hand side.

For \eqref{eq:refining k error 3}, observe that
\[
\chi_{R,s}^{k+2}u^{k+1}\mathcal{H}u_{y}=(\chi_{R,s}u)^{k+1}\mathcal{H}(\chi_{R,s}u)_{y}-(\chi_{R,s}u)^{k+1}[\mathcal{H}\partial_{y},\chi_{R,s}]u.
\]
Use $[\mathcal{H}\partial_{y},\chi_{R,s}]u=[\mathcal{H},\chi_{R,s}]u_{y}+\mathcal{H}(\chi_{R,s}'u)$,
Schur's test for the commutator term, and H\"older's inequality for
the remaining term. This completes the proof.
\end{proof}
\begin{lem}[Estimate of \eqref{eq:mass derivative error}]
\label{lem:estimate of mass derivative error}We have
\[
\eqref{eq:mass derivative error}=o_{A(u),R}(1).
\]
\end{lem}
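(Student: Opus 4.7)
The plan is to interpret the integral \eqref{eq:mass derivative error} as $\int_y e(y) G(y)\,dy$ with
\[
G(y) \coloneqq 2\int_x \Phi(y-x)\,u_x\mathcal{H}u_x\,dx,
\]
and to show that $G$ is controlled in $W^{1,\infty}_y$ with a bound of size $o_{A(u),R}(1)$. Once this is done, Lemma \ref{lem:H(1/2) bound for localization} (applied with $g = G$) immediately yields the desired estimate, because it promotes the pointwise smallness of $G$ to the bound
\[
\Big|\int_y G(y)e(y)\,dy\Big| \lesssim \|G\|_{W^{1,\infty}}\bigl(\|u\|_{H^{1/2}}^{2}+\|u\|_{H^{1/2}}^{k+2}\bigr).
\]

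The key to bounding $G$ pointwise is to exploit the skew-adjointness $\mathcal{H}^{\ast}=-\mathcal{H}$ together with the commutator structure of $\mathcal{H}$ against a smooth multiplier. For fixed $y$, writing $\phi(x) = \Phi(y-x)$, skew-adjointness gives
\[
2\int_x \phi\, u_x \mathcal{H}u_x\,dx = -\int_x u_x[\mathcal{H},\phi]u_x\,dx,
\]
and the integral representation of $[\mathcal{H},\phi]$ (used in the estimates of $E_2$) combined with two integrations by parts (one in $x$, one in $z$) converts this into
\[
G(y) = -\frac{1}{\pi}\int_{0}^{1}r(1-r)\int_{x,z}\Phi'''\bigl(y-rz-(1-r)x\bigr)u(x)u(z)\,dz\,dx\,dr.
\]
Thus the two derivatives on $u$ are traded for two derivatives on $\Phi$, at the cost of the harmless factor $r(1-r)$ (integrable after Schur).

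Now the Schur-test machinery of Lemma \ref{lem:Schur lemma} applies cleanly. For fixed $y$ and $r$, the kernel $(x,z)\mapsto\Phi'''(y-rz-(1-r)x)$ has height $\lesssim R^{-1}R_1^{-1}$, while in $x$ (resp.\ $z$) its support has length $\lesssim R/(1-r)$ (resp.\ $\lesssim R/r$), so Schur gives $\lesssim R_1^{-1}(r(1-r))^{-1/2}\|u\|_{L^{2}}^{2}$; integration in $r$ then yields
\[
\|G\|_{L^{\infty}_{y}} \lesssim_{A(u)} R_1^{-1}.
\]
The derivative $G'(y)$ has precisely the same form but with $\Phi'''$ replaced by $\Phi^{(4)}$, so the same argument produces $\|G'\|_{L^{\infty}_{y}} \lesssim_{A(u)} R_1^{-2}$. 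Combining these two, $\|G\|_{W^{1,\infty}} = o_{A(u),R}(1)$, and Lemma \ref{lem:H(1/2) bound for localization} finishes the job.

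The main obstacle, and the reason we cannot use the crude bound of the introduction, is that $\|u_x\mathcal{H}u_x\|_{L^{1}}$ is not controlled in $H^{1/2}$; nor is $\int|e(y)|\,dy$. The proof must therefore simultaneously (i) exploit the commutator $[\mathcal{H},\phi]$ to recover two lost derivatives from $u_xu_x$ and distribute them onto $\Phi$, where they come equipped with a small factor $R_1^{-1}$ from the truncation, and (ii) keep the dependence on $y$ uniform so that the resulting multiplier $G(y)$ qualifies as an admissible weight in Lemma \ref{lem:H(1/2) bound for localization}; step (ii) is what forces us to also control $G'$, which is why we need the one extra derivative $\Phi^{(4)}$ in the analysis.
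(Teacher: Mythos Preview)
Your proof is correct and follows essentially the same route as the paper: rewrite \eqref{eq:mass derivative error} via the commutator $[\mathcal{H},\Phi(y-\cdot)]$, move the two $x$-derivatives onto $\Phi$ to produce a $\Phi'''$ kernel, bound the resulting $g(y)$ (your $G$, up to sign) in $W^{1,\infty}$ by Schur's test, and then invoke Lemma~\ref{lem:H(1/2) bound for localization}. The only cosmetic difference is in the Schur parameters: you use height $\lesssim (RR_1)^{-1}$ and support sizes $\lesssim R/r$, $R/(1-r)$, whereas the paper uses the looser height $\lesssim R_1^{-2}$ together with the sharper support sizes $\lesssim R_1/r$, $R_1/(1-r)$ (the support of $\Phi'''$ actually has measure $\lesssim R_1$, not $R$); either pairing yields the same bound $\|G\|_{L^\infty}\lesssim_{A(u)} R_1^{-1}$.
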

\begin{proof}
Observe that 
\[
\eqref{eq:mass derivative error}=-\int_{x,y}\big(u_{x}[\mathcal{H},\Phi(y-\cdot)]u_{x}\big)(x)e(y).
\]
By Lemma \ref{lem:H(1/2) bound for localization}, it suffices to
show that the function
\[
g(y)\coloneqq\int_{x}\big(u_{x}[\mathcal{H},\Phi(y-\cdot)]u_{x}\big)(x)
\]
has $W^{1,\infty}$ norm $o_{A(u),R}(1)$. Observe that
\[
g(y)=\frac{1}{\pi}\int_{r}\int_{x,z}r(1-r)\Phi'''(y-rz-(1-r)x)u(x)u(z).
\]
Whenever $r$ is fixed, the kernel $(x,z)\mapsto\Phi'''(y-rz-(1-r)x)$
has height $\lesssim\frac{1}{R_{1}^{2}}$, $x$-support size $\lesssim\frac{R_{1}}{1-r}$,
and $z$-support size $\lesssim\frac{R_{1}}{r}$. By Schur's test,
we have
\[
\|g\|_{L^{\infty}}\lesssim_{A(u)}\int_{r}r^{\frac{1}{2}}(1-r)^{\frac{1}{2}}R_{1}^{-1}\lesssim_{A(u)}\frac{1}{R_{1}}.
\]
Similarly, we have $\|g\|_{W^{1,\infty}}\lesssim_{A(u)}\frac{1}{R_{1}}$.
\end{proof}
\begin{lem}[Estimate of \eqref{eq:unrefined main term for mass derivative}]
\label{lem:estimate of main term of mass derivative}We have 
\[
\eqref{eq:unrefined main term for mass derivative}=-\int_{s}\int_{x}j[\chi_{R,s}u](x)\int_{y}e[\chi_{R,s}u](y)+o_{A(u),R}(1).
\]
\end{lem}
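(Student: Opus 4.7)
My plan is to mimic the strategy of Lemma \ref{lem:estimate of main term of energy derivative}, now adapted to a \emph{product} of $s$-dependent scalar factors. Define
\[
P(s) \coloneqq \int_{x} \chi_{R,s}^{2}\,j[u](x)\,dx, \qquad \tilde{P}(s) \coloneqq \int_{x} j[\chi_{R,s}u](x)\,dx,
\]
and analogously $Q,\tilde{Q}$ in $y$ for $e$. Then \eqref{eq:unrefined main term for mass derivative} equals $-\int_{s} P Q$ and the target is $-\int_{s} \tilde{P}\tilde{Q}$, so the telescoping identity $PQ - \tilde{P}\tilde{Q} = (P - \tilde{P})Q + \tilde{P}(Q - \tilde{Q})$ reduces the lemma to two claims: (i) the uniform bounds $\|Q\|_{L^{\infty}_{s}} + \|\tilde{P}\|_{L^{\infty}_{s}} \lesssim_{A(u)} 1$, and (ii) the weighted $L^{1}$ vanishing $\int_{s}|P-\tilde{P}| + \int_{s}|Q-\tilde{Q}| = o_{A(u),R}(1)$. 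Claim (i) is immediate from Lemma \ref{lem:H(1/2) bound for localization}.

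For claim (ii), since $j$ and $e$ share the same structure up to constants, I focus on $Q-\tilde{Q}$. Using $\mathcal{H}(\chi u)_{y} = \chi\,\mathcal{H}u_{y} + \mathcal{H}(\chi' u) + [\mathcal{H},\chi]u_{y}$ and rearranging,
\[
Q(s) - \tilde{Q}(s) = \tfrac{1}{k+2}\!\int_{y}(\chi_{R,s}^{2} - \chi_{R,s}^{k+2})u^{k+2} - \tfrac{1}{2}\!\int_{y}\chi_{R,s}u\,\mathcal{H}(\chi_{R,s}'u) - \tfrac{1}{2}\!\int_{y}\chi_{R,s}u\,[\mathcal{H},\chi_{R,s}]u_{y},
\]
with an analogous decomposition for $P-\tilde{P}$. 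The \emph{polynomial} error is handled by Fubini: since $\chi_{R,s}^{2}-\chi_{R,s}^{k+2}$ is bounded by $1$ and supported on $R \leq |y-s| \leq R+R_{1}$, one has $\int_{s}|\cdot| \lesssim (R_{1}/R)\|u\|_{L^{k+2}}^{k+2} = o_{A(u),R}(1)$. The \emph{boundary} error is bounded per $s$ by $\|\chi_{R,s}u\|_{L^{2}}\|\chi_{R,s}'u\|_{L^{2}}$ via anti-symmetry of $\mathcal{H}$, after which Cauchy--Schwarz in $s$ against $ds/R$ with the size estimates $\|\chi_{R,s}'\|_{L^{\infty}} \lesssim R_{1}^{-1}$ and $|\mathrm{supp}\,\chi_{R,s}'| \lesssim R_{1}$ yields $\int_{s}|\cdot| \lesssim_{A(u)} (RR_{1})^{-1/2} = o_{A(u),R}(1)$.

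The \emph{commutator} error is the most delicate. After one integration by parts in $z$ inside the Hilbert kernel,
\[
[\mathcal{H},\chi_{R,s}]u_{y}(x) = \tfrac{1}{\pi}\!\int_{r} r \!\int_{z}\chi_{R,s}''(rz + (1-r)x)\,u(z)\,dz\,dr,
\]
the pointwise Schur bound (Lemma \ref{lem:Schur lemma}) is only $o(1)$ per $s$ but not integrable over all of $\mathbb{R}$ against $ds/R$. I instead exchange the orders of integration and perform the $s$-integral first: using evenness of $\chi_{R}$,
\[
\int_{\mathbb{R}}\chi_{R}(x - s)\,\chi_{R}''(rz + (1-r)x - s)\,ds = (\chi_{R} \ast \chi_{R}'')(r(z - x)),
\]
so the $s$-integral of the commutator piece collapses to a convolution depending only on $r(z-x)$. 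Changing variables to $w = z-x$, bounding $|\int u(x)u(x+w)dx| \leq \|u\|_{L^{2}}^{2}$, and applying Young's inequality with $\|\chi_{R}\ast\chi_{R}''\|_{L^{1}} \leq \|\chi_{R}\|_{L^{1}}\|\chi_{R}''\|_{L^{1}} \lesssim R/R_{1}$ produces $\int_{s}|\epsilon_{\mathrm{comm}}| \lesssim_{A(u)} R_{1}^{-1} = o_{A(u),R}(1)$.

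The main obstacle is precisely this commutator term: the naive pointwise Schur bound does not close when integrated over all $s \in \mathbb{R}$ against $ds/R$, and one must swap the orders of integration to exploit the convolution structure of the $s$-integral. Once the three error types are controlled as above, combining claims (i) and (ii) via the telescoping identity from the first paragraph finishes the proof of Lemma \ref{lem:estimate of main term of mass derivative}.
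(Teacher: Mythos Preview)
Your approach is correct and genuinely different from the paper's. The paper expands $j[\chi_{R,s}u]$ and $e[\chi_{R,s}u]$ symmetrically into $\chi_{R,s}^{2}j[u]$, $\chi_{R,s}^{2}e[u]$ plus error pieces $\tilde{E}_{1},\dots,\tilde{E}_{4}$, and then must handle two kinds of cross terms: ``main $\times$ error'' (where the factor $\int_{x}\chi_{R,s}^{2}j[u]$ has no useful $L^{1}_{s}$ bound since $j$ is not sign-definite, forcing a detour through a $W^{1,\infty}$ bound on $g_{i}(x)=\int_{s}\chi_{R}^{2}(x-s)\int_{y}\tilde{E}_{i}$ and an appeal to Lemma~\ref{lem:H(1/2) bound for localization}), and ``error $\times$ error'' (handled by interpolating $L^{1}_{s}$ and $L^{\infty}_{s}$ bounds to get $L^{2}_{s}$). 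Your asymmetric telescoping $PQ-\tilde P\tilde Q=(P-\tilde P)Q+\tilde P(Q-\tilde Q)$ sidesteps both devices: the multipliers $Q$ and $\tilde P$ are each directly $L^{\infty}_{s}$-bounded by Lemma~\ref{lem:H(1/2) bound for localization}, so a single $L^{1}_{s}(ds/R)$ estimate on the error differences suffices. This is shorter and conceptually cleaner; the paper's route is more modular in that it isolates each $\tilde E_{i}$ in several norms.

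One presentational point: as written, your commutator step computes $\bigl|\int_{s}\epsilon_{\mathrm{comm}}\bigr|$ (you perform the $s$-integral exactly and obtain $\chi_{R}\ast\chi_{R}''$), whereas claim~(ii) needs $\int_{s}|\epsilon_{\mathrm{comm}}|$. The fix is immediate: take absolute values inside the $r,y,z$ integrals \emph{before} integrating in $s$, obtaining $(\chi_{R}\ast|\chi_{R}''|)(r(z-y))$ in place of $(\chi_{R}\ast\chi_{R}'')(r(z-y))$; since $\|\chi_{R}\ast|\chi_{R}''|\|_{L^{1}}\le\|\chi_{R}\|_{L^{1}}\|\chi_{R}''\|_{L^{1}}\lesssim R/R_{1}$ as well, your bound $\int_{s}|\epsilon_{\mathrm{comm}}|\lesssim_{A(u)}R_{1}^{-1}$ goes through unchanged.
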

\begin{proof}
Observe that 
\begin{align*}
j[\chi_{R,s}u] & =\chi_{R,s}^{2}j[u]+2(\tilde{E}_{1}+\tilde{E}_{2}+\tilde{E}_{3})-\frac{2(k+1)}{k+2}\tilde{E}_{4},\\
e[\chi_{R,s}u] & =\chi_{R,s}^{2}e[u]+\frac{1}{2}(\tilde{E}_{1}+\tilde{E}_{2}+\tilde{E}_{3})-\frac{1}{k+2}\tilde{E}_{4}.
\end{align*}
where 
\begin{align*}
\tilde{E}_{1}(s,x) & =\big(\chi_{R,s}u[\mathcal{H},\chi_{R,s}']u\big)(x),\\
\tilde{E}_{2}(s,x) & =\big(\chi_{R,s}u[\mathcal{H},\chi_{R,s}]u_{x}\big)(x),\\
\tilde{E}_{3}(s,x) & =(\chi_{R,s}\chi_{R,s}'u\mathcal{H}u)(x),\\
\tilde{E}_{4}(s,x) & =\big((\chi_{R,s}^{2}-\chi_{R,s}^{k+2})u^{k+2}\big)(x).
\end{align*}
Therefore, it suffices to show that 
\begin{align}
\int_{s}\int_{x}[\chi_{R,s}^{2}u\mathcal{H}u_{x}](x)\int_{y}\tilde{E}_{i}(s,y) & =o_{A(u),R}(1),\label{eq:main times error 1}\\
\int_{s}\int_{x}[\chi_{R,s}^{2}u^{k+2}](x)\int_{y}\tilde{E}_{i}(s,y) & =o_{A(u),R}(1),\label{eq:main times errror 2}\\
\int_{s}\int_{x}\tilde{E}_{i}(s,x)\int_{y}\tilde{E}_{i'}(s,y) & =o_{A(u),R}(1),\label{eq:error times error}
\end{align}
whenever $i,i'=1,2,3,4$.

Let us give a remark here. In the proof of Lemma \ref{lem:estimate of main term of energy derivative},
we used H\"older's inequality in $s$ to reduce ourselves into the
situation when $s$ is fixed. Indeed, because $\rho(x)$ is nonnegative,
we could use Fubini's theorem to obtain
\[
\|\int_{x}\chi_{R,s}^{2}(x)\rho(x)\|_{L_{s}^{1}(\frac{ds}{R})}=\|\chi_{R}^{2}(s)\|_{L_{s}^{1}(\frac{ds}{R})}\int_{x}\rho(x)\lesssim_{A(u)}1.
\]
However, in the case where $\rho$ is replaced by $j$, we do not
have almost everywhere nonnegativity of $j$. If we take $L_{s}^{1}(\frac{ds}{R})$
norm, we only estimate
\[
\|\int_{x}\chi_{R,s}^{2}(x)j(x)\|_{L_{s}^{1}(\frac{ds}{R})}\leq\|\chi_{R}^{2}(s)\|_{L_{s}^{1}(\frac{ds}{R})}\int_{x}|j(x)|\lesssim\int_{x}|j(x)|,
\]
where the right hand side cannot be estimated in terms of the $H^{\frac{1}{2}}$
norm.

Back to the proof, we now estimate \eqref{eq:main times error 1}
and \eqref{eq:main times errror 2}. To avoid the aforementioned difficulty,
we use Lemma \ref{lem:H(1/2) bound for localization} instead. Hence,
it suffices to show that the function
\[
g_{i}(x)\coloneqq\int_{s}\int_{y}\chi_{R}^{2}(x-s)\tilde{E}_{i}(s,y)
\]
has $W^{1,\infty}$ norm bounded by $o_{A(u),R}(1)$ for each $i=1,2,3,4$.

When $i=1$, observe that 
\begin{align*}
g_{1}(x) & =-\frac{1}{\pi}\int_{r}\int_{y,z}K_{1}(x,r,y,z)u(y)u(z),\\
K_{1}(x,r,y,z) & \coloneqq\int_{s}\chi_{R}^{2}(x-s)\chi_{R}(y-s)\chi_{R}''(rz+(1-r)y-s).
\end{align*}
When $x$ and $r$ are fixed, the kernel $(y,z)\mapsto K_{1}(x,r,y,z)$
has height 
\[
|K_{1}(x,r,y,z)|\leq\int_{s}\chi_{R}''(rz+(1-r)y-s)\lesssim\frac{1}{R_{1}^{2}}\cdot\frac{R_{1}}{R}\lesssim\frac{1}{RR_{1}}.
\]
It has $y$-support $\lesssim R$ (by ignoring $\chi_{R}''$ part),
and $z$-support $\lesssim\frac{R}{r}$. By Schur's test and Minkowski's
inequality in $r$, we have
\[
\|g_{1}\|_{L^{\infty}}\lesssim_{A(u)}\int_{r}\frac{1}{R_{1}}r^{-\frac{1}{2}}=o_{A(u),R}(1).
\]
Note that the integral formula of $\partial_{x}g_{1}$ has kernel
$\partial_{x}K_{1}$. Because $\partial_{x}K_{1}$ obeys an even better
estimate, we conclude that $\|g_{1}\|_{W^{1,\infty}}=o_{A(u),R}(1)$.

When $i=2$, observe that 
\begin{align*}
g_{2}(x) & =-\frac{1}{\pi}\int_{r}\int_{y,z}K_{2}(x,r,y,z)u(y)u(z),\\
K_{2}(x,r,y,z) & \coloneqq\int_{s}r\chi_{R}^{2}(x-s)\chi_{R}(y-s)\chi_{R}''(rz+(1-r)y-s).
\end{align*}
One can proceed as exactly same way as when $i=1$.

When $i=3$, observe that 
\begin{align*}
g_{3}(x) & =\int_{y}\int_{s}\chi_{R}^{2}(x-s)[\chi_{R}\chi_{R}'](y-s)[u\mathcal{H}u](y).
\end{align*}
Using H\"older's inequality in $y$,
\[
|g_{3}(x)|\leq\Big\|\int_{s}\chi_{R}^{2}(x-s)[\chi_{R}\chi_{R}'](y-s)\Big\|_{L_{y}^{\infty}}\|u\mathcal{H}u\|_{L^{1}}\lesssim\frac{1}{R}\|u\|_{L^{2}}^{2}.
\]
Similarly, we have a better estimate for $\partial_{x}g_{3}$, and
so $\|g_{3}\|_{W^{1,\infty}}=o_{A(u),R}(1)$.

When $i=4$, we have
\[
g_{4}(x)=\int_{y}\int_{s}\chi_{R}^{2}(x-s)[\chi_{R}^{2}-\chi_{R}^{k+2}](y-s)u^{k+2}(y).
\]
Using H\"older's inequality in $y$ as before, 
\[
|g_{4}(x)|\leq\Big\|\int_{s}\chi_{R}^{2}(x-s)[\chi_{R}^{2}-\chi_{R}^{k+2}](y-s)\Big\|_{L_{y}^{\infty}}\|u\|_{L^{k+2}}^{k+2}\lesssim\frac{R_{1}}{R}\|u\|_{L^{k+2}}^{k+2}.
\]
For $\partial_{x}g_{4}$, we have a better estimate. Thus $\|g_{4}\|_{W^{1,\infty}}=o_{A(u),R}(1)$.

Next, we turn to the proof of \eqref{eq:error times error}. By H\"older's
inequality in $s$, it suffices to show that 
\[
\Big\|\int_{y}\tilde{E}_{i}(s,y)\Big\|_{L_{s}^{2}(\frac{ds}{R})}=o_{A(u),R}(1).
\]
When $i=1$, we observe that 
\begin{align*}
\int_{y}\tilde{E}_{1}(s,y) & =\int_{r}\int_{y,z}\tilde{K}_{1}(r,s,y,z)u(y)u(z),\\
\tilde{K}_{1}(r,s,y,z) & \coloneqq\chi_{R}(y-s)\chi_{R}''(rz+(1-r)y-s)
\end{align*}
Whenever $t$ and $s$ are fixed, the kernel $(y,z)\mapsto\tilde{K}_{1}(r,s,y,z)$
has height $\lesssim\frac{1}{R_{1}^{2}}$, $y$-support size $\lesssim R$
(ignoring $\chi_{R}''$), and $z$-support size $\lesssim\frac{R_{1}}{r}$.
Therefore, by Schur's test,
\[
\Big\|\int_{y,z}\tilde{K}_{1}(r,s,y,z)u(y)u(z)\Big\|_{L_{s}^{\infty}}\lesssim_{A(u)}R_{1}^{-\frac{3}{2}}R^{\frac{1}{2}}r^{-\frac{1}{2}}.
\]
On the other hand, whenever $r$ is fixed, the kernel $(y,z)\mapsto\|\tilde{K}_{1}(t,s,y,z)\|_{L_{s}^{1}(\frac{ds}{R})}$
has height $\lesssim\frac{1}{RR_{1}}$ and $z$-support size $\lesssim\frac{R}{r}$.
The $y$-support of the kernel
\[
\|\tilde{K}_{1}(t,s,y,z)\|_{L_{s}^{1}(\frac{ds}{R})}=\int_{s}\chi_{R}(y-s)|\chi_{R}''(rz+(y-s)-ry)|
\]
has $\lesssim\frac{R}{r}$. Therefore, 
\begin{align*}
\Big\|\int_{y,z}\tilde{K}_{1}(r,s,y,z)u(y)u(z)\Big\|_{L_{s}^{1}(\frac{ds}{R})} & \leq\int_{y,z}\|\tilde{K}_{1}(r,s,y,z)\|_{L_{s}^{1}(\frac{ds}{R})}|u(y)||u(z)|\\
 & \lesssim_{A(u)}(RR_{1})^{-1}r^{-1}.
\end{align*}
Interpolating above two estimates, we have 
\begin{align*}
\Big\|\int_{y}\tilde{E}_{1}(s,y)\Big\|_{L_{s}^{2}(\frac{ds}{R})} & \lesssim\int_{r}\Big\|\int_{y,z}\tilde{K}_{1}(r,s,y,z)u(y)u(z)\Big\|_{L_{s}^{2}(\frac{ds}{R})}\\
 & \lesssim_{A(u)}\int_{r}R^{-\frac{1}{4}}R_{1}^{-\frac{5}{4}}r^{-\frac{3}{4}}\\
 & \lesssim_{A(u)}R^{-\frac{1}{4}}R_{1}^{-\frac{5}{4}}.
\end{align*}
Note that the case of $i=2$ can be treated in a similar manner.

When $i=3$, observe that 
\begin{align*}
\Big\|\int_{y}\tilde{E}_{3}(s,y)\Big\|_{L_{s}^{1}(\frac{ds}{R})} & \lesssim\|\tilde{E}_{3}(s,y)\|_{L_{s,y}^{1}(\frac{dsdy}{R})}\lesssim_{A(u)}\frac{1}{R},\\
\Big\|\int_{y}\tilde{E}_{3}(s,y)\Big\|_{L_{s}^{\infty}(\frac{ds}{R})} & \lesssim_{A(u)}\frac{1}{R_{1}}.
\end{align*}
Interpolating the above two estimates, we have
\[
\Big\|\int_{y}\tilde{E}_{3}(s,y)\Big\|_{L_{s}^{2}(\frac{ds}{R})}\lesssim_{A(u)}R^{-\frac{1}{2}}R_{1}^{-\frac{1}{2}}.
\]
When $i=4$, observe that 
\begin{align*}
\Big\|\int_{y}\tilde{E}_{4}(s,y)\Big\|_{L_{s}^{1}(\frac{ds}{R})} & \lesssim\|\tilde{E}_{4}(s,y)\|_{L_{s,y}^{1}(\frac{dsdy}{R})}\lesssim_{A(u)}\frac{R_{1}}{R},\\
\Big\|\int_{y}\tilde{E}_{4}(s,y)\Big\|_{L_{s}^{\infty}(\frac{ds}{R})} & \lesssim_{A(u)}1.
\end{align*}
Interpolating above two estimates, we have
\[
\Big\|\int_{y}\tilde{E}_{4}(s,y)\Big\|_{L_{s}^{2}(\frac{ds}{R})}\lesssim_{A(u)}R_{1}^{\frac{1}{2}}R^{-\frac{1}{2}}.
\]
This completes the proof.
\end{proof}

\appendix

\section{\label{sec:proof of linear profile decomposition}Proof of Proposition
\ref{prop:linear profile decomposition}}

The linear profile decomposition (Proposition \ref{prop:linear profile decomposition})
is now fairly standard. In fact, it is very similar to that in Holmer-Roudenko
\cite[Lemma 2.1]{HolmerRoudenko2008}. We include it here for sake
of completeness. As a last remark, we are not sure whether Proposition
\ref{prop:linear profile decomposition} with $k=4$ holds or not.
The $L_{x}^{4}L_{t}^{\infty}$ estimate is a genuine endpoint estimate
among local smoothing estimates, so the usual interpolation argument
does not work.
\begin{proof}[Proof of Proposition \ref{prop:linear profile decomposition}]
As we can pass to a subsequence if necessary, combining with the
usual diagonal trick, we may assume that all the limits what follow
exist. Let $C$ be the implicit constant for linear local smoothing
estimates.

We proceed by induction. For notational convenience, let $w_{n}^{0}\coloneqq u_{n}$.
Suppose that we have constructed $\{\phi^{j},x_{n}^{j},t_{n}^{j}\}$
satisfying all the properties for $j=1,\dots,J-1$. We shall construct
$\{\phi^{J},x_{n}^{J},t_{n}^{J}\}$ as follows. Let 
\[
A_{J}\coloneqq\lim_{n\to\infty}\|w_{n}^{J-1}\|_{H^{\frac{1}{2}}}\quad\text{and}\quad c_{J}\coloneqq\lim_{n\to\infty}\|V(t)w_{n}^{J-1}\|_{L_{x}^{k}L_{t}^{\infty}}.
\]
If $c_{J}=0$, then we set $\phi^{j}=0$, $t_{n}^{j}=x_{n}^{j}=0$
for all $j\geq J$, and stop the procedure. Henceforth, we assume
that $c_{J}\neq0$. For some large $r>1$ chosen later, consider a
Schwartz function $\chi$ such that $|\hat{\chi}|\leq1$,  $\hat{\chi}(\xi)=1$
when $\frac{1}{r}\leq|\xi|\leq r$, and $\hat{\chi}(\xi)=0$ when
$|\xi|\leq\frac{1}{2r}$ or $|\xi|\geq2r$. Convoluting $w_{n}^{J-1}$
with $\chi$, we have
\begin{align*}
\|\chi\ast V(t)w_{n}^{J-1}\|_{L_{x}^{k}L_{t}^{\infty}} & \geq\|V(t)w_{n}^{J-1}\|_{L_{x}^{k}L_{t}^{\infty}}-\|V(t)w_{n}^{J-1}-\chi\ast V(t)w_{n}^{J-1}]\|_{L_{x}^{k}L_{t}^{\infty}}\\
 & \geq\|V(t)w_{n}^{J-1}\|_{L_{x}^{k}L_{t}^{\infty}}-C\|w_{n}^{J-1}-\chi\ast w_{n}^{J-1}\|_{\dot{H}^{s_{k}}}.
\end{align*}
As $n\to\infty$, we have
\[
\lim_{n\to\infty}\|\chi\ast V(t)w_{n}^{J-1}\|_{L_{x}^{k}L_{t}^{\infty}}\geq c_{J}-Cr^{-\frac{1}{k}}A_{J}.
\]
On the other hand, an easy interpolation yields (this is where the
assumption $k>4$ is required)
\begin{align*}
\lim_{n\to\infty}\|\chi\ast V(t)w_{n}^{J-1}\|_{L_{x}^{k}L_{t}^{\infty}} & \leq\lim_{n\to\infty}\big(\|\chi\ast V(t)w_{n}^{J-1}\|_{L_{x}^{4}L_{t}^{\infty}}^{\frac{4}{k}}\|\chi\ast V(t)w_{n}^{J-1}\|_{L_{x,t}^{\infty}}^{1-\frac{4}{k}}\big)\\
 & \leq(CA_{J})^{\frac{4}{k}}\lim_{n\to\infty}\|\chi\ast V(t)w_{n}^{J-1}\|_{L_{x,t}^{\infty}}^{1-\frac{4}{k}}.
\end{align*}
Therefore,
\[
\lim_{n\to\infty}\|\chi\ast V(t)w_{n}^{J-1}\|_{L_{x,t}^{\infty}}\geq c_{J}(CA_{J})^{-\frac{4}{k-4}}-(CA_{J})r^{-\frac{1}{k-4}}.
\]
If we choose $r>1$ satisfying $(CA_{J})r^{-\frac{1}{k-4}}\leq\frac{1}{3}c_{J}(CA_{J})^{-\frac{4}{k-4}}$,
i.e.
\[
r\leq(CA_{J})^{k}c_{J}^{-(k-4)},
\]
then we can choose $t_{n}^{J},x_{n}^{J}$ such that 
\[
|[\chi\ast V(-t_{n}^{J})w_{n}^{J-1}](x_{n}^{J})|\geq\frac{c_{J}}{2}(CA_{J})^{-\frac{4}{k-4}}
\]
for all large $n$. Let $\phi^{J}$ be the weak $H^{\frac{1}{2}}$-limit
of $[V(-t_{n}^{J})w_{n}^{J-1}](x_{n}^{J})$ and $w_{n}^{J}$ be $w_{n}^{J-1}-V(t_{n}^{J})\phi^{J}(\cdot-x_{n}^{J})$.
Note that $V(-t_{n}^{J})w_{n}^{J}(\cdot+x_{n}^{J})\rightharpoonup0$
in $H^{\frac{1}{2}}$. We can give a lower bound for $H^{\frac{1}{2}}$-norm
of the profile $\phi^{J}$. From
\[
c_{J}A_{J}^{-\frac{4}{k-4}}\lesssim\lim_{n\to\infty}|[\chi\ast V(-t_{n}^{J})u_{n}](x_{n}^{J})|=|[\chi\ast\phi^{J}](0)|\lesssim\|\chi\|_{H^{-s_{k}}}\|\phi^{J}\|_{H^{s_{k}}},
\]
we have 
\[
\|\phi^{J}\|_{H^{\frac{1}{2}}}\gtrsim c_{J}A_{J}^{-\frac{4}{k-4}}r^{-\frac{1}{k}}\gtrsim c_{J}^{\frac{4}{k}}A_{J}^{-\frac{k}{k-4}}.
\]

We now prove asymptotic orthogonality in $\dot{H}^{s}$. By the definition
of $\phi^{J}$ and $w_{n}^{J}$, 
\begin{align*}
\|w_{n}^{J-1}\|_{\dot{H}^{s}}^{2} & =\|w_{n}^{J}\|_{\dot{H}^{s}}^{2}+\|V(t_{n}^{J})\phi^{J}(\cdot-x_{n}^{J})\|_{\dot{H}^{s}}^{2}+\langle w_{n}^{J},V(t_{n}^{J})\phi^{J}(\cdot-x_{n}^{J})\rangle_{\dot{H}^{s}}\\
 & =\|w_{n}^{J}\|_{\dot{H}^{s}}^{2}+\|\phi^{J}\|_{\dot{H}^{s}}^{2}+\langle V(-t_{n}^{J})w_{n}^{J}(\cdot+x_{n}^{J}),\phi^{J}\rangle_{\dot{H}^{s}}.
\end{align*}
As $n\to\infty$, we have
\[
\lim_{n\to\infty}\Big[\|w_{n}^{J-1}\|_{\dot{H}^{s}}^{2}-\|w_{n}^{J}\|_{\dot{H}^{s}}^{2}-\|\phi^{J}\|_{\dot{H}^{s}}^{2}\Big]=0.
\]
As the induction hypothesis tells
\[
\lim_{n\to\infty}\Big[\|u_{n}\|_{\dot{H}^{s}}^{2}-\sum_{j=1}^{J-1}\|\phi^{j}\|_{\dot{H}^{s}}^{2}-\|w_{n}^{J-1}\|_{\dot{H}^{s}}^{2}\big]=0,
\]
we conclude that 
\[
\lim_{n\to\infty}\Big[\|u_{n}\|_{\dot{H}^{s}}^{2}-\sum_{j=1}^{J}\|\phi^{j}\|_{\dot{H}^{s}}^{2}-\|w_{n}^{J}\|_{\dot{H}^{s}}^{2}\big]=0.
\]
This completes the proof of asymptotic orthogonality in $\dot{H}^{s}$.

We now show asymptotic separation of parameters. Combining with the
induction hypothesis, it suffices to show that for each $j<J$, we
have $|t_{n}^{j}-t_{n}^{J}|+|x_{n}^{j}-x_{n}^{J}|\to\infty$. Suppose
not; choose the maximal $j<J$ such that $t_{n}^{j}-t_{n}^{J}$ and
$x_{n}^{j}-x_{n}^{J}$ converge for some subsequence. Then,
\[
V(-t_{n}^{j})w_{n}^{J}(\cdot+x_{n}^{j})=V(-t_{n}^{j})w_{n}^{j}(\cdot+x_{n}^{j})+\sum_{j'=j+1}^{J-1}V(t_{n}^{j'}-t_{n}^{j})\phi^{j}(\cdot+x_{n}^{j}-x_{n}^{j'})+\phi^{j}.
\]
By the induction hypothesis, we know that $|t_{n}^{j'}-t_{n}^{j}|+|x_{n}^{j}-x_{n}^{j'}|\to\infty$
for all $j'<J$ so the summation part of the above display weakly
converges to zero. By the construction, $V(-t_{n}^{j})w_{n}^{j}(\cdot+x_{n}^{j})$
converges weakly to zero. Moreover, since we assumed that $t_{n}^{j}-t_{n}^{J}$
and $x_{n}^{j}-x_{n}^{J}$ converge, $V(-t_{n}^{j})w_{n}^{J}(\cdot+x_{n}^{j})$
can be well approximated by $V(t_{0})V(-t_{n}^{J})w_{n}^{J}(\cdot+x_{n}^{J}-x_{0})$
for some fixed $x_{0}$ and $t_{0}$, so it converges weakly to zero.
Therefore, every term in the above display except $\phi^{j}$ weakly
converges to zero. This yields a contradiction.

We now show asymptotic vanishing of the weak limit. Combining with
the induction hypothesis, it suffices to show that for each $j=1,\dots,J$,
we have $V(-t_{n}^{j})w_{n}^{J}(\cdot+x_{n}^{j})\rightharpoonup0$.
We express
\[
V(-t_{n}^{j})w_{n}^{J}(\cdot+x_{n}^{j})=V(-t_{n}^{j})w_{n}^{j}(\cdot+x_{n}^{j})+\sum_{j'=j+1}^{J}V(t_{n}^{j'}-t_{n}^{j})\phi^{j}(\cdot+x_{n}^{j}-x_{n}^{j'}).
\]
The first term of the RHS vanishes by the construction. For the remaining
term, we use separation of parameters.

We finally show asymptotic vanishing of remainder. Because 
\[
A_{1}^{-\frac{2k}{k-4}}\sum_{j}c_{j}^{\frac{8}{k}}\leq\sum_{j}c_{j}^{\frac{8}{k}}A_{j}^{-\frac{2k}{k-4}}\lesssim\sum_{j}\|\phi^{j}\|_{H^{\frac{1}{2}}}^{2}<\infty,
\]
we should have $c_{j}\to0$.
\end{proof}
\bibliographystyle{plain}
\bibliography{References}

\begin{thebibliography}{10}

\bibitem{BahouriGerard1999AJM}
Hajer Bahouri and Patrick G\'erard.
\newblock High frequency approximation of solutions to critical nonlinear wave
  equations.
\newblock {\em Amer. J. Math.}, 121(1):131--175, 1999.

\bibitem{BenyiOh2013}
\'Arp\'ad B\'enyi and Tadahiro Oh.
\newblock The {S}obolev inequality on the torus revisited.
\newblock {\em Publ. Math. Debrecen}, 83(3):359--374, 2013.

\bibitem{BurqPlanchon2006}
Nicolas Burq and Fabrice Planchon.
\newblock Smoothing and dispersive estimates for 1{D} {S}chr\"odinger equations
  with {BV} coefficients and applications.
\newblock {\em J. Funct. Anal.}, 236(1):265--298, 2006.

\bibitem{Dodson2012JAMS}
Benjamin Dodson.
\newblock Global well-posedness and scattering for the defocusing,
  {$L^{2}$}-critical nonlinear {S}chr\"odinger equation when {$d\geq3$}.
\newblock {\em J. Amer. Math. Soc.}, 25(2):429--463, 2012.

\bibitem{Dodson2015AdvMath}
Benjamin Dodson.
\newblock Global well-posedness and scattering for the mass critical nonlinear
  {S}chr\"odinger equation with mass below the mass of the ground state.
\newblock {\em Adv. Math.}, 285:1589--1618, 2015.

\bibitem{Dodson2017AnnPDE}
Benjamin Dodson.
\newblock Global well-posedness and scattering for the defocusing,
  mass-critical generalized {K}d{V} equation.
\newblock {\em Ann. PDE}, 3(1):3:5, 2017.

\bibitem{HolmerRoudenko2008}
Thomas Duyckaerts, Justin Holmer, and Svetlana Roudenko.
\newblock Scattering for the non-radial 3{D} cubic nonlinear {S}chr\"odinger
  equation.
\newblock {\em Math. Res. Lett.}, 15(6):1233--1250, 2008.

\bibitem{FarahLinaresPastor2014}
Luiz~G. Farah, Felipe Linares, and Ademir Pastor.
\newblock Global well-posedness for the {$k$}-dispersion generalized
  {B}enjamin-{O}no equation.
\newblock {\em Differential Integral Equations}, 27(7-8):601--612, 2014.

\bibitem{FarahLinaresPastorVisciglia2017}
Luiz~G. Farah, Felipe Linares, Ademir Pastor, and Nicola Visciglia.
\newblock Large data scattering for the defocusing supercritical generalized
  {K}d{V} equation. preprint, arxiv:1707.03455, 2017.

\bibitem{FokasAblowitz1983}
A.~S. Fokas and M.~J. Ablowitz.
\newblock The inverse scattering transform for the {B}enjamin-{O}no
  equation---a pivot to multidimensional problems.
\newblock {\em Stud. Appl. Math.}, 68(1):1--10, 1983.

\bibitem{Iorio1986CPDE}
Rafael~Jos\'e I\'orio, Jr.
\newblock On the {C}auchy problem for the {B}enjamin-{O}no equation.
\newblock {\em Comm. Partial Differential Equations}, 11(10):1031--1081, 1986.

\bibitem{Kenig-Merle2006Invent}
Carlos~E. Kenig and Frank Merle.
\newblock Global well-posedness, scattering and blow-up for the
  energy-critical, focusing, non-linear {S}chr\"odinger equation in the radial
  case.
\newblock {\em Invent. Math.}, 166(3):645--675, 2006.

\bibitem{KenigMerle2008Acta}
Carlos~E. Kenig and Frank Merle.
\newblock Global well-posedness, scattering and blow-up for the energy-critical
  focusing non-linear wave equation.
\newblock {\em Acta Math.}, 201(2):147--212, 2008.

\bibitem{KPV1991Indiana}
Carlos~E. Kenig, Gustavo Ponce, and Luis Vega.
\newblock Oscillatory integrals and regularity of dispersive equations.
\newblock {\em Indiana Univ. Math. J.}, 40(1):33--69, 1991.

\bibitem{KPV1994TransAMS}
Carlos~E. Kenig, Gustavo Ponce, and Luis Vega.
\newblock On the generalized {B}enjamin-{O}no equation.
\newblock {\em Trans. Amer. Math. Soc.}, 342(1):155--172, 1994.

\bibitem{KillipVisanKwonShao2012}
Rowan Killip, Soonsik Kwon, Shuanglin Shao, and Monica Visan.
\newblock On the mass-critical generalized {K}d{V} equation.
\newblock {\em Discrete Contin. Dyn. Syst.}, 32(1):191--221, 2012.

\bibitem{Killip-Tao-Visan2009JEMS}
Rowan Killip, Terence Tao, and Monica Visan.
\newblock The cubic nonlinear {S}chr\"odinger equation in two dimensions with
  radial data.
\newblock {\em J. Eur. Math. Soc. (JEMS)}, 11(6):1203--1258, 2009.

\bibitem{Killip-Visan2013Clay}
Rowan Killip and Monica Visan.
\newblock Nonlinear {S}chr\"odinger equations at critical regularity.
\newblock In {\em Evolution equations}, volume~17 of {\em Clay Math. Proc.},
  pages 325--437. Amer. Math. Soc., Providence, RI, 2013.

\bibitem{Lions1984AIHP1}
P.-L. Lions.
\newblock The concentration-compactness principle in the calculus of
  variations. {T}he locally compact case. {I}.
\newblock {\em Ann. Inst. H. Poincar\'e Anal. Non Lin\'eaire}, 1(2):109--145,
  1984.

\bibitem{Lions1984AIHP2}
P.-L. Lions.
\newblock The concentration-compactness principle in the calculus of
  variations. {T}he locally compact case. {II}.
\newblock {\em Ann. Inst. H. Poincar\'e Anal. Non Lin\'eaire}, 1(4):223--283,
  1984.

\bibitem{MolinetRibaud2004}
Luc Molinet and Francis Ribaud.
\newblock Well-posedness results for the generalized {B}enjamin-{O}no equation
  with arbitrary large initial data.
\newblock {\em Int. Math. Res. Not.}, (70):3757--3795, 2004.

\bibitem{OgawaTsutsumi1991JDE}
Takayoshi Ogawa and Yoshio Tsutsumi.
\newblock Blow-up of {$H^1$} solution for the nonlinear {S}chr\"odinger
  equation.
\newblock {\em J. Differential Equations}, 92(2):317--330, 1991.

\bibitem{Ponce2016survey}
Gustavo Ponce.
\newblock On the {IVP} for the {$k$}-generalized {B}enjamin-{O}no equation.
\newblock In {\em Harmonic analysis, partial differential equations, complex
  analysis, {B}anach spaces, and operator theory. {V}ol. 1}, volume~4 of {\em
  Assoc. Women Math. Ser.}, pages 349--371. Springer, [Cham], 2016.

\bibitem{Tao2007DiscCont}
Terence Tao.
\newblock Two remarks on the generalised {K}orteweg-de {V}ries equation.
\newblock {\em Discrete Contin. Dyn. Syst.}, 18(1):1--14, 2007.

\bibitem{Tao2016AnalPDE}
Terence Tao.
\newblock Finite-time blowup for a supercritical defocusing nonlinear wave
  system.
\newblock {\em Anal. PDE}, 9(8):1999--2030, 2016.

\bibitem{Tao2018AnalPDE}
Terence Tao.
\newblock Finite time blowup for a supercritical defocusing nonlinear
  {S}chr\"odinger system.
\newblock {\em Anal. PDE}, 11(2):383--438, 2018.

\bibitem{Vento2010}
St\'ephane Vento.
\newblock Well-posedness for the generalized {B}enjamin-{O}no equations with
  arbitrary large initial data in the critical space.
\newblock {\em Int. Math. Res. Not. IMRN}, (2):297--319, 2010.

\end{thebibliography}

\end{document}